\definecolor{aleacolor}{rgb}{0.16,0.59,0.78}
\renewcommand{\cite}{\citet}
\theoremstyle{plain}
\newtheorem{theorem}{Theorem}[section]
\newtheorem{proposition}[theorem]{Proposition}
\newtheorem{lemma}[theorem]{Lemma}
\newtheorem{corollary}[theorem]{Corollary}
\theoremstyle{definition}
\newtheorem{definition}[theorem]{Definition}
\theoremstyle{remark}
\newtheorem{remark}{Remark}[section]
\makeatletter \@addtoreset{equation}{section} \makeatother
\newcommand{\Z}{\mathbb{Z}}
\newcommand{\R}{\mathbb{R}}
\newcommand{\N}{\mathbb{N}}
\newcommand{\PP}{\mathbb{P}}
\newcommand{\HH}{\mathcal{H}}
\newcommand{\F}{\mathcal{F}}
\newcommand{\nvert}[0]{~\vert~}
\newcommand{\bb}[1]{\boldsymbol{#1}}
\newcommand{\prob}[2]{\mathbb{P}_{#1}\hspace{-0.6mm}\left( #2 \right)}
\newcommand{\esp}[2]{\mathbb{E}_{#1}\hspace{-0.6mm}\left[ #2 \right]}
\newcommand{\var}[2]{\mathbb{V}_{#1}\hspace{-0.6mm}\left( #2 \right)}
\newcommand{\probw}[2]{\mathscr{P}_{#1}\hspace{-0.6mm}\left( #2 \right)}
\newcommand{\espw}[2]{\mathscr{E}_{#1}\hspace{-0.6mm}\left[ #2 \right]}
\newcommand{\para}{(\boldsymbol{\sigma},\boldsymbol{\lambda})}
\newcommand{\tabitem}{~~\llap{\textbullet}~~}
\begin{document}

\noindent
\fbox{
\begin{minipage}{34.5em}
    This is an electronic reprint of the original article : \\
    ALEA, Lat. Am. J. Probab. Math. Stat. 14 (2), 851-902 (2017). \\
    \url{http://alea.impa.br/articles/v14/14-38.pdf}. \\
    This reprint {\it may differ} from the original in typographic details.
\end{minipage}
}
\vspace{10mm}

\title[Scale-inhomogeneous Gaussian free field]{Geometry of the Gibbs measure for the discrete 2D Gaussian free field with scale-dependent variance\vspace{8mm}}
\author[F. Ouimet]{Fr\'ed\'eric Ouimet}
\address{Universit\'e de Montr\'eal, D\'epartement de Math\'ematiques et de Statistique, \newline 2920, chemin de la Tour, Montr\'eal, QC H3T 1J4, Canada.}
\email{ouimetfr@dms.umontreal.ca}
\urladdr{\url{https://sites.google.com/site/fouimet26}}

\subjclass[2010]{60G70, 60G60, 60G15, 82B44}
\keywords{Gaussian free field, Gibbs measure, inhomogeneous environment, Ghirlanda-Guerra identities, ultrametricity, spin glasses, Ruelle probability cascades}

    \begin{abstract}
        We continue our study of the scale-inhomogeneous Gaussian free field introduced in \cite{MR3541850}.
        Firstly, we compute the limiting free energy on $V_N$ and adapt a technique of \cite{MR2070335} to determine the limiting two-overlap distribution.
        The adaptation was already successfully applied in the simpler case of \cite{MR3354619}, where the limiting free energy was computed for the field with two levels (in the center of $V_N$) and the limiting two-overlap distribution was determined in the homogeneous case.
        Our results agree with the analogous quantities for the Generalized Random Energy Model (GREM); see \cite{MR883541} and \cite{MR2070334}, respectively.
        Secondly, we show that the extended Ghirlanda-Guerra identities hold exactly in the limit.
        As a corollary, the limiting array of overlaps is ultrametric and the limiting Gibbs measure has the same law as a Ruelle probability cascade.
    \end{abstract}

    \maketitle

\section{The model}\label{sec:model}

    Let $(W_k)_{k\geq 0}$ be a simple random walk starting at $u\in \Z^2$ with law $\mathscr{P}_u$. For every finite box $B \subseteq \Z^2$, the Gaussian free field (GFF) on $B$ is a centered Gaussian field $\phi \circeq \{\phi_v\}_{v\in B}$ with covariance matrix
    \begin{equation}\label{eq:GFF.green.function}
        G_B(v,v') \circeq \frac{\pi}{2} \cdot \espw{v}{\sum_{k=0}^{\tau_{\partial B} - 1} \bb{1}_{\{W_k = v'\}}}, \quad v,v'\in B,
    \end{equation}
    where $\tau_{\partial B}$ is the first hitting time of $(W_k)_{k\geq 0}$ on the \textit{boundary of $B$},
    \begin{equation}
        \partial B \circeq \{v\in B : \exists z\in \Z^2 \backslash B ~~\text{such that } \|v - z\|_2 = 1\},
    \end{equation}
    and $\|\cdot\|_2$ denotes the Euclidean distance in $\Z^2$.
    With this definition, $B$ contains its boundary.
    We let $B^o \circeq B \backslash \partial B$.
    By convention, summations are zero when there are no indices, so $\phi$ is identically zero on $\partial B$. This is the {\it Dirichlet boundary condition}.
    The constant $\pi/2$ in \eqref{eq:GFF.green.function} is a convenient normalization for the variance.

    We build a family of Gaussian fields constructed from the GFF $\{\phi_v^N\}_{v\in V_N}$ on the square box $V_N \circeq \{0,1,...,N\}^2$.
    For $\lambda \in (0,1)$ and $v=(v_1,v_2)\in V_N$, consider the closed neighborhood $[v]_\lambda$ in $V_N$ consisting of the square box of width $N^{1-\lambda}$ centered at $v$ that has been cut off by the boundary of $V_N$ :
    \begin{equation}
        [v]_{\lambda} \circeq \left((v_1,v_2) + \Big[-\frac{1}{2}N^{1-\lambda},\frac{1}{2}N^{1-\lambda}\Big]^2\right) \bigcap V_N\ .
    \end{equation}
    By convention, define $[v]_0 \circeq V_N$ and $[v]_1 \circeq \{v\}$.
    Let $\F_{\partial [v]_{\lambda} \cup [v]_{\lambda}^c} \circeq \sigma(\{\phi^N_v, v\notin [v]_{\lambda}^o\})$ be the $\sigma$-algebra generated by the variables on the boundary of the box $[v]_{\lambda}$ and those outside of it.
    Since the neighborhoods are shrinking when $\lambda$ increases, for any $v\in V_N$, the collection $\mathbb{F}_v \circeq \{\F_{\partial [v]_{\lambda} \cup [v]_{\lambda}^c}\}_{\lambda\in [0,1]}$ is a filtration.
    In particular, if we let
    \begin{equation}
        \phi^N_v(\lambda) \circeq \mathbb{E}\Big[\phi^N_v \nvert \F_{\partial [v]_{\lambda} \cup [v]_{\lambda}^c}\Big],
    \end{equation}
    then
    \begin{equation*}
        \text{for every $v\in V_N$, the process $(\phi^N_v(\lambda))_{\lambda\in[0,1]}$ is a $\mathbb{F}_v$-martingale.}
    \end{equation*}
    It is also a Gaussian field, therefore disjoint increments of the form $\phi^N_v(\lambda') - \phi^N_v(\lambda)$ are independent.
    These observations motivate the definition of {\it scale-inhomogeneous Gaussian free field}, which can be seen as a martingale-transform of $(\phi^N_v(\lambda))_{\lambda\in[0,1]}$ applied simultaneously for every $v\in V_N$.

    Fix $M\in \N$ and consider the parameters
    \begin{align}
        &\boldsymbol{\sigma} \circeq (\sigma_1,\sigma_2, ..., \sigma_M)\in (0,\infty)^M, ~& \text{(variance parameters)} \\
        &\boldsymbol{\lambda} \circeq (\lambda_1,\lambda_2, ..., \lambda_M)\in (0,1]^M, ~& \text{(scale parameters)}
    \end{align}
    where
    \begin{equation}
        0 \circeq \lambda_0 < \lambda_1 < ... < \lambda_M \circeq 1.
    \end{equation}
    We write $\nabla_i$ for the difference operator with respect to the index $i$.
    When the index variable is obvious, we omit the subscript.
    For example,
    \begin{equation}
        \nabla \phi^N_v(\lambda_i) \circeq \phi^N_v(\lambda_i) - \phi^N_v(\lambda_{i-1}).
    \end{equation}

    \begin{definition}[Scale-inhomogeneous Gaussian free field]\label{def:IGFF}
        Let $\{\phi^N_v\}_{v\in V_N}$ be the GFF on $V_N$.
        The $(\boldsymbol{\sigma},\boldsymbol{\lambda})$-GFF on $V_N$ is a Gaussian field $\{\psi^N_v\}_{v\in V_N}$ defined by
        \begin{equation}\label{eq:def.IGFF}
            \psi^N_v \circeq \sum_{i=1}^M \sigma_i \nabla \phi^N_v(\lambda_i)= \sum_{i=1}^M \sigma_i \big(\phi^N_v(\lambda_i)-\phi^N_v(\lambda_{i-1})\big).
        \end{equation}
        Similarly to the GFF, we define
        \begin{equation}
            \psi^N_v(\lambda) \circeq \mathbb{E}\Big[\psi^N_v \nvert \F_{\partial [v]_{\lambda} \cup [v]_{\lambda}^c}\Big] \quad \text{and} \quad
            \psi_v^N(\lambda,\lambda') \circeq \psi_v^N(\lambda') - \psi_v^N(\lambda).
        \end{equation}
        From hereon, we make the dependence on $N$ implicit everywhere for $\phi$ and $\psi$.
    \end{definition}

    The $\para$-GFF is the analogue of various types of inhomogeneous branching processes :
    \begin{itemize}
        \item The GREM, see e.g. \cite{MR2070334,MR2070335,MR883541,Derrida85,MR875300};
        \item The non-hierarchical GREM, see \cite{MR2209333,MR2531095};
        \item The perceptron GREM, see \cite{MR3372858};
        \item The multi-scale logarithmic potential (also called multi-scale log-REM), see e.g. \cite{MR2425780,SciPostPhys.1.2.011};
        \item The branching random walk in time-inhomogeneous environment, see e.g. \cite{MR2968674,MR3373310,MR3361256,arXiv:1509.08172};
        \item The variable speed branching Brownian motion, see e.g. \cite{MR3164771,MR3351476,MR2981635,MR3531703}.
    \end{itemize}

    \begin{remark}
        Our model is most closely related to the multi-scale log-REM of \cite{MR2425780}. In both cases :
        \begin{enumerate}
            \item There is a boundary effect, meaning that the variance of the variables of the field decays to $0$ as we approach the boundary;
            \item There is no exact hierarchical structure;
            \item The covariance between two random variables of the field is directly tied to the distance between their index in a finite-dimensional Euclidean space;
            \item The number of possible covariance values grows with the size of the system.
        \end{enumerate}
        Amongst the other models, not one has property 1 or 3, only the non-hierarchical GREM has property 2, and only the time-inhomogeneous branching random walk and the variable speed branching Brownian motion have property 4.

        The only significant difference between the single-scale log-REM and the critical GFF is the fact that the field is indexed in a $N'$-dimensional space (instead of 2-dimensional) and the covariance is defined to be logarithmic directly instead of it being a consequence of estimates on the Green function in two dimensions.
        \cite{MR2425780} calculate the limiting free energy and the limiting two-overlap distribution in the limit $N'\to \infty$ (for a finite system size) by using the {\it replica trick} and {\it Parisi's hierarchical ansatz}. In the thermodynamic limit, they recover the same structure as in the GREM case and argue that the results should also hold if $N'$ is fixed instead and the system size grows to infinity.
        In this article, we put their argument on rigorous ground via the $\para$-GFF. We go even further by showing that the limiting Gibbs measure has the same law as the one found in \cite{MR2070334} for the GREM. For an introduction to log-REM models and physical motivations, see e.g. \cite{PhysRevE.63.026110,MR2430565,MR2425780,MR2882779,SciPostPhys.1.2.011}.
    \end{remark}

\section{Motivation for the scale-inhomogeneous GFF}\label{sec:motivation}

    In contrast with branching random walks (BRWs) :
    \begin{itemize}
        \item The branching structure is {\it approximate} in the sense that $\phi_v(\lambda)$ and $\phi_{v'}(\lambda)$ are not perfectly correlated when $\lambda$ is smaller than the {\it branching scale}, namely the largest scale at which $[v]_{\lambda}$ and $[v']_{\lambda}$ intersect.
            The branching scale itself is arbitrarily defined since it is conceptually more of a transition interval : between the scale where $v'$ is ``well-inside'' $[v]_{\lambda}$ and the smallest scale for which $[v]_{\lambda} \cap [v']_{\lambda} = \emptyset$.
        \item At a given scale, the covariance of the increments of the field decays near the boundary of the domain. In the context of BRWs, this means that at a given time, the law of each point process would depend on the position of the associated ancestors in the tree.
    \end{itemize}
    Several covariance estimates can be found in Appendix \ref{sec:covariance.estimates}.

    We are interested in the $\para$-GFF to see how the results on the extremes (and the methods of proof) are robust to perturbations in the correlation structure.
    This interest is amplified by the fact that many models in recent applications have underlying approximate branching structures.
    For example :
    \begin{itemize}
        \item[$\bullet$] Cover times, see e.g. \cite{MR3263552,MR3129800,MR3602852,MR3126579,MR1998762,MR2123929,MR2206347,MR2946152,MR3178464,MR2912708,MR2921974};
        \item[$\bullet$] The randomized Riemann zeta function on the critical line, see e.g. \cite{MR3619786,arXiv:1706.08462,arXiv:1304.0677,arXiv:1604.08378};
        \item[$\bullet$] The Riemann zeta function on random intervals of the critical line, see e.g. \cite{arXiv:1612.08575,arXiv:1611.05562};
        \item[$\bullet$] The characteristic polynomials of random unitary matrices, see e.g. \cite{MR3594368,arXiv:1607.00243,arXiv:1602.08875};
    \end{itemize}
    In particular, note that all these models are heavily correlated in the {\it critical regime}, that is when the correlation starts to affect the extremal statistics.

    Generally, there are two ways to study the distribution of the extremes : via the {\it extremal process} and via the {\it Gibbs measure}.
    Since one of our goal here is to show the tree structure of the extremes in the limit  $N\to \infty$ (this interest comes partly from the physicists and the Parisi ultrametricity conjecture for mean field spin glass models (see e.g. \cite{MR2252929,MR1026102,MR3052333,MR2731561,MR3024566})), the mathematics in the latter approach is much simpler (at least in our case). A very important advancement was made recently in \cite{MR2999044} where it is shown that a random measure supported on the unit ball of a separable Hilbert space that satisfies the extended Ghirlanda-Guerra identities must have an ultrametric support with probability one. The summary in Section \ref{sec:structure} gives a detailed description of the steps we will make to prove the extended Ghirlanda-Guerra identities and the consequences we can deduce from the work of Panchenko.

    Remarkably, despite the imperfect branching structure of the $\para$-GFF and the growing number of scales as $N\rightarrow\infty$, the results of this paper show that the {\it limiting Gibbs measure} has the same tree structure as in the context of the GREM. More precisely, we show that the limiting Gibbs measure has the same law as a {\it Ruelle probability cascade} (see \cite{MR875300}) with {\it functional order parameter} $\zeta$ defined by the {\it limiting two-overlap distribution}. This is the content of Corollary \ref{cor:ruelle.probability.cascade}.
    In the limit, this means, in particular, that the extremes of the model are clustered in a hierarchical way and in fact satisfy the {\it ultrametric inequality}, see Corollary \ref{cor:ultrametricity}.

    Another reason why the study of the extremes via the Gibbs measure might be more desirable to prove ultrametricity results is its robustness, i.e. the applicability of the methods to other models.
    For instance, \cite{MR3628881} defines the notion of {\it approximate ultrametricity} for finite system sizes by imposing conditions on the sequence of Gibbs measures. It is proved that if the sequence of two-overlap distributions converge weakly and the {\it approximate extended Ghirlanda-Guerra identities} are satisfied (in his sense, see Definition 1.4 in \cite{MR3628881}), then the sequence of Gibbs measures (assuming they are supported on the unit ball of a separable Hilbert space) is {\it regularly approximately ultrametric}.
    His paper tie in very nicely with our approach since we prove the weak convergence of the two-overlap distribution in Theorem \ref{thm:two.overlap.distribution.limit}, we prove a slightly different version of the approximate extended Ghirlanda-Guerra identities in Theorem \ref{thm:IGFF.ghirlanda.guerra}, and then we show that the identities must hold exactly in the limit (Theorem \ref{thm:IGFF.ghirlanda.guerra.limit}).
    Of course, this doesn't prove that our model is {\it regularly approximately ultrametric}, but it seems at least plausible that the notion of approximate ultrametricity could hold for a large class of non-hierarchical models and could be (part of) the grand explanation behind the phenomenon of ultrametricity of the extremes in the system size limit.

\section{Structure of the paper}\label{sec:structure}

    In order to make the logical structure of this article as clear as possible, the new results and their proof are stated and written in a linear fashion.
    Some technical lemmas are relegated to appendices.
    However, we emphasize that these lemmas are not at all necessary to understand the main structure and are sparsely used.
    Below, we summarize the main results of the paper, give the main ingredients of the proofs and indicate exactly where the lemmas in the appendices are needed.

    \vspace{5mm}
    \noindent
    {{\bf Section \ref{sec:previous.results} :}} We recall the main results of \cite{MR3541850} :
            \begin{itemize}
                \item Theorem \ref{thm:IGFF.order1} : $\max_{v\in V_N} \psi_v / \log N^2 \stackrel{\PP}{\longrightarrow} \gamma^{\star}$.
                \item Theorem \ref{thm:IGFF.high.points} : $\log(|\{v\in V_N : \psi_v \geq \gamma \log N^2\}|) / \log N^2 \stackrel{\PP}{\longrightarrow} \mathcal{E}(\gamma).$
            \end{itemize}

    \noindent
    {{\bf Section \ref{sec:new.results} :}} The new main results are stated :
            \begin{itemize}
                \item Theorem \ref{thm:IGFF.free.energy} : Limit of the free energy on $V_N$ :
                    \begin{equation*}
                        \frac{1}{\log N^2} \log \sum_{v\in V_N} e^{\beta \psi_v} \xrightarrow{~\PP ~\text{and}~ L^p} \max_{\gamma\in [0,\gamma^{\star}]} \beta \gamma + \mathcal{E}(\gamma) = f^{\psi}(\beta).
                    \end{equation*}
                    The proof uses the results of Section \ref{sec:previous.results} for the $\PP$-convergence and we show that the powers of the free energy are uniformly integrable.
                    We find the explicit form of the maximum on the right-hand side using
                    \begin{itemize}
                        \item Lemma \ref{lem:tech.lemma.1} : Differentiability of $\mathcal{E}$,
                        \item Lemma \ref{lem:tech.lemma.2} : Solution of the maximization problem.
                    \end{itemize}
                \item Theorem \ref{thm:IGFF.free.energy.A.N.p} : Same as Theorem \ref{thm:IGFF.free.energy}, but on a set far enough from the boundary of $V_N$, denoted $A_{N,\rho}$.
                \item Theorem \ref{thm:two.overlap.distribution.limit} : If $\mathcal{G}_{\beta,N}$ denotes the Gibbs measure of $\psi$ and $q^N(v,v')$ denotes the normalized covariance (overlap) between $\psi_v$ and $\psi_{v'}$, then we compute the limit of the {\it two-overlap distribution}, namely $r \mapsto \mathbb{E}\mathcal{G}_{\beta,N}^{\times 2} \big[\bb{1}_{\{q^N(v,v') \leq r\}}\big]$. The main ingredients of the proof are :
                    \begin{itemize}
                        \item The Gibbs measure doesn't hold any weight outside $A_{N,\rho}$ in the limit,
                        \item The overlap estimates of Corollary \ref{cor:covariance.estimates.3},
                        \item Gaussian integration by parts,
                        \item The mean convergence of the derivative at $u=0$ of a perturbed version of the free energy to $f^{\psi^u}(\beta)$. This is proved using
                            \begin{itemize}
                                \item Theorem \ref{thm:IGFF.free.energy.A.N.p},
                                \item Lemma \ref{lem:IGFF.modified.free.energy.convexity} : Convexity of the free energy with respect to the perturbation variable $u$,
                                \item Lemma \ref{lem:IGFF.modified.free.energy.derivative} : Differentiability of $u\mapsto f^{\psi^u}(\beta)$ for all $|u| < \delta$.
                            \end{itemize}
                    \end{itemize}
                \item Theorem \ref{thm:IGFF.ghirlanda.guerra} : As $N\rightarrow \infty$, the extended Ghirlanda-Guerra identities hold approximately. The main ingredients are the same as in the proof of Theorem \ref{thm:two.overlap.distribution.limit}, but we also need Theorem \ref{thm:IGFF.free.energy.A.N.p} and the main ingredients to get a concentration result (Lemma \ref{lem:IGFF.ghirlanda.guerra.restricted.2}).
                \item Theorem \ref{thm:IGFF.ghirlanda.guerra.limit} : In the limit, the extended Ghirlanda-Guerra identities hold exactly. The main ingredients of the proof are :
                    \begin{itemize}
                        \item Theorem \ref{thm:two.overlap.distribution.limit} and Theorem \ref{thm:IGFF.ghirlanda.guerra},
                        \item The representation theorem of \cite{MR666087}.
                    \end{itemize}
            \end{itemize}

    \noindent
    {{\bf Section \ref{sec:consequences.GG.limit} :}} The consequences of Theorem \ref{thm:two.overlap.distribution.limit} and Theorem \ref{thm:IGFF.ghirlanda.guerra.limit} :
            \begin{itemize}
                \item Corollary \ref{cor:ultrametricity} : The limiting array of overlaps is almost-surely {\it ultrametric} under the mean of the limiting Gibbs measure.
                \item Corollary \ref{cor:ruelle.probability.cascade} : The limiting Gibbs measure has the same law as a Ruelle probability cascade \cite{MR875300} with functional order parameter $\zeta$ defined by the limiting two-overlap distribution, which means that it samples extreme values from an {\it exact} tree structure, where the number of branching scales is finite and controlled by the {\it inverse temperature} $\beta$.
            \end{itemize}

    \noindent
    {{\bf Section \ref{sec:proof.main.results} :}} Proof of the main results.

    \noindent
    {{\bf Appendix \ref{sec:covariance.estimates} :}} The sole purpose of this appendix is to prove Corollary \ref{cor:covariance.estimates.3}.

    \noindent
    {{\bf Appendix \ref{sec:technical.lemmas} :}} We indicated above where each of its four lemmas are needed.

\section{Some notations}\label{sec:some.notations}

    Now, we introduce some notations.
    The parameters $(\boldsymbol{\sigma}, \boldsymbol{\lambda})$ can be encoded simultaneously in the left-continuous step function
    \begin{equation}\label{eq:variance.function}
        \sigma(s) \circeq \sigma_1 \bb{1}_{\{0\}}(s) + \sum_{i=1}^M \sigma_i \bb{1}_{(\lambda_{i-1},\lambda_i]}(s), \ \ \ s\in [0,1].
    \end{equation}
    For any positive measurable function $f:[0,1]\to\R$, define the integral operators
    \begin{equation}
        \mathcal{J}_f(s) \circeq \int_0^s f(r) dr \quad \text{and} \quad \mathcal{J}_f(s_1,s_2) \circeq \int_{s_1}^{s_2} f(r) dr.
    \end{equation}
    We refer to $\mathcal{J}_{\sigma^2}(\cdot)$ as the {\it speed function}. The \textit{concavification} of $\mathcal{J}_{\sigma^2}$, denoted $\hat{\mathcal{J}}_{\sigma^2}$, is the function whose graph is the concave hull of $\mathcal{J}_{\sigma^2}$.
    From \cite{MR3541850}, we know that the asymptotics of the maximum and the log-number of high points of $\psi$ are controlled by this function.
    Its graph is an increasing and concave polygonal line, see Figure \ref{fig:concave} for an example.

    Clearly, there exists a unique non-increasing left-continuous step function $s \mapsto \bar{\sigma}(s)$ such that
    \vspace{-2mm}
    \begin{equation}
        \hat{\mathcal{J}}_{\sigma^2}(s) = \mathcal{J}_{\bar{\sigma}^2}(s) = \int_0^s \bar{\sigma}^2(r)\, dr \  \text{ for all $s\in (0,1]$\hspace{0.3mm}. }
    \end{equation}
    The scales in $[0,1]$ where $\bar{\sigma}$ jumps are denoted by
    \begin{equation}\label{eq:effective.scales}
        0 \circeq \lambda^0 < \lambda^1 < ... < \lambda^m \circeq 1,
    \end{equation}
    where $m \leq M$. To be consistent with previous notations, we set $\bar{\sigma}_l \circeq \bar{\sigma}(\lambda^l)$.
    In particular, note that
    \vspace{-1mm}
    \begin{equation}
        \bar{\sigma}_1 > \bar{\sigma}_2 > ... > \bar{\sigma}_m.
    \end{equation}
    We define $\bar{\sigma}_{m+1} \circeq 0$ and interpret $\beta_c(\bar{\sigma}_{m+1}) \circeq 2 / \bar{\sigma}_{m+1}$ as $+\infty$ whenever it is encountered.

    \begin{figure}[ht]
        \centering
        \includegraphics[scale=0.7]{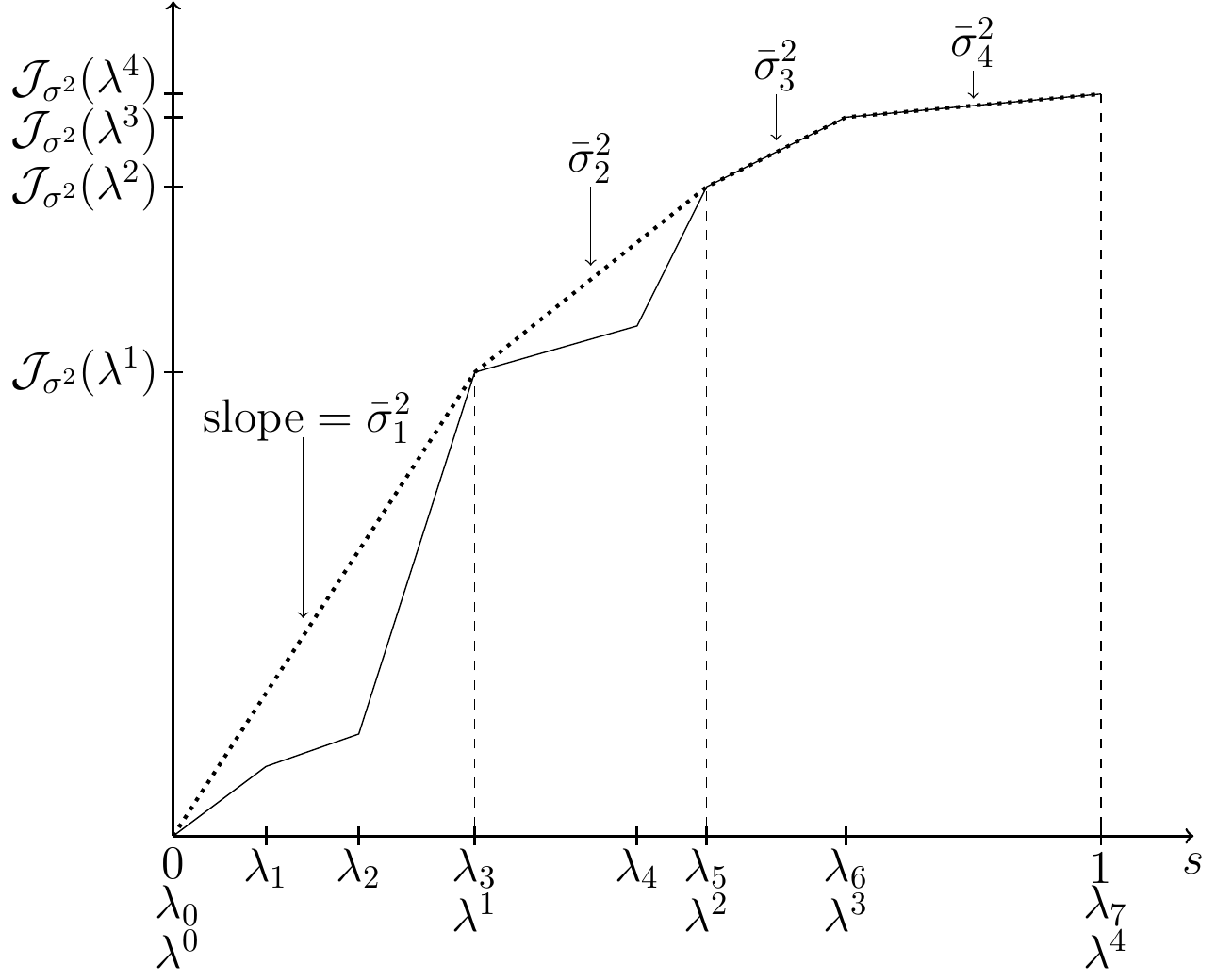}
        \captionsetup{width=0.8\textwidth}
        \caption{Example of $\mathcal{J}_{\sigma^2}$ \hspace{-1mm}(closed line) and $\hat{\mathcal{J}}_{\sigma^2}$ \hspace{-1mm}(dotted line) with $7$ values for $\sigma^2$.\vspace{-2mm}}
        \label{fig:concave}
    \end{figure}

\section{Previous results}\label{sec:previous.results}

    In this section, we recall the main results from \cite{MR3541850} on the first order asymptotics of the maximum and the log-number of $\gamma$-high points of the $\para$-GFF.
    These results are needed to compute the limiting free energy.

    \begin{theorem}\label{thm:IGFF.order1}
        Let $\{\psi_v\}_{v\in V_N}$ \hspace{-0.8mm}be the $(\boldsymbol{\sigma},\boldsymbol{\lambda})$-GFF on $V_N$ of Definition \ref{def:IGFF}, then
        \begin{equation}
            \lim_{N\rightarrow\infty} \frac{\max_{v\in V_N} \psi_v}{\log N^2} = \mathcal{J}_{\sigma^2\hspace{-0.3mm}/\bar{\sigma}}(1)\circeq \gamma^\star \quad \text{in probability}\hspace{0.3mm}.
        \end{equation}
        In fact, from Lemma 3.1 and Lemma 3.3 in \cite{MR3541850}, for any $\varepsilon \hspace{-0.2mm}> \hspace{-0.2mm}0$, there exists a constant $c = c(\varepsilon,\boldsymbol{\sigma},\boldsymbol{\lambda}) > 0$ such that for $N$ large enough,
        \begin{equation}\label{eq:IGFF.order1.upper.bound}
            \mathbb{P}\left(\max_{v\in V_N} \psi_v \geq (1 + \varepsilon) \gamma^{\star} \log N^2\right) \leq N^{-c}
        \end{equation}
        and
        \begin{equation}\label{eq:IGFF.order1.lower.bound}
            \mathbb{P}\left(\max_{v\in V_N} \psi_v \leq (1 - \varepsilon) \gamma^{\star} \log N^2\right) \leq N^{-c}.
        \end{equation}
    \end{theorem}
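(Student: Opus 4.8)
\emph{Proof idea for Theorem \ref{thm:IGFF.order1}.} (The statement is recalled from \cite{MR3541850}; here is the scheme I would follow.) Write $L:=\log N^2$. I would prove the two tail bounds \eqref{eq:IGFF.order1.upper.bound}--\eqref{eq:IGFF.order1.lower.bound} separately, which already gives the in-probability convergence. The only inputs needed are: the covariance estimates of Appendix \ref{sec:covariance.estimates} --- in the bulk $\mathrm{Var}(\psi_v(\lambda))=\mathcal J_{\sigma^2}(\lambda)\log N+O(1)$, and two points whose neighbourhoods $[v]_\lambda,[v']_\lambda$ first disconnect near scale $\lambda$ have covariance $\mathcal J_{\sigma^2}(\lambda)\log N+o(\log N)$, so that $\psi$ is \emph{approximately hierarchical} with speed $\sigma^2(\cdot)$; the count $\asymp N^{2\lambda}=e^{\lambda L}$ of macroscopically distinct scale-$\lambda$ neighbourhoods; and Gaussian (Borell--TIS) concentration. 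Since $\mathrm{Var}(\psi_v)$ decays near $\partial V_N$, I would run both arguments on a sub-box $A_{N,\rho}$ bounded away from the boundary (as in Theorem \ref{thm:IGFF.free.energy.A.N.p}); boundary points have even smaller variance and are absorbed into the error of the upper bound.

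\emph{Upper bound.} The naive first moment $\EE\,\#\{v:\psi_v\ge(1+\varepsilon)\gamma^\star L\}\le e^{L}\,\PP\bigl(\mathcal N(0,\mathcal J_{\sigma^2}(1)\log N)\ge(1+\varepsilon)\gamma^\star L\bigr)$ is $o(1)$ only for thresholds above $\sqrt{\mathcal J_{\sigma^2}(1)}$, which by Cauchy--Schwarz equals $\gamma^\star$ exactly when $\mathcal J_{\sigma^2}$ is already concave and is strictly larger otherwise: the bound on the split of $\psi_v$ into increments over the effective scales $0=\lambda^0<\dots<\lambda^m=1$ is saturated by the allocation tracking $\hat{\mathcal J}_{\sigma^2}$ rather than $\mathcal J_{\sigma^2/\bar\sigma}$. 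To recover the sharp constant I would use the multi-scale first-moment method (in the spirit of Kistler): impose along the effective scales the barrier $\psi_v(\lambda^l)\le(1+\varepsilon)\,\mathcal J_{\bar\sigma}(\lambda^l)\,L$ (recall $\mathcal J_{\bar\sigma}(\lambda^l)=\mathcal J_{\sigma^2/\bar\sigma}(\lambda^l)$ at the effective scales). A union bound over coarse-grained trajectories (one neighbourhood per effective scale) shows that the event "some $v$ reaches $(1+\varepsilon)\gamma^\star L$ while staying below the barrier" has a first moment $\asymp\exp(L[\,1-\min_{(\delta_l)}\sum_l\delta_l^2/(\bar\sigma_l^2(\lambda^l-\lambda^{l-1}))\,])$, where the minimum over increment vectors consistent with the barrier and with $\sum_l\delta_l=(1+\varepsilon)\gamma^\star$ is attained on the barrier and equals $(1+\varepsilon)^2>1$ --- so this is $\le N^{-c}$. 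The complementary event, namely a point whose profile first overshoots the barrier at some effective scale, is controlled by an inductive union bound over scales using the (large) overshoot increment; keeping track of this bookkeeping, and replacing $\sigma^2$ by $\bar\sigma^2$ where licensed by the covariance estimates, is the delicate part.

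\emph{Lower bound.} I would run a truncated second moment. Fix a small $\varepsilon>0$ and let $\Gamma_N$ be the set of $v\in A_{N,\rho}$ with $\psi_v\ge(1-\varepsilon)\gamma^\star L$ whose coarse profile $(\psi_v(\lambda^l))_l$ also stays in an $O(\varepsilon)L$-tube around $\mathcal J_{\bar\sigma}(\lambda^l)L$. Gaussian tails and the count $e^{\lambda L}$ give $\EE|\Gamma_N|=e^{\Theta(L)}\to\infty$. For the second moment, split $\sum_{v,v'}\PP(v,v'\in\Gamma_N)$ by the scale $\lambda$ at which $[v]_\lambda$ and $[v']_\lambda$ disconnect: conditionally on the common part the two completions are approximately independent, and the profile truncation forces each to cost at least $e^{-\Theta((1-\lambda)L)}$, so the branching-scale-$\lambda$ contribution is $\lesssim e^{(2-\lambda)L}e^{-\Theta(\lambda L)}e^{-\Theta(2(1-\lambda)L)}$, which sums to $O((\EE|\Gamma_N|)^2)$. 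Paley--Zygmund then gives $\PP(\max_v\psi_v\ge(1-\varepsilon)\gamma^\star L)\ge c>0$ for all large $N$. Finally, Borell--TIS applied to the Gaussian field $\psi$ (variance proxy $\mathcal J_{\sigma^2}(1)\log N+O(1)=o(L)$) shows $\max_v\psi_v$ concentrates around $\EE\max_v\psi_v$ within $o(L)$ with probability $1-N^{-c}$; combined with the previous line this forces $\EE\max_v\psi_v\ge(1-\varepsilon)\gamma^\star L-o(L)$, and then concentration again yields \eqref{eq:IGFF.order1.lower.bound}.

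\emph{Main obstacle.} The combinatorics is that of the GREM; the genuine difficulty is that the $\para$-GFF is only approximately hierarchical and that $\mathrm{Var}(\psi_v)$ decays near $\partial V_N$. Both moment computations require the $O(1)$ and $o(\log N)$ errors in the Green-function/covariance estimates of Appendix \ref{sec:covariance.estimates} to be uniform in $v,v'$ and not to accumulate over the growing number of scales, and one must check that $\hat{\mathcal J}_{\sigma^2}$ (equivalently the passage from $\sigma$ to $\bar\sigma$) is exactly the correct "speed profile", so that the constrained Cauchy--Schwarz optimization saturates at $\gamma^\star=\mathcal J_{\sigma^2/\bar\sigma}(1)$ and not above; together with the overshoot bookkeeping in the upper bound, this is where the real work lies.
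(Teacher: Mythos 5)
Theorem \ref{thm:IGFF.order1} is not proved in this paper at all; Section \ref{sec:previous.results} merely \emph{recalls} it from Lemmas 3.1 and 3.3 of \cite{MR3541850}, and the tail estimates \eqref{eq:IGFF.order1.upper.bound}--\eqref{eq:IGFF.order1.lower.bound} are subsequently used as black boxes. Your proposed scheme --- a Kistler-style multi-scale first moment running a barrier along the effective scales $\lambda^j$ for the upper bound, and a truncated second moment with Paley--Zygmund for the lower bound, both executed with the covariance estimates of Appendix \ref{sec:covariance.estimates} and the restriction to a sub-box away from $\partial V_N$ --- is indeed the strategy of the cited reference, so the proposal identifies the correct method. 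Two remarks. First, the motivating claim that $\sqrt{\mathcal{J}_{\sigma^2}(1)}=\gamma^\star$ ``exactly when $\mathcal{J}_{\sigma^2}$ is already concave'' is false: when $\mathcal{J}_{\sigma^2}$ is concave one has $\bar\sigma=\sigma$, hence $\gamma^\star=\int_0^1\sigma(s)\,ds$, and by Cauchy--Schwarz this is $\le\big(\int_0^1\sigma^2(s)\,ds\big)^{1/2}=\sqrt{\mathcal{J}_{\sigma^2}(1)}$ with equality only when $\sigma$ is constant. So the naive one-scale first moment already overshoots for a two-speed concave GREM; the multi-scale barrier is needed in \emph{every} inhomogeneous case, which is actually a stronger motivation than the one you gave, and the error does not otherwise affect the scheme. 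Second, upgrading the Paley--Zygmund constant-probability lower bound to $1-N^{-c}$ via Borell--TIS (with variance proxy $\mathcal{J}_{\sigma^2}(1)\log N+O(1)$ from Lemma \ref{lem:IGFF.variance.uniform.bound}) is a valid, arguably cleaner, alternative to the more common route in the GFF literature of tensorizing over nearly independent sub-boxes using the domain Markov property; since the field is Gaussian this buys you a shorter argument at essentially no cost, though one should make sure the concentration center (median or mean of the max) is itself located by the second-moment step, which your sketch does.
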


    The set of $\gamma$-high points of the field $\psi$ is defined as
    \begin{equation}
        \HH_N(\gamma) \circeq \{v\in V_N : \psi_v \geq \gamma \log N^2\},\quad \text{for all $\gamma\in [0,\gamma^{\star}]$.}
    \end{equation}
    The number of $\gamma$-high points depends on {\it critical levels} defined by
    \begin{equation}\label{eq:critic.levels}
        \gamma^{l}
        \circeq \int_0^1 \frac{\sigma^2(s)}{\bar{\sigma}(s \wedge \lambda^l)} ds, \quad 1\leq l\leq m,
        \qquad \gamma^0\circeq 0 \hspace{0.3mm}.
    \end{equation}
    For $\gamma\in (\gamma^{l-1},\gamma^l]$, define
    \begin{equation}\label{eq:entropy}
        \mathcal{E}(\gamma) \circeq (1 - \lambda^{l-1}) - \frac{(\gamma - \mathcal{J}_{\sigma^2\hspace{-0.3mm} / \bar{\sigma}}(\lambda^{l-1}))^2}{\mathcal{J}_{\sigma^2}(\lambda^{l-1},1)}
        \quad \text{and} \quad \mathcal{E}(0) \circeq 1.
    \end{equation}

    \begin{theorem}\label{thm:IGFF.high.points}
        Let $\{\psi_v\}_{v\in V_N}$ \hspace{-0.8mm}be the $(\boldsymbol{\sigma},\boldsymbol{\lambda})$-GFF on $V_N$ of Definition \ref{def:IGFF} and let $\gamma\in [0,\gamma^{\star})$, then
        \vspace{-1mm}
        \begin{equation}
            \lim_{N\rightarrow\infty} \frac{\log |\HH_N(\gamma)|}{\log N^2} = \mathcal{E}(\gamma)
            \quad
            \text{in probability\hspace{0.3mm}.}
        \end{equation}
        In fact, from Lemma 3.4 and Lemma 3.5 in \cite{MR3541850}, for any $\gamma\in [0,\gamma^{\star}]$ and for any $\varepsilon > 0$, there exists a constant $c = c(\gamma,\varepsilon,\boldsymbol{\sigma},\boldsymbol{\lambda}) > 0$ such that for $N$ large enough,
        \begin{equation}\label{eq:IGFF.high.points.upper.bound}
            \mathbb{P}\Big(|\HH_N(\gamma)| \geq N^{2\mathcal{E}(\gamma) + \varepsilon}\Big) \leq N^{-c},
        \end{equation}
        and, when $\gamma\in [0,\gamma^{\star})$,
        \begin{equation}\label{eq:IGFF.high.points.lower.bound}
            \mathbb{P}\Big(|\HH_N(\gamma)| < N^{2\mathcal{E}(\gamma) - \varepsilon}\Big) \leq N^{-c}.
        \end{equation}
    \end{theorem}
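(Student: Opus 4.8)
The plan is the log-correlated-field version of the classical first- and second-moment analysis of branching random walk; in the paper itself this statement is not reproved but quoted from \cite{MR3541850}, where it follows by combining Lemma~3.4 (upper bound) and Lemma~3.5 (lower bound).

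\emph{Upper bound.} I would write $\psi_v=\sum_{i=1}^M\sigma_i\nabla\phi_v(\lambda_i)$ with the increments $\nabla\phi_v(\lambda_i)$ independent, centered Gaussian, of variance $(\lambda_i-\lambda_{i-1})\log N^2\,(1+o(1))$ uniformly for $v$ in a bulk region $A_{N,\rho}$ (points far from $\partial V_N$) --- the covariance estimates of Appendix~\ref{sec:covariance.estimates}, and their counterparts in \cite{MR3541850}, supply this uniformity, while $V_N\setminus A_{N,\rho}$ is a negligible fraction of $V_N$ carrying too little variance to affect the exponent. A plain first moment bound is wasteful once $\mathcal{J}_{\sigma^2}$ fails to be concave, so I would first show, exactly as in the proof of the upper bound in Theorem~\ref{thm:IGFF.order1} (Lemmas~3.1 and 3.3 of \cite{MR3541850}), that with probability $1-N^{-c}$ the martingale $\psi_v(\lambda)$ stays below the optimal polygonal envelope $\lambda\mapsto\mathcal{J}_{\sigma^2/\bar{\sigma}}(\lambda)\log N^2$ at the effective scales $\lambda^1<\dots<\lambda^m$, simultaneously for all $v$. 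On that event I would run the first moment over configurations respecting the envelope: summing the Gaussian probabilities over the $N^{2\lambda^l}$ boxes $[v]_{\lambda^l}$ at scale $\lambda^l$ and optimizing over admissible profiles --- the optimizer being exactly the concavification $\hat{\mathcal{J}}_{\sigma^2}$, by the same convexity bookkeeping as in Lemmas~\ref{lem:tech.lemma.1}--\ref{lem:tech.lemma.2} --- gives $\mathbb{E}|\HH_N(\gamma)|\le N^{2\mathcal{E}(\gamma)+o(1)}$, and then Markov's inequality yields \eqref{eq:IGFF.high.points.upper.bound}.

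\emph{Lower bound.} For $\gamma<\gamma^\star$ I would introduce a restricted count $X_N$ of the $v\in A_{N,\rho}$ with $\psi_v\ge\gamma\log N^2$ whose martingale $\psi_v(\lambda)$ follows the optimal profile, up to a fixed additive slack, at every effective scale, tuning the parameters so that $\mathbb{E}[X_N]=N^{2\mathcal{E}(\gamma)-o(1)}$. Then I would expand $\mathbb{E}[X_N^2]=\sum_{v,v'}\mathbb{P}(v,v'\text{ both counted})$ and split the pairs by their branching scale $\lambda^l$, the largest effective scale with $[v]_{\lambda^l}\cap[v']_{\lambda^l}\ne\emptyset$: above $\lambda^l$ the increments of $v$ and $v'$ are essentially independent and below it essentially identical, so the overlap estimates of Corollary~\ref{cor:covariance.estimates.3} let me bound $\mathbb{P}(v,v'\text{ both counted})$ by $\mathbb{P}(v\text{ counted})\cdot\mathbb{P}(v'\text{ counted}\mid\text{history of }v\text{ below }\lambda^l)$ up to constants. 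Summing over $l$ and over the number of pairs branching at scale $\lambda^l$ (of order $N^{4-2\lambda^l}$) should give $\mathbb{E}[X_N^2]\le(1+o(1))(\mathbb{E}[X_N])^2$, so Paley--Zygmund produces $X_N\ge N^{2\mathcal{E}(\gamma)-\varepsilon}$ with probability $1-o(1)$; a separate concentration step over the independent Gaussian increments (bounded differences, or a second moment restricted to well-separated subregions) should improve this to the rate $1-N^{-c}$ of \eqref{eq:IGFF.high.points.lower.bound}.

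\emph{Main obstacle.} The hard part is that the branching structure is only approximate: $\phi_v(\lambda)$ and $\phi_{v'}(\lambda)$ are not perfectly correlated below their branching scale, and the variance of $\psi_v(\lambda)$ is not exactly $\mathcal{J}_{\sigma^2}(\lambda)\log N^2$ near the boundary. Every heuristic above --- ``increments above the branching scale are independent'', ``histories below it coincide'', ``the variance is linear in the scale'' --- has to be upgraded to a quantitative estimate whose error is summable over the $O(N^4)$ pairs and over the growing number of scales, and $V_N$ must be split into a bulk where this works and a boundary layer that can be discarded. These are exactly the covariance (Green-function) estimates assembled for the purpose; granting them, the Gaussian tail bounds, ballot-type counting, profile optimization and the Paley--Zygmund step are routine.
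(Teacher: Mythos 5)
You correctly observe that the present paper does not reprove this theorem: Section~\ref{sec:previous.results} simply recalls it from \cite{MR3541850} and points to Lemmas~3.4 and~3.5 there for the quantitative bounds \eqref{eq:IGFF.high.points.upper.bound}--\eqref{eq:IGFF.high.points.lower.bound}. There is therefore no argument in this paper to compare your proposal against, but your sketch is a sound high-level reconstruction of the first/second moment method with barriers that underlies the cited result and that the present paper repeatedly relies upon (e.g.\ in Lemma~\ref{lem:restricted.free.energy.tech.lemma}, which reruns the very Lemmas~3.1 and~3.4 of \cite{MR3541850} on a restricted index set): restricted first moment along the concavified envelope $\hat{\mathcal{J}}_{\sigma^2}$ at the effective scales $\lambda^j$ for the upper bound, restricted second moment with pairs sorted by branching scale plus Paley--Zygmund for the lower bound, both made quantitative by Green-function variance/covariance estimates and a bulk/boundary split via the representatives $R_\lambda$. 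One small attribution slip: Corollary~\ref{cor:covariance.estimates.3} and Lemmas~\ref{lem:tech.lemma.1}--\ref{lem:tech.lemma.2} are tools assembled in the present paper for the free-energy and two-overlap analysis, not the counting lemmas of \cite{MR3541850}; the high-point count there uses its own variance/covariance lemmas (the ones referenced here as Lemma~A.2, A.5, A.6 of \cite{MR3541850}), though all of them flow from the same discrete Green-function asymptotics. With that caveat, the mathematical skeleton you describe is the right one.
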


    \begin{remark}
        The case $\gamma = 0$ is not explicitly covered in \cite{MR3541850}.
        In that case, \eqref{eq:IGFF.high.points.upper.bound} is trivial (the probability is $0$) and \eqref{eq:IGFF.high.points.lower.bound} is a simple and direct application of the Paley-Zygmund inequality.
    \end{remark}

\section{New results}\label{sec:new.results}

    The first main result of this article concerns the {\it free energy} of the $\para$-GFF, which is defined by
    \vspace{-1mm}
    \begin{equation}\label{eq:IGFF.free.energy}
        f_N^{\psi}(\beta) \circeq \frac{1}{\log N^2} \log Z_N^{\psi}(\beta), \quad \beta > 0,
    \end{equation}
    where $Z_N^{\psi}(\beta) \circeq \sum_{v\in V_N} e^{\beta \psi_v}$.
    The $L^1$-limit of the free energy will be central to obtain the limiting two-overlap distribution of the $\para$-GFF and the extended Ghirlanda-Guerra identities.
    This limit is better expressed in terms of the limiting free energy of the $\text{REM}(\sigma)$ model consisting of $N^2$ i.i.d. Gaussian variables of variance $\sigma^2 \log N$.
    From Theorem 8.1 in \cite{MR1890289},
    \begin{align}\label{eq:REM.free.energy}
        f^{\text{REM}(\sigma)}(\beta)
        &\circeq \lim_{N\rightarrow \infty} \frac{\log Z_N^{\text{REM}(\sigma)}(\beta)}{\log N^2} \notag \\
        &\stackrel{\text{a.s.}}{=}
        \left\{\hspace{-1mm}
        \begin{array}{ll}
            2 (\beta / \beta_c(\sigma)), &\mbox{if } \beta >  \beta_c(\sigma), \\
            1 + (\beta / \beta_c(\sigma))^2, &\mbox{if } \beta \leq \beta_c(\sigma),
        \end{array}
        \right.
    \end{align}
    for all $\beta > 0$, where $\beta_c(\sigma) \circeq 2 / \sigma$.

    \begin{theorem}[Limit of the free energy on $V_N$]\label{thm:IGFF.free.energy}
        Let $\{\psi_v\}_{v\in V_N}$ be the $(\boldsymbol{\sigma},\boldsymbol{\lambda})$-GFF on $V_N$ of Definition \ref{def:IGFF}, then
        \begin{align}\label{eq:thm:IGFF.free.energy}
            \lim_{N\rightarrow\infty} f_N^{\psi}(\beta)
            &= \max_{\gamma\in [0,\gamma^{\star}]} (\beta \gamma + \mathcal{E}(\gamma)) \notag \\
            &= \sum_{j=1}^m f^{\text{REM}(\bar{\sigma}_j)}(\beta) \nabla \lambda^j \circeq f^{\psi}(\beta),
        \end{align}
        where the limit holds in probability and in $L^p, ~1 \leq p < \infty$.
    \end{theorem}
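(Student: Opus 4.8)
The plan is to prove the two assertions separately: first the limiting identity $f_N^\psi(\beta)\to\max_{\gamma\in[0,\gamma^\star]}(\beta\gamma+\mathcal{E}(\gamma))$ (in probability, then in $L^p$), and then the deterministic identity $\max_{\gamma\in[0,\gamma^\star]}(\beta\gamma+\mathcal{E}(\gamma))=\sum_{j=1}^m f^{\text{REM}(\bar\sigma_j)}(\beta)\,\nabla\lambda^j$. The convergence in probability is a Laplace-type estimate driven entirely by Theorems \ref{thm:IGFF.order1} and \ref{thm:IGFF.high.points}, since $Z_N^\psi(\beta)=\sum_{v\in V_N}e^{\beta\psi_v}$ is controlled by the counts $|\HH_N(\gamma)|\approx N^{2\mathcal{E}(\gamma)}$. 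For the lower bound, fix $\gamma\in[0,\gamma^\star)$ and $\varepsilon>0$; on the event $\{|\HH_N(\gamma)|\ge N^{2\mathcal{E}(\gamma)-\varepsilon}\}$, which holds with probability $\ge1-N^{-c}$ by \eqref{eq:IGFF.high.points.lower.bound}, we get $Z_N^\psi(\beta)\ge|\HH_N(\gamma)|\,e^{\beta\gamma\log N^2}\ge N^{2(\beta\gamma+\mathcal{E}(\gamma))-\varepsilon}$, hence $f_N^\psi(\beta)\ge\beta\gamma+\mathcal{E}(\gamma)-\varepsilon/2$ with high probability. Letting $N\to\infty$, then $\varepsilon\to0$, then taking the supremum over $\gamma\in[0,\gamma^\star)$, and using continuity of $\gamma\mapsto\beta\gamma+\mathcal{E}(\gamma)$ on $[0,\gamma^\star]$ (Lemma \ref{lem:tech.lemma.1}), yields $\liminf_N f_N^\psi(\beta)\ge\max_{\gamma\in[0,\gamma^\star]}(\beta\gamma+\mathcal{E}(\gamma))$ in probability.

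For the upper bound, fix $\varepsilon>0$ and a grid $0=\gamma_0<\gamma_1<\dots<\gamma_K=\gamma^\star$ of mesh $\le\varepsilon$. On the intersection of $\{\max_v\psi_v\le(1+\varepsilon)\gamma^\star\log N^2\}$ (probability $\ge1-N^{-c}$ by \eqref{eq:IGFF.order1.upper.bound}) with $\{|\HH_N(\gamma_k)|\le N^{2\mathcal{E}(\gamma_k)+\varepsilon}\text{ for all }0\le k\le K\}$ (probability $\ge1-(K+1)N^{-c}$ by \eqref{eq:IGFF.high.points.upper.bound}), split $Z_N^\psi(\beta)$ into the contribution $\le|V_N|=N^{2+o(1)}$ of sites with $\psi_v<0$, the contribution of sites with $\psi_v\in[\gamma_{k-1}\log N^2,\gamma_k\log N^2)$ for $1\le k\le K$, and the contribution of sites with $\psi_v\ge\gamma^\star\log N^2$ (all of which satisfy $\psi_v\le(1+\varepsilon)\gamma^\star\log N^2$ on this event):
\[
Z_N^\psi(\beta)\le |V_N| + \sum_{k=1}^{K}e^{\beta\gamma_k\log N^2}|\HH_N(\gamma_{k-1})| + e^{\beta(1+\varepsilon)\gamma^\star\log N^2}|\HH_N(\gamma^\star)|.
\]
Since $\mathcal{E}(\gamma^\star)=0$ (a direct consequence of the definitions), bounding the counts and applying $\tfrac{1}{\log N^2}\log(\cdot)$, then letting $N\to\infty$ and $\varepsilon\to0$ (the grid maximum converges to $\max_{\gamma\in[0,\gamma^\star]}(\beta\gamma+\mathcal{E}(\gamma))$ by continuity, while the first and last terms contribute at most $\mathcal{E}(0)=1$ and $\beta\gamma^\star=\beta\gamma^\star+\mathcal{E}(\gamma^\star)$, both $\le\max_\gamma(\beta\gamma+\mathcal{E}(\gamma))$), gives $\limsup_N f_N^\psi(\beta)\le\max_{\gamma\in[0,\gamma^\star]}(\beta\gamma+\mathcal{E}(\gamma))$. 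Together with the lower bound, $f_N^\psi(\beta)\to\max_{\gamma\in[0,\gamma^\star]}(\beta\gamma+\mathcal{E}(\gamma))$ in probability.

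To upgrade to $L^p$ it suffices to check uniform integrability of $\{|f_N^\psi(\beta)|^p\}_N$, for which I would establish $\sup_N\mathbb{E}[|f_N^\psi(\beta)|^q]<\infty$ for every $q\ge1$. This follows from the deterministic sandwich $\beta\psi_{v^\ast}/\log N^2\le f_N^\psi(\beta)\le\log|V_N|/\log N^2+\beta(\max_v\psi_v)_+/\log N^2$ (valid for any fixed $v^\ast\in V_N$, the lower bound because $Z_N^\psi(\beta)\ge e^{\beta\psi_{v^\ast}}$), once one knows $\mathbb{E}[|\psi_{v^\ast}|^q]=O((\log N)^q)$ and $\mathbb{E}[(\max_v\psi_v)_+^q]=O((\log N)^q)$. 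The first bound is immediate from the elementary estimate $\mathrm{Var}(\psi_v)\le C\log N$, uniform in $v$ and $N$; for the second, combine the crude union bound $\mathbb{P}(\max_v\psi_v\ge t)\le|V_N|\,e^{-t^2/(2C\log N)}$ (valid for all $t\ge0$) with \eqref{eq:IGFF.order1.upper.bound}, and integrate $\mathbb{E}[(\max_v\psi_v)_+^q]=q\int_0^\infty t^{q-1}\mathbb{P}(\max_v\psi_v>t)\,dt$. Uniform integrability together with the convergence in probability then delivers convergence in $L^p$ for every $1\le p<\infty$.

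Finally, the identity $\max_{\gamma\in[0,\gamma^\star]}(\beta\gamma+\mathcal{E}(\gamma))=\sum_{j=1}^m f^{\text{REM}(\bar\sigma_j)}(\beta)\,\nabla\lambda^j$ is a purely deterministic optimization. Since $\mathcal{E}$ is continuous, piecewise quadratic, concave on each $(\gamma^{l-1},\gamma^l]$ and $C^1$ across the critical levels $\gamma^l$ (Lemma \ref{lem:tech.lemma.1}), one locates the maximizer $\gamma_\beta$ by solving $\beta+\mathcal{E}'(\gamma)=0$ (with $\gamma_\beta=\gamma^\star$ once $\beta\ge2/\bar\sigma_m$), substitutes it into \eqref{eq:entropy}, and reorganizes the result using the definition \eqref{eq:critic.levels} of the $\gamma^l$ together with the relation $\hat{\mathcal{J}}_{\sigma^2}=\mathcal{J}_{\bar\sigma^2}$; the expression then separates scale by scale into $\sum_j f^{\text{REM}(\bar\sigma_j)}(\beta)\,\nabla\lambda^j$, with the $j$-th term in high- or low-temperature form according to whether $\beta>2/\bar\sigma_j$ or $\beta\le2/\bar\sigma_j$ — this is the content of Lemma \ref{lem:tech.lemma.2}. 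The probabilistic steps are light once Theorems \ref{thm:IGFF.order1} and \ref{thm:IGFF.high.points} are available; the delicate points I expect are making the Laplace estimate tight at and just above $\gamma^\star$ (which rests on continuity of $\mathcal{E}$ and on $\mathcal{E}(\gamma^\star)=0$) and securing the uniform moment bounds needed for the $L^p$ upgrade, while the most computation-heavy step is the deterministic optimization of Lemma \ref{lem:tech.lemma.2}.
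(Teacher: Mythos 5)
Your proposal is correct and takes essentially the same route as the paper: both derive the convergence in probability from the high-points and max asymptotics via a grid / Laplace-type estimate on $Z_N^\psi(\beta)$, both establish uniform integrability from a pointwise sandwich of $f_N^\psi(\beta)$ together with Gaussian tail bounds on the maximum, and both delegate the explicit form of the maximum to Lemma \ref{lem:tech.lemma.2}. The only differences are cosmetic — the paper works with absolute-value level sets $\HH_N^{\text{abs}}(\gamma)$ and an Abel summation so the negative-$\psi_v$ and near-$\gamma^\star$ contributions are absorbed into a single telescoping sum, whereas you split them off explicitly; and for uniform integrability the paper sandwiches $f_N^\psi(\beta)$ tightly between $\beta\xi_N$ and $\beta\xi_N+2$ with $\xi_N=\max_v\psi_v/\log N^2$, while your looser two-sided bound (single vertex below, positive part of the max above) still delivers the needed moment bounds.
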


    \begin{remark}
        This result was first proved for the GREM by \cite{MR883541}, although with vastly different notations.
        The expression for the GREM can also be recovered from Theorem 1.6 in \cite{MR2070334}.
    \end{remark}

    In \cite{MR3354619}, the convergence in probability (and in $L^1$) of the free energy of the scale-inhomogeneous GFF with two variance parameters was only proved on a subset of $V_N$ that excludes the points that are too close to the boundary $\partial V_N$. This was done because the decay of variance near $\partial V_N$ makes the asymptotics of the log-number of points in sets of the form
    \begin{equation}\label{eq:gamma.high.set}
            \HH_{A_N}(\gamma) \circeq \{v\in A_N : \psi_v \geq \gamma \log N^2\}
    \end{equation}
    harder to determine when $A_N$ includes points close to $\partial V_N$. In contrast, let
    \begin{equation}\label{eq:A.N.rho.2}
        A_{N,\rho} \circeq \Big\{v\in V_N : \min_{z\in \Z^2 \backslash V_N} \|v - z\|_2 \geq N^{1 - \rho}\Big\}, \quad \rho\in (0,1].
    \end{equation}
    Then, it turns out that for $\rho > 0$ small enough, the variance of the increments of $\psi$ on $A_{N,\rho}$ is within a uniform bound from the analogue quantity in the context of the GREM (except when the scale $0$ is involved), see Lemma \ref{lem:IGFF.variance.estimates}.

    Theorem \ref{thm:IGFF.free.energy} not only generalizes Theorem 2.1 in \cite{MR3354619}, but is also stronger because it tells us that including points arbitrarily close to $\partial V_N$ in the free energy has no impact on its limit, as long as we include the center of $V_N$. We are able to prove Theorem \ref{thm:IGFF.free.energy} here because the asymptotics of $|\mathcal{H}_N(\gamma)|$ were proved on $V_N$ in \cite{MR3541850}.

    Even though Theorem \ref{thm:IGFF.free.energy} is interesting on its own, we will instead use the version on $A_{N,\rho}$ later in this article.

    \begin{theorem}[Limit of the free energy on $A_{N,\rho}$]\label{thm:IGFF.free.energy.A.N.p}
        Let $\{\psi_v\}_{v\in V_N}$ be the $(\boldsymbol{\sigma},\boldsymbol{\lambda})$-GFF on $V_N$ of Definition \ref{def:IGFF} and define
        \begin{equation}\label{eq:IGFF.free.energy.A.N.p}
            f_{N,\rho}^{\psi}(\beta) \circeq \frac{1}{\log N^2} \log Z_{N,\rho}^{\psi}(\beta), \quad \beta > 0,
        \end{equation}
        where $Z_{N,\rho}^{\psi}(\beta) \circeq \sum_{v\in A_{N,\rho}} e^{\beta \psi_v}$. Then, for all $\rho\in (0,1]$,
        \begin{equation}\label{eq:thm:IGFF.free.energy.A.N.p}
            \lim_{N\rightarrow\infty} f_{N,\rho}^{\psi}(\beta) = f^{\psi}(\beta),
        \end{equation}
        where the limit holds in probability and in $L^p, ~1 \leq p < \infty$.
    \end{theorem}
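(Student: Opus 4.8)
The plan is to sandwich $f_{N,\rho}^{\psi}(\beta)$ between a cheap upper bound inherited from Theorem~\ref{thm:IGFF.free.energy} and a matching lower bound produced by counting high points in the bulk of $V_N$, which is contained in $A_{N,\rho}$ for every $\rho$. For the upper bound, $A_{N,\rho}\subseteq V_N$ gives $Z_{N,\rho}^{\psi}(\beta)\leq Z_N^{\psi}(\beta)$, hence $f_{N,\rho}^{\psi}(\beta)\leq f_N^{\psi}(\beta)$ pointwise; by Theorem~\ref{thm:IGFF.free.energy}, for every $\varepsilon>0$ we then have $\PP(f_{N,\rho}^{\psi}(\beta)\geq f^{\psi}(\beta)+\varepsilon)\leq\PP(f_N^{\psi}(\beta)\geq f^{\psi}(\beta)+\varepsilon)\to 0$ as $N\to\infty$, uniformly in $\rho$.

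For the lower bound, fix $\varepsilon>0$. Since $\gamma\mapsto\beta\gamma+\mathcal{E}(\gamma)$ is continuous on $[0,\gamma^{\star}]$, one can pick $\gamma_0\in[0,\gamma^{\star})$ with $\beta\gamma_0+\mathcal{E}(\gamma_0)>f^{\psi}(\beta)-\varepsilon/2$ (the maximiser itself if it lies in $[0,\gamma^{\star})$, and otherwise a point of $[0,\gamma^{\star})$ close enough to $\gamma^{\star}$). Let $Q_N$ be the centred sub-box of $V_N$ of side $N/2$; for $N$ large, every $v\in Q_N$ is at distance $\geq N/4\geq N^{1-\rho}$ from $\Z^2\setminus V_N$, so $Q_N\subseteq A_{N,\rho}$ for all $\rho\in(0,1]$ simultaneously. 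Discarding all but the $\gamma_0$-high points in $Q_N$,
\[
Z_{N,\rho}^{\psi}(\beta)\ \geq\ |\HH_{Q_N}(\gamma_0)|\,e^{\beta\gamma_0\log N^2},
\qquad\text{i.e.}\qquad
f_{N,\rho}^{\psi}(\beta)\ \geq\ \frac{\log|\HH_{Q_N}(\gamma_0)|}{\log N^2}+\beta\gamma_0 .
\]
It then suffices to establish the bulk analogue of \eqref{eq:IGFF.high.points.lower.bound}, namely $\PP(|\HH_{Q_N}(\gamma_0)|<N^{2\mathcal{E}(\gamma_0)-\varepsilon/2})\to 0$. This is the only non-routine point. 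It is obtained by re-running the (truncated) second-moment / Paley--Zygmund argument behind Lemma~3.5 of \cite{MR3541850} with the sum of indicators restricted to $Q_N$: the first moment is already of the correct order $N^{2\mathcal{E}(\gamma_0)}$ from the $\asymp N^2$ points of the bulk, and on $A_{N,\rho}$ (hence on $Q_N$) the increment variances of $\psi$ agree with the corresponding GREM values up to a uniform additive constant by Lemma~\ref{lem:IGFF.variance.estimates}, so the first- and second-moment exponents are unchanged and the conclusion of Lemma~3.5 holds verbatim with $\HH_N$ replaced by $\HH_{Q_N}$. Feeding this back, on an event of probability tending to $1$ and uniformly in $\rho$, $f_{N,\rho}^{\psi}(\beta)\geq\mathcal{E}(\gamma_0)-\varepsilon/2+\beta\gamma_0>f^{\psi}(\beta)-\varepsilon$; combined with the upper bound this proves $f_{N,\rho}^{\psi}(\beta)\to f^{\psi}(\beta)$ in probability for every $\rho\in(0,1]$.

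For the $L^p$ statement it remains to add uniform integrability of $\{|f_{N,\rho}^{\psi}(\beta)|^p\}_N$. The positive part is dominated by $f_N^{\psi}(\beta)$, whose $p$-th powers are uniformly integrable by Theorem~\ref{thm:IGFF.free.energy}; for the negative part, Jensen's inequality applied to the average $|A_{N,\rho}|^{-1}\sum_{v\in A_{N,\rho}}e^{\beta\psi_v}$ gives $f_{N,\rho}^{\psi}(\beta)\geq \log|A_{N,\rho}|/\log N^2+\beta\,|A_{N,\rho}|^{-1}(\log N^2)^{-1}\sum_{v\in A_{N,\rho}}\psi_v$, and since $\cov{\psi_v}{\psi_{v'}}\leq\max_w\var{}{\psi_w}=O(\log N)$ the last term is a centred Gaussian of variance $O((\log N)^{-1})$, while $\log|A_{N,\rho}|/\log N^2\to 1$; hence $(f_{N,\rho}^{\psi}(\beta))^-$ has moments of every order bounded uniformly in $N$ and $\rho$. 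Uniform integrability follows, and with convergence in probability it yields convergence in $L^p$ for all $1\leq p<\infty$. Thus the entire difficulty is confined to the localisation of the high-point lower bound of \cite{MR3541850} to $A_{N,\rho}$ — equivalently, to checking that no non-negligible fraction of the ``cheap'' $\gamma_0$-high points lives in the thin layer $V_N\setminus A_{N,\rho}$ near $\partial V_N$ — which is exactly what the uniform variance control of Lemma~\ref{lem:IGFF.variance.estimates} provides; everything else is immediate from Theorem~\ref{thm:IGFF.free.energy} and standard estimates.
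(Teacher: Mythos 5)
Your proof is correct and follows essentially the same route as the paper: upper bound by monotonicity from $A_{N,\rho}\subseteq V_N$ and Theorem~\ref{thm:IGFF.free.energy}, lower bound by restricting to a centred sub-box deep inside the bulk (the paper uses $V_N^{\delta}$; your $Q_N$ is $V_N^{1/4}$) and re-running the $\gamma$-high-point count there. Two small remarks. First, for the variance control on $Q_N$ you cite Lemma~\ref{lem:IGFF.variance.estimates}, but that lemma explicitly requires $\alpha\neq 0$ and $\rho\leq\alpha$, so it does not cover the first-scale increment $\psi_v(0,\lambda_1)$; the correct reference for the centred sub-box is Lemma~A.2 of \cite{MR3541850} (which the paper's proof invokes, and which controls all increments on $V_N^{\delta}$ with $\delta$ a fixed fraction of $1$). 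The intended claim is nevertheless true and your argument is easily repaired by pointing to the right lemma. Second, you get the lower bound by discarding all but the $\gamma_0$-high points for a single well-chosen $\gamma_0$, instead of re-running the $K$-partition argument of Lemma~\ref{lem:IGFF.free.energy.prob}; this is a legitimate simplification for a one-sided bound. Finally, your Jensen lower bound $f_{N,\rho}^{\psi}(\beta)\geq\log|A_{N,\rho}|/\log N^2+\beta\,|A_{N,\rho}|^{-1}(\log N^2)^{-1}\sum_{v\in A_{N,\rho}}\psi_v$ is a clean way to get uniform integrability of the negative part; the paper's Lemma~\ref{lem:IGFF.free.energy.uniform.integrability} instead bounds $f_{N,\rho}^{\psi}(\beta)$ from below by $\beta\max_{v\in A_{N,\rho}}\psi_v/\log N^2$ and uses Gaussian tails of the maximum. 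Either route works; yours is arguably more elementary for the $L^p$ upgrade.
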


    For the second main result of this article, consider the normalized covariances or {\it overlaps} of the $\para$-GFF :
    \begin{equation}\label{eq:overlap}
        q^N(v,v') \circeq \frac{\esp{}{\psi_v \psi_{v'}}}{\mathcal{J}_{\sigma^2}(1) \log N + C_0}, \quad v,v'\in V_N,
    \end{equation}
    where $C_0$ is the constant introduced in Lemma \ref{lem:IGFF.variance.uniform.bound}.
    The overlap is the covariance divided by the uniform upper bound on the variance.
    From the Cauchy-Schwarz inequality, it is clear that $|q^N(v,v')| \leq 1$, for any $v,v'\in V_N$.

    We are concerned with the limiting distribution of the overlaps, when the variables are sampled from the \textit{Gibbs measure}
    \begin{equation}\label{eq:IGFF.gibbs.measure}
        \mathcal{G}_{\beta,N}(\{v\}) \circeq \frac{e^{\beta \psi_v}}{Z_N^{\psi}(\beta)}, \quad v\in V_N.
    \end{equation}
    Since the Gibbs measure samples extreme values of the field $\psi$, the overlaps under the Gibbs measure can be interpreted as measures of relative distance between the extremes.
    In spin-glass theory, the relevant object to classify the extreme value statistics of strongly correlated variables is the {\it two-overlap distribution}
    \begin{equation}\label{eq:two.overlap.distribution}
        \mathbb{E}\mathcal{G}_{\beta,N}^{\times 2} \big[\bb{1}_{\{q^N(v,v') \leq r\}}\big], \quad r\in [0,1].
    \end{equation}
    Since the overlaps are normalized, their asymptotics will also be normalized to lie in $[0,1]$.
    Define
    \vspace{-1mm}
    \begin{equation}\label{eq:normalized.speed.function}
        \bar{\mathcal{J}}_{\sigma^2}(\cdot) \circeq \frac{\mathcal{J}_{\sigma^2}(\cdot)}{\mathcal{J}_{\sigma^2}(1)}.
    \end{equation}
    This is motivated by the fact that if
    \begin{equation}
        b_N(v,v') \circeq \max\{\lambda\in [0,1] : [v]_{\lambda} \cap [v']_{\lambda} \neq \emptyset\}
    \end{equation}
    denotes the {\it branching scale between $v$ and $v'$} (the analogue of the normalized branching time for BRWs), then Corollary \ref{cor:covariance.estimates.3} in Appendix \ref{sec:covariance.estimates} shows (in particular) that for all $\rho\in (0,1]$ and $N$ large enough,
    \begin{equation}
        \max_{v,v'\in A_{N,\rho}} \hspace{-1mm}\big|q^N(v,v') - \bar{\mathcal{J}}_{\sigma^2}(b_N(v,v'))\big| \leq \frac{C_7}{\sqrt{\log N}} + C_8 \, \rho.
    \end{equation}
    For any {\it inverse temperature} $\beta > 0$, denote
    \begin{equation}\label{eq:l.beta}
        l_{\beta} \circeq
        \left\{\hspace{-1mm}
        \begin{array}{ll}
            \min\{l\in \{1,...,m\} : \beta \leq \beta_c(\bar{\sigma}_l) \circeq 2 / \bar{\sigma}_l\}, &\mbox{if } \beta \leq 2 / \bar{\sigma}_m, \\
            m+1, &\mbox{otherwise}.
        \end{array}
        \right.
    \end{equation}
    \noindent
    This is the smallest index $l$ for which a {\it critical inverse temperature} $\beta_c(\bar{\sigma}_l)$ is larger or equal to $\beta$.

    \begin{remark}
        In \cite{MR2070334}, $l(\beta)$ is defined such that $l(\beta) = l_{\beta} - 1$.
        Our choice is more natural with the notation we used in \eqref{eq:critic.levels} and \eqref{eq:entropy}, see the proof of Lemma \ref{lem:tech.lemma.2}.
    \end{remark}

    \begin{remark}
        In order to state the results on the limiting two-overlap distribution and on the Ghirlanda-Guerra identities, we need to restrict $\beta$ from taking any critical value $\beta_c(\bar{\sigma}_j) \circeq 2 / \bar{\sigma}_j$.
        The restriction will be used in exactly two places throughout the article :
        \begin{itemize}
            \item[$\bullet$] In the proof of Theorem \ref{thm:two.overlap.distribution.limit} (Limiting two-overlap distribution);
            \item[$\bullet$] In the proof of Lemma \ref{lem:IGFF.ghirlanda.guerra.restricted.2}. The result of Lemma \ref{lem:IGFF.ghirlanda.guerra.restricted.2} is then used to prove Theorem \ref{thm:IGFF.ghirlanda.guerra} (Approximate extended Ghirlanda-Guerra identities).
        \end{itemize}
        Let
        \begin{equation}
            \mathcal{B} \circeq (0,\infty) \backslash \cup_{1 \leq j \leq m} \{2 / \bar{\sigma}_j\}.
        \end{equation}
        Theorem \ref{thm:two.overlap.distribution.limit} and Theorem \ref{thm:IGFF.ghirlanda.guerra} (and their consequences) will hold as long as $\beta \in \mathcal{B}$.
        In the proof of both theorems, the restriction $\beta\in \mathcal{B}$ is needed to obtain the mean convergence of the derivative of a perturbed version of the free energy, see \eqref{eq:convergence.free.energy.derivative}. Ultimately, the restriction is related to Lemma \ref{lem:IGFF.modified.free.energy.derivative}, where it is shown that the limiting free energy is differentiable with respect to a perturbation inside $[\lambda^{j-1},\lambda^j]$ if and only if $\beta \neq 2 /\bar{\sigma}_j$.
    \end{remark}

    We are now ready to state the second main result of this article.

    \begin{theorem}[Limiting two-overlap distribution]\label{thm:two.overlap.distribution.limit}
        Let $\{\psi_v\}_{v\in V_N}$ be the $(\boldsymbol{\sigma},\boldsymbol{\lambda})$-GFF on $V_N$ of Definition \ref{def:IGFF}.
        Then, for $\beta\in \mathcal{B}$,
        \begin{equation}
            \lim_{N\rightarrow \infty} \mathbb{E}\mathcal{G}_{\beta,N}^{\times 2} \big[\bb{1}_{\{q^N(v,v') \leq r\}}\big] =
            \left\{\hspace{-1mm}
            \begin{array}{ll}
                0, &\mbox{if } r < 0, \\
                \beta_c(\bar{\sigma}_j) / \beta, &\mbox{if } r\in [x^{j-1},x^j), ~j \leq l_{\beta}-1, \\
                1, &\mbox{if } r \geq x^{l_{\beta}-1},
            \end{array}
            \right.
        \end{equation}
        where $\beta_c(\bar{\sigma}_j) \circeq 2 / \bar{\sigma}_j$ and $x^j \circeq \bar{\mathcal{J}}_{\sigma^2}(\lambda^j)$.
    \end{theorem}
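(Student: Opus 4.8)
The plan is to adapt the perturbation scheme of \cite{MR2070335}: instead of estimating the two-overlap distribution head on, I would introduce a one-parameter perturbation of the field, differentiate the associated free energy at the parameter $u=0$, and identify this derivative in two ways --- probabilistically, via Gaussian integration by parts, where it equals a two-overlap functional of the kind we want; and deterministically, via convexity together with the explicit limiting free energy of Theorem~\ref{thm:IGFF.free.energy.A.N.p}, where it can be evaluated in closed form through the REM formula~\eqref{eq:REM.free.energy}.

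\emph{Step 1 (localization and reduction to the branching scale).} Comparing Theorem~\ref{thm:IGFF.free.energy} with Theorem~\ref{thm:IGFF.free.energy.A.N.p}, the free energies of $\psi$ on $V_N$ and on $A_{N,\rho}$ share the $L^1$-limit $f^{\psi}(\beta)$; hence $\EE\,\mathcal{G}_{\beta,N}(V_N\setminus A_{N,\rho})\to 0$ and, up to an error $o_N(1)$ uniform in $r$, both replicas in $\EE\,\mathcal{G}_{\beta,N}^{\times 2}[\bb{1}_{\{q^N(v,v')\le r\}}]$ may be restricted to $A_{N,\rho}$. On $A_{N,\rho}$, Corollary~\ref{cor:covariance.estimates.3} gives $\big|q^N(v,v')-\bar{\mathcal{J}}_{\sigma^2}(b_N(v,v'))\big|\le C_7/\sqrt{\log N}+C_8\rho$. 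Since $\bar{\mathcal{J}}_{\sigma^2}$ is increasing and the candidate limit is a right-continuous step function whose jumps occur only at the finitely many points $x^j=\bar{\mathcal{J}}_{\sigma^2}(\lambda^j)$, the theorem reduces to computing, for each effective scale $\lambda^j$,
\begin{equation*}
    p_j \;\coloneqq\; \lim_{\rho\to 0}\ \lim_{N\to\infty}\ \EE\,\mathcal{G}_{\beta,N}^{\times 2}\big[\bb{1}_{\{b_N(v,v')\le \lambda^j\}}\big],
\end{equation*}
together with the statement that the Gibbs-weighted law of $b_N(v,v')$ puts no asymptotic mass strictly between two consecutive effective scales.

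\emph{Step 2 (perturbed free energy and Gaussian integration by parts).} Fix $j\in\{1,\dots,m\}$ and, for $|u|$ small, let
\begin{equation*}
    \psi^u_v \;\coloneqq\; \psi_v(0,\lambda^{j-1}) + \sqrt{1+u}\,\psi_v(\lambda^{j-1},\lambda^j) + \psi_v(\lambda^j,1),
\end{equation*}
the three summands being the independent increments of the martingale decomposition of Definition~\ref{def:IGFF}; equivalently, $\psi^u$ is the $(\boldsymbol\sigma_u,\boldsymbol\lambda)$-GFF whose variance on $(\lambda^{j-1},\lambda^j]$ is multiplied by $\sqrt{1+u}$. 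Let $Z_{N,\rho}^{\psi^u}(\beta)$, $f_{N,\rho}^{\psi^u}(\beta)$ and $\mathcal{G}_{\beta,N}^{u}$ be the associated partition function on $A_{N,\rho}$, free energy and Gibbs measure. The covariance of $\psi^u$ is affine in $u$ with $\partial_u$-derivative $\cov{\psi_v(\lambda^{j-1},\lambda^j)}{\psi_{v'}(\lambda^{j-1},\lambda^j)}$, so the standard interpolation identity (Gaussian integration by parts) gives
\begin{equation}\label{eq:GIBP.identity}
\begin{aligned}
    \partial_u\,\EE\, f_{N,\rho}^{\psi^u}(\beta)
    = \frac{\beta^2}{2\log N^2}\Big(&\EE\,\mathcal{G}_{\beta,N}^{u}\big[\var{}{\psi_v(\lambda^{j-1},\lambda^j)}\big] \\
    &{}- \EE\,\mathcal{G}_{\beta,N}^{u,\times 2}\big[\cov{\psi_v(\lambda^{j-1},\lambda^j)}{\psi_{v'}(\lambda^{j-1},\lambda^j)}\big]\Big).
\end{aligned}
\end{equation}
By the variance and overlap estimates of Appendix~\ref{sec:covariance.estimates}, uniformly on $A_{N,\rho}$,
\begin{align*}
    \var{}{\psi_v(\lambda^{j-1},\lambda^j)} &= \mathcal{J}_{\sigma^2}(\lambda^{j-1},\lambda^j)\log N + O(1), \\
    \cov{\psi_v(\lambda^{j-1},\lambda^j)}{\psi_{v'}(\lambda^{j-1},\lambda^j)} &= \mathcal{J}_{\sigma^2}\big(\lambda^{j-1}\wedge b_N,\ \lambda^j\wedge b_N\big)\log N + O\big(\rho\log N + \sqrt{\log N}\,\big),
\end{align*}
so evaluating \eqref{eq:GIBP.identity} at $u=0$ and letting $N\to\infty$ then $\rho\to 0$ yields
\begin{equation*}
    \lim_{\rho\to 0}\lim_{N\to\infty}\partial_u\,\EE\, f_{N,\rho}^{\psi^u}(\beta)\big|_{u=0}
    \;=\; \frac{\beta^2}{4}\,\mathcal{J}_{\sigma^2}(\lambda^{j-1},\lambda^j)\,p_{j-1} \;+\; \mathrm{Rem}_j,
\end{equation*}
where $\mathrm{Rem}_j$ is a remainder supported (in expectation) on $\{\lambda^{j-1}<b_N<\lambda^j\}$; here we used that the covariance of $\psi_v(\lambda^{j-1},\lambda^j)$ vanishes on $\{b_N\le\lambda^{j-1}\}$ and is maximal on $\{b_N\ge\lambda^j\}$.

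\emph{Step 3 (evaluating the derivative and solving).} The map $u\mapsto\EE\,f_{N,\rho}^{\psi^u}(\beta)$ is convex (Lemma~\ref{lem:IGFF.modified.free.energy.convexity}) and, by Theorem~\ref{thm:IGFF.free.energy.A.N.p} applied to the perturbed parameters, converges pointwise to $f^{\psi^u}(\beta)$, which for $\beta\in\mathcal{B}$ is differentiable at $u=0$ (Lemma~\ref{lem:IGFF.modified.free.energy.derivative}); a convex-functions-converging-pointwise argument then forces $\partial_u\,\EE\,f_{N,\rho}^{\psi^u}(\beta)|_{u=0}\to\partial_u f^{\psi^u}(\beta)|_{u=0}$. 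For $|u|$ small the concavification $\hat{\mathcal{J}}_{\sigma_u^2}$ keeps the same effective scales, so $f^{\psi^u}(\beta)=\sum_{l\ne j}f^{\mathrm{REM}(\bar\sigma_l)}(\beta)\nabla\lambda^l + f^{\mathrm{REM}(\sqrt{1+u}\,\bar\sigma_j)}(\beta)\nabla\lambda^j$; differentiating, and using $\mathcal{J}_{\sigma^2}(\lambda^{j-1},\lambda^j)=\bar\sigma_j^2\,\nabla\lambda^j$ together with \eqref{eq:REM.free.energy}, gives
\begin{equation*}
    \partial_u f^{\psi^u}(\beta)\big|_{u=0} =
    \begin{cases}
        \tfrac12\,\beta\,\bar\sigma_j\,\nabla\lambda^j, & \text{if } \beta > 2/\bar\sigma_j \ \ (\text{i.e. } j\le l_\beta-1),\\[1mm]
        \tfrac14\,\beta^2\,\bar\sigma_j^2\,\nabla\lambda^j, & \text{if } \beta < 2/\bar\sigma_j,
    \end{cases}
\end{equation*}
the borderline $\beta=2/\bar\sigma_j$ being exactly the non-differentiable case ruled out by $\beta\in\mathcal{B}$. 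Combining with Step 2 --- and disposing of $\mathrm{Rem}_j$ by running the same perturbation on sub-intervals of $[\lambda^{j-1},\lambda^j]$, for which Lemma~\ref{lem:IGFF.modified.free.energy.derivative} again provides differentiability of $u\mapsto f^{\psi^u}(\beta)$ with a derivative forcing the Gibbs-weighted law of $b_N$ to assign no asymptotic mass strictly inside the effective interval --- yields $p_{j-1}=2/(\beta\bar\sigma_j)=\beta_c(\bar\sigma_j)/\beta$ when $j\le l_\beta-1$, and $p_{j-1}=1$ when $j\ge l_\beta$. Reindexing, $p_j=\beta_c(\bar\sigma_{j+1})/\beta$ for $j\le l_\beta-2$ and $p_j=1$ for $j\ge l_\beta-1$; these values, together with Corollary~\ref{cor:covariance.estimates.3} (which transfers $b_N(v,v')\le\lambda^j\Leftrightarrow\bar{\mathcal{J}}_{\sigma^2}(b_N(v,v'))\le x^j$ to the overlaps) and the absence of asymptotic mass strictly between effective scales, identify the weak limit of $r\mapsto\EE\,\mathcal{G}_{\beta,N}^{\times 2}[\bb{1}_{\{q^N(v,v')\le r\}}]$ as the right-continuous step function of the statement (the values $0$ for $r<0$ and $1$ for $r\ge x^{l_\beta-1}$ being immediate).

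\textbf{Main obstacle.} The crux is the interchange $\lim_N\partial_u=\partial_u\lim_N$ in Step 3, which rests on combining the convexity of Lemma~\ref{lem:IGFF.modified.free.energy.convexity} with the differentiability of the limiting free energy at $u=0$ --- the unique place where the hypothesis $\beta\in\mathcal{B}$ is used, through Lemma~\ref{lem:IGFF.modified.free.energy.derivative} --- and, entangled with it, the removal of the remainder $\mathrm{Rem}_j$, i.e. the proof that branching scales lying strictly between consecutive effective scales contribute nothing to the two-overlap distribution in the limit; this requires the uniform covariance control of Appendix~\ref{sec:covariance.estimates} on $A_{N,\rho}$ together with a careful analysis of how the concavification $\hat{\mathcal{J}}_{\sigma_u^2}$ responds to a localized perturbation. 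The Gaussian integration by parts of Step 2 and the bookkeeping with \eqref{eq:REM.free.energy} are routine by comparison.
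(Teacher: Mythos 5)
Your overall strategy --- a Bovier--Kurkova perturbation of the free energy, identified once via Gaussian integration by parts with an overlap functional, and once via convexity plus the explicit free energy formula --- is exactly the route taken in the paper. However, Step~1 contains a genuine gap. You argue that since $f_N^\psi(\beta)$ and $f_{N,\rho}^\psi(\beta)$ share the $L^1$-limit $f^\psi(\beta)$ ``hence $\EE\,\mathcal{G}_{\beta,N}(V_N\setminus A_{N,\rho})\to 0$.'' This implication does not hold: the free energy is a logarithm normalized by $\log N^2$, so even if $\sum_{v\in A_{N,\rho}^c}e^{\beta\psi_v}$ and $\sum_{v\in A_{N,\rho}}e^{\beta\psi_v}$ were of the same exponential order (giving $\mathcal{G}_{\beta,N}(A_{N,\rho}^c)$ of order one) the two free energies on $V_N$ and $A_{N,\rho}$ would still converge to the same constant. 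What is actually needed --- and what the paper proves in Lemma~\ref{lem:restricted.free.energy.tech.lemma} --- is that the free energy restricted to $A_{N,\rho}^c$ satisfies $\widetilde{f}_{N,\rho}^\psi(\beta)\le f^\psi(\beta)-\rho/2+o(1)$ with high probability. That strict gap of size $\rho/2$ is what makes $\mathcal{G}_{\beta,N}(A_{N,\rho}^c)\to 0$ (Lemma~\ref{lem:gibbs.measure.near.boundary}), and it requires a separate estimate on the number of high points of $\psi$ in $A_{N,\rho}^c$, obtained by rerunning the optimization problems for $\gamma^\star$ and $\mathcal{E}(\gamma)$ with the reduced number $O(N^{2(\lambda_i-\rho/2)})$ of representatives at each scale. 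You cannot bypass this with a pure free-energy comparison.

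Two smaller points worth flagging. First, your perturbation $\psi^u_v=\psi_v(0,\lambda^{j-1})+\sqrt{1+u}\,\psi_v(\lambda^{j-1},\lambda^j)+\psi_v(\lambda^j,1)$ is a nonlinear reparametrization of the paper's additive perturbation $\psi^u_v=u\,\phi_v(\alpha,\alpha')+\psi_v$; Lemma~\ref{lem:IGFF.modified.free.energy.convexity} proves convexity in the additive parameter via H\"older's inequality, and convexity is not automatically preserved under the change $u\mapsto\sqrt{1+u}$. You should either verify convexity directly in your parameter or revert to the additive form, for which the lemma applies verbatim. Second, your Steps~2--3 first perturb the whole effective interval $(\lambda^{j-1},\lambda^j]$ and then invoke sub-interval perturbations only to ``dispose of $\mathrm{Rem}_j$.'' As written this is circular: $\mathrm{Rem}_j$ encodes the Gibbs mass at branching scales strictly inside the effective interval, and showing it vanishes is part of what you are trying to prove. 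Once you make the sub-interval argument precise you obtain $\int_{(\bar{\mathcal{J}}_{\sigma^2}(\alpha),\bar{\mathcal{J}}_{\sigma^2}(\alpha')]}Q_\beta(r)\,dr=\bar{\mathcal{J}}_{\sigma^2}(\alpha,\alpha')\cdot\text{const}$ for all $\lambda^{j-1}<\alpha<\alpha'<\lambda^j$, which already determines $Q_\beta$ on the interval by right-continuity; at that point the full-interval computation becomes redundant. The cleaner ordering is the paper's: work with sub-intervals $(\alpha,\alpha')$ from the start and extract the value of $Q_\beta$ from the family of identities \eqref{eq:proof.two.overlap.distribution.limit.end}.
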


    \begin{remark}
        This is the same expression as in the context of the GREM. Compare this to Proposition 1.11 in \cite{MR2070334}.
        In the homogeneous case, the theorem was proved by \cite{MR3211001} for a certain class of non-hierarchical log-correlated Gaussian fields with no boundary effect, by \cite{MR3354619} for the GFF (trivial adjustments of their proof show the same result for the BRW), and by \cite{MR3539644} for the BRW (using an alternative method).
    \end{remark}

    \begin{remark}
        In the context of the GFF, we have to show that the Gibbs measure doesn't carry any weight outside $A_{N,\rho}$ in the limit.
        In \cite{MR3354619}, a crucial step was to use self-averaging and Slepian's lemma in order to compare the free energy outside $A_{N,\rho}$ with that of a REM.
        Here, we find an upper bound on the free energy outside $A_{N,\rho}$ through the optimization problems for the maximum and $\gamma$-high points, see the proof of Lemma \ref{lem:restricted.free.energy.tech.lemma}.
        This approach is much more efficient when there are several {\it effective scales} $\lambda^j$.
    \end{remark}

    \begin{remark}
        Theorem \ref{thm:two.overlap.distribution.limit} tells us that even though the overlap between the extremes can be (almost) anything between $0$ and $1$ for finite system sizes, this variability disappears in the limit. The extremes can only branch asymptotically at the {\it effective distances} $N^{1-d}$, $d\in \{0,\lambda^1,\lambda^2,...,\lambda^{l_{\beta}-1}\}$, where $\lambda^j$ is defined in \eqref{eq:effective.scales}.
        We see that the number of branching scales $\lambda^j$ for the extremes is finite in the limit and increases as the {\it inverse temperature} $\beta$ becomes larger than some of the {\it critical thresholds} $0 < \beta_c(\bar{\sigma}_1) < \beta_c(\bar{\sigma}_2) < ... < \beta_c(\bar{\sigma}_{l_{\beta}-1}) < \infty$.
        In comparison, for homogeneous models (like the GFF and the BRW), there is only one critical inverse temperature
        \begin{itemize}
            \item above which the extremes only branch at scale $0$ or $1$ in the limit, and
            \item under which the extremes only branch at scale $0$ in the limit (meaning that the extremes are all asymptotically uncorrelated).
        \end{itemize}
    \end{remark}

    The third and final main result of this article concerns the {\it Ghirlanda-Guerra identities}.
    These identities were introduced in \cite{MR1662161} and an extended version of the identities was proved for a general class of models, called the mixed $p$-spin, in \cite{MR2600075}.
    Before taking the limit, we have the following approximate version.

    \begin{theorem}[Approximate extended Ghirlanda-Guerra identities]\label{thm:IGFF.ghirlanda.guerra}
        Let $\beta \in \mathcal{B}$, and let $\alpha < \alpha'$ be any pair of scales such that
        \begin{equation}\label{eq:scales.are.points.of.continuity.TOD}
           \lambda^{j-1} < \alpha < \alpha' < \lambda^{j + (m-j)\bb{1}_{\{j = l_{\hspace{-0.3mm}\beta}\hspace{-0.3mm}\}}}
        \end{equation}
        for some $j\in \{1,2,...,l_{\beta}\}$.
        Denote $\bb{v} \circeq (v^1,v^2,...,v^s)$ and $S_{\alpha,\alpha'} \circeq (\bar{\mathcal{J}}_{\sigma^2}(\alpha),\bar{\mathcal{J}}_{\sigma^2}(\alpha')]$.
        Then, for any $s\in \N$, any $k\in \{1,...,s\}$, and any functions $h : V_N^s \rightarrow \R$ such that $\sup_N \|h\|_{\infty} < \infty$,
        \begin{equation}\label{eq:thm:IGFF.ghirlanda.guerra.AGGI}
            \lim_{N \rightarrow \infty} \left|\hspace{-1mm}
            \begin{array}{l}
                \vspace{2mm}\mathbb{E}\mathcal{G}_{\beta,N}^{\times (s+1)}\big[\int_{S_{\alpha,\alpha'}} \hspace{-1mm} \bb{1}_{\{r < q^N(v^k,v^{s+1})\}} dr ~h(\bb{v})\big] \\[1mm]
                - \left\{\hspace{-1.5mm}
                \begin{array}{l}
                    \vspace{0.5mm}\frac{1}{s} \mathbb{E}\mathcal{G}_{\beta,N}^{\times 2}\big[\int_{S_{\alpha,\alpha'}} \hspace{-1mm}\bb{1}_{\{r < q^N(v^1,v^2)\}} dr\big] \mathbb{E}\mathcal{G}_{\beta,N}^{\times s}\big[h(\bb{v})\big] \\
                    + \frac{1}{s} \sum_{l \neq k}^s \mathbb{E}\mathcal{G}_{\beta,N}^{\times s}\big[\int_{S_{\alpha,\alpha'}} \hspace{-1mm}\bb{1}_{\{r < q^N(v^k,v^l)\}} dr ~h(\bb{v})\big]
                \end{array}
                \hspace{-1.5mm}\right\}
            \end{array}
            \hspace{-1mm}\right| = 0.\vspace{0.5mm}
        \end{equation}
    \end{theorem}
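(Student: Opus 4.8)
The plan is to run the standard perturbative derivation of the (extended) Ghirlanda--Guerra identities (as in \cite{MR2600075}, and in the GREM context \cite{MR2070335}), with the extra care demanded by the fact that the number of scales of $\psi$ grows with $N$ while the branching structure is only approximate. The first move is to reduce everything to the bulk region $A_{N,\rho}$: since the restricted free energy has the same limit as the one on $V_N$ (Theorem \ref{thm:IGFF.free.energy.A.N.p} together with Theorem \ref{thm:IGFF.free.energy}), the Gibbs measure $\mathcal{G}_{\beta,N}$ assigns total mass tending to $0$ to $V_N \setminus A_{N,\rho}$ as $N\to\infty$ and then $\rho\downarrow 0$, so each expectation in \eqref{eq:thm:IGFF.ghirlanda.guerra.AGGI} may be computed with the Gibbs measure $\mathcal{G}_{\beta,N,\rho}$ restricted to $A_{N,\rho}$, up to an error negligible in that iterated limit. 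On $A_{N,\rho}$ one has the overlap linearization $q^N(v,v') = \bar{\mathcal{J}}_{\sigma^2}(b_N(v,v')) + O(1/\sqrt{\log N}+\rho)$ (Corollary \ref{cor:covariance.estimates.3}) and the comparison of the increment variances with their GREM analogues up to uniform constants (Lemma \ref{lem:IGFF.variance.estimates}); moreover \eqref{eq:scales.are.points.of.continuity.TOD} is precisely the condition under which $\bar{\mathcal{J}}_{\sigma^2}(\alpha)$ and $\bar{\mathcal{J}}_{\sigma^2}(\alpha')$ are points of continuity of the limiting two-overlap distribution of Theorem \ref{thm:two.overlap.distribution.limit}, which is what makes the indicator of $\{q^N \in S_{\alpha,\alpha'}\}$ behave well in the limit.

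Next I would introduce a one-parameter perturbation $\{\psi^u_v\}_{v\in V_N}$, $|u| < \delta$, that modifies the variance of $\psi$ on the block of scales $[\alpha,\alpha']$ (equivalently, adds an independent Gaussian field whose overlap is the part of $q^N$ carried by the scales in $[\alpha,\alpha']$), together with the associated free energy $f^{\psi^u}_{N,\rho}(\beta) \circeq \frac{1}{\log N^2}\log \sum_{v\in A_{N,\rho}} e^{\beta \psi^u_v}$. This free energy is convex in $u$ (Lemma \ref{lem:IGFF.modified.free.energy.convexity}); the proof of Theorem \ref{thm:IGFF.free.energy.A.N.p} applies verbatim to the perturbed field, so $\mathbb{E} f^{\psi^u}_{N,\rho}(\beta) \to f^{\psi^u}(\beta)$; and $u \mapsto f^{\psi^u}(\beta)$ is differentiable at $u=0$ exactly because $\beta \in \mathcal{B}$ (Lemma \ref{lem:IGFF.modified.free.energy.derivative}). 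Convexity then promotes the convergence of these functions to convergence of their derivatives at $0$. The extra ingredient beyond what is needed for Theorem \ref{thm:two.overlap.distribution.limit} is the concentration estimate $\mathbb{E}\big| f^{\psi^u}_{N,\rho}(\beta) - \mathbb{E} f^{\psi^u}_{N,\rho}(\beta)\big| \to 0$, uniformly for $u$ near $0$ (Lemma \ref{lem:IGFF.ghirlanda.guerra.restricted.2}); combined with the elementary inequality controlling a convex function's derivative by its increments, it gives that the $u$-derivative at $0$ of $f^{\psi^u}_{N,\rho}(\beta)$, and of its $h$-tilted versions $\mathbb{E}\big[\mathcal{G}_{\beta,N,\rho}^{\times s}[h(\bb{v})]\, f^{\psi^u}_{N,\rho}(\beta)\big]$ --- which by the same concentration decouple into $\mathbb{E}\mathcal{G}_{\beta,N,\rho}^{\times s}[h(\bb{v})] \cdot f^{\psi^u}(\beta) + o(1)$ --- converge in mean to the corresponding deterministic derivatives.

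The identity \eqref{eq:thm:IGFF.ghirlanda.guerra.AGGI} is then extracted by Gaussian integration by parts. Carrying the $u$-derivative at $0$ inside the Gibbs averages and integrating by parts in the perturbation variables writes it as a sum of overlap functionals: the diagonal entries of the perturbation covariance contribute a deterministic constant (namely $|S_{\alpha,\alpha'}|$, up to the $o(1)$ error of Corollary \ref{cor:covariance.estimates.3}), while the off-diagonal entries reproduce --- after the usual replica bookkeeping, in which one new replica $v^{s+1}$ is created and coupled either to a fresh pair $(v^1,v^2)$ or to one of the existing replicas $v^1,\dots,v^s$, the factor $\tfrac1s$ arising from the symmetrization over which replica it is identified with --- exactly the combination inside the absolute value in \eqref{eq:thm:IGFF.ghirlanda.guerra.AGGI}. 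By the convergence established in the preceding paragraph, the same derivative also converges to $\mathbb{E}\mathcal{G}_{\beta,N,\rho}^{\times s}[h(\bb{v})]$ times the finite deterministic number $\tfrac{d}{du}\big|_{0} f^{\psi^u}(\beta)$, which equals $|S_{\alpha,\alpha'}|$ minus the limit of $\mathbb{E}\mathcal{G}_{\beta,N,\rho}^{\times 2}\big[\int_{S_{\alpha,\alpha'}} \bb{1}_{\{r < q^N(v^1,v^2)\}}\,dr\big]$. Equating the two expressions and letting $N\to\infty$ and then $\rho\downarrow 0$ yields the theorem.

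I expect the heart of the matter to be the construction and analysis of the perturbation rather than the integration-by-parts bookkeeping (which is the same as for the GREM): one must prove the uniform-in-$u$ self-averaging of the perturbed free energy on $A_{N,\rho}$ (Lemma \ref{lem:IGFF.ghirlanda.guerra.restricted.2}) and identify the derivative of its deterministic limit at $u=0$, which exists only when $\beta \neq 2/\bar{\sigma}_j$ --- the sole reason for the restriction $\beta \in \mathcal{B}$. Because only finitely many of the diverging number of scales are effective and the hierarchy is merely approximate, neither point follows from the GREM directly; both are obtained from the limiting free-energy formula of Theorem \ref{thm:IGFF.free.energy.A.N.p} together with the uniform variance comparison of Lemma \ref{lem:IGFF.variance.estimates} and the overlap linearization of Corollary \ref{cor:covariance.estimates.3}, all of which force one to work inside $A_{N,\rho}$ in order to escape the decay of the variance near $\partial V_N$.
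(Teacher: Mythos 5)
Your proposal reproduces the paper's argument faithfully: reduce to $A_{N,\rho}$ (Lemma \ref{lem:gibbs.measure.near.boundary} and Proposition \ref{prop:product.gibbs.near.boundary}), perturb the field, combine convexity and $L^1$-convergence of the free energy (Theorem \ref{thm:IGFF.free.energy.A.N.p}) with differentiability of the limit at $u=0$ (Lemma \ref{lem:IGFF.modified.free.energy.derivative}, where $\beta\in\mathcal{B}$ enters) to get the concentration/factorization of the Gibbs average of $\psi_{v^k}(\alpha,\alpha')$ (Lemma \ref{lem:IGFF.ghirlanda.guerra.restricted.2}), convert to overlap functionals via Gaussian integration by parts (Proposition \ref{prop:BV.technique}), and pass to the limit using continuity points of the two-overlap distribution. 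The one step you elide is that the integration by parts and the perturbation $\psi^u$ are set up under the additional restriction $\lambda_{i^\star-1}\leq\alpha<\alpha'\leq\lambda_{i^\star}$ so that $\psi_v(\alpha,\alpha')=\sigma_{i^\star}\phi_v(\alpha,\alpha')$; for general $\alpha,\alpha'$ satisfying only \eqref{eq:scales.are.points.of.continuity.TOD} the paper then decomposes $(\alpha,\alpha')$ into the finitely many sub-intervals cut out by the $\lambda_i$'s, proves the identity on each, and sums with the triangle inequality.
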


    \begin{remark}
        The word ``approximate'' here is {\bf NOT} to be understood in exactly the same sense as in Definition 1.4 of \cite{MR3628881}.
        It is approximate in the sense that the limit $N\to \infty$ is taken, but also because linear combinations of functions of the form $q\mapsto \int_{S_{\alpha,\alpha'}} \hspace{-1mm}\bb{1}_{\{r < q\}} dr$ do not describe all the bounded mesurable functions defined on $[0,1]$.
    \end{remark}

    \newpage
    We now show that the extended form of the Ghirlanda-Guerra identities hold exactly in the limit.
    Along with Theorem \ref{thm:two.overlap.distribution.limit}, these identities completely determine the law of the limiting array of overlaps when the variables are sampled by the Gibbs measure, see Section \ref{sec:consequences.GG.limit}.

    We follow closely the reasoning from page 101 in \cite{MR3052333} and page 1459 in \cite{MR3211001}.
    Let $(v_l)_{l\in \N}$ be an i.i.d. sequence sampled from the Gibbs measure $\mathcal{G}_{\beta,N}$ and let
    \begin{align}
        R^N
        &\circeq (R_{l,l'}^N)_{l,l'\in \N} \circeq (q^N(v^l,v^{l'}))_{l,l'\in \N}
    \end{align}
    be the array of overlaps of this sample.
    Note that the array $R^N$ is symmetric and non-negative definite because the entries are normalized covariances of the Gaussian field $\psi$. Since each point is sampled independently, it is also {\it weakly exchangeable}, namely, for any permutation $\pi$ of a finite number of indices,
    \begin{equation}
        \big(R_{\pi(l),\pi(l')}^N\big) \stackrel{\text{law}}{=} \big(R_{l,l'}^N\big).
    \end{equation}

    The push-forward of the probability measure $\mathbb{E}\mathcal{G}_{\beta,N}^{\times \infty}$ under the mapping
    \begin{equation}
        (v^l)_{l\in \N} \mapsto R^N
    \end{equation}
    defines a probability measure on the space $\mathcal{C}$ of $\N \times \N$ arrays with entries in $[-1,1]$, endowed with the product topology. Since $\mathcal{C}$ is a compact metric space (by Tychonoff's theorem), the space $\mathcal{M}_1(\mathcal{C})$ of probability measures on $\mathcal{C}$ is compact under the weak topology.
    Therefore, there exists a subsequence $\{N_m\}_{m\in \N}$ under which the above push-forward measures converge weakly to the distribution of some infinite array $R \circeq (R_{l,l'})_{l,l'\in \N}$ in the sense of convergence of all their finite dimensional marginals.
    In particular, the three properties of $R^N$ mentioned above are preserved by the limit, meaning that $R$ is also symmetric, non-negative definite and weakly exchangeable.

    By the representation theorem of \cite{MR666087} (see also the proof in \cite{MR2679002}) and the atoms in Theorem \ref{thm:two.overlap.distribution.limit}, we can assume that the limiting array $R$ is a random element of some probability space with measure $P$ (and corresponding expectation $E$), generated by
    \begin{equation}
        \big(R_{l,l'}\big)_{l,l'\in \N} = \big((\rho^l,\rho^{l'})_{\mathcal{H}} + (1 - x^{l_{\beta}-1}) \bb{1}_{\{l=l'\}}\big)_{l,l'\in \N},
    \end{equation}
    where $(\rho^l)_{l\in \N}$ is an i.i.d. sample of {\it replicas} from some random measure $\mu_{\beta}$ concentrated a.s. on the sphere of radius $\sqrt{x^{l_{\beta}-1}}$ of a separable Hilbert space $\mathcal{H}$.

    By construction, there exists a subsequence $\{N_m\}_{m\in \N}$ such that for any $s\in \N$ and any continuous function $h : [-1,1]^{s(s-1)/2} \rightarrow \R$,
    \begin{equation}\label{eq:subsequential.limit.mu}
        \lim_{m\rightarrow \infty} \mathbb{E}\mathcal{G}_{\beta,N_m}^{\times \infty}\big[h((R_{l,l'}^{N_m})_{1\leq l,l' \leq s})\big] = E \mu_{\beta}^{\times \infty}\big[h((R_{l,l'})_{1\leq l,l' \leq s})\big].
    \end{equation}
    In particular, from Theorem \ref{thm:two.overlap.distribution.limit}, we have
    \begin{equation}\label{eq:limiting.overlap.distribution.mu}
        E\mu_{\beta}^{\times 2}\big[\bb{1}_{\{R_{1,2} \leq r\}}\big] =
        \left\{\hspace{-1mm}
            \begin{array}{ll}
                0, &\mbox{if } r < 0, \\
                \beta_c(\bar{\sigma}_j) / \beta, &\mbox{if } r\in [x^{j-1},x^j), ~1 \leq j \leq l_{\beta}-1, \\
                1, &\mbox{if } r \geq x^{l_{\beta}-1},
            \end{array}
            \right.
    \end{equation}
    where $\beta_c(\bar{\sigma}_j) \circeq 2 / \bar{\sigma}_j$ and $x^j \circeq \bar{\mathcal{J}}_{\sigma^2}(\lambda^j)$.

    Next, we show the consequence of taking the limit \eqref{eq:subsequential.limit.mu} in the statement of Theorem \ref{thm:IGFF.ghirlanda.guerra}.
    Note that a bounded function $h : \{x^0,x^1,...,x^{l_{\beta}-1}\}^{s(s-1)/2} \rightarrow \R$ can always be embedded in a continuous function defined on $[-1,1]^{s(s-1)/2}$.

    \begin{theorem}[Extended Ghirlanda-Guerra identities in the limit]\label{thm:IGFF.ghirlanda.guerra.limit}
        Let $\beta\in \mathcal{B}$ and let $\mu_{\beta}$ be a subsequential limit of $\{\mathcal{G}_{\beta,N}\}_{N\in \N}$ in the sense of \eqref{eq:subsequential.limit.mu}.
        Then, for any $s\in \N$, any $k\in \{1,...,s\}$, and any functions $h : \{x^0,x^1,...,x^{l_{\beta}-1}\}^{s(s-1)/2} \rightarrow \R$ and $g : \{x^0,x^1,...,x^{l_{\beta}-1}\} \rightarrow \R$, we have
        \begin{equation}\label{eq:thm:IGFF.ghirlanda.guerra.limit}
        \begin{aligned}
            &E \mu_{\beta}^{\times (s+1)}\big[g(R_{k,s+1})\, h((R_{i,i'})_{1 \leq i,i' \leq s})\big] \\
            &\hspace{5mm}= \frac{1}{s} E \mu_{\beta}^{\times 2}\big[g(R_{1,2})\big] E \mu_{\beta}^{\times s}\big[h((R_{i,i'})_{1 \leq i,i' \leq s})\big] \\
            &\hspace{10mm}\quad+ \frac{1}{s} \sum_{l \neq k}^s E \mu_{\beta}^{\times s}\big[g(R_{k,l})\, h((R_{i,i'})_{1 \leq i,i' \leq s})\big].
        \end{aligned}
        \end{equation}
    \end{theorem}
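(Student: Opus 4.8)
The plan is to pass to the limit in Theorem \ref{thm:IGFF.ghirlanda.guerra} along the subsequence $\{N_m\}$ that defines $\mu_\beta$, and then to upgrade the "step-function integral" test functions appearing there to arbitrary functions on the finite set of atoms $\{x^0,x^1,\dots,x^{l_\beta-1}\}$. The point is that Theorem \ref{thm:two.overlap.distribution.limit} tells us $R_{l,l'}$ takes values only in this finite set $P$-almost surely (off the diagonal), so a function $g:P\to\R$ need only be matched on $P$, and on $P$ the family of functions $r\mapsto \int_{S_{\alpha,\alpha'}}\bb{1}_{\{r<q\}}\,dr$, as $\alpha,\alpha'$ range over admissible pairs, spans every real-valued function.

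First I would verify that for each admissible pair $(\alpha,\alpha')$ satisfying \eqref{eq:scales.are.points.of.continuity.TOD}, the approximate identity \eqref{eq:thm:IGFF.ghirlanda.guerra.AGGI} passes to the limit. The integrand $q\mapsto \int_{S_{\alpha,\alpha'}}\bb{1}_{\{r<q\}}\,dr$ is continuous and bounded in $q$, and $\bb{v}\mapsto h(\bb{v})$ is bounded; moreover the endpoints $\bar{\mathcal{J}}_{\sigma^2}(\alpha),\bar{\mathcal{J}}_{\sigma^2}(\alpha')$ are \emph{not} atoms of the limiting overlap distribution \eqref{eq:limiting.overlap.distribution.mu} (this is exactly why the strict inequalities in \eqref{eq:scales.are.points.of.continuity.TOD} are imposed — they keep $\alpha,\alpha'$ strictly between consecutive effective scales). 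Hence the three terms in \eqref{eq:thm:IGFF.ghirlanda.guerra.AGGI} are expectations of functionals that are $\mu_\beta^{\times\infty}$-a.s. continuous on the array, so \eqref{eq:subsequential.limit.mu} applies termwise; combined with the fact that the whole bracket tends to $0$, this yields
\begin{equation*}
    E\mu_\beta^{\times(s+1)}\Big[\widetilde{g}(R_{k,s+1})\,h\Big] = \frac{1}{s}E\mu_\beta^{\times 2}\big[\widetilde{g}(R_{1,2})\big]\,E\mu_\beta^{\times s}[h] + \frac{1}{s}\sum_{l\neq k}^s E\mu_\beta^{\times s}\big[\widetilde{g}(R_{k,l})\,h\big]
\end{equation*}
for every function of the special form $\widetilde{g}(q)=\int_{S_{\alpha,\alpha'}}\bb{1}_{\{r<q\}}\,dr$, with $h$ any bounded function on $P^{s(s-1)/2}$ (such $h$ extends to a continuous function on $[-1,1]^{s(s-1)/2}$ by the remark preceding the theorem).

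Next I would argue that linear combinations of such $\widetilde g$'s, restricted to the finite set $P=\{x^0,\dots,x^{l_\beta-1}\}$, exhaust all functions $P\to\R$. Indeed, for $\lambda^{j-1}<\alpha<\alpha'<\lambda^j$ one has $\bar{\mathcal{J}}_{\sigma^2}(\alpha),\bar{\mathcal{J}}_{\sigma^2}(\alpha')\in(x^{j-1},x^j)$, so $\widetilde g(x^i)=\bar{\mathcal{J}}_{\sigma^2}(\alpha')-\bar{\mathcal{J}}_{\sigma^2}(\alpha)>0$ for $i\leq j-1$ and $\widetilde g(x^i)=0$ for $i\geq j$; rescaling and taking differences of these "step" functions across $j=1,\dots,l_\beta-1$ (together with the special window allowed when $j=l_\beta$, which reaches $x^{l_\beta-1}$) produces the indicator of each singleton $\{x^i\}$, hence every $g:P\to\R$ by linearity. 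Since both sides of the identity are linear in $g$, the identity extends from the special $\widetilde g$'s to arbitrary $g:P\to\R$, which is exactly \eqref{eq:thm:IGFF.ghirlanda.guerra.limit}.

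The main obstacle is the bookkeeping in the second step: one must check that the admissible pairs $(\alpha,\alpha')$ — constrained by \eqref{eq:scales.are.points.of.continuity.TOD} so as to avoid the atoms — nonetheless generate \emph{enough} step functions to separate all $l_\beta$ atoms, in particular handling the top atom $x^{l_\beta-1}$ correctly via the $j=l_\beta$ case where the window is allowed to extend all the way to $\lambda^m=1$. Once the spanning claim is in hand, everything else is a routine limit exchange justified by boundedness of $h$, continuity of the integrands, and the portmanteau theorem applied through \eqref{eq:subsequential.limit.mu}, using that the integration endpoints are continuity points of the limiting law.
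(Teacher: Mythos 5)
Your overall strategy — pass to the limit in Theorem~\ref{thm:IGFF.ghirlanda.guerra} along the defining subsequence and then extend by linearity from the special integrated-indicator functions to arbitrary $g$ on the finite support — is exactly the paper's approach. But the bookkeeping step that you flag as the ``main obstacle'' is where your argument goes wrong, and the error is a sign flip that hides a genuine gap.

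You compute $\widetilde g(x^i)=\bar{\mathcal{J}}_{\sigma^2}(\alpha')-\bar{\mathcal{J}}_{\sigma^2}(\alpha)$ for $i\leq j-1$ and $\widetilde g(x^i)=0$ for $i\geq j$. This is backwards. For $i\leq j-1$ we have $x^i\leq x^{j-1}<\bar{\mathcal{J}}_{\sigma^2}(\alpha)$, so every $r\in S_{\alpha,\alpha'}$ satisfies $r>x^i$, hence $\bb{1}_{\{r<x^i\}}=0$ and $\widetilde g(x^i)=0$; for $i\geq j$ we have $x^i\geq x^j>\bar{\mathcal{J}}_{\sigma^2}(\alpha')$, so $\bb{1}_{\{r<x^i\}}=1$ on all of $S_{\alpha,\alpha'}$ and $\widetilde g(x^i)=|S_{\alpha,\alpha'}|>0$. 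Thus on the support $P$ the $\widetilde g$'s are proportional to $\bb{1}_{\{x^{j-1}<\,\cdot\,\}}$, i.e.\ to $\bb{1}_{\{\cdot\geq x^j\}}$, with $j\in\{1,\dots,l_\beta\}$. The $j=l_\beta$ case is \emph{identically zero} on $P$ (not the constant function, as your sign error suggests), so after passing to the limit you only obtain the identity for $g$ in the span of $\bb{1}_{\{\cdot\geq x^1\}},\dots,\bb{1}_{\{\cdot\geq x^{l_\beta-1}\}}$, which is precisely the set of functions on $P$ vanishing at the bottom atom $x^0$. The missing direction is the constant function $g\equiv 1$, and no choice of admissible $(\alpha,\alpha')$ produces it, since $\widetilde g(x^0)=0$ always. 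So the real subtlety is the bottom atom $x^0$, not the top atom $x^{l_\beta-1}$ as you conjectured, and the $j=l_\beta$ window does not rescue it. The paper fills this gap by observing that the Ghirlanda--Guerra identity for $g\equiv 1$ is trivially satisfied (both sides equal $E\mu_\beta^{\times s}[h]$, since $\frac{1}{s}\cdot 1 + \frac{1}{s}(s-1)=1$); equivalently, because $R_{i,i'}\geq 0$ a.s., one may formally adjoin $x^{-1}\circeq-1$ so that $\bb{1}_{\{x^{-1}<\,\cdot\,\}}\equiv 1$ on the support. With this extra trivial case, the $l_\beta$ linearly independent staircase indicators do span all of $\R^P$, and linearity finishes the proof.

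Aside from this, your first step (passing to the limit) is fine: the map $q\mapsto\int_{S_{\alpha,\alpha'}}\bb{1}_{\{r<q\}}\,dr$ is continuous and bounded regardless of atoms, and the bounded function $h$ on the finite grid extends continuously; so the subsequential convergence \eqref{eq:subsequential.limit.mu} applies term by term and turns the vanishing in Theorem~\ref{thm:IGFF.ghirlanda.guerra} into the exact identity for $\widetilde g$. The condition \eqref{eq:scales.are.points.of.continuity.TOD} is in fact used earlier (in Theorem~\ref{thm:IGFF.ghirlanda.guerra} itself, to make the error terms vanish) rather than to justify the limit exchange here, but your remark about avoiding atoms does no harm.
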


    \begin{proof}[Proof of Theorem \ref{thm:IGFF.ghirlanda.guerra.limit}]
        Let $\alpha < \alpha'$ be a pair of scales such that
        \begin{equation}\label{eq:condition.between.discontinuity.points}
            \lambda^{j-1} < \alpha < \alpha' < \lambda^{j + (m-j)\bb{1}_{\{j = l_{\hspace{-0.3mm}\beta}\hspace{-0.3mm}\}}}
        \end{equation}
        for some $j\in \{1,2,...,l_{\beta}\}$.
        From \eqref{eq:subsequential.limit.mu} and from Theorem \ref{thm:IGFF.ghirlanda.guerra} (in the particular case where $h$ is a function of the overlaps), we deduce
        \begin{equation}\label{eq:thm:IGFF.ghirlanda.guerra.limit.first}
            \begin{array}{l}
                \vspace{2mm}E\mu_{\beta}^{\times (s+1)}\big[\int_{(\bar{\mathcal{J}}_{\sigma^2}(\alpha), \bar{\mathcal{J}}_{\sigma^2}(\alpha')]} \hspace{-1mm} \bb{1}_{\{r < R_{k,s+1}\}} dr ~h((R_{i,i'})_{1 \leq i,i' \leq s})\big] \\
                \vspace{2mm}\quad=\frac{1}{s} E\mu_{\beta}^{\times 2}\big[\int_{(\bar{\mathcal{J}}_{\sigma^2}(\alpha), \bar{\mathcal{J}}_{\sigma^2}(\alpha')]} \hspace{-1mm}\bb{1}_{\{r < R_{1,2}\}} dr\big] E\mu_{\beta}^{\times s}\big[h((R_{i,i'})_{1 \leq i,i' \leq s})\big] \\
                \quad\quad+ \frac{1}{s} \sum_{l \neq k}^s E\mu_{\beta}^{\times s}\big[\int_{(\bar{\mathcal{J}}_{\sigma^2}(\alpha), \bar{\mathcal{J}}_{\sigma^2}(\alpha')]} \hspace{-1mm}\bb{1}_{\{r < R_{k,l}\}} dr ~h((R_{i,i'})_{1 \leq i,i' \leq s})\big].
            \end{array}
        \end{equation}
        From \eqref{eq:limiting.overlap.distribution.mu}, we know that $\bb{1}_{\{r <  R_{i,i'}\}}$ is $E\mu_{\beta}^{\times 2}$-a.s. constant in $r$ on the interval $[x^{j-1},x^{j + (m-j)\bb{1}_{\{j = l_{\hspace{-0.3mm}\beta}\hspace{-0.3mm}\}}})$. Therefore, from \eqref{eq:condition.between.discontinuity.points} and \eqref{eq:thm:IGFF.ghirlanda.guerra.limit.first}, we get
        \begin{equation}\label{eq:thm:IGFF.ghirlanda.guerra.limit.finish}
            \begin{array}{l}
                \vspace{2mm}E\mu_{\beta}^{\times (s+1)}\big[\bb{1}_{\{x^{j-1} < R_{k,s+1}\}} ~h((R_{i,i'})_{1 \leq i,i' \leq s})\big] \\
                \vspace{2mm}\quad=\frac{1}{s} E\mu_{\beta}^{\times 2}\big[\bb{1}_{\{x^{j-1} < R_{1,2}\}}\big] E\mu_{\beta}^{\times s}\big[h((R_{i,i'})_{1 \leq i,i' \leq s})\big] \\
                \quad\quad+ \frac{1}{s} \sum_{l \neq k}^s E\mu_{\beta}^{\times s}\big[\bb{1}_{\{x^{j-1} < R_{k,l}\}} ~h((R_{i,i'})_{1 \leq i,i' \leq s})\big].
            \end{array}
        \end{equation}
        Since $R_{i,i'} \geq 0$ $E\mu_{\beta}^{\times 2}$-a.s. by \eqref{eq:limiting.overlap.distribution.mu}, the last equation is also trivially satisfied with say $x^{-1} \circeq -1$.
        But, any function $g : \{x^0,x^1,...,x^{l_{\beta}-1}\} \rightarrow \R$ can be written as a linear combination of the indicator functions $\bb{1}_{\{x^{j-1} < \cdot\}}, ~j\in \{0\} \cup \{1,2,...,l_{\beta}\}$, so we get the conclusion (by the linearity of \eqref{eq:thm:IGFF.ghirlanda.guerra.limit.finish}).
    \end{proof}

    \begin{remark}
        The extended Ghirlanda-Guerra identities are still the subject of ongoing research in spin glass theory and the study of log-correlated random fields, so it is still not clear why these identities seem to be a property shared in the limit by such a vast collection of models.
        Perhaps even more universal could be the {\it stochastic stability} property of {\it random overlap structures} (ROSt's), which are defined and treated (for example) in \cite{MR1657840,MR2219862,MR2537550,MR2678900,MR3055263}. For instance, it is conjectured in \cite{MR3055263} that the laws of the ROSt's satisfying the Ghirlanda-Guerra identities correspond to the extremes of the convex set of laws of the stochastically stable ROSt's.
        It is also conjectured that the stochastic stability of a ROSt for a subsequence of {\it$p$-th power cavity fields} implies ultrametricity.
        In this sense, it is expected that the stochastic stability property is more universal then the Ghirlanda-Guerra identities but still implies ultrametricity under technical conditions.
        In \cite{MR2945621}, it is shown how the Aizenman-Contucci stochastic stability property can be combined with a specific form of the Ghirlanda-Guerra identities into a unified stability property analogous to the Bolthausen-Sznitman invariance property in the setting of Ruelle probability cascades (see \cite{MR1652734}).
    \end{remark}

    \begin{remark}
        It is expected that the results of \cite{MR3541850} on the first order asymptotics of the maximum and $\gamma$-high points can be extended to the more general case where the variance function $\sigma$ in \eqref{eq:variance.function} is piecewise $C^1$. Therefore, it is also expected that the results in the present article could be generalised just like \cite{MR2070335} did when they generalized the results of the GREM to the CREM (the GREM with a continuum of hierarchies).

        We could take this further by imposing $\sigma$ to be piecewise $C^1$ and by working directly with the continuous version of the two-dimensional GFF instead of the discrete version. A formal definition of such a field is given in Section 1.3 of \cite{MR3541850} as well as a conjecture on the Hausdorff dimension of the $\gamma$-thick points (the analogue of the $\gamma$-high points). The field is a random distribution (i.e. generalized function), so it cannot be defined pointwise, but we can make sense of the collection of circle averages around a point $v\in [0,1]^2$ as a stochastic process. In fact, we would expect such a process, after a time-change, to be equal in law to $\int_0^{\cdot} \sigma(s) dB_v(s)$, where $B_v$ is a Brownian motion adapted to a certain filtration $\mathbb{F}_v$. We could then ask if it is possible to characterize the limiting (with respect to the approximation procedure) law of the Liouville measure (the analogue of the Gibbs measure) of this new field. For an introduction to these concepts, see e.g. \cite{Berestycki2015ln,MR3274356,MR2322706}.

        Finally, another natural question is to ask if there is a way to introduce randomness in the function $s\mapsto \sigma(s)$ and still make sense of the questions above, although this is not clear since the process $(\sigma(s))_{s\in [0,1]}$ cannot be adapted (let alone predictable) simultaneously to all the filtrations $\mathbb{F}_v, ~v\in [0,1]^2$. Maybe there is a way around this problem if the filtrations share ``information'' in a very structured way.
    \end{remark}

\section{Consequences of Theorem \ref{thm:two.overlap.distribution.limit} and Theorem \ref{thm:IGFF.ghirlanda.guerra.limit}}\label{sec:consequences.GG.limit}

    The first consequence concerns the geometry of the overlaps in the limit.
    It was shown in \cite{MR2599202} (see also \cite{MR2825947} for a simplified proof) that any limiting array of overlaps that takes finitely many values and satisfy \eqref{eq:thm:IGFF.ghirlanda.guerra.limit} must be ultrametric under $E \mu_{\beta}^{\times \infty}$.

    \begin{corollary}[Ultrametricity in the limit]\label{cor:ultrametricity}
        Let $\beta\in \mathcal{B}$ and let $\mu_{\beta}$ be a subsequential limit of $\{\mathcal{G}_{\beta,N}\}_{N\in \N}$ in the sense of \eqref{eq:subsequential.limit.mu}.
        We must have
        \begin{equation}\label{eq:ultrametricity}
            E\mu_{\beta}^{\times 3}\big(R_{1,2} \geq R_{1,3} \wedge R_{2,3}\big) = 1.
        \end{equation}
        Since the {\it replicas} $\rho^l$ all have norm $\sqrt{x^{l_{\beta}-1}}$ almost-surely in $\mathcal{H}$, then \eqref{eq:ultrametricity} is equivalent to the ultrametric inequality
        \begin{equation}
            E\mu_{\beta}^{\times 3}\big(\|\rho^1 - \rho^2\| \leq \|\rho^1 - \rho^3\| \vee \|\rho^2 - \rho^3\|\big) = 1.
        \end{equation}
    \end{corollary}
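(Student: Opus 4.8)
The plan is to deduce Corollary \ref{cor:ultrametricity} from Panchenko's ultrametricity theorem (\cite{MR2599202}, with the streamlined argument of \cite{MR2825947}), whose hypotheses have all been arranged in the preceding development; the proof is therefore short, and consists mainly in checking that those hypotheses are literally met and then translating the conclusion into Hilbert-space language.

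First I would record the three structural facts already established. By the Dovbysh--Sukhov representation set up just before Theorem \ref{thm:IGFF.ghirlanda.guerra.limit}, the limiting overlap array $R = (R_{l,l'})_{l,l'\in\N}$ is the Gram-type array of an i.i.d. sample $(\rho^l)_{l\in\N}$ drawn from the random measure $\mu_\beta$, which is concentrated almost surely on the sphere of radius $\sqrt{x^{l_\beta-1}}$ of a separable Hilbert space $\mathcal{H}$, with $R_{l,l'} = (\rho^l,\rho^{l'})_{\mathcal{H}}$ for $l\neq l'$. From \eqref{eq:limiting.overlap.distribution.mu}, the off-diagonal entries of $R$ take values in the finite set $\{x^0,x^1,\dots,x^{l_\beta-1}\}$ under $E\mu_\beta^{\times 2}$, hence under $E\mu_\beta^{\times \infty}$ by weak exchangeability, so $R$ takes finitely many values. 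Finally, Theorem \ref{thm:IGFF.ghirlanda.guerra.limit} furnishes the extended Ghirlanda--Guerra identities \eqref{eq:thm:IGFF.ghirlanda.guerra.limit} for all bounded functions $g$ on $\{x^0,\dots,x^{l_\beta-1}\}$, which — since the overlaps only ever take those values — is exactly the form needed. Panchenko's theorem then applies verbatim to $R$ and gives
\[
    E\mu_\beta^{\times 3}\big(R_{1,2} \geq R_{1,3}\wedge R_{2,3}\big) = 1,
\]
which is \eqref{eq:ultrametricity}.

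Second I would translate this into the stated ultrametric inequality in $\mathcal{H}$. Since $\|\rho^l\|^2 = x^{l_\beta-1}$ almost surely for every $l$, for distinct indices $i\neq j$ one has $\|\rho^i-\rho^j\|^2 = 2x^{l_\beta-1} - 2(\rho^i,\rho^j)_{\mathcal{H}} = 2x^{l_\beta-1} - 2R_{i,j}$. As $t\mapsto 2x^{l_\beta-1}-2t$ is strictly decreasing, the event $\{R_{1,2} \geq R_{1,3}\wedge R_{2,3}\}$ coincides with $\{\|\rho^1-\rho^2\|^2 \leq \|\rho^1-\rho^3\|^2 \vee \|\rho^2-\rho^3\|^2\}$, and taking square roots yields $\|\rho^1-\rho^2\| \leq \|\rho^1-\rho^3\| \vee \|\rho^2-\rho^3\|$ with $E\mu_\beta^{\times 3}$-probability one, as claimed.

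I do not expect a genuine obstacle here: the substantive inputs — finiteness of the support of the two-overlap distribution (Theorem \ref{thm:two.overlap.distribution.limit}), the exact extended Ghirlanda--Guerra identities (Theorem \ref{thm:IGFF.ghirlanda.guerra.limit}), and the Hilbert-space representation of the limiting array — have all been proved, and Panchenko's theorem is then invoked as a black box. The only point requiring a little care is to verify that the normalization conventions match those under which \cite{MR2599202,MR2825947} are stated; in particular, their argument uses that the self-overlap is constant, which holds in our setting precisely because the radius $\sqrt{x^{l_\beta-1}}$ of the supporting sphere is deterministic.
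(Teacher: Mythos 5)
Your proposal is correct and follows the same route as the paper: both invoke Panchenko's ultrametricity theorem for finitely-valued overlap arrays satisfying the extended Ghirlanda--Guerra identities (\cite{MR2599202}, simplified in \cite{MR2825947}), with the ingredients supplied by Theorem \ref{thm:two.overlap.distribution.limit}, Theorem \ref{thm:IGFF.ghirlanda.guerra.limit}, and the Dovbysh--Sukhov representation. The translation to the norm inequality via $\|\rho^i-\rho^j\|^2 = 2x^{l_\beta-1}-2R_{i,j}$ is exactly the step implicit in the paper's statement, so there is no substantive difference.
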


    \begin{remark}
        The random measure $\mu_{\beta}$ gives more weight to extreme values, so we can interpret this corollary as saying that the extremes are clustered in a hierarchical way.
        From \eqref{eq:limiting.overlap.distribution.mu}, the number of hierarchies increases as the {\it inverse temperature} $\beta$ becomes larger than some of the {\it critical thresholds} $\beta_c(\bar{\sigma}_j)$.
    \end{remark}

    The second consequence makes the description of the structure of $\mu_{\beta}$ even more precise.
    Probability cascades were introduced in \cite{MR875300} to describe the limiting Gibbs measure of the GREM.
    Since the $\para$-GFF satisfies the extended Ghirlanda-Guerra identities and the limiting two-overlap distribution takes finitely many values, we can show that the limiting Gibbs measure $\mu_{\beta}$ is a Ruelle probability cascade.
    First, we define probability cascades by following \cite{MR3052333}.

    For a given $r \geq 1$, let
    \vspace{-1mm}
    \begin{equation}
        \mathcal{T} \circeq \{\emptyset\} \cup \N \cup \N^2 \cup ... \cup \N^r
    \end{equation}
    be the vertex set of a tree rooted at $\emptyset$. Each vertex $v = (n_1,...,n_p)\in \N^p$, for $p \leq r - 1$, has children
    \vspace{-1mm}
    \begin{equation}
        v n \circeq (n_1,...,n_p,n)\in \N^{p+1}, \quad n\in \N.
    \end{equation}
    Each vertex $v\in \N^p$ is connected to the root by a path. Denote by $p(v)$ the set of vertices (excluding the root) on the shortest path from $v$ to the root.
    Additionally, fix two sequences of parameters :
    \vspace{1mm}
    \begin{equation}\label{eq:RPC.parameters}
    \begin{aligned}
        &0 \circeq \zeta_{-1} < \zeta_0 < \zeta_1 < ... < \zeta_{r-1} < \zeta_r \circeq 1, \\
        &0 \circeq q_0 < q_1 < ... < q_{r-1} < q_r \leq 1.
    \end{aligned}
    \end{equation}
    For all $v\in \mathcal{T}$, denote by $|v|$ its distance in the tree, namely $|v| \circeq \#p(v)$.
    Then, for all $v\in \mathcal{T} \backslash \N^r$, generate independent Poisson point processes, denoted by $\Pi_v$, with mean measure $\zeta_{|v|} x^{-1 - \zeta_{|v|}} dx$ on $(0,\infty)$.
    We arrange the points in $\Pi_v$ in decreasing order :
    \begin{equation}\label{eq:PPP.decreasing.order}
        z_{v 1} > z_{v 2} > ... > z_{v n} > ...
    \end{equation}
    For each vertex $v\in \mathcal{T}\backslash \N^r$, the relative weight of each point in $\Pi_v$ is defined by
    \begin{equation}\label{eq:PPP.relative.weights}
        w_{v n} \circeq \frac{z_{v n}}{\sum_{i\in \N} z_{v i}}, \quad n\in \N.
    \end{equation}
    Say we are on a separable Hilbert space $\mathscr{H}$ with orthonormal basis $\{e_v\}_{v\in \mathcal{T} \backslash \{\emptyset\}}$.
    Consider the vectors in $\mathscr{H}$
    \vspace{1mm}
    \begin{equation}\label{eq:elements.to.sample.from.in.H}
        h_v \circeq \hspace{-1.5mm}\sum_{u\in p(v)} \hspace{-1mm}e_u\, (q_{|u|} - q_{|u|-1})^{1/2}, \quad v\in \mathcal{T} \backslash \{\emptyset\},
    \end{equation}
    and define a random measure on them by
    \begin{equation}\label{eq:RPC.definition}
        G(h_v) \circeq \prod_{u\in p(v)} w_u, \quad v\in \N^r.
    \end{equation}
    The random measure $G$ is called a Ruelle probability cascade (RPC) associated with the parameters in \eqref{eq:RPC.parameters}.
    It is defined up to an orthonormal change of basis.

    We can think of $\N^r$ as the leaves in the tree structure.
    From \eqref{eq:elements.to.sample.from.in.H}, each element in $\{h_v\}_{v\in \N^r}$ has norm $\sqrt{q_r}$ and the scalar product between two such elements can only take values in the finite set
    \begin{equation}
        \{0,q_1,q_2,...,q_r\}.
    \end{equation}
    The weights \eqref{eq:PPP.relative.weights}, associated with each branch in the tree, are random.
    Hence, \eqref{eq:RPC.definition} defines a random probability measure and each instance of $G$ samples elements in $\{h_v\}_{v\in \N^r}$ by choosing a branch independently at each step, between the root and a leaf, with probability given by the associated weight in \eqref{eq:PPP.relative.weights}. The tree structure is illustrated in Figure \ref{fig:RPC.tree.structure}. Since the weights are ordered in decreasing order at each scale, the branches on the left are more likely to be selected at each step.

    \newpage
    \begin{figure}[ht]
        \centering
        \includegraphics[height=11cm,width=12.5cm]{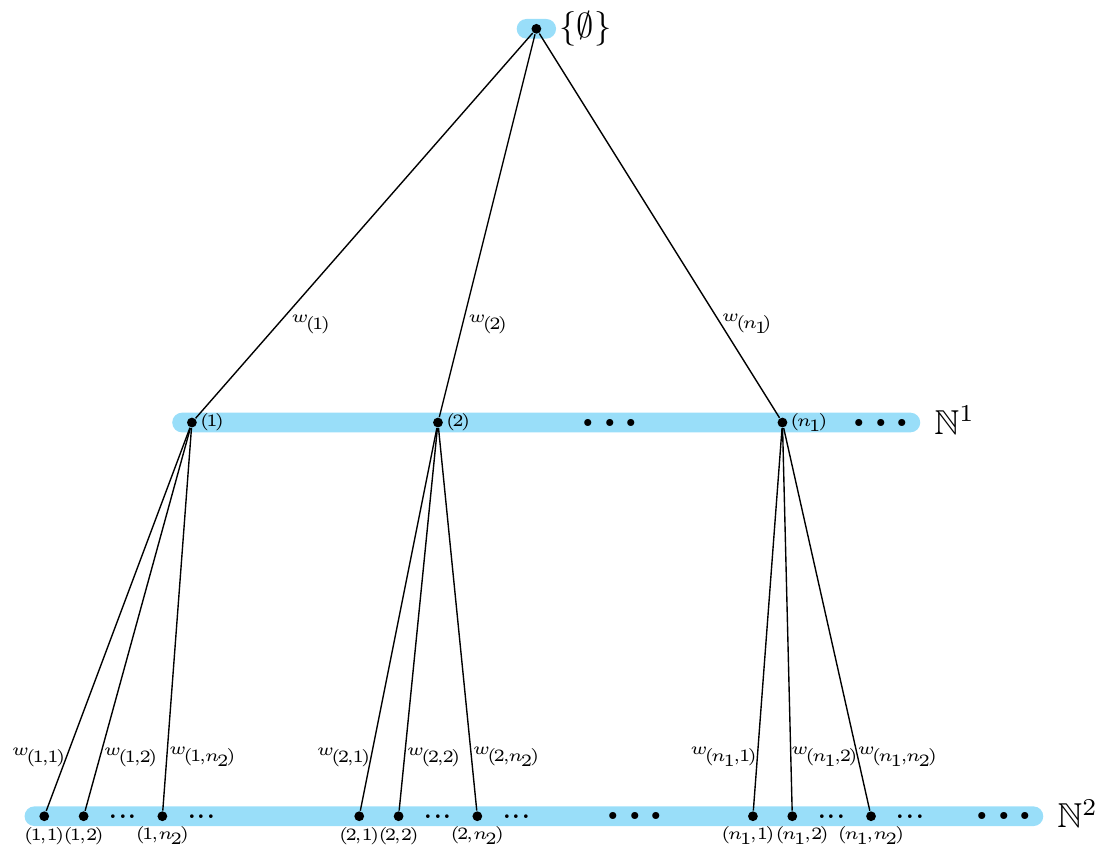}
        \captionsetup{width=0.8\textwidth}
        \caption{Exact tree structure of a Ruelle probability cascade with $r = 2$ levels. Given an instance $\omega\in \Omega$ of the random weights, the measure $G$ samples elements in $\{h_v\}_{v\in \N^r}$ with probability equal to the product of the probabilities associated with the branches on the shortest path from the root to the leaf $v$. For example, $(G(\omega))(h_{(1,2)}) = w_{(1)}(\omega) w_{(1,2)}(\omega)$. For the scalar products, we have, for example, $(h_{(1,1)},h_{(2,2)})_{\mathscr{H}} = 0$ and $(h_{(2,1)},h_{(2,2)})_{\mathscr{H}} = q_1$.}
        \label{fig:RPC.tree.structure}
    \end{figure}

    The corollary below shows that the limiting Gibbs measure of the $\para$-GFF is a RPC, despite its underlying tree structure being only approximate for finite $N$.

    \begin{corollary}\label{cor:ruelle.probability.cascade}
        Let $\beta\in \mathcal{B}$ and let $\mu_{\beta}$ be a subsequential limit of $\{\mathcal{G}_{\beta,N}\}_{N\in \N}$ in the sense of \eqref{eq:subsequential.limit.mu}. Then, $\mu_{\beta}$ has the same law as a RPC with parameters
        \begin{itemize}
            \item[$\bullet$] \vspace{0mm}$r = l_{\beta} - 1$,
            \item[$\bullet$] \vspace{1mm}$\zeta_j = E\mu_{\beta}^{\times 2}\big[\bb{1}_{\{R_{1,2} \leq x^j\}}\big] = (2 / \bar{\sigma}_{j+1}) / \beta$, \quad for all $j\in \{0,1,...,r-1\}$,
            \item[$\bullet$] \vspace{1mm}$q_j = x^j \circeq \bar{\mathcal{J}}_{\sigma^2}(\lambda^j)$, \quad for all $j\in \{0,1,...,r\}$.
        \end{itemize}
    \end{corollary}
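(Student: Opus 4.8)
\noindent
\emph{Proof proposal.}
The plan is to recognize that, once Theorem \ref{thm:IGFF.ghirlanda.guerra.limit} and Theorem \ref{thm:two.overlap.distribution.limit} are available, $\mu_\beta$ falls inside the scope of Panchenko's structure theory for the extended Ghirlanda--Guerra identities, so that only a translation of parameters remains. First I would assemble the three inputs. (i) The measure $\mu_\beta$ is, almost surely, a random probability measure supported on the sphere of radius $\sqrt{x^{l_\beta-1}}$ of a separable Hilbert space $\HH$; this is the Dovbysh--Sudakov representation \cite{MR666087} that was set up just before Theorem \ref{thm:IGFF.ghirlanda.guerra.limit}. (ii) By Theorem \ref{thm:IGFF.ghirlanda.guerra.limit}, the overlap array $(R_{l,l'})$ of an i.i.d.\ sample from $\mu_\beta$ satisfies the extended Ghirlanda--Guerra identities \eqref{eq:thm:IGFF.ghirlanda.guerra.limit} for all bounded $g,h$ on $\{x^0,\dots,x^{l_\beta-1}\}$. (iii) By \eqref{eq:limiting.overlap.distribution.mu}, the single overlap $R_{1,2}$ takes values in the finite set $\{x^0,x^1,\dots,x^{l_\beta-1}\}$, with $x^0=\bar{\mathcal{J}}_{\sigma^2}(\lambda^0)=0$, with explicitly known cumulative distribution. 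Ultrametricity of the array is moreover already available from Corollary \ref{cor:ultrametricity}.

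With these in hand I would invoke the characterization of Ruelle probability cascades through the Ghirlanda--Guerra identities (as presented in \cite{MR3052333}, resting on \cite{MR666087,MR2599202,MR2825947}): a random measure on the unit ball of a separable Hilbert space whose overlap takes finitely many values and which satisfies the extended Ghirlanda--Guerra identities has the same law as an RPC, and that RPC is uniquely determined by the law of a single overlap. Rescaling $\mu_\beta$ to the unit ball is harmless since all replicas share the norm $\sqrt{x^{l_\beta-1}}$. This step carries essentially all the weight, but it is imported wholesale; the only thing to verify is that $\mu_\beta$ literally meets the hypotheses --- separable Hilbert support, finitely valued overlap, the \emph{exact} (not merely approximate) identities in the form \eqref{eq:thm:IGFF.ghirlanda.guerra.limit} --- which it does by (i)--(iii).

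It then remains to match the parameters, which is bookkeeping. An $r$-level RPC produces replicas of squared norm $q_r$ and pairwise overlaps valued in the $r+1$ numbers $0=q_0<q_1<\dots<q_r$; matching this list against the $l_\beta$ numbers $0=x^0<x^1<\dots<x^{l_\beta-1}$ (strictly increasing because $s\mapsto\bar{\mathcal{J}}_{\sigma^2}(s)$ is strictly increasing and $\lambda^0<\lambda^1<\dots<\lambda^m$) forces $r=l_\beta-1$ and $q_j=x^j=\bar{\mathcal{J}}_{\sigma^2}(\lambda^j)$ for $j=0,\dots,r$. The $\zeta_j$ are the values of the functional order parameter, i.e.\ of the overlap cumulative distribution, at the atoms $x^j$: evaluating \eqref{eq:limiting.overlap.distribution.mu} at $r=x^j\in[x^j,x^{j+1})$ gives $\zeta_j=E\mu_\beta^{\times 2}\big[\bb{1}_{\{R_{1,2}\le x^j\}}\big]=\beta_c(\bar\sigma_{j+1})/\beta=(2/\bar\sigma_{j+1})/\beta$ for $j=0,\dots,r-1$. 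Finally I would check the admissibility conditions \eqref{eq:RPC.parameters}: the $q_j$ increase from $q_0=0$ and satisfy $q_r=\bar{\mathcal{J}}_{\sigma^2}(\lambda^{l_\beta-1})\le\bar{\mathcal{J}}_{\sigma^2}(1)=1$; the $\zeta_j$ are positive and strictly increasing because $\bar\sigma_1>\dots>\bar\sigma_m$ makes $l\mapsto\beta_c(\bar\sigma_l)$ strictly increasing; and $\zeta_{r-1}=\beta_c(\bar\sigma_{l_\beta-1})/\beta<1$ directly from the definition \eqref{eq:l.beta} of $l_\beta$ as the smallest index with $\beta\le\beta_c(\bar\sigma_{l_\beta})$. (The case $l_\beta=1$, where $r=0$ and $\mu_\beta$ collapses to a single atom, is the degenerate RPC and should be dispatched in a line.)

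I do not expect a genuine mathematical obstacle, since the substantive content --- the RPC characterization and the ultrametricity behind it --- is cited rather than reproved; the only places that demand care are confirming that the hypotheses of the cited theorem are met verbatim and keeping the indexing straight, in particular the off-by-one between the $l_\beta$ atoms of the limiting two-overlap distribution and the $r=l_\beta-1$ branching levels of the cascade, and the convention (used throughout spin-glass theory) that the $\zeta_j$ record values of the normalized overlap distribution rather than raw Poisson--Dirichlet exponents.
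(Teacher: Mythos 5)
Your proof is correct and follows essentially the same route as the paper: both rest on citing the characterization of Ruelle probability cascades via the extended Ghirlanda--Guerra identities (Theorem 2.13 in \cite{MR3052333}, equivalently the argument in \cite{MR2070334} once ultrametricity is in hand) and then matching the atoms of the limiting two-overlap distribution to $\{q_j\}$ and its CDF values to $\{\zeta_j\}$. The extra care you take with the admissibility checks \eqref{eq:RPC.parameters}, the off-by-one indexing, and the degenerate case $l_\beta=1$ is sound but beyond what the paper's two-line proof spells out.
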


    \begin{proof}
        The proof follows directly from Theorem 2.13 in \cite{MR3052333} or from the proof of Theorem 1.13 in \cite{MR2070334} (once we have the ultrametricity).
        We simply need to match the parameters so that $\{q_j\}_{j=0}^r$ are the atoms of $E\mu_{\beta}^{\times 2}(R_{1,2}\in \cdot)$ and $\{\nabla \zeta_j\}_{j=0}^r$ are the corresponding probabilities.
    \end{proof}

\newpage
\section{Proofs of the main results}\label{sec:proof.main.results}

    Throughout the proofs, $c, C, \widetilde{C}, etc.,$ will denote positive constants whose value can change from line to line and can only depend on the parameters $(\boldsymbol{\sigma},\boldsymbol{\lambda})$, unless additional variables are specified. Equations are always implicitly stated to hold for $N$ large enough when needed.

    \subsection{Computation of the limiting free energy}\label{sec:free.energy}

    Theorem \ref{thm:IGFF.free.energy} is a direct consequence of Lemma \ref{lem:IGFF.free.energy.prob}, which shows $f_N^{\psi}(\beta) \rightarrow f^{\psi}(\beta)$ in probability, and Lemma \ref{lem:IGFF.free.energy.uniform.integrability}, which shows the uniform integrability of the sequence $\{|f_N^{\psi}(\beta)|^p\}_{N\in \N}$ for all $p\in [1,\infty)$.

    \begin{lemma}[Convergence in probability of the free energy]\label{lem:IGFF.free.energy.prob}
        Let $\eta > 0$ and $\beta > 0$. There exists $c = c(\eta,\beta,\boldsymbol{\sigma},\boldsymbol{\lambda}) > 0$ such that for $N$ large enough,
        \begin{equation}
            \prob{}{|f_N^{\psi}(\beta) - f^{\psi}(\beta)| > \eta} \leq N^{-c}.
        \end{equation}
    \end{lemma}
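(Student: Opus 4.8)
The plan is to derive the concentration estimate from the two ingredients already available in the paper: the first-order asymptotics of the maximum (Theorem~\ref{thm:IGFF.order1}) and the log-count of $\gamma$-high points (Theorem~\ref{thm:IGFF.high.points}), together with the identification of the variational formula in \eqref{eq:thm:IGFF.free.energy}. The overall strategy is a standard ``slicing'' or coarea-type argument for the partition function $Z_N^\psi(\beta) = \sum_{v\in V_N} e^{\beta\psi_v}$, carried out on the event of probability $\geq 1 - N^{-c}$ where all the estimates \eqref{eq:IGFF.order1.upper.bound}, \eqref{eq:IGFF.order1.lower.bound}, \eqref{eq:IGFF.high.points.upper.bound}, \eqref{eq:IGFF.high.points.lower.bound} hold simultaneously (a finite intersection of such events, hence still of probability $\geq 1 - N^{-c'}$ for a possibly smaller $c'>0$).

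First I would fix a small $\varepsilon > 0$ and a fine partition $0 = \gamma_0 < \gamma_1 < \dots < \gamma_K = \gamma^\star$ of $[0,\gamma^\star]$ with mesh $O(\varepsilon)$, and write $Z_N^\psi(\beta) = \sum_{k} \sum_{v : \gamma_{k-1}\log N^2 \leq \psi_v < \gamma_k \log N^2} e^{\beta\psi_v} + (\text{contribution of } v \text{ with } \psi_v \geq \gamma^\star \log N^2 \text{ or } \psi_v < 0)$. On the good event, the high-end tail is controlled by \eqref{eq:IGFF.order1.upper.bound}: there are no points with $\psi_v \geq (1+\varepsilon)\gamma^\star\log N^2$, so that piece contributes at most (something like) $N^2 e^{\beta(1+\varepsilon)\gamma^\star\log N^2}$, which after dividing by $\log N^2$ and taking logs is $\leq \beta(1+\varepsilon)\gamma^\star + o(1)$; and the low-end (negative $\psi_v$) contributes at most $N^2$, i.e. at most $\mathcal{E}(0) = 1 \leq f^\psi(\beta)$ after normalization, so it is negligible relative to the maximizer. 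For the bulk slices, on the good event the number of $v$ with $\psi_v \in [\gamma_{k-1}\log N^2, \gamma_k\log N^2)$ is at most $|\HH_N(\gamma_{k-1})| \leq N^{2\mathcal{E}(\gamma_{k-1}) + \varepsilon}$, and each summand is at most $e^{\beta\gamma_k\log N^2}$, so the $k$-th slice contributes at most $N^{2\mathcal{E}(\gamma_{k-1}) + \varepsilon} \cdot N^{2\beta\gamma_k}$; summing over the $O(1/\varepsilon)$ slices and using $\max_k$, we get $f_N^\psi(\beta) \leq \max_{k}(\beta\gamma_k + \mathcal{E}(\gamma_{k-1})) + \varepsilon + o(1)$. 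Continuity of $\gamma\mapsto \mathcal{E}(\gamma)$ (piecewise quadratic, from \eqref{eq:entropy}, and continuous at the $\gamma^l$) lets me send $\varepsilon\to 0$ to obtain the upper bound $f_N^\psi(\beta) \leq f^\psi(\beta) + \eta$ on the good event.

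For the matching lower bound, I would localize to a single well-chosen slice: let $\gamma^\dagger \in [0,\gamma^\star]$ be a (near-)maximizer of $\gamma\mapsto \beta\gamma + \mathcal{E}(\gamma)$, which by \eqref{eq:thm:IGFF.free.energy} and Lemma~\ref{lem:tech.lemma.2} exists in the closed interval. If $\gamma^\dagger \in [0,\gamma^\star)$, then on the good event \eqref{eq:IGFF.high.points.lower.bound} gives $|\HH_N(\gamma^\dagger)| \geq N^{2\mathcal{E}(\gamma^\dagger) - \varepsilon}$ points each with $\psi_v \geq \gamma^\dagger\log N^2$, whence $Z_N^\psi(\beta) \geq N^{2\mathcal{E}(\gamma^\dagger) - \varepsilon} e^{\beta\gamma^\dagger\log N^2}$ and $f_N^\psi(\beta) \geq \beta\gamma^\dagger + \mathcal{E}(\gamma^\dagger) - \varepsilon = f^\psi(\beta) - \varepsilon$. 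The boundary case $\gamma^\dagger = \gamma^\star$ (which happens when $\beta$ is large, $l_\beta = m+1$) requires a touch more care since \eqref{eq:IGFF.high.points.lower.bound} is only stated for $\gamma < \gamma^\star$: here I would instead take $\gamma^\dagger = \gamma^\star - \delta$ for small $\delta$, use the lower bound there, and exploit the continuity of $\gamma\mapsto \beta\gamma+\mathcal{E}(\gamma)$ plus the fact that $\mathcal{E}(\gamma^\star) = 0$ and the maximum value $f^\psi(\beta) = \beta\gamma^\star$ is approached continuously, so $\beta(\gamma^\star-\delta) + \mathcal{E}(\gamma^\star-\delta) \to \beta\gamma^\star$ as $\delta\to 0$. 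Combining the two bounds, $|f_N^\psi(\beta) - f^\psi(\beta)| \leq \eta$ on an event of probability $\geq 1 - N^{-c}$, which is the claim.

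I expect the main obstacle to be purely bookkeeping rather than conceptual: tracking the $\varepsilon$'s and $\delta$'s uniformly across the $O(1/\varepsilon)$ slices so that the error terms genuinely vanish, handling the $\gamma=\gamma^\star$ endpoint where the lower bound on high points degenerates, and confirming via Lemma~\ref{lem:tech.lemma.1} and Lemma~\ref{lem:tech.lemma.2} that the variational problem $\max_{\gamma\in[0,\gamma^\star]}(\beta\gamma + \mathcal{E}(\gamma))$ indeed equals $\sum_{j=1}^m f^{\mathrm{REM}(\bar\sigma_j)}(\beta)\,\nabla\lambda^j = f^\psi(\beta)$ — but that identity is asserted in Theorem~\ref{thm:IGFF.free.energy} and can be cited. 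One subtlety worth a sentence in the actual proof: the constant $c$ degrades as $\eta \to 0$ (equivalently as $\varepsilon\to 0$) because the constants in Theorems~\ref{thm:IGFF.order1} and~\ref{thm:IGFF.high.points} depend on $\varepsilon$, but for each fixed $\eta$ this is harmless.
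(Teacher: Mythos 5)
Your overall strategy is the same as the paper's: slice $[0,\gamma^\star]$ into a fine grid, bound the partition function slice-by-slice using the high-point counts, close off the extremes with the max bound, then send the mesh to zero using the continuity of $\mathcal{E}$ (Lemma~\ref{lem:tech.lemma.1}) and identify the variational formula via Lemma~\ref{lem:tech.lemma.2}. The paper's upper bound works with $\HH_N^{\mathrm{abs}}(\gamma)=\{v:|\psi_v|\ge\gamma\log N^2\}$ and an Abel-summation rearrangement, which bundles the negative-$\psi_v$ contribution into the $\gamma_0=0$ slice instead of treating it separately as you do; that is a cosmetic difference. Your lower bound is a touch more economical than the paper's: you localize to a single near-maximizing $\gamma^\dagger$ and apply~\eqref{eq:IGFF.high.points.lower.bound} once, whereas the paper again runs the full Abel sum and extracts the discrete max. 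Both routes are valid, and your treatment of the endpoint $\gamma^\dagger=\gamma^\star$ via $\gamma^\star-\delta$ and continuity is exactly right.

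There is, however, one genuine slip in your upper bound that needs fixing, not just tidying. For the ``high-end tail'' $\{v:\gamma^\star\log N^2\le\psi_v<(1+\varepsilon)\gamma^\star\log N^2\}$ you bound the count by $N^2$ and the value by $N^{2\beta(1+\varepsilon)\gamma^\star}$, and then claim the normalized contribution is $\beta(1+\varepsilon)\gamma^\star+o(1)$. It isn't: $\log\bigl(N^2\cdot N^{2\beta(1+\varepsilon)\gamma^\star}\bigr)/\log N^2 = 1+\beta(1+\varepsilon)\gamma^\star$, and that extra $1$ does not vanish. When $l_\beta=m+1$ one has $f^\psi(\beta)=\beta\gamma^\star$, so your bound overshoots by $1+\beta\varepsilon\gamma^\star>1$, which cannot be absorbed into a small $\eta$. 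The fix is to not use the crude count $N^2$ for this slice: instead apply~\eqref{eq:IGFF.high.points.upper.bound} at $\gamma=\gamma^\star$ and use $\mathcal{E}(\gamma^\star)=0$ (visible from the formula~\eqref{eq:entropy}, and needed in Lemma~\ref{lem:tech.lemma.2}) to get $|\HH_N(\gamma^\star)|\le N^{\varepsilon}$ on the good event. Then the tail contributes at most $N^{\varepsilon}N^{2\beta(1+\varepsilon)\gamma^\star}$, whose normalized log is $\varepsilon/2+\beta(1+\varepsilon)\gamma^\star\le f^\psi(\beta)+O(\varepsilon)$. This is exactly what the paper's $U_{N,K,\eta}$ event builds in by including the $i=K$ term $|\HH_N^{\mathrm{abs}}(\gamma^\star)|<N^{2\mathcal{E}(\gamma^\star)+\eta}=N^\eta$. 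With that correction, and the uniformity-in-$K$ bookkeeping you already flagged, the argument goes through.
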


    \begin{proof}
        Fix $\eta > 0$ and $\beta > 0$. For all $i\in \{0,1,...,K+1\}$, define $\gamma_i \circeq i \gamma^{\star} / K$.
        We will choose $K\in \N$ large enough later.
        We prove the upper bound first.
        Define
        \begin{equation}
            \HH_N^{\text{abs}}(\gamma) \circeq \{v\in V_N : |\psi_v| \geq \gamma \log N^2\}.
        \end{equation}
        From \eqref{eq:IGFF.order1.upper.bound}, \eqref{eq:IGFF.high.points.upper.bound}, and the symmetry of Gaussian densities, the event
        \begin{equation}
            U_{N,K,\eta} \circeq \bigcap_{i=0}^K \left\{|\HH_N^{\text{abs}}(\gamma_i)| < N^{2 \mathcal{E}(\gamma_i) + \eta}\right\} \bigcap \left\{\max_{v\in V_N} |\psi_v| < \frac{K+1}{K} \gamma^{\star} \log N^2\right\}
        \end{equation}
        satisfies $\prob{}{U_{N,K,\eta}^c} \leq N^{-c(K,\eta,\bb{\sigma},\bb{\lambda})}$ for any given $K$.
        On the event $U_{N,K,\eta}$,
        \vspace{-2mm}
        \begin{align}\label{eq:free.energy.eq.Z.upper}
            Z_N^{\psi}(\beta)
            &\circeq \sum_{v\in V_N} e^{\beta \psi_v}
            \leq \sum_{i=1}^{K+1} (|\HH_N^{\text{abs}}(\gamma_{i-1})| - |\HH_N^{\text{abs}}(\gamma_i)|) N^{2 \beta \gamma_i} \notag \\
            &= \sum_{i=1}^K (N^{2 \beta \gamma_{i+1}} - N^{2 \beta \gamma_i}) |\HH_N^{\text{abs}}(\gamma_i)| + N^{2 \beta \gamma_1} |\HH_N^{\text{abs}}(\gamma_0)| \notag \\
            &\leq N^{2 \beta \gamma^{\star} \hspace{-0.7mm}/\hspace{-0.2mm} K} \sum_{i=0}^K N^{2 \beta \gamma_i} |\HH_N^{\text{abs}}(\gamma_i)|.
        \end{align}
        We used the fact that $|\HH_N^{\text{abs}}(\gamma_{K+1})| = 0$ to obtain the second equality.
        Therefore, on $U_{N,K,\eta}$,
        \begin{align}
            f_N^{\psi}(\beta)
            &\stackrel{\eqref{eq:free.energy.eq.Z.upper}}{\leq} \frac{\beta \gamma^{\star}}{K} + \frac{\log(K+1)}{\log N^2} + \max_{0 \leq i \leq K} (\beta \gamma_i + \mathcal{E}(\gamma_i)) + \frac{\eta}{2} \notag \\
            &\stackrel{\phantom{\eqref{eq:free.energy.eq.Z.upper}}}{\leq} \max_{0 \leq i \leq K} (\beta \gamma_i + \mathcal{E}(\gamma_i)) + \eta \hspace{7mm}
                \begin{array}{l}
                    \text{for } K ~\text{large enough with respect} \\
                    \text{to $\eta$ and $\beta$, and $N$ large enough} \\
                    \text{with respect to $K$ and $\eta$},
                \end{array} \notag \\
            &\stackrel{\phantom{\eqref{eq:free.energy.eq.Z.upper}}}{\leq} \max_{\gamma\in [0,\gamma^{\star}]} (\beta \gamma + \mathcal{E}(\gamma)) + \eta \notag \\
            &\stackrel{\phantom{\eqref{eq:free.energy.eq.Z.upper}}}{=} f^{\psi}(\beta) + \eta \hspace{27.2mm}
                \begin{array}{l}
                    \text{by Lemma } \ref{lem:tech.lemma.2}.
                \end{array}
        \end{align}
        Thus, for $K$ large enough (fixed, depending on $\eta$ and $\beta$), we have
        \begin{equation}\label{lem:IGFF.free.energy.prob.upper.bound}
            \prob{}{f_N^{\psi}(\beta) > f^{\psi}(\beta) + \eta} \leq \prob{}{U_{N,K,\eta}^c} \leq N^{-c(K,\eta,\bb{\sigma},\bb{\lambda})}.
        \end{equation}

        We now prove the lower bound.
        Recall that
        \begin{equation}
            \HH_N(\gamma) \circeq \{v\in V_N : \psi_v \geq \gamma \log N^2\}.
        \end{equation}
        From \eqref{eq:IGFF.order1.upper.bound} and \eqref{eq:IGFF.high.points.lower.bound}, the event
        \begin{equation}
            B_{N,K,\eta} \circeq \bigcap_{i=1}^{K-1} \left\{|\HH_N(\gamma_i)| \geq N^{2 \mathcal{E}(\gamma_i) - \eta}\right\} \bigcap \left\{\max_{v\in V_N} \psi_v < \frac{K+1}{K} \gamma^{\star} \log N^2\right\}
        \end{equation}
        satisfies $\prob{}{B_{N,K,\eta}^c} \leq N^{-c(K,\eta,\bb{\sigma},\bb{\lambda})}$ for any given $K$.
        On the event $B_{N,K,\eta}$,
        \vspace{-2mm}
        \begin{align}\label{eq:free.energy.eq.Z.lower}
            Z_N^{\psi}(\beta)
            &\circeq \sum_{v\in V_N} e^{\beta \psi_v}
            \geq \sum_{i=1}^{K+1} (|\HH_N(\gamma_{i-1})| - |\HH_N(\gamma_i)|) N^{2 \beta \gamma_{i-1}} \notag \\
            &= \sum_{i=1}^K (N^{2 \beta \gamma_i} - N^{2 \beta \gamma_{i-1}}) |\HH_N(\gamma_i)| + N^{2 \beta \gamma_0} |\HH_N(\gamma_0)| \notag \\
            &\geq \frac{1}{2} \sum_{i=1}^{K-1} N^{2 \beta \gamma_i} |\HH_N(\gamma_i)|.
        \end{align}
        We used the fact that $|\mathcal{H}_N(\gamma_{K+1})| = 0$ to obtain the second equality.
        We dropped the $0$-th and $K$-th summands to obtain the last inequality and took $N$ large enough that $1 - N^{-2\beta \gamma^{\star}\hspace{-0.7mm} / \hspace{-0.2mm}K} \geq 1/2$.
        Therefore, on $B_{N,K,\eta}$,
        \begin{align}
            f_N^{\psi}(\beta)
            &\stackrel{\eqref{eq:free.energy.eq.Z.lower}}{\geq} \max_{1 \leq i \leq K-1} (\beta \gamma_i + \mathcal{E}(\gamma_i)) - \frac{\eta}{2} - \frac{\log 2}{\log N^2} \notag \\
            &\stackrel{\phantom{\eqref{eq:free.energy.eq.Z.lower}}}{\geq} \max_{1 \leq i \leq K-1} (\beta \gamma_i + \mathcal{E}(\gamma_i)) - \frac{3\eta}{4} \hspace{7mm}
                \begin{array}{l}
                    \text{for } N ~\text{large enough} \\
                    \text{with respect to $\eta$},
                \end{array} \notag \\
            &\stackrel{\phantom{\eqref{eq:free.energy.eq.Z.lower}}}{\geq} \max_{\gamma\in [0,\gamma^{\star}]} (\beta \gamma + \mathcal{E}(\gamma)) - \eta \hspace{13.9mm}
                \begin{array}{l}
                    \text{for } K ~\text{large enough with respect} \\
                    \text{to $\eta$ and $\beta$ since $\gamma\mapsto (\beta\gamma + \mathcal{E}(\gamma))$} \\
                    \text{is continuous by Lemma \ref{lem:tech.lemma.1}},
                \end{array} \notag \\
            &\stackrel{\phantom{\eqref{eq:free.energy.eq.Z.lower}}}{=} f^{\psi}(\beta) - \eta \hspace{33.3mm}
                \begin{array}{l}
                    \text{by Lemma } \ref{lem:tech.lemma.2}.
                \end{array}
        \end{align}
        Thus, for $K$ large enough (fixed, depending on $\eta$ and $\beta$), we have
        \begin{equation}\label{lem:IGFF.free.energy.prob.lower.bound}
            \prob{}{f_N^{\psi}(\beta) < f^{\psi}(\beta) - \eta} \leq \prob{}{B_{N,K,\eta}^c} \leq N^{-c(K,\eta,\bb{\sigma},\bb{\lambda})}.
        \end{equation}
        Equations \eqref{lem:IGFF.free.energy.prob.upper.bound} and \eqref{lem:IGFF.free.energy.prob.lower.bound} together prove the lemma.
    \end{proof}

    For the uniform integrability, we follow the proof of \cite{MR883541}, originally given in the context of the GREM.

    \begin{lemma}[Uniform integrability of $\{|f_N^{\psi}(\beta)|^p\}_{N\in \N}$]\label{lem:IGFF.free.energy.uniform.integrability}
        Let $\beta > 0$ and $1 \leq p < \infty$. Then,
        \begin{equation}
            \lim_{\alpha \rightarrow \infty} \sup_{N\in \N} \esp{}{|f_N^{\psi}(\beta)|^p ~\bb{1}_{\{|f_N^{\psi}(\beta)|^p > \alpha\}}} = 0.
        \end{equation}
    \end{lemma}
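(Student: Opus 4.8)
The plan is to establish uniform integrability by obtaining a uniform (in $N$) exponential-moment-type bound on $|f_N^\psi(\beta)|$, which by a standard argument implies $\sup_N \esp{}{|f_N^\psi(\beta)|^{p+1}} < \infty$, and hence the desired uniform integrability of $\{|f_N^\psi(\beta)|^p\}$. Concretely, it suffices to prove that there exist constants $c, C > 0$, independent of $N$, such that $\prob{}{|f_N^\psi(\beta)| > t} \leq C e^{-c t \log N^2}$ for all $t$ large enough; then $\esp{}{|f_N^\psi(\beta)|^{p+1}}$ is bounded by a convergent integral uniformly in $N$, and $\{|f_N^\psi(\beta)|^p\}$ is bounded in $L^{1+1/p}$, hence uniformly integrable.

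The first step is the upper tail. From the deterministic bound $Z_N^\psi(\beta) \leq |V_N| \, e^{\beta \max_{v} \psi_v} \leq (N+1)^2 e^{\beta \max_v \psi_v}$, we get $f_N^\psi(\beta) \leq 1 + \beta \max_{v\in V_N}\psi_v / \log N^2 + o(1)$, so for $t > 2 + \beta\gamma^\star$, say, the event $\{f_N^\psi(\beta) > t\}$ is contained in $\{\max_v \psi_v \geq (1+\varepsilon)\gamma^\star \log N^2\}$ for a suitable $\varepsilon = \varepsilon(t)$, and \eqref{eq:IGFF.order1.upper.bound} gives a bound of the form $N^{-c}$; to get a genuine exponential-in-$t$ decay one re-examines the Gaussian tail behind Theorem \ref{thm:IGFF.order1} (or simply uses a union bound with the single-variable Gaussian tail $\prob{}{\psi_v \geq s} \leq e^{-s^2 / (2\mathcal{J}_{\sigma^2}(1)\log N + 2C_0)}$ for each of the $(N+1)^2$ sites), which yields $\prob{}{\max_v \psi_v \geq a \log N^2} \leq (N+1)^2 N^{-2 a^2 / (\mathcal{J}_{\sigma^2}(1) + o(1))}$, decaying faster than any power of $N$ as $a \to \infty$.

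The second step is the lower tail, and this is where I expect the main obstacle. The trivial bound $Z_N^\psi(\beta) \geq e^{\beta \psi_{v_0}}$ for a fixed interior site $v_0$ gives $f_N^\psi(\beta) \geq \beta \psi_{v_0} / \log N^2$, and since $\psi_{v_0}$ is a single centered Gaussian with variance of order $\log N$, $\prob{}{f_N^\psi(\beta) < -t} \leq \prob{}{\psi_{v_0} < -t\log N^2 / \beta} \leq \exp(-c t^2 \log N^2 / \beta^2)$, which is more than enough decay. However, one must also rule out that $f_N^\psi(\beta)$ is, say, a large \emph{positive} number with non-negligible probability in a way that is not yet controlled — but this is exactly the upper tail already handled. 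The only genuinely delicate point is ensuring the constants $c$ in the tail bounds do not degenerate as $t$ grows; this is handled by the explicit Gaussian computations above rather than by invoking \eqref{eq:IGFF.order1.upper.bound} as a black box. Assembling the two one-sided bounds, $\prob{}{|f_N^\psi(\beta)| > t} \leq C e^{-c t \log N^2}$ for $t$ large, uniformly in $N\geq N_0$; the finitely many $N < N_0$ contribute a finite family of integrable random variables, completing the proof of uniform integrability.
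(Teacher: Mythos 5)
Your proof is correct and relies on the same key ingredients as the paper's: the sandwich $\beta\max_{v\in V_N}\psi_v/\log N^2 \leq f_N^{\psi}(\beta) \leq \beta\max_{v\in V_N}\psi_v/\log N^2 + 2$ together with the union bound and the Gaussian tail estimate coming from the uniform variance bound of Lemma \ref{lem:IGFF.variance.uniform.bound}. The only differences are cosmetic: the paper directly estimates the truncated $p$-th moment via a layered sum over levels $l\alpha^{1/p}$, whereas you establish uniform $L^{p+1}$-boundedness (which equivalently yields the uniform integrability of the $p$-th powers), and your lower-tail bound via a single interior site is a slight simplification of the paper's bound in terms of $\max_{v\in V_N}|\psi_v|$.
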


    \begin{proof}
        By definition, $f_N^{\psi}(\beta) \circeq \frac{1}{\log N^2} \log \sum_{v\in V_N} e^{\beta \psi_v}$.
        Bound from above every summand by the maximum summand and bound from below by keeping only the maximum summand. If $\xi_N \circeq \max_{v\in V_N} \psi_v / (\log N^2)$, it is easily seen that for $N \geq 2$,
        \begin{equation}\label{eq:cappocaccia.boun}
            \beta \xi_N \leq f_N^{\psi}(\beta) \leq \beta \xi_N + \frac{\log (N+1)^2}{\log N^2} \leq \beta \xi_N + 2.
        \end{equation}
        Assume that $\alpha^{1/p} - 2 > 0$. By splitting the event $\{|f_N^{\psi}(\beta)| > \alpha^{1/p}\}$ in two parts : $\{f_N^{\psi}(\beta) > \alpha^{1/p}\}$ and $\{-f_N^{\psi}(\beta) > \alpha^{1/p}\}$, and then using \eqref{eq:cappocaccia.boun}, we deduce
        \begin{align}\label{eq:lem:IGFF.free.energy.uniform.integrability.sum}
            &\esp{}{|f_N^{\psi}(\beta)|^p ~\bb{1}_{\{|f_N^{\psi}(\beta)| > \alpha^{1/p}\}}} \notag \\
            &\quad\leq \mathbb{E}\Big[(\beta\xi_N + 2)^p ~\bb{1}_{\{\beta\xi_N + 2 > \alpha^{1/p}\}}\Big] + \mathbb{E}\Big[(-\beta\xi_N)^p ~\bb{1}_{\{-\beta\xi_N > \alpha^{1/p}\}}\Big] \notag \\
            &\quad= \sum_{l=1}^{\infty} \mathbb{E}\Big[(\beta\xi_N + 2)^p ~\bb{1}_{\{(l+1) \alpha^{1/p} \geq \beta\xi_N + 2 > l \alpha^{1/p}\}}\Big] \notag \\
            &\quad\quad+ \sum_{l=1}^{\infty} \mathbb{E}\Big[(-\beta\xi_N)^p ~\bb{1}_{\{(l+1) \alpha^{1/p} \geq -\beta\xi_N > l \alpha^{1/p}\}}\Big] \notag \\
            &\quad\leq 2 \sum_{l=1}^{\infty} (l+1)^p\, \alpha ~\mathbb{P}\Big(|\xi_N| > \frac{1}{\beta}(l \alpha^{1/p} - 2)\Big).
        \end{align}
        Note that $|\xi_N| \leq \max_{v\in V_N} |\psi_v| / (\log N^2)$, and $\max_{v\in V_N} \var{}{\psi_v} \leq \mathcal{J}_{\sigma^2}(1) \log N + C_0$ by Lemma \ref{lem:IGFF.variance.uniform.bound}. Therefore, for all $l\in \N$, a union bound and a standard Gaussian tail estimate yield (when $N$ is large enough, say $N \geq N_0 \geq 2$)
        \begin{align}\label{eq:lem:IGFF.free.energy.uniform.integrability.gaussian.estimate}
            \mathbb{P}\Big(|\xi_N| > \frac{1}{\beta} (l \alpha^{1/p} - 2)\Big)
            &\leq (N+1)^2 \max_{v\in V_N} 2\, \mathbb{P}\Big(\psi_v > \frac{1}{\beta} (l \alpha^{1/p} - 2) \log N^2\Big) \notag \\
            &\leq (N+1)^2 N^{- 2 \frac{(l \alpha^{1/p} - 2)^2}{\beta^2 \mathcal{J}_{\sigma^2}(1)}} \notag \\
            &\leq (N+1)^2 N^{-2 \frac{(\alpha^{1/p} - 2)^2}{\beta^2 \mathcal{J}_{\sigma^2}(1)}} N^{- 2 \frac{(l - 1)^2 \alpha^{2/p}}{\beta^2 \mathcal{J}_{\sigma^2}(1)}}.
        \end{align}
        To obtain the last inequality, we wrote $(l\alpha^{1/p} - 2)^2 = (\alpha^{1/p} - 2 + (l-1)\alpha^{1/p})^2$ and used $(a+b)^2 \geq a^2 + b^2, ~a,b \geq 0$.
        If we further assume that $(\alpha^{1/p} - 2)^2 > \beta^2 \mathcal{J}_{\sigma^2}(1)$, the sum in \eqref{eq:lem:IGFF.free.energy.uniform.integrability.sum} tends to $0$ as $\alpha \rightarrow \infty$, uniformly for $N \geq N_0$.
    \end{proof}

    \begin{proof}[Proof of Theorem \ref{thm:IGFF.free.energy.A.N.p}]
        Since $x \mapsto \log x$ is an increasing function and $A_{N,\rho} \subseteq V_N$, we have the upper bound on the limit in probability :
        \begin{equation}
            f_{N,\rho}^{\psi}(\beta) \leq \frac{1}{\log N^2} \log \sum_{v\in V_N} e^{\beta \psi_v} \circeq f_N^{\psi}(\beta) \stackrel{N\rightarrow \infty}{\longrightarrow} f^{\psi}(\beta).
        \end{equation}
        On the other hand, from Lemma A.2 in \cite{MR3541850} and the independence of the increments, we know that for any $\delta\in (0,1/2]$ and $j\in \{1,2,...,m\}$, then for $N$ large enough and all $v\in V_N^{\delta} \circeq \big\{v\in V_N : \min_{z\in \partial V_N} \|v - z\|_2 \geq \delta N\big\}$, we have
        \vspace{-1mm}
        \begin{equation}
            -C_1(\delta,\bb{\sigma}) \leq \var{}{\nabla \psi_v(\lambda^j)} - \bar{\sigma}_j^2 \nabla \lambda^j \log N \leq C_2(\bb{\sigma}).\vspace{1mm}
        \end{equation}
        Hence, from the remark at the end of Lemma 3.1 in \cite{MR3541850}, we know that Theorem \ref{thm:IGFF.order1} and Theorem \ref{thm:IGFF.high.points} (in this paper) hold on $V_N^{\delta}$; the proof is in fact easier. Since $A_{N,\rho} \supseteq V_N^{\delta}$ for $N$ large enough, we have
        \begin{equation}
            f_{N,\rho}^{\psi}(\beta) \circeq \frac{1}{\log N^2} \log \sum_{v\in A_{N,\rho}} \hspace{-1mm}e^{\beta \psi_v} \geq \frac{1}{\log N^2} \log \sum_{v\in V_N^{\delta}} e^{\beta \psi_v}.
        \end{equation}
        A rerun of the proof of the lower bound in Lemma \ref{lem:IGFF.free.energy.prob}, with $\mathcal{H}_N(\gamma)$ restricted to $V_N^{\delta}$, yields the conclusion.
    \end{proof}

    \subsection{The Gibbs measure near the boundary}\label{sec:Gibbs.measure.outside.A.N.p}

    The first step in the proof of Theorem \ref{thm:two.overlap.distribution.limit} is to show that the Gibbs measure does not carry any weight near the boundary of $V_N$ in the limit $N\rightarrow \infty$.
    For this purpose, recall
    \begin{equation}\label{eq:A.N.rho}
        A_{N,\rho} \circeq \Big\{v\in V_N : \min_{z\in \Z^2 \backslash V_N} \|v - z\|_2 \geq N^{1 - \rho}\Big\}, \quad \rho\in (0,1].
    \end{equation}
    This box contains the points in $V_N$ that are at least at a distance of $N^{1-\rho}$ from the exterior.
    The Gibbs measure of the $\para$-GFF restricted to $A_{N,\rho}$ is
    \begin{equation}\label{eq:IGFF.gibbs.measure.restricted}
        \mathcal{G}_{\beta,N,\rho}(\{v\}) \circeq \frac{e^{\beta \psi_v}}{Z_{N,\rho}^{\psi}(\beta)}, \quad v\in A_{N,\rho},
    \end{equation}
    where $Z_{N,\rho}^{\psi}(\beta) \circeq \sum_{v\in A_{N,\rho}} e^{\beta \psi_v}$.
    We start by proving an upper bound on the following quantity :
    \begin{equation}\label{eq:restricted.free.energy.definition}
        \widetilde{f}_{N,\rho}^{\psi}(\beta) \circeq \frac{1}{\log N^2} \log \hspace{-1mm}\sum_{v\in A_{N,\rho}^c} \hspace{-1mm}e^{\beta \psi_v}.
    \end{equation}

    \begin{lemma}\label{lem:restricted.free.energy.tech.lemma}
        Let $\eta > 0$, $\beta > 0$ and $\rho\in (0,\lambda_1)$.
        There exists $c = c(\eta,\beta,\rho,\boldsymbol{\sigma},\boldsymbol{\lambda}) > 0$ such that
        \begin{equation}
            \prob{}{\widetilde{f}_{N,\rho}^{\psi}(\beta) > (f^{\psi}(\beta) - \rho/2) + \eta} \leq N^{-c}
        \end{equation}
        for $N$ large enough.
    \end{lemma}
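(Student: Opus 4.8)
\emph{Proof plan.} The plan is to bound the free energy on the boundary strip by covering it with $O(N^{\rho})$ sub‑boxes that abut $\partial V_N$ and controlling each one through the optimisation problems for the maximum and the $\gamma$‑high points of a rescaled field, the extra $-\rho/2$ coming from the logarithmic cost of the union bound. Since $|A_{N,\rho}^c|\le C N^{2-\rho}$, one may cover $A_{N,\rho}^c$ by a family $\mathcal D$ of at most $C N^{\rho}$ boxes $D$ of side $\asymp N^{1-\rho}$, each touching $\partial V_N$. Then
\[
\widetilde f_{N,\rho}^{\psi}(\beta)\le \frac{\log|\mathcal D|}{\log N^2}+\max_{D\in\mathcal D}\frac{1}{\log N^2}\log\sum_{v\in D}e^{\beta\psi_v}\le \frac{\rho}{2}+o(1)+\max_{D\in\mathcal D}\frac{1}{\log N^2}\log\sum_{v\in D}e^{\beta\psi_v},
\]
so by a union bound over $\mathcal D$ it suffices to prove, for a single abutting box $D$, that $\tfrac{1}{\log N^2}\log\sum_{v\in D}e^{\beta\psi_v}\le f^{\psi}(\beta)-\rho+\eta/2$ with probability at least $1-N^{-c}$ (with $c>\rho$, which is available since the bounds below are first/second‑moment estimates carrying a free exponent).

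Next I would peel off the coarse part of the field. For $v\in D$ write $\psi_v=\psi_v(\rho)+\psi_v(\rho,1)$, the two terms being independent disjoint increments. Because $D$ abuts $\partial V_N$ and has side $\asymp N^{1-\rho}$, for every $v\in D$ the neighbourhood $[v]_\rho$ is cut by $\partial V_N$ down to effective size $\asymp\mathrm{dist}(v,\partial V_N)$, so $G_{V_N}(v,v)-G_{[v]_\rho}(v,v)=O(1)$ uniformly over $D$; since $\rho<\lambda_1$ this gives $\sup_{v\in D}\var{}{\psi_v(\rho)}=\sigma_1^2\sup_{v\in D}\bigl(G_{V_N}(v,v)-G_{[v]_\rho}(v,v)\bigr)\le C$. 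A union bound and a Gaussian tail estimate then yield $\max_{v\in D}|\psi_v(\rho)|\le C\sqrt{\log N}=o(\log N^2)$ with probability at least $1-N^{-c}$, whence
\[
\frac{1}{\log N^2}\log\sum_{v\in D}e^{\beta\psi_v}\le o(1)+\frac{1}{\log N^2}\log\sum_{v\in D}e^{\beta\psi_v(\rho,1)}.
\]

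For the fine part, the point is that $\{\psi_v(\rho,1)\}_{v\in D}$ has, uniformly over $v\in D$, all its increment variances bounded above by those of the $\para$‑GFF restricted to the scales $(\rho,1]$ (reparametrised to $(0,1]$ via $s\mapsto(s-\rho)/(1-\rho)$) on a box of side $N^{1-\rho}$; as in the previous step this reduces to Green‑function estimates for the cut boxes $[v]_\lambda\cap V_N$, $\lambda>\rho$. Consequently the proofs of the upper bounds \eqref{eq:IGFF.order1.upper.bound} and \eqref{eq:IGFF.high.points.upper.bound}, which use only variance upper bounds and which hold for this restricted, rescaled field by \cite{MR3541850} (cf. the remark at the end of Lemma~3.1 there, already invoked in the proof of Theorem~\ref{thm:IGFF.free.energy.A.N.p}), apply verbatim to $\{\psi_v(\rho,1)\}_{v\in D}$. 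Feeding these into a rerun of the layer‑cake computation of Lemma~\ref{lem:IGFF.free.energy.prob}, with $\log(N^{1-\rho})^2=(1-\rho)\log N^2$ as the normalisation and Lemmas~\ref{lem:tech.lemma.1}--\ref{lem:tech.lemma.2} to evaluate the resulting maximisation, gives, with probability at least $1-N^{-c}$,
\[
\frac{1}{\log N^2}\log\sum_{v\in D}e^{\beta\psi_v(\rho,1)}\le (1-\rho)\,f^{[\rho,1]}(\beta)+\frac{\eta}{4},
\]
where $f^{[\rho,1]}(\beta)$ denotes the limiting free energy, in the sense of Theorem~\ref{thm:IGFF.free.energy}, of the $\para$‑GFF restricted to the scales $(\rho,1]$.

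Finally it remains to verify the deterministic inequality $(1-\rho)\,f^{[\rho,1]}(\beta)\le f^{\psi}(\beta)-\rho$, and here I would use $\rho<\lambda_1$ once more: splitting the scales of $\psi$ at $\rho$ exhibits $\psi$ as the sum of an independent coarse level that is constant on boxes of side $N^{1-\rho}$ — hence, on the $N^{\rho}\times N^{\rho}$ grid of blocks, a homogeneous field of variance $\sigma_1^2\log N^{\rho}$ with the same limiting free energy as $\mathrm{REM}(\sigma_1)$ — and the fine field; the additivity of the GREM free energy over such a splitting (which can also be read off the explicit form \eqref{eq:thm:IGFF.free.energy}) gives $f^{\psi}(\beta)=\rho\,f^{\mathrm{REM}(\sigma_1)}(\beta)+(1-\rho)\,f^{[\rho,1]}(\beta)$, and since $f^{\mathrm{REM}(\sigma_1)}(\beta)\ge 1$ by \eqref{eq:REM.free.energy} we conclude $(1-\rho)\,f^{[\rho,1]}(\beta)=f^{\psi}(\beta)-\rho\,f^{\mathrm{REM}(\sigma_1)}(\beta)\le f^{\psi}(\beta)-\rho$. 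Collecting the three high‑probability events and letting the auxiliary $\varepsilon$'s be small compared to $\eta$ then finishes the proof. I expect the main obstacle to be precisely the two Green‑function estimates on boxes truncated by $\partial V_N$ used above — that on a sub‑box abutting $\partial V_N$ the coarse field $\psi_\cdot(\rho)$ has uniformly bounded variance, while $\psi_\cdot(\rho,1)$ has increment variances no larger than the interior profile of the rescaled field — which are of the type assembled in Appendix~\ref{sec:covariance.estimates} / Lemma~\ref{lem:IGFF.variance.estimates}; everything else is bookkeeping around Theorems~\ref{thm:IGFF.order1}, \ref{thm:IGFF.high.points} and \ref{thm:IGFF.free.energy}.
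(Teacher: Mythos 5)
Your route is genuinely different from the paper's. The paper does not cover $A_{N,\rho}^c$ with sub-boxes; instead it reruns the representative-counting arguments of \cite{MR3541850} directly on $A_{N,\rho}^c$, noting that there are only $O(N^{2(\lambda_i - \rho/2)})$ representatives at each scale $\lambda_i$ there (since $\rho < \lambda_1$). This shifts the Karush--Kuhn--Tucker constraints in the optimisation problems for $\gamma^\star$ and $\mathcal{E}(\gamma)$ from $\geq 0$ to $\geq \rho/2$, giving $\mathcal{E}_\rho(\gamma) \leq \mathcal{E}(\gamma) - \rho/2$ directly and hence $f^\psi(\beta) - \rho/2$ as the upper bound. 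That argument avoids both of the two steps in your proposal that I think are gaps.

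\emph{Gap 1: the deterministic ``additivity'' is false.} You close by invoking
\[
f^{\psi}(\beta) = \rho\, f^{\mathrm{REM}(\sigma_1)}(\beta) + (1-\rho)\, f^{[\rho,1]}(\beta),
\]
and say it ``can be read off the explicit form \eqref{eq:thm:IGFF.free.energy}''. That form is a sum of REM free energies over the \emph{effective} scales $\lambda^j$, and $\rho$ is not an effective scale (for $\rho<\lambda_1$ the concave hull generically lies strictly above $\mathcal{J}_{\sigma^2}$ at $\rho$, so the concavification of the fine field $(\sigma^{[\rho,1]},\cdot)$ is \emph{not} the restriction of $\hat{\mathcal{J}}_{\sigma^2}$). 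Concretely, take $M=2$, $\sigma_1=1$, $\sigma_2=2$, $\lambda_1=1/2$, $\rho=1/4$. Here $\mathcal{J}_{\sigma^2}$ is convex, so $m=1$ and $\bar{\sigma}_1 = \sqrt{5/2}$, giving $f^{\psi}(\beta) = f^{\mathrm{REM}(\sqrt{5/2})}(\beta)$. The rescaled fine field has speed $(\sigma^{[\rho,1]})^2 = 1$ on $(0,1/3]$ and $=4$ on $(1/3,1]$, still convex, so $\bar{\sigma}^{[\rho,1]}_1 = \sqrt{3}$ and $f^{[\rho,1]}(\beta) = f^{\mathrm{REM}(\sqrt{3})}(\beta)$. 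For $\beta$ large (linear regime everywhere) the right-hand side of your identity is $\tfrac14\beta + \tfrac34\sqrt{3}\,\beta = \tfrac{1+3\sqrt{3}}{4}\beta \approx 1.549\,\beta$, while $f^{\psi}(\beta) = \sqrt{5/2}\,\beta \approx 1.581\,\beta$: the equality fails. The inequality $(1-\rho)f^{[\rho,1]}(\beta) \leq f^{\psi}(\beta) - \rho$ that you actually need does appear to hold (it is tight for $\beta$ small and slack for $\beta$ large), but it has to be proved by a different argument — for instance via Jensen's inequality applied to the convex functional $\psi(\rho)\mapsto \tfrac{1}{\log N^2}\log\sum_v e^{\beta(\psi_v(\rho)+\psi_v(\rho,1))}$, which gives $f^{\psi}(\beta) \geq \lim_N \tfrac{1}{\log N^2}\log\sum_{v\in V_N} e^{\beta\psi_v(\rho,1)}$, plus a concentration argument to show the latter limit equals $\rho + (1-\rho)f^{[\rho,1]}(\beta)$. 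Your proposal does not supply this, and as written the justification is incorrect.

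\emph{Gap 2: the union-bound exponent.} You cover the strip with $\asymp N^{\rho}$ boxes of side $\asymp N^{1-\rho}$ and then want a single-box bound $\mathbb{P}\big(\tfrac{1}{\log N^2}\log\sum_{v\in D} e^{\beta\psi_v} > f^{\psi}(\beta)-\rho+\eta/2\big)\leq N^{-c}$ with $c>\rho$, asserting such a $c$ is ``available since the bounds below are first/second-moment estimates carrying a free exponent''. The high-points upper bound \eqref{eq:IGFF.high.points.upper.bound} comes (essentially) from Markov's inequality on $\EE|\mathcal{H}_N(\gamma)|$ and gives an exponent $c\approx\varepsilon$, not a free one; applied on a box of side $N'=N^{1-\rho}$ with slack $\varepsilon\asymp\eta$, you get $(N')^{-c(\eta)} = N^{-(1-\rho)c(\eta)}$, and $(1-\rho)c(\eta) > \rho$ fails whenever $\eta$ is small compared to $\rho$. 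Since the lemma is to hold for every $\eta>0$, this needs a genuine argument (e.g.\ a concentration inequality for $\log Z$ of Borell--TIS type that yields a large exponent for a fixed deviation, rather than the first-moment high-points bound). The paper sidesteps this entirely because it never decomposes $A_{N,\rho}^c$ into independent pieces and never performs this union bound.

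The coarse-field variance estimate (step 3) and the reduction of the fine field to a rescaled $(\boldsymbol{\sigma},\boldsymbol{\lambda})$-GFF on $D$ are sound in spirit and are consistent with Lemma~\ref{lem:IGFF.variance.uniform.bound} and the Green-function estimates in Appendix~\ref{sec:covariance.estimates}; the bookkeeping there is not the problem. The two issues above, however, are load-bearing, and the second in particular is the reason the paper's proof takes the optimisation-problem route instead of a covering route.
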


    \begin{proof}
        In \cite{MR3541850}, Theorem 1.2 and Theorem 1.3 prove that
        \begin{equation}\label{eq:lem:restricted.free.energy.tech.lemma.max}
            \lim_{N\rightarrow \infty} \frac{\max_{v\in V_N} \psi_v}{\log N^2} = \gamma^{\star}\hspace{-1mm}, \qquad \text{in probability},
        \end{equation}
        where
        \begin{equation*}
            \begin{array}{l}
                \gamma^{\star} = \max_{\gamma_1,\gamma_2,...,\gamma_M} \sum_{i=1}^M \nabla \gamma_i, \\[2mm]
                \text{under the constraints }
                \sum_{i=1}^k \left(\nabla \lambda_i - \frac{(\nabla \gamma_i)^2}{\sigma_i^2 \nabla \lambda_i}\right) \geq 0, \quad 1 \leq k \leq M,
            \end{array}
        \end{equation*}
        and, for all $0 \leq \gamma < \gamma^{\star}$,
        \begin{equation}\label{eq:lem:restricted.free.energy.tech.lemma.high.points}
            \lim_{N\rightarrow \infty} \frac{\log(|\HH_N(\gamma)|)}{\log N^2} = \mathcal{E}(\gamma), \qquad \text{in probability},
        \end{equation}
        where
        \begin{equation*}
            \begin{array}{l}
                \mathcal{E}(\gamma) = \max_{\gamma_1,\gamma_2,...,\gamma_{M-1}} \sum_{i=1}^{M-1} \left(\nabla \lambda_i - \frac{(\nabla \gamma_i)^2}{\sigma_i^2 \nabla \lambda_i}\right) + \left(\nabla \lambda_M - \frac{(\gamma - \gamma_{M-1})^2}{\sigma_M^2 \nabla \lambda_M}\right), \\[3mm]
                \text{under the constraints }
                \sum_{i=1}^k \left(\nabla \lambda_i - \frac{(\nabla \gamma_i)^2}{\sigma_i^2 \nabla \lambda_i}\right) \geq 0, \quad 1 \leq k \leq M-1.
            \end{array}
        \end{equation*}
        The unique solution of each optimization problem is rigorously found in Appendix A of \cite{Ouimet2014master} by using the Karush-Kuhn-Tucker theorem, and the solutions are shown to coincide with \eqref{eq:lem:restricted.free.energy.tech.lemma.max} and \eqref{eq:lem:restricted.free.energy.tech.lemma.high.points} in \cite{MR3541850}.

        Now, to bound $\widetilde{f}_{N,\rho}^{\psi}(\beta)$ from above, we need to find the analogues of \eqref{eq:lem:restricted.free.energy.tech.lemma.max} and \eqref{eq:lem:restricted.free.energy.tech.lemma.high.points} on $A_{N,\rho}^c$ instead of $V_N$.
        To this end, we recall the {\it set of representatives at scale $\lambda$} from \cite{MR3541850}, denoted by $R_{\lambda}$. Loosely speaking, at a given scale $\lambda$, the points in $R_{\lambda}\subseteq V_N$ represent the $O(N^{2\lambda})$ nods of the underlying branching quaternary tree structure of the GFF, see Figure \ref{fig:branching.structure.GFF}. This branching structure is motivated by the fact that if $v_{\lambda}$ denotes the representative at scale $\lambda > 0$ that is closest to $v$, then, from Lemma A.6 in \cite{MR3541850}, we know that $\max_{v\in V_N} \var{}{\psi_v(\lambda) - \psi_{v_{\lambda}}(\lambda)} \leq C$, for $N$ large enough.

        \begin{figure}[H]
            \includegraphics[width=70mm]{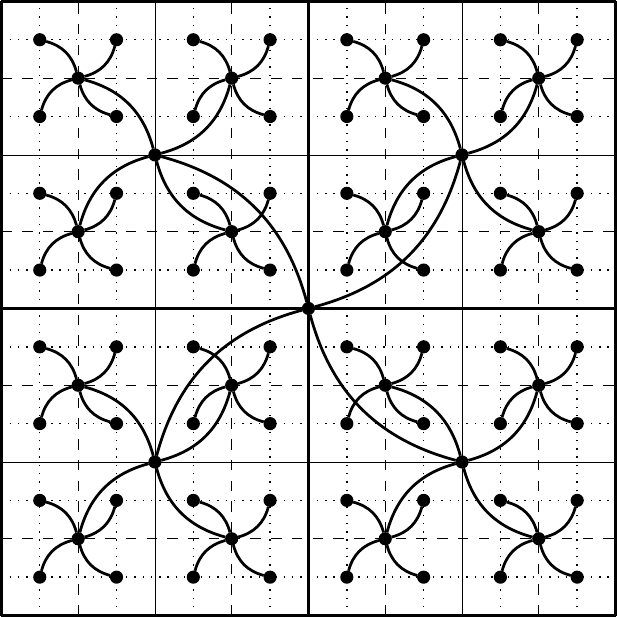}
            \captionsetup{width=0.8\textwidth}
            \caption{The representatives at scale $0, 1/4, 1/2$ and $3/4$.}
            \label{fig:branching.structure.GFF}
        \end{figure}

        More precisely, let $R_1 \circeq V_N$, and for $\lambda\in [0,1)$, the set $R_{\lambda}$ contains $\lfloor N^{\lambda} \rfloor^2$ $v$'s with neighborhoods $[v]_{\lambda}$ that can only touch at their boundary (if they do touch) and are not cut off by $\partial V_N$.
        To remove any ambiguity, define $R_{\lambda}$ in such a way that $\max_{v\in V_N} \min_{z\in R_{\lambda}} \|v - z\|_2$ is minimized.
        For instance, if $N = 2^n$, $\lambda\in [0,1)$ and $\lambda n\in \N_0$, then divide $V_N$ into a grid with $N^{2\lambda}$ squares of side length $N^{1-\lambda}$; the center point of each square is a representative at scale $\lambda$.

        Since we assumed $\rho\in (0,\lambda_1)$, the only difference is that there are $O(N^{2(\lambda_i - \rho/2)})$ representatives at each scale $\lambda_i$ on $A_{N,\rho}^c$ ($\psi$ is still defined on $V_N$) instead of $O(N^{2\lambda_i})$. Therefore, a rerun of the proof of Lemma 3.1 and 3.4 in \cite{MR3541850} (note that only the upper bounds work) shows that for all $\varepsilon > 0$,
        \vspace{-1mm}
        \begin{equation}\label{eq:lem:restricted.free.energy.tech.lemma.max.restricted}
            \mathbb{P}\left(\max_{v\in A_{N,\rho}^c} \psi_v \geq (1 + \varepsilon) \gamma_{\rho}^{\star} \log N^2\right) \leq N^{-c(\varepsilon,\bb{\sigma},\bb{\lambda})},
        \end{equation}
        for $N$ large enough, where
        \begin{equation*}
            \begin{array}{l}
                \gamma_{\rho}^{\star} \circeq \max_{\gamma_1,\gamma_2,...,\gamma_M} \sum_{i=1}^M \nabla \gamma_i, \\[2mm]
                \text{under the constraints }
                \sum_{i=1}^k \left(\nabla \lambda_i - \frac{(\nabla \gamma_i)^2}{\sigma_i^2 \nabla \lambda_i}\right) \geq \rho/2, \quad 1 \leq k \leq M,
            \end{array}
        \end{equation*}
        and, for all $0 \leq \gamma \leq \gamma_{\rho}^{\star}$,
        \begin{equation}\label{eq:lem:restricted.free.energy.tech.lemma.high.points.restricted}
            \mathbb{P}\Big(|\{v\in A_{N,\rho}^c : \psi_v \geq \gamma \log N^2\}| \geq N^{2\mathcal{E}_{\rho}(\gamma) + \varepsilon}\Big) \leq N^{-c(\gamma,\varepsilon,\bb{\sigma},\bb{\lambda})},
        \end{equation}
        for $N$ large enough, where
        \begin{equation*}
            \begin{array}{l}
                \mathcal{E}_{\rho}(\gamma) \circeq \max_{\gamma_1,\gamma_2,...,\gamma_{M-1}} \sum_{i=1}^{M-1} \left(\nabla \lambda_i - \frac{(\nabla \gamma_i)^2}{\sigma_i^2 \nabla \lambda_i}\right) + \left(\nabla \lambda_M - \frac{(\gamma - \gamma_{M-1})^2}{\sigma_M^2 \nabla \lambda_M}\right) - \rho/2, \\[3mm]
                \text{under the constraints }
                \sum_{i=1}^k \left(\nabla \lambda_i - \frac{(\nabla \gamma_i)^2}{\sigma_i^2 \nabla \lambda_i}\right) \geq \rho/2, \quad 1 \leq k \leq M-1.
            \end{array}
        \end{equation*}
        A rerun of the upper bound in the proof of Lemma \ref{lem:IGFF.free.energy.prob} shows that for all $\eta > 0$, there exists a constant $c = c(\eta,\beta,\rho,\boldsymbol{\sigma},\boldsymbol{\lambda}) > 0$ such that for $N$ large enough,
        \begin{equation}\label{eq:lem:restricted.free.energy.tech.lemma.last.1}
            \mathbb{P}\left(\widetilde{f}_{N,\rho}^{\psi}(\beta) > \max_{\gamma\in [0,\gamma_{\rho}^{\star}]} (\beta \gamma + \mathcal{E}_{\rho}(\gamma)) + \eta\right) \leq N^{-c}.
        \end{equation}
        The constraints associated with $\gamma_{\rho}^{\star}$ and $\mathcal{E}_{\rho}(\gamma)$ are respectively more restrictive than the constraints associated with $\gamma^{\star}$ and $\mathcal{E}(\gamma)$, so we obviously have
        \begin{equation}
            \gamma_{\rho}^{\star} \leq \gamma^{\star} \quad \text{and} \quad \mathcal{E}_{\rho}(\gamma) \leq \mathcal{E}(\gamma) - \rho/2, \quad 0 \leq \gamma \leq \gamma_{\rho}^{\star}.
        \end{equation}
        Therefore,
        \begin{equation}\label{eq:lem:restricted.free.energy.tech.lemma.last.2}
            \max_{\gamma\in [0,\gamma_{\rho}^{\star}]} (\beta \gamma + \mathcal{E}_{\rho}(\gamma)) \leq \max_{\gamma\in [0,\gamma^{\star}]} (\beta \gamma + \mathcal{E}(\gamma) - \rho/2) \stackrel{\eqref{eq:thm:IGFF.free.energy}}{=} f^{\psi}(\beta) - \rho/2.
        \end{equation}
        The conclusion of the lemma follows directly from \eqref{eq:lem:restricted.free.energy.tech.lemma.last.1} and \eqref{eq:lem:restricted.free.energy.tech.lemma.last.2}.
    \end{proof}

    \begin{lemma}\label{lem:gibbs.measure.near.boundary}
        Let $\beta > 0$ and $\rho\in (0,\lambda_1)$. Then,
        \begin{equation}
            \lim_{N\rightarrow \infty} \mathcal{G}_{\beta,N}(A_{N,\rho}^c) = 0,
        \end{equation}
        where the limit holds in $\PP$-probability and in $L^p, ~1 \leq p < \infty$.
    \end{lemma}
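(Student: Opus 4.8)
The plan is to control $\mathcal{G}_{\beta,N}(A_{N,\rho}^c)$ by comparing the partition function restricted to the ``bad'' region $A_{N,\rho}^c$ with the one restricted to $A_{N,\rho}$, and then to invoke the two results just established. Since $V_N = A_{N,\rho} \cup A_{N,\rho}^c$ is a disjoint union, $Z_N^{\psi}(\beta) \geq Z_{N,\rho}^{\psi}(\beta)$, so
\begin{equation*}
    \mathcal{G}_{\beta,N}(A_{N,\rho}^c) = \frac{\sum_{v\in A_{N,\rho}^c} e^{\beta \psi_v}}{Z_N^{\psi}(\beta)} \leq \frac{\sum_{v\in A_{N,\rho}^c} e^{\beta \psi_v}}{Z_{N,\rho}^{\psi}(\beta)}.
\end{equation*}
Taking logarithms and dividing by $\log N^2 > 0$, this reads
\begin{equation*}
    \frac{\log \mathcal{G}_{\beta,N}(A_{N,\rho}^c)}{\log N^2} \leq \widetilde{f}_{N,\rho}^{\psi}(\beta) - f_{N,\rho}^{\psi}(\beta),
\end{equation*}
so everything is reduced to the asymptotics of the two free energies on either side of the boundary layer.

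Next I would fix the threshold $\eta \circeq \rho/8$ and intersect two good events: the event $\{\widetilde{f}_{N,\rho}^{\psi}(\beta) \leq (f^{\psi}(\beta) - \rho/2) + \eta\}$, which has probability at least $1 - N^{-c}$ by Lemma \ref{lem:restricted.free.energy.tech.lemma} (recall $\rho\in (0,\lambda_1)$), and the event $\{f_{N,\rho}^{\psi}(\beta) \geq f^{\psi}(\beta) - \eta\}$, which has probability at least $1 - N^{-c}$ by the lower bound in the proof of Theorem \ref{thm:IGFF.free.energy.A.N.p} (that proof reruns the lower bound of Lemma \ref{lem:IGFF.free.energy.prob}, which is polynomial in $N$). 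On the intersection of these events,
\begin{equation*}
    \frac{\log \mathcal{G}_{\beta,N}(A_{N,\rho}^c)}{\log N^2} \leq -\frac{\rho}{2} + 2\eta = -\frac{\rho}{4}, \qquad \text{i.e.} \qquad \mathcal{G}_{\beta,N}(A_{N,\rho}^c) \leq N^{-\rho/2}.
\end{equation*}
Hence $\prob{}{\mathcal{G}_{\beta,N}(A_{N,\rho}^c) > \varepsilon} \leq N^{-c}$ as soon as $N$ is large enough that $N^{-\rho/2} \leq \varepsilon$, which is the convergence in $\PP$-probability. For the $L^p$ statement it suffices to observe that $0 \leq \mathcal{G}_{\beta,N}(A_{N,\rho}^c) \leq 1$ deterministically, so $\{\mathcal{G}_{\beta,N}(A_{N,\rho}^c)^p\}_{N\in\N}$ is uniformly bounded, hence uniformly integrable; together with the convergence to $0$ in probability, the bounded (Vitali) convergence theorem yields $\esp{}{\mathcal{G}_{\beta,N}(A_{N,\rho}^c)^p} \to 0$ for every $p\in [1,\infty)$.

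I do not expect any genuine obstacle inside this lemma: its content lies entirely in the \emph{strict} negativity of the exponent $-\rho/4$, and that strictness is inherited from Lemma \ref{lem:restricted.free.energy.tech.lemma}, where the $\rho/2$ gap reflects the reduced entropy of the boundary layer (only $O(N^{2(\lambda_i-\rho/2)})$ representatives at scale $\lambda_i$ on $A_{N,\rho}^c$, tightening the constraints of the optimization problems and lowering their value). Thus the hard part has already been done upstream; here one only needs the elementary partition-function comparison above and the bookkeeping of the two good events.
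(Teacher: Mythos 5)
Correct, and essentially the same approach as the paper: both reduce the problem to Lemma \ref{lem:restricted.free.energy.tech.lemma} for the boundary layer's free energy and a high-probability lower bound on a free energy in the denominator. The only variation is that you bound $1/Z_N^{\psi}$ by $1/Z_{N,\rho}^{\psi}$ and invoke the polynomial tail hidden in the proof of Theorem \ref{thm:IGFF.free.energy.A.N.p}, whereas the paper keeps $Z_N^{\psi}$ and cites \eqref{lem:IGFF.free.energy.prob.lower.bound} directly — a cosmetic difference, with the small bonus that your version makes the explicit rate $\mathcal{G}_{\beta,N}(A_{N,\rho}^c) \leq N^{-\rho/2}$ on the good event visible.
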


    \begin{remark}
        The result in Lemma \ref{lem:gibbs.measure.near.boundary} would not hold if we considered instead the complement of $V_N^{\delta}$, which is much larger than the complement of $A_{N,\rho}$.
    \end{remark}

    \begin{proof}[Proof of Lemma \ref{lem:gibbs.measure.near.boundary}]
        Fix $\rho\in (0,\lambda_1)$ and $\varepsilon\in (0,1)$, and let $\widetilde{\eta} > 0$ depend on $\rho$. We have
        \begin{align}
            \prob{}{\mathcal{G}_{\beta,N}(A_{N,\rho}^c) > \varepsilon}
            &\leq \prob{}{\mathcal{G}_{\beta,N}(A_{N,\rho}^c) > \varepsilon, \frac{1}{\log N^2} \log Z_N^{\psi}(\beta) \geq f^{\psi}(\beta) - \widetilde{\eta}} \notag \\
            &\quad+ \prob{}{\frac{1}{\log N^2} \log Z_N^{\psi}(\beta) < f^{\psi}(\beta) - \widetilde{\eta}} \notag \\
            &\circeq (1) + (2).
        \end{align}
        For any $\widetilde{\eta} > 0$, we have $(2)\to 0$ by \eqref{lem:IGFF.free.energy.prob.lower.bound}.
        Furthermore, since
        \begin{equation}
            \left\{\mathcal{G}_{\beta,N}(A_{N,\rho}^c) > \varepsilon\right\} \subseteq \Big\{\log \hspace{-1mm}\sum_{v\in A_{N,\rho}^c} \hspace{-1mm}e^{\beta \psi_v} > \log Z_N^{\psi}(\beta) + \log \varepsilon\Big\},
        \end{equation}
        then
        \begin{equation}\label{eq:gibbs.measure.near.boundary.eq.1}
            (1) \leq \mathbb{P}\Bigg(\frac{1}{\log N^2} \log \hspace{-1mm}\sum_{v\in A_{N,\rho}^c} \hspace{-1mm}e^{\beta \psi_v} > f^{\psi}(\beta) - \rho/2 + \eta\Bigg),
        \end{equation}
        where
        \begin{equation}
            \eta \circeq \rho/2 - \widetilde{\eta} + \frac{\log \varepsilon}{\log N^2}.
        \end{equation}
        Choose $\widetilde{\eta} > 0$ small enough, with respect to $\rho$, and $N$ large enough, with respect to $\rho$ and $\varepsilon$, that $\eta > 0$. The right-hand side of \eqref{eq:gibbs.measure.near.boundary.eq.1} converges to $0$ by Lemma \ref{lem:restricted.free.energy.tech.lemma}.
        This proves $\lim_{N\rightarrow \infty} \mathcal{G}_{\beta,N}(A_{N,\rho}^c) = 0$ in $\PP$-probability.
        Since
        \begin{equation}
            \sup_{N\in \N} |\mathcal{G}_{\beta,N}(A_{N,\rho}^c)|^p \leq 1,
        \end{equation}
        the $L^p$ convergence follows trivially.
    \end{proof}

    The fact that the Gibbs measure does not carry any weight on $A_{N,\rho}^c$ in the limit generalizes to expectations of bounded functions of $s$ vertices in $V_N$ sampled from the product of Gibbs measures. In Section \ref{sec:proof.ghirlanda.guerra.identities}, this will be used to obtain the approximate extended Ghirlanda-Guerra identities on $V_N$ from the ones on $A_{N,\rho}$.

    \begin{proposition}\label{prop:product.gibbs.near.boundary}
        Let $\beta > 0$ and $\rho\in (0,\lambda_1)$.
        Denote $\boldsymbol{v} \circeq (v^1,v^2,...,v^s)$.
        Then, for any $s\in \N$ and any functions $h \hspace{-0.5mm}:\hspace{-0.5mm} V_N^s \rightarrow \R$ such that $\sup_N \|h\|_{\infty} < \infty$,
        \begin{equation}
            \lim_{N\rightarrow \infty} \Big|\mathbb{E}\mathcal{G}_{\beta,N}^{\times s} \big[h(\bb{v})\big] - \mathbb{E}\mathcal{G}_{\beta,N,\rho}^{\times s} \big[h(\bb{v})\big]\Big| = 0.
        \end{equation}
    \end{proposition}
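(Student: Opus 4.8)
The plan is to realize $\mathcal{G}_{\beta,N,\rho}$ as $\mathcal{G}_{\beta,N}$ conditioned on the event $A_{N,\rho}$ and then transfer the smallness of $\mathcal{G}_{\beta,N}(A_{N,\rho}^c)$ from Lemma \ref{lem:gibbs.measure.near.boundary} to the $s$-fold products. First I would record the identity $Z_{N,\rho}^{\psi}(\beta) = Z_N^{\psi}(\beta)\,\mathcal{G}_{\beta,N}(A_{N,\rho})$, which gives, for $v\in A_{N,\rho}$,
\begin{equation*}
    \mathcal{G}_{\beta,N,\rho}(\{v\}) = \frac{\mathcal{G}_{\beta,N}(\{v\})}{\mathcal{G}_{\beta,N}(A_{N,\rho})}.
\end{equation*}
Note $\mathcal{G}_{\beta,N}(A_{N,\rho}) > 0$ for $N$ large (the box $A_{N,\rho}$ contains the center of $V_N$), so everything is well defined. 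Writing $p_N \circeq \mathcal{G}_{\beta,N}(A_{N,\rho})$, the product measure then satisfies $\mathcal{G}_{\beta,N,\rho}^{\times s}(\cdot) = p_N^{-s}\,\mathcal{G}_{\beta,N}^{\times s}(\cdot \cap A_{N,\rho}^s)$, so that $\mathcal{G}_{\beta,N,\rho}^{\times s}\big[h(\bb{v})\big] = p_N^{-s}\,\mathcal{G}_{\beta,N}^{\times s}\big[h(\bb{v})\,\bb{1}_{\{\bb{v}\in A_{N,\rho}^s\}}\big]$.

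Next I would split the difference into two pieces,
\begin{equation*}
    \mathcal{G}_{\beta,N}^{\times s}\big[h(\bb{v})\big] - \mathcal{G}_{\beta,N,\rho}^{\times s}\big[h(\bb{v})\big] = \mathcal{G}_{\beta,N}^{\times s}\big[h(\bb{v})\,\bb{1}_{\{\bb{v}\notin A_{N,\rho}^s\}}\big] + \big(1 - p_N^{-s}\big)\,\mathcal{G}_{\beta,N}^{\times s}\big[h(\bb{v})\,\bb{1}_{\{\bb{v}\in A_{N,\rho}^s\}}\big],
\end{equation*}
and bound each term using $\|h\|_\infty$ together with $\mathcal{G}_{\beta,N}^{\times s}(\bb{v}\notin A_{N,\rho}^s) = 1 - p_N^s$ and $\mathcal{G}_{\beta,N}^{\times s}(\bb{v}\in A_{N,\rho}^s) = p_N^s$. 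The first term is at most $\|h\|_\infty(1-p_N^s)$; the second is at most $|1-p_N^{-s}|\,\|h\|_\infty\,p_N^s = \|h\|_\infty(1-p_N^s)$. Using the elementary inequality $1 - x^s \leq s(1-x)$ for $x\in[0,1]$ with $x = p_N$, this yields the \emph{pathwise} bound
\begin{equation*}
    \big|\mathcal{G}_{\beta,N}^{\times s}\big[h(\bb{v})\big] - \mathcal{G}_{\beta,N,\rho}^{\times s}\big[h(\bb{v})\big]\big| \leq 2 s\,\|h\|_\infty\,\mathcal{G}_{\beta,N}(A_{N,\rho}^c).
\end{equation*}

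Finally I would take expectations over the Gaussian field: since $\sup_N\|h\|_\infty < \infty$, the right-hand side has expectation bounded by a constant times $\mathbb{E}\mathcal{G}_{\beta,N}(A_{N,\rho}^c)$, which tends to $0$ by the $L^1$ ($p=1$) part of Lemma \ref{lem:gibbs.measure.near.boundary}; hence $\big|\mathbb{E}\mathcal{G}_{\beta,N}^{\times s}[h(\bb{v})] - \mathbb{E}\mathcal{G}_{\beta,N,\rho}^{\times s}[h(\bb{v})]\big| \to 0$. There is no real obstacle here — the only points requiring a little care are the passage to the conditional-measure identity and the union-bound step $1 - p_N^s \le s(1-p_N)$ controlling all $s$ coordinates simultaneously; the heavy lifting was already done in Lemma \ref{lem:restricted.free.energy.tech.lemma} and Lemma \ref{lem:gibbs.measure.near.boundary}.
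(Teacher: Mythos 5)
Your proposal is correct and follows essentially the same route as the paper: the identity $\mathcal{G}_{\beta,N,\rho}^{\times s}(\cdot)=\mathcal{G}_{\beta,N}^{\times s}(\cdot\mid A_{N,\rho}^{\times s})$, the split into a restriction term and a renormalization term, the bound $1-p_N^s\le s(1-p_N)$, and the conclusion via Lemma \ref{lem:gibbs.measure.near.boundary}. The only cosmetic difference is that you argue pathwise before taking expectations, whereas the paper takes expectations first and then bounds each piece; the content is identical.
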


    \begin{proof}
        Introducing an auxiliary term,
        \begin{align}
            \Big|\mathbb{E}\mathcal{G}_{\beta,N}^{\times s} \big[h(\bb{v})\big] - \mathbb{E}\mathcal{G}_{\beta,N,\rho}^{\times s} \big[h(\bb{v})\big]\Big|
            &\leq \Big|\mathbb{E}\mathcal{G}_{\beta,N}^{\times s}\big[h(\bb{v})\big] - \mathbb{E}\mathcal{G}_{\beta,N}^{\times s}\big[h(\bb{v}) \, \bb{1}_{\{\boldsymbol{v}\in A_{N,\rho}^{\times s}\}}\big]\Big| \notag \\
            &\, + \Big|\mathbb{E}\mathcal{G}_{\beta,N}^{\times s}\big[h(\bb{v}) \, \bb{1}_{\{\bb{v}\in A_{N,\rho}^{\times s}\}}\big] - \mathbb{E}\mathcal{G}_{\beta,N,\rho}^{\times s}\big[h(\bb{v})\big]\Big| \notag \\
            &\circeq (1) + (2).
        \end{align}
        Now, by monotonicity and sub-additivity,
        \begin{equation}
            (1) = \mathbb{E}\mathcal{G}_{\beta,N}^{\times s}\big[h(\bb{v}) \bb{1}_{\{\exists i\in \{1,...,s\} ~\text{s.t. } v^i\in A_{N,\rho}^c\}}\big] \leq s ~\mathbb{E}\mathcal{G}_{\beta,N}(A_{N,\rho}^c) \cdot \sup_N \|h\|_{\infty}.
        \end{equation}
        Similarly, for the second term,
        \begin{align}
            (2)
            &= \mathbb{E}\mathcal{G}_{\beta,N,\rho}^{\times s}\big[h(\bb{v})\big] - \mathbb{E}\mathcal{G}_{\beta,N}^{\times s}\big[h(\bb{v}) \, \bb{1}_{\{\bb{v}\in A_{N,\rho}^{\times s}\}}\big] \notag \\
            &= \esp{}{\frac{\mathcal{G}_{\beta,N}^{\times s}\big[h(\bb{v}) \, \bb{1}_{\{\bb{v}\in A_{N,\rho}^{\times s}\}}\big]}{\mathcal{G}_{\beta,N}^{\times s}(\bb{v}\in A_{N,\rho}^{\times s})} \left(1 - \mathcal{G}_{\beta,N}^{\times s}(\bb{v}\in A_{N,\rho}^{\times s})\right)} \notag \\
            &\leq \esp{}{1 - \mathcal{G}_{\beta,N}^{\times s}(\bb{v}\in A_{N,\rho}^{\times s})} \cdot \sup_N \|h\|_{\infty} \notag \\
            &\leq s ~\mathbb{E}\mathcal{G}_{\beta,N}(A_{N,\rho}^c) \cdot \sup_N \|h\|_{\infty}.
        \end{align}
        By Lemma \ref{lem:gibbs.measure.near.boundary}, $(1) + (2) \rightarrow 0$ as $N\rightarrow \infty$.
        This ends the proof.
    \end{proof}

    When $s = 2$ and $h(v,v') \circeq \bb{1}_{\{q^N(v,v') \leq r\}}$, Proposition \ref{prop:product.gibbs.near.boundary} tells us that we can compute the limiting two-overlap distribution of Theorem \ref{thm:two.overlap.distribution.limit} by only considering a restricted version, where the points are sampled from $A_{N,\rho}^2$ instead of $V_N^2$.
    This property will be crucial to control the covariance of the increments in the next section, where we adapt the Bovier-Kurkova technique.
    The proof of Theorem \ref{thm:two.overlap.distribution.limit} will be given right after, in Section \ref{sec:proof.two.overlap.disribution.limit}.

    \begin{corollary}\label{cor:two.overlap.distribution.near.boundary}
        Let $\beta > 0$ and $\rho\in (0,\lambda_1)$. Then, for any $r\in \R$,
        \begin{equation}\label{eq:cor:two.overlap.distribution.near.boundary.limit}
            \lim_{N\rightarrow \infty} \Big|\mathbb{E}\mathcal{G}_{\beta,N}^{\times 2} \big[\bb{1}_{\{q^N(v,v') \leq r\}}\big] - \mathbb{E}\mathcal{G}_{\beta,N,\rho}^{\times 2} \big[\bb{1}_{\{q^N(v,v') \leq r\}}\big]\Big| = 0.
        \end{equation}
        Note that \eqref{eq:cor:two.overlap.distribution.near.boundary.limit} is valid even if $r$ depends on $\rho$.
    \end{corollary}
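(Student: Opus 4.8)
The plan is to derive this corollary as an immediate specialization of Proposition \ref{prop:product.gibbs.near.boundary}. First I would fix $\rho\in(0,\lambda_1)$ and $r\in\R$ (possibly depending on $\rho$), take $s=2$, and for each $N$ consider the bounded function $h_N : V_N^2 \to \R$ defined by $h_N(v,v') \circeq \bb{1}_{\{q^N(v,v') \leq r\}}$. The only hypothesis of Proposition \ref{prop:product.gibbs.near.boundary} that needs verification is $\sup_N \|h_N\|_{\infty} < \infty$; this is immediate, since $h_N$ is an indicator, so $\|h_N\|_{\infty} \leq 1$ for every $N$. Substituting $h_N$ into the conclusion of the proposition gives precisely \eqref{eq:cor:two.overlap.distribution.near.boundary.limit}.

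The only subtlety worth making explicit --- and the reason for the last sentence of the statement --- is that $h_N$ really does depend on $N$ (through the overlap $q^N$) and that $r$ may depend on $\rho$. Neither is a problem, because the proof of Proposition \ref{prop:product.gibbs.near.boundary} never uses that $h$ is fixed: both error terms there are bounded by $s\,\mathbb{E}\mathcal{G}_{\beta,N}(A_{N,\rho}^c)\cdot\sup_N\|h\|_{\infty}$, and this vanishes by Lemma \ref{lem:gibbs.measure.near.boundary} as soon as the uniform bound on $\|h_N\|_{\infty}$ is available. So the argument carries over verbatim with the single estimate $\|h_N\|_{\infty}\leq 1$.

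There is no genuine obstacle here: all the analytic content sits in Proposition \ref{prop:product.gibbs.near.boundary}, hence in Lemma \ref{lem:gibbs.measure.near.boundary} and ultimately in Lemma \ref{lem:restricted.free.energy.tech.lemma}. The one point to be slightly careful about in the write-up is to record the bound $\|h_N\|_{\infty}\leq 1$ as holding uniformly in $N$ (and, for fixed $\rho$, uniformly in any choice $r=r(\rho)$), since that is exactly what licenses applying the proposition in this $N$- and $\rho$-dependent setting.
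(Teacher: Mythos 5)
Your proposal is correct and takes exactly the approach the paper intends: the corollary is obtained as the special case $s=2$, $h(v,v')=\bb{1}_{\{q^N(v,v')\leq r\}}$ of Proposition~\ref{prop:product.gibbs.near.boundary}, with the uniform bound $\|h\|_\infty\leq 1$ supplying the hypothesis $\sup_N\|h\|_\infty<\infty$. Your remark that the proposition's proof only ever uses $\sup_N\|h\|_\infty$, and hence tolerates both $N$-dependence of $h$ and $\rho$-dependence of $r$, is precisely the right observation and justifies the final sentence of the statement.
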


    \subsection{An adaptation of the Bovier-Kurkova technique}\label{sec:bovier.kurkova.technique.general}

    The Bovier-Kurkova technique is a way to compute the two-overlap distribution of a model in terms of the free energy of a perturbed version of that model.
    In the context of this paper, this connection is established by Proposition \ref{prop:BV.technique} below in the case $(s = 1, k = 1, h \equiv 1)$. One difficulty in the present case is the fact that the covariance between the increments of the field depends on their position relative to the boundary. The restriction to the set $A_{N,\rho}$ is a way to control this, cf. Lemma \ref{lem:covariance.estimates.4}.

    To simplify the notation, recall
    \begin{equation}\label{eq:normalized.speed.function.2}
        \bar{\mathcal{J}}_{\sigma^2}(\cdot) \circeq \frac{\mathcal{J}_{\sigma^2}(\cdot)}{\mathcal{J}_{\sigma^2}(1)},
    \end{equation}
    and denote the increments of overlaps by
    \begin{equation}\label{eq:overlap.increments}
        q_{\alpha,\alpha'}^N(v,v') \circeq \frac{\mathbb{E}\big[\psi_v(\alpha,\alpha')\psi_{v'}\big]}{\mathcal{J}_{\sigma^2}(1) \log N + C_0}, \quad v,v'\in V_N,
    \end{equation}
    where $C_0$ is the constant introduced in Lemma \ref{lem:IGFF.variance.uniform.bound}.
    Estimates on \eqref{eq:overlap.increments} are given in terms of \eqref{eq:normalized.speed.function.2} in Corollary \ref{cor:covariance.estimates.3} of Appendix \ref{sec:covariance.estimates}.
    The following lemma uses these estimates in order to compare $q^N(v,v')$ and $q_{\alpha,\alpha'}^N(v,v')$.

    \begin{lemma}\label{lem:covariance.estimates.4}
        Let $0 \hspace{-0.5mm}\leq \hspace{-0.5mm}\alpha \hspace{-0.5mm}< \hspace{-0.5mm}\alpha' \hspace{-0.5mm}\leq \hspace{-0.5mm}1$ and $\rho\in (0,1]$.
        Then, for all $v,v'\in A_{N,\rho}$, for all
        \begin{equation}
            \varepsilon \geq \frac{C_7}{\sqrt{\log N}} + C_8 \, \rho, \qquad (C_7,C_8 ~\text{are from \eqref{eq:covariance.estimate.3.2}})
        \end{equation}
        and for $N$ large enough (dependent on $\alpha$ and $\alpha'$, but independent from $v,v'$, and independent from $\rho$ (except when $\alpha = 0$)) :
        \begin{itemize}
            \item[(1)] If $q^N(v,v') \leq \bar{\mathcal{J}}_{\sigma^2}(\alpha) - \varepsilon$, then
                \begin{equation*}
                    q_{\alpha,\alpha'}^N(v,v') = O\left((\log N)^{-1/2}\right) + O(\rho).
                \end{equation*}
            \item[(2)] If $\bar{\mathcal{J}}_{\sigma^2}(\alpha) + \varepsilon \leq q^N(v,v') \leq \bar{\mathcal{J}}_{\sigma^2}(\alpha') - \varepsilon$, then
                \begin{equation*}
                    q_{\alpha,\alpha'}^N(v,v') = q^N(v,v') - \bar{\mathcal{J}}_{\sigma^2}(\alpha) + O\left((\log N)^{-1/2}\right) + O(\rho).
                \end{equation*}
            \item[(3)] If $\bar{\mathcal{J}}_{\sigma^2}(\alpha') + \varepsilon \leq q^N(v,v')$, then
                \begin{equation*}
                    q_{\alpha,\alpha'}^N(v,v') = \bar{\mathcal{J}}_{\sigma^2}(\alpha,\alpha') + O\left((\log N)^{-1/2}\right) + O(\rho).
                \end{equation*}
        \end{itemize}
        In all three cases, $O(\rho)$ is uniform in $N$.
    \end{lemma}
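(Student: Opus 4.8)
The plan is to deduce everything from the covariance estimates collected in Corollary \ref{cor:covariance.estimates.3}, together with the elementary observation that, since $\sigma^2$ is everywhere strictly positive, the normalized speed function $\bar{\mathcal{J}}_{\sigma^2}$ is continuous and \emph{strictly increasing} on $[0,1]$, hence injective. Fix $v,v'\in A_{N,\rho}$, abbreviate $b\circeq b_N(v,v')$ for the branching scale, and set $\varepsilon_N\circeq C_7(\log N)^{-1/2}+C_8\,\rho$. For $N$ large enough (depending only on $\alpha,\alpha'$, and also on $\rho$ when $\alpha=0$, but never on $v,v'$), Corollary \ref{cor:covariance.estimates.3} provides the two companion estimates
\begin{equation*}
    \bigl|q^N(v,v')-\bar{\mathcal{J}}_{\sigma^2}(b)\bigr|\leq \varepsilon_N
    \qquad\text{and}\qquad
    \Bigl|q_{\alpha,\alpha'}^N(v,v')-\bigl(\bar{\mathcal{J}}_{\sigma^2}(\alpha'\wedge b)-\bar{\mathcal{J}}_{\sigma^2}(\alpha\wedge b)\bigr)\Bigr|\leq \varepsilon_N ,
\end{equation*}
both uniform over $v,v'\in A_{N,\rho}$. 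Since $\varepsilon\geq\varepsilon_N$ by hypothesis, both inequalities also hold with $\varepsilon$ on the right-hand side, and $\varepsilon_N$ already splits as $O((\log N)^{-1/2})+O(\rho)$ with the $O(\rho)$ part free of $N$.

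Next I would run the three-case analysis, using the first estimate and the hypothesis to locate $b$ relative to $\alpha$ and $\alpha'$. In case (1), $q^N(v,v')\leq\bar{\mathcal{J}}_{\sigma^2}(\alpha)-\varepsilon$ and the first estimate give $\bar{\mathcal{J}}_{\sigma^2}(b)\leq q^N(v,v')+\varepsilon\leq\bar{\mathcal{J}}_{\sigma^2}(\alpha)$, hence $b\leq\alpha$ by monotonicity and injectivity of $\bar{\mathcal{J}}_{\sigma^2}$; then $\alpha\wedge b=\alpha'\wedge b=b$, so the second estimate collapses to $q_{\alpha,\alpha'}^N(v,v')=O(\varepsilon_N)$. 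In case (2), the two-sided hypothesis yields $\bar{\mathcal{J}}_{\sigma^2}(\alpha)\leq\bar{\mathcal{J}}_{\sigma^2}(b)\leq\bar{\mathcal{J}}_{\sigma^2}(\alpha')$, hence $\alpha\leq b\leq\alpha'$, so $\alpha\wedge b=\alpha$ and $\alpha'\wedge b=b$, and the second estimate reads $q_{\alpha,\alpha'}^N(v,v')=\bar{\mathcal{J}}_{\sigma^2}(b)-\bar{\mathcal{J}}_{\sigma^2}(\alpha)+O(\varepsilon_N)$; substituting $\bar{\mathcal{J}}_{\sigma^2}(b)=q^N(v,v')+O(\varepsilon_N)$ from the first estimate gives $q_{\alpha,\alpha'}^N(v,v')=q^N(v,v')-\bar{\mathcal{J}}_{\sigma^2}(\alpha)+O(\varepsilon_N)$. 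In case (3), $q^N(v,v')\geq\bar{\mathcal{J}}_{\sigma^2}(\alpha')+\varepsilon$ gives $\bar{\mathcal{J}}_{\sigma^2}(b)\geq\bar{\mathcal{J}}_{\sigma^2}(\alpha')$, hence $b\geq\alpha'$, so $\alpha\wedge b=\alpha$ and $\alpha'\wedge b=\alpha'$, and the second estimate becomes $q_{\alpha,\alpha'}^N(v,v')=\bar{\mathcal{J}}_{\sigma^2}(\alpha')-\bar{\mathcal{J}}_{\sigma^2}(\alpha)+O(\varepsilon_N)=\bar{\mathcal{J}}_{\sigma^2}(\alpha,\alpha')+O(\varepsilon_N)$. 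In every case $O(\varepsilon_N)=O((\log N)^{-1/2})+O(\rho)$ with the $O(\rho)$ uniform in $N$, which is exactly the assertion of the lemma.

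Given Corollary \ref{cor:covariance.estimates.3}, there is no genuine obstacle beyond this bookkeeping; the substantive work all sits inside that corollary (proved in Appendix \ref{sec:covariance.estimates}), where the decay of the covariance of the increments of $\psi$ near $\partial V_N$ is precisely what forces the restriction to $A_{N,\rho}$ and the $C_8\rho$ term in $\varepsilon_N$, and where the scale $\alpha=0$ must be treated apart: there $\psi_v(0)\equiv 0$ for every $v$ by the Dirichlet boundary condition, so $\psi_v(0,\alpha')=\psi_v(\alpha')$ and $\mathbb{E}[\psi_v(0,\alpha')\psi_{v'}]$ retains a boundary sensitivity that one can only control once $N$ is large in a $\rho$-dependent manner --- hence the parenthetical exception in the statement. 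Finally, one should record that the largeness threshold $N_0(\alpha,\alpha')$ does not depend on $v,v'$, which is automatic since the bounds in Corollary \ref{cor:covariance.estimates.3} are uniform over $A_{N,\rho}\times A_{N,\rho}$.
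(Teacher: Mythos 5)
Your proof is correct and follows the same route as the paper's: apply Corollary \ref{cor:covariance.estimates.3} twice --- once in the special case $\alpha=0,\alpha'=1$ (so $q^N_{0,1}=q^N$ and $\bar{\mathcal{J}}_{\sigma^2}(0\wedge b,1\wedge b)=\bar{\mathcal{J}}_{\sigma^2}(b)$) to locate $b_N(v,v')$ relative to $\alpha,\alpha'$ via monotonicity of $\bar{\mathcal{J}}_{\sigma^2}$, and once with the given $\alpha,\alpha'$ to evaluate $q^N_{\alpha,\alpha'}$ --- which is exactly the paper's (very terse) argument, merely written out in full.
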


    \begin{proof}
        From \eqref{eq:covariance.estimate.3.2}, we know that $|q^N(v,v') - \bar{\mathcal{J}}_{\sigma^2}(b_N(v,v'))| \leq \varepsilon$. Thus, in each case respectively, we deduce $(1)$ : $b_N \leq \alpha$, $(2)$ : $\alpha \leq b_N \leq \alpha'$, and $(3)$ : $\alpha' \leq b_N$. Use \eqref{eq:covariance.estimate.3.2} again to get the appropriate bounds on $q_{\alpha,\alpha'}^N(v,v')$.
    \end{proof}

    Here is the main result of this section.

    \begin{proposition}\label{prop:BV.technique}
        Let $0 \hspace{-0.5mm}\leq \hspace{-0.5mm}\alpha \hspace{-0.5mm}< \hspace{-0.5mm}\alpha' \hspace{-0.5mm}\leq \hspace{-0.5mm}1$, $\rho\in (0,1]$ and $S_{\alpha,\alpha'} \circeq (\bar{\mathcal{J}}_{\sigma^2}(\alpha),\bar{\mathcal{J}}_{\sigma^2}(\alpha')]$.
        Let $\beta > 0$, $s\in \N$, $k\in \{1,...,s\}$, and let $h : V_N^s \rightarrow \R$ be such that $\sup_N \|h\|_{\infty} < \infty$. Then, for all
        \vspace{-1mm}
        \begin{equation}
            \varepsilon \geq \frac{C_7}{\sqrt{\log N}} + C_8 \, \rho, \qquad (C_7,C_8 ~\text{are from \eqref{eq:covariance.estimate.3.2}})
        \end{equation}
        and for $N$ large enough (dependent on $\alpha$ and $\alpha'$, but independent from $v,v'$, and independent from $\rho$ (except when $\alpha = 0$)), we have
        \begin{align}\label{eq:BV.technique}
            &\left|\hspace{-1mm}
                \begin{array}{l}
                    \resizebox{0.33\hsize}{!}{%
                        $\frac{\begin{array}{l}\mathbb{E}\mathcal{G}_{\beta,N,\rho}^{\times s} [\psi_{v^k}(\alpha,\alpha') h(\bb{v})]\end{array}}{\begin{array}{l}\beta\, (\mathcal{J}_{\sigma^2}(1) \log N + C_0)\end{array}}$
                    }
                    \hspace{-0.5mm}
                    -   \left\{\hspace{-1mm}
                        \begin{array}{l}
                            \sum_{l=1}^s \mathbb{E}\mathcal{G}_{\beta,N,\rho}^{\times s} \big[\int_{S_{\alpha,\alpha'}} \hspace{-1mm}\bb{1}_{\{r < q^N(v^k,v^l)\}} dr \, h(\bb{v})\big] \\[1.5mm]
                            - s \, \mathbb{E}\mathcal{G}_{\beta,N,\rho}^{\times (s+1)} \big[\int_{S_{\alpha,\alpha'}} \hspace{-1mm}\bb{1}_{\{r < q^N(v^k,v^{s+1})\}} dr \, h(\bb{v})\big]
                        \end{array}
                        \hspace{-1.5mm}\right\}
                \end{array}
            \hspace{-1mm}\right| \notag \\
            &\quad\leq C \cdot s \cdot \sup_N \|h\|_{\infty} \cdot \left\{\hspace{-1mm}
                \begin{array}{l}
                    \mathbb{E}\mathcal{G}_{\beta,N,\rho}^{\times 2} \big[\bb{1}_{\{\bar{\mathcal{J}}_{\sigma^2}(\alpha) - \varepsilon \leq q^N(v,v') \leq \bar{\mathcal{J}}_{\sigma^2}(\alpha) + \varepsilon\}}\big] \\[1.5mm]
                    \mathbb{E}\mathcal{G}_{\beta,N,\rho}^{\times 2} \big[\bb{1}_{\{\bar{\mathcal{J}}_{\sigma^2}(\alpha') - \varepsilon \leq q^N(v,v') \leq \bar{\mathcal{J}}_{\sigma^2}(\alpha') + \varepsilon\}}\big] \\[1.5mm]
                    + O\left((\log N)^{-1/2}\right) + O(\rho)
                \end{array}
                \hspace{-1.5mm}\right\},
        \end{align}
        where $O(\rho)$ is uniform in $N$ and $C > 0$ is a universal constant.
    \end{proposition}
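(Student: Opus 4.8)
The plan is to apply Gaussian integration by parts (Stein's identity) in the Gaussian field $\psi$ to rewrite the left-hand numerator as an \emph{exact} combination of Gibbs expectations of the increment overlaps $q_{\alpha,\alpha'}^N$, and then to replace each such increment overlap by the truncated integral $\int_{S_{\alpha,\alpha'}}\bb{1}_{\{r<q^N(\cdot,\cdot)\}}\,dr$ via Lemma \ref{lem:covariance.estimates.4}, the error being carried entirely by the two transition windows around $\bar{\mathcal{J}}_{\sigma^2}(\alpha)$ and $\bar{\mathcal{J}}_{\sigma^2}(\alpha')$ where that lemma is silent.

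\emph{Step 1 (integration by parts).} Write
\[
\mathbb{E}\mathcal{G}_{\beta,N,\rho}^{\times s}\big[\psi_{v^k}(\alpha,\alpha')\,h(\bb{v})\big] = \mathbb{E}\Big[\sum_{\bb{v}\in A_{N,\rho}^s}\psi_{v^k}(\alpha,\alpha')\,h(\bb{v})\prod_{l=1}^s\mathcal{G}_{\beta,N,\rho}(\{v^l\})\Big],
\]
and for each fixed $\bb{v}$ apply $\mathbb{E}[gF]=\sum_{w\in A_{N,\rho}}\mathbb{E}[g\,\psi_w]\,\mathbb{E}[\partial_{\psi_w}F]$ with $g=\psi_{v^k}(\alpha,\alpha')$; this is legitimate since $h$ and every Gibbs weight, together with its first $\psi$-derivatives (bounded by $\beta$ in absolute value), are bounded, while $\mathbb{E}[\psi_{v^k}(\alpha,\alpha')\,\psi_w]=(\mathcal{J}_{\sigma^2}(1)\log N+C_0)\,q_{\alpha,\alpha'}^N(v^k,w)$ is finite. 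Using
\[
\partial_{\psi_w}\prod_{l=1}^s\mathcal{G}_{\beta,N,\rho}(\{v^l\}) = \beta\Big(\sum_{l=1}^s\bb{1}_{\{v^l=w\}} - s\,\mathcal{G}_{\beta,N,\rho}(\{w\})\Big)\prod_{l=1}^s\mathcal{G}_{\beta,N,\rho}(\{v^l\}),
\]
summing over $w$ and then over $\bb{v}$, and recognizing $\sum_w q_{\alpha,\alpha'}^N(v^k,w)\,\mathcal{G}_{\beta,N,\rho}(\{w\})$ as an additional replica $v^{s+1}\sim\mathcal{G}_{\beta,N,\rho}$, one obtains the exact identity
\[
\frac{\mathbb{E}\mathcal{G}_{\beta,N,\rho}^{\times s}[\psi_{v^k}(\alpha,\alpha')h(\bb{v})]}{\beta(\mathcal{J}_{\sigma^2}(1)\log N+C_0)} = \sum_{l=1}^s\mathbb{E}\mathcal{G}_{\beta,N,\rho}^{\times s}\big[q_{\alpha,\alpha'}^N(v^k,v^l)h(\bb{v})\big] - s\,\mathbb{E}\mathcal{G}_{\beta,N,\rho}^{\times(s+1)}\big[q_{\alpha,\alpha'}^N(v^k,v^{s+1})h(\bb{v})\big].
\]

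\emph{Step 2 (replacing the increment overlaps).} An elementary computation gives $\int_{S_{\alpha,\alpha'}}\bb{1}_{\{r<q\}}\,dr$ equal to $0$ if $q\le\bar{\mathcal{J}}_{\sigma^2}(\alpha)$, to $q-\bar{\mathcal{J}}_{\sigma^2}(\alpha)$ if $\bar{\mathcal{J}}_{\sigma^2}(\alpha)\le q\le\bar{\mathcal{J}}_{\sigma^2}(\alpha')$, and to $\bar{\mathcal{J}}_{\sigma^2}(\alpha,\alpha')$ if $q\ge\bar{\mathcal{J}}_{\sigma^2}(\alpha')$; this is precisely the three-case description of $q_{\alpha,\alpha'}^N(v,v')$ supplied by Lemma \ref{lem:covariance.estimates.4}, up to $O((\log N)^{-1/2})+O(\rho)$, valid (uniformly in $v,v'\in A_{N,\rho}$, with $O(\rho)$ uniform in $N$) unless $q^N(v,v')$ falls in one of the windows $[\bar{\mathcal{J}}_{\sigma^2}(\alpha)-\varepsilon,\bar{\mathcal{J}}_{\sigma^2}(\alpha)+\varepsilon]$ or $[\bar{\mathcal{J}}_{\sigma^2}(\alpha')-\varepsilon,\bar{\mathcal{J}}_{\sigma^2}(\alpha')+\varepsilon]$, on which we fall back on $|q_{\alpha,\alpha'}^N|\le1$ and $|\int_{S_{\alpha,\alpha'}}\bb{1}_{\{r<\cdot\}}\,dr|\le1$. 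The diagonal term $l=k$ involves $q_{\alpha,\alpha'}^N(v^k,v^k)=\mathbb{E}[\psi_{v^k}(\alpha,\alpha')^2]/(\mathcal{J}_{\sigma^2}(1)\log N+C_0)$ (by independence of increments), which by the uniform covariance estimates of Corollary \ref{cor:covariance.estimates.3} equals $\bar{\mathcal{J}}_{\sigma^2}(\alpha,\alpha')+O((\log N)^{-1/2})+O(\rho)$; since moreover $q^N(v^k,v^k)$ is within $O((\log N)^{-1/2})+O(\rho)$ of $1\ge\bar{\mathcal{J}}_{\sigma^2}(\alpha')$, this coincides with $\int_{S_{\alpha,\alpha'}}\bb{1}_{\{r<q^N(v^k,v^k)\}}\,dr$ up to the same order (the boundary case $\alpha'=1$ being handled by the variance estimate directly).

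\emph{Step 3 (bookkeeping and the main obstacle).} Substituting the identities of Step 2 into the formula of Step 1 and integrating against $\mathcal{G}_{\beta,N,\rho}^{\times s}$ (resp. $\mathcal{G}_{\beta,N,\rho}^{\times(s+1)}$), each of the $s$ pairs $(v^k,v^l)$ and the pair $(v^k,v^{s+1})$ contributes an error bounded by $\sup_N\|h\|_\infty$ times $O((\log N)^{-1/2})+O(\rho)$ plus the Gibbs mass of the event that the corresponding overlap sits in a transition window; by the exchangeability of replicas sampled from $\mathcal{G}_{\beta,N,\rho}$ this mass equals $\mathbb{E}\mathcal{G}_{\beta,N,\rho}^{\times2}[\bb{1}_{\{\bar{\mathcal{J}}_{\sigma^2}(\alpha)-\varepsilon\le q^N(v,v')\le\bar{\mathcal{J}}_{\sigma^2}(\alpha)+\varepsilon\}}]$ or its analogue at $\alpha'$. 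Summing the $s$ terms of the first sum and accounting for the factor $s$ in front of the $(s+1)$-replica term gives the bound $C\,s\,\sup_N\|h\|_\infty$ times the bracketed expression in the statement. The conceptual heart is the integration-by-parts identity and the clean collapse of the $Z_{N,\rho}^{\psi}(\beta)^{-s}$ derivative into a single extra replica; the one genuinely delicate point in execution is Step 2, i.e. isolating the two transition windows of width $2\varepsilon\ge 2(C_7/\sqrt{\log N}+C_8\rho)$ — which cannot be taken narrower and which is exactly what forces the two Gibbs-mass terms into the error — and the separate treatment of the self-overlap term $l=k$, whose value is pinned down by the variance rather than directly by Lemma \ref{lem:covariance.estimates.4}. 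Keeping the $O(\rho)$ errors uniform in $N$ is precisely the purpose of the uniform overlap estimates of Corollary \ref{cor:covariance.estimates.3}.
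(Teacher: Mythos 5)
Your proof is correct and follows essentially the same route as the paper's: Gaussian integration by parts applied to $\psi_{v^k}(\alpha,\alpha')$ against the $s$ Gibbs weights (yielding exactly the identity that appears as (5.47) in the text), followed by a term-by-term replacement of $q_{\alpha,\alpha'}^N$ with $\int_{S_{\alpha,\alpha'}}\bb{1}_{\{r<q^N\}}\,dr$ via Lemma \ref{lem:covariance.estimates.4}, with the two transition windows around $\bar{\mathcal{J}}_{\sigma^2}(\alpha)$ and $\bar{\mathcal{J}}_{\sigma^2}(\alpha')$ producing the Gibbs-mass error terms. Your explicit treatment of the diagonal term $l=k$ and the boundary case $\alpha'=1$ is a minor added detail that the paper leaves implicit in its final ``one by one'' sentence, but there is no substantive difference in approach.
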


    \begin{proof}
        For any $l\in \{1,...,s+1\}$,
        \begin{align}\label{eq:prop:BV.technique.beginning}
            &\mathbb{E}\mathcal{G}_{\beta,N,\rho}^{\times (s+1)} \Big[\int_{S_{\alpha,\alpha'}} \hspace{-3mm}\bb{1}_{\{r < q^N(v^k,v^l)\}} dr ~h(\bb{v})\Big] \\
            &\quad= \mathbb{E}\mathcal{G}_{\beta,N,\rho}^{\times (s+1)} \Big[(q^N(v^k,v^l) - \bar{\mathcal{J}}_{\sigma^2}(\alpha)) \bb{1}_{\{\bar{\mathcal{J}}_{\sigma^2}(\alpha) < q^N(v^k,v^l) \leq \bar{\mathcal{J}}_{\sigma^2}(\alpha')\}}\, h(\bb{v})\Big] \notag \\
            &\quad\quad+ \mathbb{E}\mathcal{G}_{\beta,N,\rho}^{\times (s+1)} \Big[\bar{\mathcal{J}}_{\sigma^2}(\alpha,\alpha') \bb{1}_{\{\bar{\mathcal{J}}_{\sigma^2}(\alpha') < q^N(v^k,v^l)\}}\, h(\bb{v})\Big]. \notag
        \end{align}
        On the other hand,
        \begin{equation}\label{eq:generalization.gaussian.integration.by.parts.apply.to}
            \mathbb{E}\mathcal{G}_{\beta,N,\rho}^{\times s}\big[\psi_{v^k}(\alpha,\alpha') h(\bb{v})\big] = \hspace{-1mm}\sum_{\bb{v}\in A_{N,\rho}^{\times s}} \hspace{-1mm}\esp{}{\frac{\psi_{v^k}(\alpha,\alpha') h(\bb{v}) \prod_{l=1}^s \hspace{-0.5mm}\exp(\beta \psi_{v^l})}{\prod_{l'=1}^s \sum_{v^{l'}\hspace{-0.7mm}\in A_{N,\rho}} \hspace{-0.5mm}\exp(\beta \psi_{v^{l'}})}}.
        \end{equation}
        For a centered Gaussian vector $\boldsymbol{X} \circeq (X_1,...,X_n)$ and a twice-continuously differentiable function $F$ on $\R^n$, of moderate growth at infinity, we have the formula $\esp{}{X_i F(\boldsymbol{X})} = \sum_{j=1}^n \esp{}{X_i X_j} \esp{}{\partial_{X_j} F(\boldsymbol{X})}$. Here, for any $\bb{v}\in A_{N,\rho}^{\times s}$, the relevant Gaussian vector is
        \begin{equation}
            \big(\psi_{v^k}(\alpha,\alpha') ~;~ \psi_{v^l}, l\in \{1,...,s\} ~;~ \psi_{v^{l'}}, v^{l'}\hspace{-1mm}\in A_{N,\rho}, l'\in \{1,...,s\}\big),
        \end{equation}
        where $X_i \circeq \psi_{v^k}(\alpha,\alpha')$ and $F \circeq h(\bb{v}) \prod_{l=1}^s \hspace{-0.5mm}\exp(\beta \psi_{v^l}) / \prod_{l'=1}^s \sum_{v^{l'}\hspace{-0.7mm}\in A_{N,\rho}} \hspace{-0.5mm}\exp(\beta \psi_{v^{l'}})$.
        Applying the formula to the right-hand side of \eqref{eq:generalization.gaussian.integration.by.parts.apply.to} yields
        \begin{equation}\label{eq:generalization.gaussian.integration.by.parts.apply.to.follow.up}
            \begin{aligned}
                \eqref{eq:generalization.gaussian.integration.by.parts.apply.to}
                &=\sum_{l=1}^s \beta\, \mathbb{E}\mathcal{G}_{\beta,N,\rho}^{\times s}\Big[\mathbb{E}\big[\psi_{v^k}(\alpha,\alpha')\psi_{v^l}\big] h(\bb{v})\Big] \\
                &\quad\quad- s\, \beta\, \mathbb{E}\mathcal{G}_{\beta,N,\rho}^{\times (s+1)}\Big[\mathbb{E}\big[\psi_{v^k}(\alpha,\alpha')\psi_{v^{s+1}}\big] h(\bb{v})\Big].
            \end{aligned}
        \end{equation}
        If we divide \eqref{eq:generalization.gaussian.integration.by.parts.apply.to.follow.up} on both sides by $\beta\, (\mathcal{J}_{\sigma^2}(1) \log N + C_0)$, we deduce
        \begin{equation}\label{eq:prop:BV.technique.end}
            \frac{\mathbb{E}\mathcal{G}_{\beta,N,\rho}^{\times s}\big[\psi_{v^k}(\alpha,\alpha') h(\bb{v})\big]}{\beta\, (\mathcal{J}_{\sigma^2}(1) \log N + C_0)} =
            \left\{\hspace{-1mm}
                \begin{array}{l}
                    \vspace{1mm}\sum_{l=1}^s \mathbb{E}\mathcal{G}_{\beta,N,\rho}^{\times s} \big[q_{\alpha,\alpha'}^N(v^k,v^l) h(\bb{v})\big] \\
                    - s\, \mathbb{E}\mathcal{G}_{\beta,N,\rho}^{\times (s+1)} \big[q_{\alpha,\alpha'}^N(v^k,v^{s+1}) h(\bb{v})\big]
                \end{array}
            \hspace{-1mm}\right\}.
        \end{equation}
        Now, one by one, take the difference in absolute value between each of the $s+1$ expectations inside the braces in \eqref{eq:prop:BV.technique.end} and the corresponding expectation on the left-hand side of \eqref{eq:prop:BV.technique.beginning}. We obtain the bound \eqref{eq:BV.technique} by using Lemma \ref{lem:covariance.estimates.4}.
    \end{proof}

    \subsection{Computation of the limiting two-overlap distribution}\label{sec:proof.two.overlap.disribution.limit}

    Let $\alpha,\alpha'\in [0,1]$ be such that
    \vspace{-2mm}
    \begin{equation}\label{eq:a.a.condition}
        \lambda^{j^{\star}-1} \leq \lambda_{i^{\star}-1} \leq \alpha < \alpha' \leq \lambda_{i^{\star}} \leq \lambda^{j^{\star}}
    \end{equation}
    for some $i^{\star}$ and $j^{\star}$.
    Define $\psi^{u}$, the {\it perturbed scale-inhomogeneous GFF}, mentioned in the previous section, by
    \vspace{-1mm}
    \begin{equation}\label{eq:perturbed.psi}
        \psi_v^u \circeq u\, \phi_v(\alpha,\alpha') + \psi_v, \quad \text{where } u > -\sigma_{i^{\star}}.
    \end{equation}
    The dependence on $\alpha$ and $\alpha'$ is made implicit to lighten the notation. In the proof of Theorem \ref{thm:two.overlap.distribution.limit}, Proposition \ref{prop:BV.technique} will be used to link the limiting two-overlap distribution of $\psi$ to the derivative of the limiting free energy of $\psi^u$ with respect to the perturbation parameter $u$.

    \begin{proof}[Proof of Theorem \ref{thm:two.overlap.distribution.limit}]
        By Corollary \ref{cor:two.overlap.distribution.near.boundary}, it suffices to prove that
        \begin{align}\label{eq:two.overlap.distribution.limit.to.show}
            &\lim_{\rho\rightarrow 0} \lim_{N\rightarrow \infty} \mathbb{E}\mathcal{G}_{\beta,N,\rho}^{\times 2} \big[\bb{1}_{\{q^N(v,v') \leq r\}}\big] \notag \\
            &\hspace{15mm}=
            \left\{\hspace{-1mm}
            \begin{array}{ll}
                0, &\mbox{if } r < 0, \\
                (2 / \bar{\sigma}_j) / \beta, &\mbox{if } r\in [\bar{\mathcal{J}}_{\sigma^2}(\lambda^{j-1}),\bar{\mathcal{J}}_{\sigma^2}(\lambda^j)), ~j \leq l_{\beta}-1, \\
                1, &\mbox{if } r \geq \bar{\mathcal{J}}_{\sigma^2}(\lambda^{l_{\beta}-1}).
            \end{array}
            \right.
        \end{align}
        Since $[0,1] \subseteq \R$ is compact, the space $\mathcal{M}_1([0,1])$ of probability measures on $[0,1]$ is compact under the weak topology.
        Thus, any subsequence of the cumulative distribution functions on the left-hand side of \eqref{eq:two.overlap.distribution.limit.to.show} has a subsequence converging to a cumulative distribution function.
        Pick any converging sub-subsequence and denote its limit by $r \mapsto Q_{\beta}(r)$.
        Since $\mathcal{M}_1([0,1])$ is a metric space, the proof is reduced to showing that $Q_{\beta}$ is given by the right-hand side of \eqref{eq:two.overlap.distribution.limit.to.show}.

        We already know that $Q_{\beta}(r) = 0$ for all $r < 0$ since Corollary \ref{cor:covariance.estimates.3} implies
        \begin{equation}
            \liminf_{\rho\rightarrow 0} \, \liminf_{N\rightarrow \infty} \min_{v,v'\in A_{N,\rho}} \hspace{-1mm}q^N(v,v') \geq 0.
        \end{equation}
        We also have $Q_{\beta}(r) = 1$ for all $r \geq 1$ since $\max_{v,v'\in V_N} q^N(v,v') \leq 1$ by Lemma \ref{lem:IGFF.variance.uniform.bound} and the Cauchy-Schwarz inequality.

        To determine $Q_{\beta}$ on $[0,1)$, let $\alpha, \alpha'\in [0,1]$ be such that $\bar{\mathcal{J}}_{\sigma^2}(\alpha),\bar{\mathcal{J}}_{\sigma^2}(\alpha')$ are continuity points of $Q_{\beta}$ and \eqref{eq:a.a.condition} is satisfied.
        Direct differentiation gives
        \begin{equation}\label{eq:direct.differentiation.free.energy}
            \frac{2\sigma_{i^{\star}}}{\beta^2 \mathcal{J}_{\sigma^2}(1)} \left.\frac{\partial}{\partial u} \esp{}{f_{N,\rho}^{\psi^u}(\beta)} \right|_{u=0} \hspace{-1mm}= \frac{\mathbb{E}\mathcal{G}_{\beta,N,\rho}\big[\psi_v(\alpha,\alpha')\big]}{\beta \mathcal{J}_{\sigma^2}(1) \log N}.
        \end{equation}
        Combine this result with Proposition \ref{prop:BV.technique} in the special case ($s = 1$, $k = 1$, $h \equiv 1$). After taking the limits $N\rightarrow \infty$ (use Corollary \ref{cor:two.overlap.distribution.near.boundary} on the right-hand side of \eqref{eq:BV.technique}), $\rho \rightarrow 0$ and then $\varepsilon\rightarrow 0$, we find
        \begin{equation}\label{eq:proof.two.overlap.distribution.limit.eq}
            \int_{(\bar{\mathcal{J}}_{\sigma^2}(\alpha),\bar{\mathcal{J}}_{\sigma^2}(\alpha')]} \hspace{-4mm}Q_{\beta}(r) dr = \lim_{\rho\rightarrow 0} \lim_{N\rightarrow \infty} \frac{2 \sigma_{i^{\star}}}{\beta^2 \mathcal{J}_{\sigma^2}(1)} \left.\frac{\partial}{\partial u} \esp{}{f_{N,\rho}^{\psi^u}(\beta)}\right|_{u=0}.
        \end{equation}
        For all $\rho\in (0,1]$, the function $u\mapsto \mathbb{E}\big[f_{N,\rho}^{\psi^u}(\beta)\big]$ is convex by Lemma \ref{lem:IGFF.modified.free.energy.convexity}, and by Theorem \ref{thm:IGFF.free.energy.A.N.p},
        \vspace{-1mm}
        \begin{equation}\label{eq:convergence.free.energy}
            \lim_{N\rightarrow \infty} \esp{}{f_{N,\rho}^{\psi^u}(\beta)} = f^{\psi^u}(\beta).
        \end{equation}
        Pointwise limits preserve convexity, so $u \mapsto f^{\psi^u}(\beta)$ is convex. From Lemma \ref{lem:IGFF.modified.free.energy.derivative}, assuming $\beta\in \mathcal{B}$, we also know that $u \mapsto f^{\psi^u}(\beta)$ is differentiable on an open interval $(-\delta,\delta)$, for $\delta = \delta(\beta,\alpha,\alpha',\boldsymbol{\sigma},\boldsymbol{\lambda})$ small enough. In particular, by another standard result of convexity (see e.g. Theorem 25.7 in \cite{MR0274683}),
        \begin{equation}\label{eq:convergence.free.energy.derivative}
            \lim_{N\rightarrow \infty} \frac{\partial}{\partial u} \esp{}{f_{N,\rho}^{\psi^u}(\beta)} = \frac{\partial}{\partial u} f^{\psi^u}(\beta),
        \end{equation}
        for all $u\in (-\delta,\delta)$ (and all $\rho\in (0,1]$). The derivative of $u \mapsto f^{\psi^u}(\beta)$ at $u=0$ is given by \eqref{eq:lem:IGFF.modified.free.energy.derivative}. Thus, from \eqref{eq:proof.two.overlap.distribution.limit.eq}, we get
        \begin{equation}\label{eq:proof.two.overlap.distribution.limit.end}
            \int_{(\bar{\mathcal{J}}_{\sigma^2}(\alpha),\bar{\mathcal{J}}_{\sigma^2}(\alpha')]} \hspace{-4mm}Q_{\beta}(r) dr =
            \left\{\hspace{-1.5mm}
            \begin{array}{ll}
                \vspace{0.7mm}\bar{\mathcal{J}}_{\sigma^2}(\alpha,\alpha') \frac{(2 / \bar{\sigma}_{j^{\star}})}{\beta}, &\mbox{if } j^{\star} \leq l_{\beta} - 1, \\
                \bar{\mathcal{J}}_{\sigma^2}(\alpha,\alpha'), &\mbox{if } j^{\star} \geq l_{\beta}.
            \end{array}
            \right.
        \end{equation}
        But $Q_{\beta}$ is right-continuous (it's a cumulative distribution function) and \eqref{eq:proof.two.overlap.distribution.limit.end} holds for all pairs $\bar{\mathcal{J}}_{\sigma^2}(\alpha),\bar{\mathcal{J}}_{\sigma^2}(\alpha')$ of continuity points satisfying \eqref{eq:a.a.condition}, so $Q_{\beta}$ must be equal to the right-hand side of \eqref{eq:two.overlap.distribution.limit.to.show}. This ends the proof.
    \end{proof}

    \subsection{Proof of the approximate extended Ghirlanda-Guerra identities}\label{sec:proof.ghirlanda.guerra.identities}

    We start by proving a concentration result.
    Denote $\bb{v} \circeq (v^1,...,v^s)$ everywhere in this section.

    \begin{lemma}\label{lem:IGFF.ghirlanda.guerra.restricted.2}
        Let $\lambda_{i^{\star}-1} \hspace{-0.3mm}\leq \hspace{-0.1mm}\alpha \hspace{-0.3mm}<\hspace{-0.1mm} \alpha' \hspace{-0.3mm}\leq\hspace{-0.1mm} \lambda_{i^{\star}}$ for some $i^{\star}$, and let $\beta\in \mathcal{B}$ and $\rho\in (0,1]$.
        Then, for any $s\in \N$, any $k\in \{1,...,s\}$ and any functions $h : V_N^s \rightarrow \R$ such that $\sup_N \|h\|_{\infty} < \infty$,
        \begin{align}\label{eq:lem:IGFF.ghirlanda.guerra.restricted.2}
            &\lim_{N \rightarrow \infty} \frac{\Big|\mathbb{E}\mathcal{G}_{\beta,N,\rho}^{\times s}\big[\psi_{v^k}(\alpha,\alpha') h(\bb{v})\big] - \mathbb{E}\mathcal{G}_{\beta,N,\rho}\big[\psi_{v^k}(\alpha,\alpha')\big] \mathbb{E}\mathcal{G}_{\beta,N,\rho}^{\times s}\big[h(\bb{v})\big]\Big|}{\beta\, (\mathcal{J}_{\sigma^2}(1) \log N + C_0)} = 0.
        \end{align}
    \end{lemma}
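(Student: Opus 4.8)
The plan is to reduce Lemma~\ref{lem:IGFF.ghirlanda.guerra.restricted.2} to two scalar concentration statements for the increment $\psi_v(\alpha,\alpha')$ sampled under the annealed Gibbs measure, and to read both off the behaviour of the perturbed free energy $u\mapsto f_{N,\rho}^{\psi^u}(\beta)$ of \eqref{eq:perturbed.psi}. Since $\sup_N\|h\|_\infty<\infty$ and, by the symmetry of the replicas under $\mathcal{G}_{\beta,N,\rho}^{\times s}$, the number $\bar X_N\circeq\mathbb{E}\mathcal{G}_{\beta,N,\rho}[\psi_{v^k}(\alpha,\alpha')]$ is independent of $k$, the numerator of \eqref{eq:lem:IGFF.ghirlanda.guerra.restricted.2} equals $\mathbb{E}\mathcal{G}_{\beta,N,\rho}^{\times s}\big[(\psi_{v^k}(\alpha,\alpha')-\bar X_N)\,h(\bb v)\big]$, hence is at most $\sup_N\|h\|_\infty\cdot\mathbb{E}\mathcal{G}_{\beta,N,\rho}\big[|\psi_v(\alpha,\alpha')-\bar X_N|\big]$ in modulus. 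Writing $\psi_v(\alpha,\alpha')-\bar X_N$ as a thermal fluctuation plus a disorder fluctuation and using Cauchy--Schwarz under $\mathcal{G}_{\beta,N,\rho}$ and Jensen, it suffices to prove $(\log N)^{-1}\big(\mathbb{E}\mathcal{G}_{\beta,N,\rho}[(\psi_v(\alpha,\alpha')-\mathcal{G}_{\beta,N,\rho}[\psi_v(\alpha,\alpha')])^2]\big)^{1/2}\to0$ and $(\log N)^{-1}\mathbb{E}\big|\mathcal{G}_{\beta,N,\rho}[\psi_v(\alpha,\alpha')]-\bar X_N\big|\to0$.

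Because $\lambda_{i^\star-1}\le\alpha<\alpha'\le\lambda_{i^\star}$ one has $\psi_v(\alpha,\alpha')=\sigma_{i^\star}\phi_v(\alpha,\alpha')=\sigma_{i^\star}\,\partial_u\psi_v^u$, so differentiating exactly as in \eqref{eq:direct.differentiation.free.energy} yields $\mathcal{G}_{\beta,N,\rho}[\psi_v(\alpha,\alpha')]=\frac{\sigma_{i^\star}\log N^2}{\beta}\,\partial_u f_{N,\rho}^{\psi^u}(\beta)|_{u=0}$ and $\mathcal{G}_{\beta,N,\rho}[(\psi_v(\alpha,\alpha')-\mathcal{G}_{\beta,N,\rho}[\psi_v(\alpha,\alpha')])^2]=\frac{\sigma_{i^\star}^2\log N^2}{\beta^2}\,\partial_u^2 f_{N,\rho}^{\psi^u}(\beta)|_{u=0}$. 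Thus the first bullet amounts to $\mathbb{E}\,\partial_u^2 f_{N,\rho}^{\psi^u}(\beta)|_{u=0}=o(\log N)$ and the second to the concentration of $\partial_u f_{N,\rho}^{\psi^u}(\beta)|_{u=0}$ about its mean. For the disorder part I would first prove Gaussian concentration of the free energy: writing $\psi_v^u=\sum_j a_{vj}^u g_j$ with $g_j$ i.i.d.\ standard normal, the $g$-gradient of $f_{N,\rho}^{\psi^u}(\beta)$ has squared norm $\frac{\beta^2}{(\log N^2)^2}\sum_j(\mathcal{G}_{\beta,N,\rho}[a_{vj}^u])^2\le\frac{\beta^2}{(\log N^2)^2}\mathcal{G}_{\beta,N,\rho}[\var{}{\psi_v^u}]\le C/\log N$ by Jensen and the uniform variance bound of Lemma~\ref{lem:IGFF.variance.uniform.bound}, uniformly for $u$ in a fixed neighbourhood of $0$; the Gaussian Poincar\'e inequality then gives $\var{}{f_{N,\rho}^{\psi^u}(\beta)}=O((\log N)^{-1})$, hence $\mathbb{E}|f_{N,\rho}^{\psi^u}(\beta)-\mathbb{E} f_{N,\rho}^{\psi^u}(\beta)|\to0$. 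Feeding this together with the convexity of $u\mapsto f_{N,\rho}^{\psi^u}(\beta)$ (Lemma~\ref{lem:IGFF.modified.free.energy.convexity}), the convergence $\mathbb{E} f_{N,\rho}^{\psi^u}(\beta)\to f^{\psi^u}(\beta)$ (Theorem~\ref{thm:IGFF.free.energy.A.N.p}), and the differentiability of $u\mapsto f^{\psi^u}(\beta)$ at $0$ (Lemma~\ref{lem:IGFF.modified.free.energy.derivative}, which is exactly where $\beta\in\mathcal{B}$ is used) into the standard convex-analysis argument already invoked for \eqref{eq:convergence.free.energy.derivative} --- sandwiching $\partial_u f_{N,\rho}^{\psi^u}(\beta)|_0-\partial_u\mathbb{E} f_{N,\rho}^{\psi^u}(\beta)|_0$ between difference quotients at $\pm\delta$, letting $N\to\infty$ and then $\delta\downarrow0$ --- upgrades concentration of the function to concentration of its derivative, which after scaling is the disorder bullet.

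For the thermal part, by Jensen it is enough that $\mathbb{E}\,\partial_u^2 f_{N,\rho}^{\psi^u}(\beta)|_{u=0}=o(\log N)$. Here convexity gives the exact identity $\int_{-\delta}^{\delta}\mathbb{E}\,\partial_u^2 f_{N,\rho}^{\psi^u}(\beta)\,du=\partial_u\mathbb{E} f_{N,\rho}^{\psi^u}(\beta)|_{\delta}-\partial_u\mathbb{E} f_{N,\rho}^{\psi^u}(\beta)|_{-\delta}$, which by the derivative convergence of the previous paragraph (taken at points of differentiability of the limit) tends to $\partial_u f^{\psi^u}(\beta)|_{\delta}-\partial_u f^{\psi^u}(\beta)|_{-\delta}$, and this vanishes as $\delta\downarrow0$ by differentiability of $f^{\psi^u}(\beta)$ at $0$. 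Consequently $\mathbb{E}\,\partial_u^2 f_{N,\rho}^{\psi^u}(\beta)=o(\log N)$ off a set of $u$ of arbitrarily small measure inside any small window around $0$; running the reduction above for the field $\psi^u$ --- which only turns $\sigma_{i^\star}$ into $u+\sigma_{i^\star}$ in the two displayed formulas --- then gives the asserted identity for almost every $u$ near $0$, and one transfers it to $u=0$ by noting that for $|u|=o((\log N)^{-1/2})$ both $\psi_v^u(\alpha,\alpha')-\psi_v(\alpha,\alpha')=u\phi_v(\alpha,\alpha')$ and the density $\mathcal{G}_{\beta,N,\rho}^u/\mathcal{G}_{\beta,N,\rho}$ are uniformly close to $0$ and to $1$ on the support of the Gibbs measure.

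I expect the thermal term to be the main obstacle: unlike the disorder fluctuations it is controlled only through the \emph{second} derivative of the free energy, which convexity together with pointwise convergence pins down only in an integrated sense --- so the hypothesis $\beta\in\mathcal{B}$ (the differentiability of $u\mapsto f^{\psi^u}(\beta)$ at the base point) is genuinely needed, and the passage from an almost-every-$u$ statement to the statement at $u=0$ requires the continuity estimate indicated above. The remaining ingredients (Gaussian integration by parts, the overlap estimates of Corollary~\ref{cor:covariance.estimates.3}, and the vanishing of the Gibbs mass off $A_{N,\rho}$) enter only when this lemma is combined with Proposition~\ref{prop:BV.technique} to deduce Theorem~\ref{thm:IGFF.ghirlanda.guerra}.
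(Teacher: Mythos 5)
Your initial reduction matches the paper's: after the Jensen step you land on the two quantities $(a)\circeq\mathbb{E}\mathcal{G}_{\beta,N,\rho}\big|\psi_v(\alpha,\alpha')-\mathcal{G}_{\beta,N,\rho}[\psi_v(\alpha,\alpha')]\big|$ and $(b)\circeq\mathbb{E}\big|\mathcal{G}_{\beta,N,\rho}[\psi_v(\alpha,\alpha')]-\mathbb{E}\mathcal{G}_{\beta,N,\rho}[\psi_v(\alpha,\alpha')]\big|$, and your treatment of the disorder term $(b)$ is essentially the paper's Step~2 (convexity sandwich of $F'(0)-\mathbb{E}F'(0)$ by difference quotients, mean convergence of $F(\pm u)$ from Theorem~\ref{thm:IGFF.free.energy.A.N.p}, then differentiability of $u\mapsto f^{\psi^u}(\beta)$ at $0$ from Lemma~\ref{lem:IGFF.modified.free.energy.derivative}); your Gaussian Poincar\'e detour for $\mathbb{E}|F-\mathbb{E}F|\to0$ is an admissible alternative to invoking the $L^1$-convergence of Theorem~\ref{thm:IGFF.free.energy.A.N.p} directly, though it adds nothing.

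The thermal term $(a)$ is where there is a real gap, and you have correctly identified it yourself as the main obstacle. Your plan requires $\mathbb{E}\,\partial_u^2 f_{N,\rho}^{\psi^u}(\beta)\big|_{u=0}=o(\log N)$, but convexity plus the limit of the free energy only controls $\int_{-\delta}^{\delta}\mathbb{E}F''(y)\,dy=\mathbb{E}[F'(\delta)-F'(-\delta)]$, an integrated statement; this gives at best that $\mathbb{E}F''_N(u)$ is $o(\log N)$ for $u$ in a set whose complement has small measure, but that exceptional set depends on $N$ and can sit precisely at $u=0$. Your proposed repair — prove the identity at a.e.\ $u$ and transfer to $u=0$ using $|u|=o((\log N)^{-1/2})$ — entangles two limits in a circular way: the ``good'' $u$ must be chosen as a function of $N$ to be both outside the $N$-dependent bad set and small enough for the continuity estimate, and there is no way to select such a $u_N$ before knowing the bad sets, which are only controlled in aggregate. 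The paper avoids the pointwise estimate at $u=0$ entirely: it sets $D(y)\circeq\mathbb{E}\mathcal{G}_{\beta,N,\rho,y}^{\times2}\big|\psi_{v^1}(\alpha,\alpha')-\psi_{v^2}(\alpha,\alpha')\big|$ (an $L^1$ thermal fluctuation, which dominates $(a)$), and uses the exact identity
\begin{equation*}
    uD(0)=\int_0^u D(y)\,dy-\int_0^u\!\!\int_0^x \frac{\partial}{\partial y}D(y)\,dy\,dx,
\end{equation*}
valid for every $u>0$, to express the $u=0$ quantity as integrals over $[0,u]$. Both integrands are then bounded (via Gaussian integration by parts and Cauchy--Schwarz) by the Gibbs variance of $\psi_v(\alpha,\alpha')$ at the running parameter $y$, whose $u$-integral is exactly $\tfrac{2\sigma_{i^\star}^2}{\beta^2}\,\mathbb{E}[F'(u)-F'(0)]$; this is controlled by convexity and difference quotients, and then $N\to\infty$, $u\to0^+$, $y\to0^+$ close the argument using only differentiability of $f^{\psi^\cdot}(\beta)$ at $0$. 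The crucial point, which your proposal misses, is that $u$ remains a free parameter sent to $0$ \emph{after} $N\to\infty$, so no pointwise control of the second derivative at $u=0$ is ever needed.
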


    \begin{proof}
        If we apply Jensen's inequality to the expectation $\mathbb{E}\mathcal{G}_{\beta,N,\rho}^{\times s}[\cdot]$, followed by the triangle inequality, we have
        \begin{align}\label{eq:lem:IGFF.ghirlanda.guerra.restricted.2.beginning}
            &\Big|\mathbb{E}\mathcal{G}_{\beta,N,\rho}^{\times s}\big[\psi_{v^k}(\alpha,\alpha') h(\bb{v})\big] - \mathbb{E}\mathcal{G}_{\beta,N,\rho}\big[\psi_{v^k}(\alpha,\alpha')\big] \mathbb{E}\mathcal{G}_{\beta,N,\rho}^{\times s}\big[h(\bb{v})\big]\Big| \notag \\
            &\quad\leq \mathbb{E}\mathcal{G}_{\beta,N,\rho} \Big|\psi_{v^k}(\alpha,\alpha') - \mathbb{E}\mathcal{G}_{\beta,N,\rho} \big[\psi_{v^k}(\alpha,\alpha')\big]\Big| \cdot \sup_N \|h\|_{\infty} \notag \\
            &\quad\leq ((a) + (b)) \cdot \sup_N \|h\|_{\infty},
        \end{align}
        where
        \begin{equation}
            \begin{aligned}
                (a) + (b)
                &\circeq \mathbb{E}\mathcal{G}_{\beta,N,\rho}\Big|\psi_{v^k}(\alpha,\alpha') - \mathcal{G}_{\beta,N,\rho}\big[\psi_{v^k}(\alpha,\alpha')\big]\Big| \\
                &\quad+ \mathbb{E}\Big|\mathcal{G}_{\beta,N,\rho}\big[\psi_{v^k}(\alpha,\alpha')\big] - \mathbb{E}\mathcal{G}_{\beta,N,\rho}\big[\psi_{v^k}(\alpha,\alpha')\big]\Big|.
            \end{aligned}
        \end{equation}
        In the remainder, we follow the strategy developed in the proof of Theorem 3.8 in \cite{MR3052333}, where the same concentration result was proved for the mixed $p$-spin model.
        We show that, for all $\rho\in (0,1]$,
        \begin{equation}
            \lim_{N\rightarrow \infty} \frac{(a)}{\log N} = 0 \quad \text{and} \quad \lim_{N\rightarrow \infty} \frac{(b)}{\log N} = 0.
        \end{equation}

        \noindent
        \begin{center}
            \fbox{\textbf{Step 1 :} For all $\rho\in (0,1]$, $\lim_{N\rightarrow \infty} \frac{(a)}{\log N} = 0$.}
        \end{center}
        Note that
        \begin{align}
            (a)
            &= \mathbb{E}\mathcal{G}_{\beta,N,\rho}\Bigg|\sum_{v^2\in A_{N,\rho}} (\psi_{v^1}(\alpha,\alpha') - \psi_{v^2}(\alpha,\alpha'))\frac{\exp(\beta \psi_{v^2})}{\sum_{z^2\in A_{N,\rho}} \hspace{-0.5mm}\exp(\beta \psi_{z^2})}\Bigg| \notag \\
            &\leq \mathbb{E}\mathcal{G}_{\beta,N,\rho}^{\times 2}\big|\psi_{v^1}(\alpha,\alpha') - \psi_{v^2}(\alpha,\alpha')\big|.
        \end{align}
        For $u \geq 0$, we define a perturbed version of the last quantity (where the Gibbs measure $\mathcal{G}_{\beta,N,\rho,u}$ is now defined with respect to $\psi^u$) :
        \begin{align}
            D(u)
            &\circeq \mathbb{E}\mathcal{G}_{\beta,N,\rho,u}^{\times 2} \big|\psi_{v^1}(\alpha,\alpha') - \psi_{v^2}(\alpha,\alpha')\big| \notag \\
            &= \mathbb{E}\Bigg[\sum_{v^1,v^2 \in A_{N,\rho}} \hspace{-3mm} \big|\psi_{v^1}(\alpha,\alpha') - \psi_{v^2}(\alpha,\alpha')\big|\Bigg. \\[-5pt]
            &\hspace{8mm} \Bigg.\cdot \frac{e^{\beta (u\phi_{v^1}(\alpha,\alpha') + \psi_{v^1})}}{\sum_{z^1\in A_{N,\rho}} e^{\beta (u \phi_{z^1}(\alpha,\alpha') + \psi_{z^1})}} \cdot \frac{e^{\beta (u \phi_{v^2}(\alpha,\alpha') + \psi_{v^2})}}{\sum_{z^2\in A_{N,\rho}} e^{\beta (u \phi_{z^2}(\alpha,\alpha') + \psi_{z^2})}}\Bigg] \notag.
        \end{align}
        We can easily verify that
        \begin{equation}\label{eq:difference.psi.u.integrals}
            u D(0) = \int_0^u D(y) dy - \int_0^u \int_0^x \frac{\partial}{\partial y} D(y) dy dx,
        \end{equation}
        and also that
        \begin{equation}
            \frac{\partial}{\partial y} D(y) = \frac{\beta}{\sigma_{i^{\star}}} \mathbb{E}\mathcal{G}_{\beta,N,\rho,y}^{\times 3}
            \Bigg[\hspace{-1mm}
            \begin{array}{l}
                \vspace{1mm} \big|\psi_{v^1}(\alpha,\alpha') - \psi_{v^2}(\alpha,\alpha')\big| \\
                \cdot\, \big(\psi_{v^1}(\alpha,\alpha') + \psi_{v^2}(\alpha,\alpha') - 2\psi_{v^3}(\alpha,\alpha')\big)
            \end{array}
            \hspace{-1mm}\Bigg].
        \end{equation}
        If we separate the last expectation in two parts and apply the Cauchy-Schwarz inequality to each one of them, we find (for $y \geq 0$) :
        \begin{align}\label{eq:lem:IGFF.ghirlanda.guerra.restricted.2.a.bound.derivative}
            \left|\frac{\partial}{\partial y} D(y)\right|
            &\leq \frac{\beta}{\sigma_{i^{\star}}}
                \left\{\hspace{-1mm}
                \begin{array}{l}
                    \vspace{2mm}\mathbb{E}\mathcal{G}_{\beta,N,\rho,y}^{\times 3} \big|\psi_{v^1}(\alpha,\alpha') - \psi_{v^2}(\alpha,\alpha')\big| \big|\psi_{v^1}(\alpha,\alpha') - \psi_{v^3}(\alpha,\alpha')\big| \\
                    +\, \mathbb{E}\mathcal{G}_{\beta,N,\rho,y}^{\times 3} \big|\psi_{v^1}(\alpha,\alpha') - \psi_{v^2}(\alpha,\alpha')\big| \big|\psi_{v^2}(\alpha,\alpha') - \psi_{v^3}(\alpha,\alpha')\big|
                \end{array}
                \hspace{-1mm}\right\} \notag \\
            &\leq \frac{\beta}{\sigma_{i^{\star}}} \cdot 2\, \mathbb{E}\mathcal{G}_{\beta,N,\rho,y}^{\times 2} \Big[\big(\psi_{v^1}(\alpha,\alpha') - \psi_{v^2}(\alpha,\alpha')\big)^2\Big].
        \end{align}
        From the elementary inequality $(c + d)^2 \leq 2c^2 + 2d^2$, we also have
        \begin{align}\label{eq:lem:IGFF.ghirlanda.guerra.restricted.2.a.bound.derivative.2}
            &2\, \mathbb{E}\mathcal{G}_{\beta,N,\rho,y}^{\times 2} \Big[\big(\psi_{v^1}(\alpha,\alpha') - \psi_{v^2}(\alpha,\alpha')\big)^2\Big] \notag \\
            &\quad \leq 8\, \mathbb{E}\mathcal{G}_{\beta,N,\rho,y} \Big[\Big(\psi_v(\alpha,\alpha') - \mathcal{G}_{\beta,N,\rho,y}\big[\psi_v(\alpha,\alpha')\big]\Big)^2\Big].
        \end{align}
        By putting \eqref{eq:lem:IGFF.ghirlanda.guerra.restricted.2.a.bound.derivative} and \eqref{eq:lem:IGFF.ghirlanda.guerra.restricted.2.a.bound.derivative.2} together in \eqref{eq:difference.psi.u.integrals}, we obtain (for $u > 0$) :
        \begin{align}
            D(0)
            &\leq \frac{1}{u} \int_0^u D(y) dy + \int_0^u \left|\frac{\partial}{\partial y} D(y)\right| dy \notag \\
            &\leq 2 \Bigg(\frac{1}{u} \int_0^u \mathbb{E}\mathcal{G}_{\beta,N,\rho,y}\Big[\Big(\psi_v(\alpha,\alpha') - \mathcal{G}_{\beta,N,\rho,y}\big[\psi_v(\alpha,\alpha')\big]\Big)^2\Big] dy\Bigg)^{1/2} \notag \\
            &\quad\quad+ \frac{8 \beta}{\sigma_{i^{\star}}} \int_0^u \mathbb{E}\mathcal{G}_{\beta,N,\rho,y} \Big[\Big(\psi_v(\alpha,\alpha') - \mathcal{G}_{\beta,N,\rho,y}\big[\psi_v(\alpha,\alpha')\big]\Big)^2\Big] dy.
        \end{align}
        In order to bound $\frac{1}{u}\int_0^u D(y) dy$, we separated $D(y)$ in two parts (with the triangle inequality) and we applied the Cauchy-Schwarz inequality to the two resulting expectations $\frac{1}{u} \int_0^u \mathbb{E}\mathcal{G}_{\beta,N,\rho,y}[\, \cdot\, ]\, dy$. Denote
        \begin{equation}\label{eq:lem:IGFF.ghirlanda.guerra.restricted.2.a.varepsilon}
            \varepsilon_{N,\rho}(u) \circeq \frac{1}{\log N} \int_0^u \mathbb{E}\mathcal{G}_{\beta,N,\rho,y} \Big[\Big(\psi_v(\alpha,\alpha') - \mathcal{G}_{\beta,N,\rho,y}\big[\psi_v(\alpha,\alpha')\big]\Big)^2\Big] dy.
        \end{equation}
        So far, we have shown that
        \begin{equation}\label{eq:lem:IGFF.ghirlanda.guerra.restricted.2.a.so.far}
            \frac{(a)}{\log N} \leq \frac{D(0)}{\log N} \leq 2 \sqrt{\frac{\varepsilon_{N,\rho}(u)}{u \log N}} + \frac{8 \beta}{\sigma_{i^{\star}}}\, \varepsilon_{N,\rho}(u).
        \end{equation}
        Let
        \begin{equation}\label{eq.F.u}
            F(u) \circeq f_{N,\rho}^{\psi^u}(\beta) = \frac{1}{\log N^2} \log\hspace{-1mm} \sum_{v\in A_{N,\rho}} \hspace{-1mm}e^{\beta (u \phi_v(\alpha,\alpha') + \psi_v)},
        \end{equation}
        and note that
        \begin{align}\label{eq:lem:IGFF.ghirlanda.guerra.restricted.2.a.so.far.2}
            \esp{}{F''(y)}
            &= \frac{\beta^2}{\sigma_{i^{\star}}^2 \log N^2}\, \mathbb{E}\Big[\mathcal{G}_{\beta,N,\rho,y} \big[\big(\psi_v(\alpha,\alpha')\big)^2\big] - \Big(\mathcal{G}_{\beta,N,\rho,y}\big[\psi_v(\alpha,\alpha')\big]\Big)^2\Big] \notag \\
            &= \frac{\beta^2}{2 \sigma_{i^{\star}}^2}\, \cdot \frac{1}{\log N}\, \mathbb{E}\mathcal{G}_{\beta,N,\rho,y} \Big[\Big(\psi_v(\alpha,\alpha') - \mathcal{G}_{\beta,N,\rho,y}\big[\psi_v(\alpha,\alpha')\big]\Big)^2\Big].
        \end{align}
        From \eqref{eq:lem:IGFF.ghirlanda.guerra.restricted.2.a.varepsilon} and the convexity of $F$ (see Lemma \ref{lem:IGFF.modified.free.energy.convexity}), we have, for all $y\in (0,\sigma_{i^{\star}})$,
        \begin{align}\label{eq:lem:IGFF.ghirlanda.guerra.restricted.2.varepsilon.bound}
            \varepsilon_{N,\rho}(u)
            &= \frac{2 \sigma_{i^{\star}}^2}{\beta^2} \int_0^u \esp{}{F''(y)} dy = \frac{2 \sigma_{i^{\star}}^2}{\beta^2}\, \esp{}{F'(u) - F'(0)} \notag \\
            &\leq \frac{2 \sigma_{i^{\star}}^2}{\beta^2}\, \esp{}{\frac{F(u + y) - F(u)}{y} - \frac{F(0) - F(-y)}{y}}.
        \end{align}
        By putting \eqref{eq:lem:IGFF.ghirlanda.guerra.restricted.2.varepsilon.bound} in \eqref{eq:lem:IGFF.ghirlanda.guerra.restricted.2.a.so.far} and by using the mean convergence in Theorem \ref{thm:IGFF.free.energy.A.N.p}, we get, for all $\rho\in (0,1]$ and all $u > 0$ and $y\in (0,\sigma_{i^{\star}})$,
        \begin{equation}
            \limsup_{N\rightarrow \infty} \frac{(a)}{\log N} \leq \frac{8 \beta}{\sigma_{i^{\star}}} \cdot \frac{2 \sigma_{i^{\star}}^2}{\beta^2} \left(\frac{f(u + y) - f(u)}{y} - \frac{f(0) - f(-y)}{y}\right),
        \end{equation}
        where $f(u) \circeq f^{\psi^u}(\beta)$.
        From Lemma \ref{lem:IGFF.modified.free.energy.derivative}, there exists $\delta = \delta(\beta,\alpha,\alpha',\boldsymbol{\sigma},\boldsymbol{\lambda})$ such that $f$ is differentiable on $(-\delta,\delta)$. Therefore, take $u \rightarrow 0^+$ and then $y \rightarrow 0^+$ in the above equation, the right-hand side goes to $0$. The left-hand side does not depend on $u$ or $y$, so we conclude that for all $\rho\in (0,1]$, $\lim_{N\rightarrow \infty} (a) / \log N = 0$.

        \vspace{2mm}
        \begin{center}
            \fbox{\textbf{Step 2 :} For all $\rho\in (0,1]$, $\lim_{N\rightarrow \infty} \frac{(b)}{\log N} = 0$.}
        \end{center}
        \vspace{0mm}
        Let $F(u) \circeq f_{N,\rho}^{\psi^u}(\beta)$ as in \eqref{eq.F.u} and, for $u\in (0,\sigma_{i^{\star}})$, let
        \begin{equation}
            \eta(u) \circeq \big|F(-u) - \mathbb{E}\big[F(-u)\big]\big| + \big|F(0) - \mathbb{E}\big[F(0)\big]\big| + \big|F(u) - \mathbb{E}\big[F(u)\big]\big|.
        \end{equation}
        Differentiation of the free energy gives
        \begin{equation}
            (b) = \frac{\sigma_{i^{\star}} \log N^2}{\beta} \mathbb{E}\Big|F'(0) - \mathbb{E}\big[F'(0)\big]\Big|.
        \end{equation}
        From the convexity of $F$ (see Lemma \ref{lem:IGFF.modified.free.energy.convexity}),
        \begin{align}
            F'(0) - \mathbb{E}\big[F'(0)\big]
            &\leq \frac{F(u) - F(0)}{u} - \mathbb{E}\big[F'(0)\big] \notag \\
            &\leq \left|\frac{\mathbb{E}\big[F(u)\big] - \mathbb{E}\big[F(0)\big]}{u} - \mathbb{E}\big[F'(0)\big]\right| + \frac{\eta(u)}{u}, \\
            F'(0) - \mathbb{E}\big[F'(0)\big]
            &\geq \frac{F(0) - F(-u)}{u} - \mathbb{E}\big[F'(0)\big] \notag \\
            &\geq - \left|\frac{\mathbb{E}\big[F(0)\big] - \mathbb{E}\big[F(-u)\big]}{u} - \mathbb{E}\big[F'(0)\big]\right| - \frac{\eta(u)}{u}.
        \end{align}
        By taking the absolute value and the expectation, we get
        \begin{align}\label{eq:lem:IGFF.ghirlanda.guerra.restricted.2.b.end}
            \frac{\beta}{2 \sigma_{i^{\star}}} \cdot \frac{(b)}{\log N}
            &\leq \left|\frac{\mathbb{E}\big[F(u)\big] - \mathbb{E}\big[F(0)\big]}{u} - \mathbb{E}\big[F'(0)\big]\right| \notag \\
            &\quad+ \left|\frac{\mathbb{E}\big[F(0)\big] - \mathbb{E}\big[F(-u)\big]}{u} - \mathbb{E}\big[F'(0)\big]\right| + \frac{\mathbb{E}\big[\eta(u)\big]}{u}.
        \end{align}
        Recall that $F$ and $\eta$ are functions of $N$ and $\rho$ by definition.
        From Theorem \ref{thm:IGFF.free.energy.A.N.p}, we know that for all $\rho\in (0,1]$ and all $u\in (0,\sigma_{i^{\star}})$,
        \begin{equation}
            \lim_{N\rightarrow \infty} \mathbb{E}\big[\eta(u)\big] = 0.
        \end{equation}
        Using \eqref{eq:convergence.free.energy} and \eqref{eq:convergence.free.energy.derivative} in \eqref{eq:lem:IGFF.ghirlanda.guerra.restricted.2.b.end}, we get, for all $\rho\in (0,1]$ and all $u\in (0,\sigma_{i^{\star}})$,
        \begin{align}
            \limsup_{N\rightarrow \infty} \left\{\frac{\beta}{2 \sigma_{i^{\star}}} \cdot \frac{(b)}{\log N}\right\}
            &\leq \left|\frac{f(u) \hspace{-0.3mm}-\hspace{-0.3mm} f(0)}{u} \hspace{-0.3mm}-\hspace{-0.3mm} f'(0)\right| \notag \\
            &\quad+ \left|\frac{f(0) \hspace{-0.3mm}-\hspace{-0.3mm} f(-u)}{u} \hspace{-0.3mm}-\hspace{-0.3mm} f'(0)\right|,
        \end{align}
        where $f(u) \circeq f^{\psi^u}(\beta)$. Finally, take $u\rightarrow 0^+$ in the last equation, the differentiability of $f$ at $0$ (from Lemma \ref{lem:IGFF.modified.free.energy.derivative}) implies that for all $\rho\in (0,1]$, $\lim_{N\rightarrow \infty} (b) / \log N = 0$.
        This ends the proof of Lemma \ref{lem:IGFF.ghirlanda.guerra.restricted.2}.
    \end{proof}

    Finally, we can prove the approximate extended Ghirlanda-Guerra identities.

    \begin{proof}[Proof of Theorem \ref{thm:IGFF.ghirlanda.guerra}]
        In addition to \eqref{eq:scales.are.points.of.continuity.TOD}, assume that $\lambda_{i^{\star}-1} \hspace{-0.3mm} \leq \hspace{-0.1mm}\alpha \hspace{-0.3mm}<\hspace{-0.1mm} \alpha' \hspace{-0.3mm} \leq \hspace{-0.1mm} \lambda_{i^{\star}}$ for some $i^{\star}$. Also, let $\rho\in (0,\lambda_1)$.
        If we combine Lemma \ref{lem:IGFF.ghirlanda.guerra.restricted.2} and Proposition \ref{prop:BV.technique} with the triangle inequality, we get
        \begin{equation}\label{eq:prop:IGFF.ghirlanda.guerra.restricted.bound.1}
            \hspace{2mm}\left|\hspace{-1mm}
            \begin{array}{l}
            \vspace{2mm}\resizebox{0.29\hsize}{!}{%
                $\frac{\mathbb{E}\mathcal{G}_{\beta\hspace{-0.3mm},N\hspace{-0.6mm},\rho}[\psi_{v^k}(\alpha,\alpha')]}{\beta\, (\mathcal{J}_{\sigma^2}(1) \log N + C_0)}$
            }
            \hspace{-1mm}\mathbb{E}\mathcal{G}_{\beta,N,\rho}^{\times s}\big[h(\bb{v})\big] \\
            \hspace{0mm} - \left\{\hspace{-1.5mm}
                \begin{array}{l}
                    \vspace{0.5mm}\sum_{l = 1}^s \mathbb{E}\mathcal{G}_{\beta,N,\rho}^{\times s}\big[\int_{S_{\alpha,\alpha'}} \hspace{-1mm}\bb{1}_{\{r < q^N(v^k,v^l)\}} dr \, h(\bb{v})\big] \\
                    - s \, \mathbb{E}\mathcal{G}_{\beta,N,\rho}^{\times (s+1)}\big[\int_{S_{\alpha,\alpha'}} \hspace{-1mm}\bb{1}_{\{r < q^N(v^k,v^{s+1})\}} dr \, h(\bb{v})\big]
                \end{array}
                \hspace{-1.5mm}\right\}
            \end{array}\hspace{-1mm}
            \right| \leq \text{RHS}\eqref{eq:BV.technique} + o_N(1),
        \end{equation}
        where $\text{RHS}$ means ``right-hand side of''.
        Furthermore, from Proposition \ref{prop:BV.technique} in the special case ($s=1$, $k = 1$, $h \equiv 1$),
        \begin{equation}\label{eq:prop:IGFF.ghirlanda.guerra.restricted.bound.2}
            \left|
            \begin{array}{l}
            \vspace{2mm}\resizebox{0.29\hsize}{!}{%
                $\frac{\mathbb{E}\mathcal{G}_{\beta\hspace{-0.3mm},N\hspace{-0.6mm},\rho}[\psi_{v^k}(\alpha,\alpha')]}{\beta\, (\mathcal{J}_{\sigma^2}(1) \log N + C_0)}$
            } \\
            \hspace{0mm}- \left\{\hspace{-1mm}
                \begin{array}{l}
                    \vspace{0.5mm}\mathbb{E}\mathcal{G}_{\beta,N,\rho}\big[\int_{S_{\alpha,\alpha'}} \hspace{-1mm}\bb{1}_{\{r < q^N(v^k,v^k)\}} dr\big] \\
                    - \mathbb{E}\mathcal{G}_{\beta,N,\rho}^{\times 2}\big[\int_{S_{\alpha,\alpha'}} \hspace{-1mm}\bb{1}_{\{r < q^N(v^1,v^2)\}} dr\big]
                \end{array}
                \hspace{-1.5mm}\right\}
            \end{array}
            \right| \leq \text{RHS}_{(s=1,h\equiv 1)}\eqref{eq:BV.technique}.
        \end{equation}
        By combining the last two bounds with the triangle inequality, we find
        \begin{align}\label{eq:prop:IGFF.ghirlanda.guerra.restricted.to.prove}
            &\limsup_{N\to\infty} \left|\hspace{-1mm}
            \begin{array}{l}
                \vspace{2mm}\mathbb{E}\mathcal{G}_{\beta,N,\rho}^{\times (s+1)}\big[\int_{S_{\alpha,\alpha'}} \hspace{-1mm} \bb{1}_{\{r < q^N(v^k,v^{s+1})\}} dr ~h(\bb{v})\big] \\
                \hspace{0mm}- \left\{\hspace{-1.5mm}
                \begin{array}{l}
                    \frac{1}{s} \mathbb{E}\mathcal{G}_{\beta,N,\rho}^{\times 2}\big[\int_{S_{\alpha,\alpha'}} \hspace{-1mm}\bb{1}_{\{r < q^N(v^1,v^2)\}} dr\big] \mathbb{E}\mathcal{G}_{\beta,N,\rho}^{\times s}\big[h(\bb{v})\big] \\[1.5mm]
                    + \frac{1}{s} \sum_{l \neq k}^s \mathbb{E}\mathcal{G}_{\beta,N,\rho}^{\times s}\big[\int_{S_{\alpha,\alpha'}} \hspace{-1mm}\bb{1}_{\{r < q^N(v^k,v^l)\}} dr ~h(\bb{v})\big]
                \end{array}
                \hspace{-1.5mm}\right\}
            \end{array}\hspace{-1mm}
            \right| \\[1mm]
            &\leq \widetilde{C} \cdot \sup_N \|h\|_{\infty} \cdot \left\{\hspace{-1mm}
                \begin{array}{l}
                    \limsup_{N\to\infty} \mathbb{E}\mathcal{G}_{\beta,N,\rho}^{\times 2} \big[\bb{1}_{\{\bar{\mathcal{J}}_{\sigma^2}(\alpha) - \varepsilon \leq q^N(v,v') \leq \bar{\mathcal{J}}_{\sigma^2}(\alpha) + \varepsilon\}}\big] \\[1.5mm]
                    \limsup_{N\to\infty}\mathbb{E}\mathcal{G}_{\beta,N,\rho}^{\times 2} \big[\bb{1}_{\{\bar{\mathcal{J}}_{\sigma^2}(\alpha') - \varepsilon \leq q^N(v,v') \leq \bar{\mathcal{J}}_{\sigma^2}(\alpha') + \varepsilon\}}\big] \\[1.5mm]
                    + O(\rho)
                \end{array}
                \hspace{-1mm}\right\}. \notag
        \end{align}
        Using the triangle inequality, Proposition \ref{prop:product.gibbs.near.boundary} and Corollary \ref{cor:two.overlap.distribution.near.boundary} in \eqref{eq:prop:IGFF.ghirlanda.guerra.restricted.to.prove}, it is easy to show that inequality \eqref{eq:prop:IGFF.ghirlanda.guerra.restricted.to.prove} is also true if $\mathcal{G}_{\beta,N,\rho}$ is replaced everywhere by $\mathcal{G}_{\beta,N}$.
        From Theorem \ref{thm:two.overlap.distribution.limit}, condition \eqref{eq:scales.are.points.of.continuity.TOD} guarantees that $\bar{\mathcal{J}}_{\sigma^2}(\alpha)$ and $\bar{\mathcal{J}}_{\sigma^2}(\alpha')$ are continuity points of the limiting two-overlap distribution.
        Thus, after the replacement of the Gibbs measures in \eqref{eq:prop:IGFF.ghirlanda.guerra.restricted.to.prove}, take $\rho\to 0$ and then $\varepsilon\to 0$ to deduce \eqref{eq:thm:IGFF.ghirlanda.guerra.AGGI}.

        If we only assume \eqref{eq:scales.are.points.of.continuity.TOD}, note that $\lambda_{i-1} \hspace{-0.5mm}\leq \hspace{-0.5mm}\alpha \hspace{-0.5mm}< \hspace{-0.5mm}\lambda_{i}$ and $\lambda_{i'-1} \hspace{-0.5mm}< \hspace{-0.5mm}\alpha' \hspace{-0.5mm}\leq \hspace{-0.5mm}\lambda_{i'}$ for some $i,i'$. By the above argument, we have \eqref{eq:thm:IGFF.ghirlanda.guerra.AGGI} for each pair of scales
        \begin{equation}
            \alpha < \lambda_i ~~;~~ \lambda_i < \lambda_{i + 1} ~~; \ldots ;~~ \lambda_{i'- 2} < \lambda_{i' - 1} ~~;~~ \lambda_{i' - 1} < \alpha'.
        \end{equation}
        Add all the limits together and use the triangle inequality to conclude.
    \end{proof}

\newpage
\appendix

\section{Covariance estimates}\label{sec:covariance.estimates}

    The Markov property of the GFF, which is a consequence of the strong Markov property of the simple random walk (in the covariance function in \eqref{eq:GFF.green.function}),
    implies that the value of the field inside a neighborhood is independent of the field outside given the boundary, see e.g.~\citet{MR585179}.
    \hspace{-0.5mm}In particular, for the \hspace{-0.5mm}neighborhood \hspace{-0.5mm}$[v]_\lambda$, this implies
    \begin{equation}\label{eq:GFF.Markov}
        \phi_v(\lambda) \circeq \mathbb{E}\big[\phi_v \nvert \F_{\partial [v]_{\lambda} \cup [v]_{\lambda}^c}\big] = \mathbb{E}\big[\phi_v \nvert \F_{\partial [v]_{\lambda}}\big].
    \end{equation}
    Define the {\it branching scale between $v$ and $v'$} in $V_N$ :
    \begin{equation}\label{eq:rho}
        b_N(v,v') \circeq \max\big\{\lambda\in [0,1] : [v]_\lambda \cap [v']_\lambda \neq \emptyset\big\}.
    \end{equation}
    This is the largest $\lambda$ for which the two neighborhoods $[v]_\lambda$ and $[v']_\lambda$ intersect.
    We always have by definition that $\|v-v'\|_2$ is of order $N^{1-b_N(v,v')}$.
    The branching scale plays the same role as the branching time (normalized to lie in $[0,1]$) in branching random walk.
    Define
    \begin{equation*}
        \varepsilon_N \circeq \frac{\log 4}{\log N}.
    \end{equation*}
    For all $v,v'\in V_N$ ($v \neq v'$), this definition guarantees that for all $N\in \N$,
    \begin{equation}\label{eq:property.figure.overlap.over.under.branching}
        \begin{array}{lll}
            \hspace{4mm}&\hspace{7.5mm}&[v]_{1 \wedge (b_N + \varepsilon_N)} \cap [v']_{1 \wedge (b_N + \varepsilon_N)} = \emptyset \\
            \hspace{4mm}&\text{and}& \\
            \hspace{4mm}&\hspace{7.5mm}&[v]_{b_N} \cup [v']_{b_N} \subseteq [v]_{0 \vee (b_N - \varepsilon_N)} \cap [v']_{0 \vee (b_N - \varepsilon_N)}.
        \end{array}
    \end{equation}
    To convince the reader, see Figure \ref{fig:overlap.over.under.branching} below and note that $N^{\varepsilon_N} = 4$.

    If $\lambda < \lambda'$ and $\mu < \mu'$, a direct consequence of \eqref{eq:property.figure.overlap.over.under.branching} and the Markov property of the GFF is the fact that when
    \begin{equation*}
        \begin{array}{l}
            \hspace{6mm} v \neq v' \quad \text{and} \quad
            \left\{\hspace{-1mm}
            \begin{array}{l}
                \hspace{4.3mm} (1) : \lambda, \mu \geq b_N(v,v') + \varepsilon_N, \\
                \text{or } (2) : \lambda \geq b_N(v,v') + \varepsilon_N > b_N(v,v') - \varepsilon_N \geq \mu', \\
                \text{or } (3) : b_N(v,v') - \varepsilon_N \geq \lambda \geq \mu' + \varepsilon_N,
            \end{array}
            \right. \\
            \text{or } \\
            \hspace{6mm} v = v' \quad \text{and} \quad \lambda \geq \mu'
        \end{array}
    \end{equation*}
    then
    \begin{equation}\label{eq:GFF.Markov.independence}
        \phi_v(\lambda,\lambda') \quad \text{is independent of} \quad \phi_{v'}(\mu,\mu').
    \end{equation}
    This is because the shell $[v]_{\lambda} \cap [v]_{\lambda'}^c$ does not intersect the shell $[v']_{\mu} \cap [v']_{\mu'}^c$ in all cases, see Figure 2.2 in \cite{MR3541850}.
    The ``spacing'' $\varepsilon_N$ is not optimal but sufficient for our purpose.
    We stress that, in general, the field $\psi$ does not have the Markov property.
    However, by working with increments of the field $\psi$, the property analogous to \eqref{eq:GFF.Markov.independence} can be proved.
    The following lemma is a refinement of Lemma A.1 in \cite{MR3541850}, where the error term $\varepsilon_N$ is introduced to make the statement hold for all $N$, not only $N$ large enough.

    \begin{lemma}\label{lem:IGFF.psi.Markov}
        Let $v,v'\in V_N$, $\lambda < \lambda'$, $\mu < \mu'$ and $\varepsilon_N \circeq (\log 4) / (\log N)$. If
        \begin{equation*}
        \begin{array}{l}
            \hspace{6mm} v \neq v' \quad \text{and} \quad
            \left\{\hspace{-1mm}
            \begin{array}{l}
                \hspace{4.4mm} (1) : \lambda, \mu \geq b_N(v,v') + \varepsilon_N, \\
                \text{or } (2) : \lambda \geq b_N(v,v') + \varepsilon_N > b_N(v,v') - \varepsilon_N \geq \mu', \\
                \text{or } (3) : b_N(v,v') - \varepsilon_N \geq \lambda \geq \mu' + \varepsilon_N,
            \end{array}
            \right. \\
            \text{or } \\
            \hspace{6mm} v = v' \quad \text{and} \quad \lambda \geq \mu'
        \end{array}
    \end{equation*}
    then
    \begin{equation}\label{eq:IGFF.Markov.independence}
        \psi_v(\lambda,\lambda') \quad \text{is independent of} \quad \psi_{v'}(\mu,\mu').
    \end{equation}
    \end{lemma}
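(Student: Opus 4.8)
The plan is to reduce the statement about the increments of $\psi$ to the corresponding statement \eqref{eq:GFF.Markov.independence} about the increments of the underlying GFF $\phi$, which is already available. Recall from Definition \ref{def:IGFF} that $\psi_v = \sum_{i=1}^M \sigma_i \nabla\phi_v(\lambda_i)$, and that conditioning is nested: since $\F_{\partial[v]_{\lambda}\cup[v]_{\lambda}^c}$ is a filtration in $\lambda$, one has the tower-property identity $\psi_v(\lambda) = \mathbb{E}[\psi_v\nvert\F_{\partial[v]_{\lambda}\cup[v]_{\lambda}^c}] = \sum_{i=1}^M \sigma_i \nabla\phi_v(\lambda\wedge\lambda_i \,,\, \lambda\wedge\lambda_{i-1}\text{-style increments})$. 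More precisely, the first step is the bookkeeping lemma that $\psi_v(\lambda)$ is a linear combination of the GFF increments $\phi_v(\lambda_{i-1}\wedge\lambda,\lambda_i\wedge\lambda)$ with coefficients $\sigma_i$; hence the increment $\psi_v(\lambda,\lambda')$ is a finite linear combination $\sum_i \sigma_i\,\phi_v(\lambda\vee\lambda_{i-1}\wedge\lambda'\,,\,\lambda\vee\lambda_i\wedge\lambda')$ of GFF-increments, each of which lies over a sub-interval of $[\lambda,\lambda']$. The analogous decomposition holds for $\psi_{v'}(\mu,\mu')$ over sub-intervals of $[\mu,\mu']$.

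The second step is to observe that, because the family $(\phi_v(\cdot))$, $(\phi_{v'}(\cdot))$ is jointly Gaussian, it suffices to show that \emph{each} GFF-increment $\phi_v(a,b)$ appearing in the expansion of $\psi_v(\lambda,\lambda')$ is independent of \emph{each} GFF-increment $\phi_{v'}(c,d)$ appearing in the expansion of $\psi_{v'}(\mu,\mu')$; then the two linear combinations are independent. For this I would invoke \eqref{eq:GFF.Markov.independence} with the pair $(\phi_v(a,b),\phi_{v'}(c,d))$: since $[a,b]\subseteq[\lambda,\lambda']$ and $[c,d]\subseteq[\mu,\mu']$, the hypotheses $(1)$–$(3)$ (resp.\ $\lambda\geq\mu'$ in the $v=v'$ case) on $\lambda,\lambda',\mu,\mu'$ are inherited by $a,b,c,d$ — e.g.\ in case $(1)$ we have $a\geq\lambda\geq b_N+\varepsilon_N$ and $c\geq\mu\geq b_N+\varepsilon_N$; in case $(3)$ we get $b_N-\varepsilon_N\geq\lambda\geq a$ but we need $a\geq d+\varepsilon_N$, which follows from $\lambda\ge\mu'+\varepsilon_N\ge d+\varepsilon_N$ only if $a\ge\lambda$, wait — here one must instead use that the relevant increment of $\psi_v(\lambda,\lambda')$ starts at $\lambda$, so $a\ge\lambda\ge\mu'+\varepsilon_N\ge d+\varepsilon_N$; and symmetrically in case $(2)$. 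The $v=v'$ case is immediate since $a\ge\lambda\ge\mu'\ge d$.

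The third and final step is just to assemble: the $\sigma$-algebra generated by $\{\phi_v(a,b)\}$ (over the relevant sub-intervals) is independent of the one generated by $\{\phi_{v'}(c,d)\}$, because pairwise independence of jointly Gaussian families upgrades to independence of the generated $\sigma$-algebras; and $\psi_v(\lambda,\lambda')$, $\psi_{v'}(\mu,\mu')$ are measurable with respect to these, respectively. I expect the main (though modest) obstacle to be the careful case-checking in step two — making sure the spacing $\varepsilon_N$ survives intact when passing from the outer intervals $[\lambda,\lambda']$, $[\mu,\mu']$ to the GFF sub-increments that actually appear in the decomposition, and in particular correctly handling the boundary cases $\lambda_0=0$, $\lambda_M=1$ and scales $\lambda,\lambda'$ that fall strictly between consecutive $\lambda_i$'s. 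This is exactly the point where the refinement over Lemma A.1 of \cite{MR3541850} (valid for all $N$, not only $N$ large) is used, since $\varepsilon_N$ is tuned precisely so that $N^{\varepsilon_N}=4$ forces the shells to separate.
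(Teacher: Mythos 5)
Your approach mirrors the paper's: expand $\psi_v(\lambda,\lambda')$ and $\psi_{v'}(\mu,\mu')$ via \eqref{eq:martingale.transform.decomposition} into sums of GFF increments $\phi_v(a,b)$, $\phi_{v'}(c,d)$ over sub-intervals, apply \eqref{eq:GFF.Markov.independence} to each cross-pair, and finish using joint Gaussianity (zero cross-covariance between the two families). But the ``wait ---'' in your step two is a genuine stumble, and the correction you make leaves the argument incomplete. After correctly replacing ``$b_N - \varepsilon_N \geq \lambda \geq a$'' with ``$a \geq \lambda$'' (a sub-increment of $\psi_v(\lambda,\lambda')$ has left endpoint $a = \lambda \vee \lambda_{i-1} \geq \lambda$), you recover the lower bound $a \geq \lambda \geq \mu' + \varepsilon_N \geq d + \varepsilon_N$, but the upper bound $a \leq b_N - \varepsilon_N$ required by condition $(3)$ is no longer guaranteed. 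When some $\lambda_{i-1}$ lands in $(b_N(v,v') - \varepsilon_N,\, b_N(v,v') + \varepsilon_N)$ --- which does occur, since $b_N$ sweeps over $[0,1]$ as $(v,v',N)$ varies --- you get a cross-pair with $a$ in this gap and $d \leq \mu' \leq b_N - 2\varepsilon_N$, and none of $(1)$, $(2)$, $(3)$ applies as stated: $(2)$ would need $a \geq b_N + \varepsilon_N$ and $(3)$ would need $a \leq b_N - \varepsilon_N$.

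The independence still holds in that sub-case, but closing the gap requires an observation not contained in the literal hypotheses $(1)$--$(3)$. In that regime one has $[v]_a \subsetneq [v']_d^o$ (indeed $[v]_a$ has side $< 4N^{1-b_N}$, $[v']_d$ has side $\geq 16N^{1-b_N}$, and $\|v-v'\|_\infty \leq N^{1-b_N}$), hence $\F_{\partial[v']_d \cup [v']_d^c} \subseteq \F_{\partial[v]_a \cup [v]_a^c}$; the increment $\phi_v(a,b)$ is independent of the latter $\sigma$-algebra while $\phi_{v'}(c,d)$ is measurable with respect to the former, giving the independence. Alternatively, you can bypass the sub-increment case-check entirely in case $(3)$ by arguing at the level of the full $\psi$-increments: $\psi_v(\lambda,\lambda')$ is an $\mathbb{F}_v$-martingale increment, hence independent of $\F_{\partial[v]_\lambda \cup [v]_\lambda^c}$; $\psi_{v'}(\mu,\mu')$ is $\F_{\partial[v']_{\mu'} \cup [v']_{\mu'}^c}$-measurable; and in case $(3)$, $[v]_\lambda \subsetneq [v']_{\mu'}^o$, giving $\F_{\partial[v']_{\mu'} \cup [v']_{\mu'}^c} \subseteq \F_{\partial[v]_\lambda \cup [v]_\lambda^c}$. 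The paper's one-line proof (``follows directly from \eqref{eq:GFF.Markov.independence}'') leaves this implicit; the obstacle you call ``modest'' is exactly where the work lies.
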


    \begin{proof}
        Using the tower property of conditional expectations, we have the following decomposition (see (A.4) in \cite{MR3541850}) :
        \begin{equation}\label{eq:martingale.transform.decomposition}
            \psi_v(\lambda,\lambda') = \hspace{-11mm} \sum_{\substack{1 \leq i \leq M : \\ \lambda \leq \lambda_{i-1} < \lambda' \text{ or } \lambda < \lambda_i \leq \lambda' \\ \text{or } \lambda_{i-1} \leq \lambda < \lambda' \leq \lambda_i}} \hspace{-11.5mm} \sigma_i \hspace{0.5mm}\phi_v(\lambda \vee \lambda_{i-1},\lambda' \wedge \lambda_i).
        \end{equation}
        The conclusion follows directly from \eqref{eq:GFF.Markov.independence} above.
    \end{proof}

    \begin{figure}[ht]
        \centering
        \includegraphics[scale=0.65]{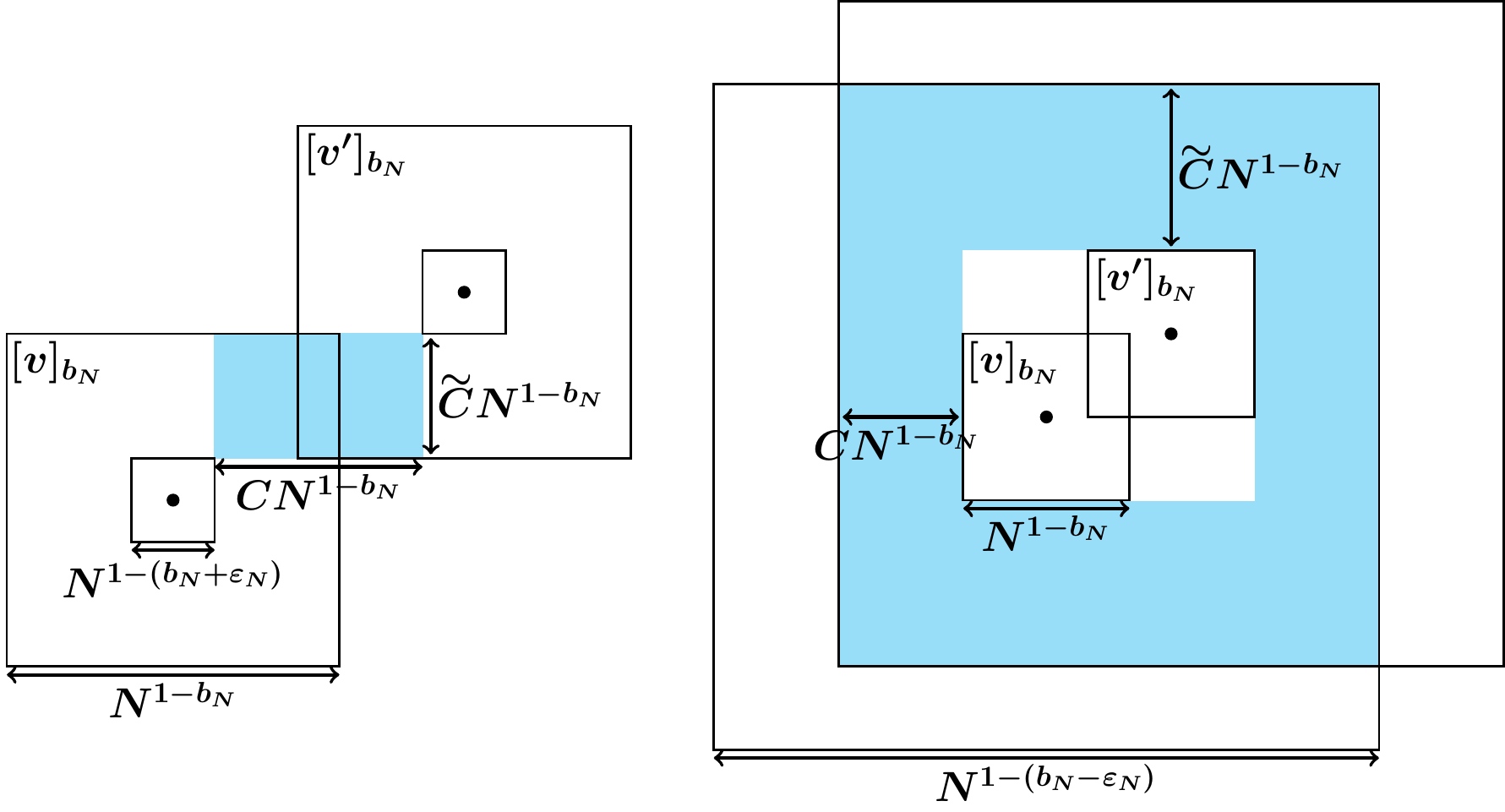}
        \captionsetup{width=0.7\textwidth}
        \caption{Illustration of Equation \eqref{eq:property.figure.overlap.over.under.branching}.}
        \label{fig:overlap.over.under.branching}
    \end{figure}

    The next lemma gives upper and lower bounds on the variance of the increments of the field $\psi$ in $A_{N,\rho}$. Recall from \eqref{eq:A.N.rho} that
    \begin{equation}
        A_{N,\rho} \circeq \Big\{v\in V_N : \min_{z\in \Z^2 \backslash V_N} \|v - z\|_2 \geq N^{1 - \rho}\Big\}, \quad \rho\in (0,1].
    \end{equation}

    \begin{lemma}\label{lem:IGFF.variance.estimates}
        Let $\lambda_{i-1} \hspace{-0.5mm}\leq \hspace{-0.5mm}\alpha \hspace{-0.5mm}< \hspace{-0.5mm}\alpha' \hspace{-0.5mm}\leq \hspace{-0.5mm}\lambda_i$ for some $i\in \{1,...,M\}$, $\alpha \neq 0$ and $\rho\in (0,\alpha]$. Then, for $N$ large enough (dependent on $\alpha$, but independent from $\rho$),
        \begin{equation}\label{eq:variance.estimate}
            \max_{v\in A_{N,\rho}} \big|\mathbb{E}\big[\psi_v(\alpha,\alpha')^2\big] - (\alpha' - \alpha) \sigma_i^2 \log N\big| \leq C \sigma_i^2.
        \end{equation}
    \end{lemma}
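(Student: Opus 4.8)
The plan is to collapse the $\psi$-increment to a single GFF term, express the variance of that term as a difference of Green functions of two concentric, uncut boxes centred at $v$, and then apply the standard logarithmic asymptotics for the Green function at the centre of a box. First, since $\lambda_{i-1}\le\alpha<\alpha'\le\lambda_i$, a short check of the summation condition in \eqref{eq:martingale.transform.decomposition} shows that only the index $i$ contributes, so $\psi_v(\alpha,\alpha')=\sigma_i\,\phi_v(\alpha,\alpha')$ and hence $\mathbb{E}[\psi_v(\alpha,\alpha')^2]=\sigma_i^2\,\mathbb{E}[\phi_v(\alpha,\alpha')^2]$. It therefore suffices to prove $\big|\mathbb{E}[\phi_v(\alpha,\alpha')^2]-(\alpha'-\alpha)\log N\big|\le C$ uniformly over $v\in A_{N,\rho}$.

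Next, since $(\phi_v(\lambda))_{\lambda}$ is a centred Gaussian $\mathbb{F}_v$-martingale, its disjoint increments are independent, so $\mathbb{E}[\phi_v(\alpha,\alpha')^2]=\mathbb{E}[\phi_v(\alpha')^2]-\mathbb{E}[\phi_v(\alpha)^2]$. For each $\lambda>0$, the Markov property \eqref{eq:GFF.Markov} lets me write $\phi_v=\phi_v(\lambda)+\tilde{\phi}_v$, where $\tilde{\phi}$ restricted to $[v]_\lambda^o$ is a Dirichlet GFF on $[v]_\lambda$ independent of $\F_{\partial[v]_\lambda\cup[v]_\lambda^c}$; by orthogonality of the conditional expectation and the residual, $\mathbb{E}[\phi_v(\lambda)^2]=G_{V_N}(v,v)-G_{[v]_\lambda}(v,v)$ (here the hypothesis $\alpha\neq0$, hence $0<\alpha<\alpha'$, is used so that this applies to $\lambda=\alpha$ and $\lambda=\alpha'$). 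Subtracting cancels the $\rho$-dependent term $G_{V_N}(v,v)$ — which is exactly why the final bound can be made independent of $\rho$, and why $\alpha=0$ must be excluded — leaving $\mathbb{E}[\phi_v(\alpha,\alpha')^2]=G_{[v]_\alpha}(v,v)-G_{[v]_{\alpha'}}(v,v)$.

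Now I would observe that for $v\in A_{N,\rho}$ with $\rho\le\alpha$ (hence $\rho<\alpha'$), neither $[v]_\alpha$ nor $[v]_{\alpha'}$ is cut off by $\partial V_N$: every lattice point $w$ of $v+[-\tfrac12 N^{1-\alpha},\tfrac12 N^{1-\alpha}]^2$ satisfies $\|w-v\|_2\le\tfrac{\sqrt2}{2}N^{1-\alpha}<N^{1-\alpha}\le N^{1-\rho}$, so $w$ cannot lie in $\Z^2\backslash V_N$ by \eqref{eq:A.N.rho}, i.e.\ $w\in V_N$; the same applies at scale $\alpha'$. Thus $[v]_\alpha$ and $[v]_{\alpha'}$ are honest square boxes with $v$ at the centre, of side length differing from $N^{1-\alpha}$, resp.\ $N^{1-\alpha'}$, by at most $1$. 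The classical estimate for the Green function of the simple random walk at the centre of a square box (with the $\pi/2$ normalization of \eqref{eq:GFF.green.function}; see the Green function estimates in \cite{MR3541850}) then gives $G_{[v]_\alpha}(v,v)=(1-\alpha)\log N+O(1)$ and $G_{[v]_{\alpha'}}(v,v)=(1-\alpha')\log N+O(1)$, with the $O(1)$ absolute once $N$ is large enough depending on $\alpha$ (so that the boxes are large enough for the asymptotics to hold). Subtracting yields $\mathbb{E}[\phi_v(\alpha,\alpha')^2]=(\alpha'-\alpha)\log N+O(1)$, and multiplying by $\sigma_i^2$ gives \eqref{eq:variance.estimate}.

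The main obstacle is really only the uniformity of the $O(1)$ in the last step, i.e.\ making sure the implicit constant does not secretly depend on $v$, on $\rho$, or on the precise values of $\alpha,\alpha'$. This is resolved by the observation above: the hypothesis $\rho\le\alpha$ forces both neighbourhoods to be uncut, centred square boxes, so the Green-function constant depends only on the fixed square shape; and the integer rounding of $\tfrac12 N^{1-\lambda}$ perturbs the side length, hence the logarithm, by at most $O(1)$, which is absorbed into $C$.
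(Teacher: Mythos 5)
Your argument is correct and reproduces exactly the ideas behind the paper's proof, which in the text is simply a one-line citation of Lemma A.2 of \cite{MR3541850} together with the observation that $\rho\le\alpha$ keeps $[v]_\alpha$ (hence also $[v]_{\alpha'}$) from being cut by $\partial V_N$. You correctly reduce via \eqref{eq:martingale.transform.decomposition} to $\sigma_i^2\,\mathbb{E}[\phi_v(\alpha,\alpha')^2]$, use the orthogonality of martingale increments and the domain Markov property to write the increment variance as $G_{[v]_\alpha}(v,v)-G_{[v]_{\alpha'}}(v,v)$, and then invoke the centre-of-box logarithmic Green-function asymptotics; the cancellation of the $G_{V_N}(v,v)$ term is indeed precisely what makes the bound uniform in $\rho$ and on $A_{N,\rho}$, and the $\rho\le\alpha$ hypothesis is correctly identified as the reason both boxes are genuine uncut squares so that the $O(1)$ error is uniform in $v$. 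This is the same argument carried out in the referenced Lemma A.2 (there stated on $V_N^\delta$), so the approaches coincide; you have simply written out the details rather than citing them.
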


    \begin{proof}
        This is Lemma A.2 in \cite{MR3541850} with $v\in A_{N,\rho}$ instead of $v\in V_N^{\delta}$.
        The proof is exactly the same and the constant $C$ is independent of $\alpha$ because $\rho\in (0,\alpha]$ implies that the boxes $[v]_{\alpha}$ are not cut off by $\partial V_N$.
    \end{proof}

    The next lemma shows that the upper bound on the variance of the increments in \eqref{eq:variance.estimate} is in fact uniform on $V_N$.
    We extend the statement to include all combinations of scales $\alpha < \alpha'$.

    \begin{lemma}\label{lem:IGFF.variance.uniform.bound}
        There exists a constant $C_0 = C_0(\boldsymbol{\sigma}) > 0$ such that for all scales $0 \leq \alpha < \alpha' \leq 1$ and $N$ large enough (independent from $\alpha$ and $\alpha'$),
        \begin{equation}
            \max_{v\in V_N} \mathbb{E}\big[\psi_v(\alpha,\alpha')^2\big] \leq \mathcal{J}_{\sigma^2}(\alpha,\alpha') \log N + C_0.
        \end{equation}
    \end{lemma}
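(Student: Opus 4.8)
The plan is to reduce the statement to one uniform second-moment estimate for a single increment of the underlying GFF martingale. Apply the martingale-transform decomposition (A.4) of \cite{MR3541850} (recorded here as \eqref{eq:martingale.transform.decomposition}) with $\lambda=\alpha$, $\lambda'=\alpha'$: it writes $\psi_v(\alpha,\alpha')=\sum_i\sigma_i\,\phi_v(a_i,b_i)$, where the intervals $[a_i,b_i]\circeq[\alpha\vee\lambda_{i-1},\alpha'\wedge\lambda_i]$ (over those $i\in\{1,\dots,M\}$ for which the interval is nondegenerate) partition $[\alpha,\alpha']$ with pairwise disjoint interiors. As $(\phi_v(\lambda))_{\lambda\in[0,1]}$ is a Gaussian martingale, its increments over disjoint intervals are independent, so
\begin{equation*}
    \mathbb{E}\big[\psi_v(\alpha,\alpha')^2\big]=\sum_i\sigma_i^2\,\mathbb{E}\big[\phi_v(a_i,b_i)^2\big]\qquad\text{and}\qquad\mathcal{J}_{\sigma^2}(\alpha,\alpha')=\sum_i\sigma_i^2\,(b_i-a_i),
\end{equation*}
the second identity because $\sigma(\cdot)\equiv\sigma_i$ on $(\lambda_{i-1},\lambda_i]$. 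It therefore suffices to prove the uniform single-increment bound
\begin{equation}\label{eq:uniform.single.increment.bound}
    \mathbb{E}\big[\phi_v(a,b)^2\big]\leq(b-a)\log N+C,\qquad v\in V_N,\ 0\leq a<b\leq 1,
\end{equation}
for a universal $C$ and all $N$ large; summing the at most $M$ terms then yields the lemma with $C_0\circeq M(\max_i\sigma_i^2)C$, which is independent of $\alpha,\alpha'$.

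For \eqref{eq:uniform.single.increment.bound} I would use the domain Markov property of the GFF \eqref{eq:GFF.Markov}: writing $\phi_v=\phi_v(\lambda)+(\phi_v-\phi_v(\lambda))$, the second term is on $[v]_\lambda^o$ a Dirichlet GFF on $[v]_\lambda$ independent of the $\F_{\partial[v]_\lambda}$-measurable first term, so $\mathbb{E}[\phi_v(\lambda)^2]=G_{V_N}(v,v)-G_{[v]_\lambda}(v,v)$, and by orthogonality of martingale increments
\begin{equation*}
    \mathbb{E}\big[\phi_v(a,b)^2\big]=\mathbb{E}\big[\phi_v(b)^2\big]-\mathbb{E}\big[\phi_v(a)^2\big]=G_{[v]_a}(v,v)-G_{[v]_b}(v,v)\geq 0,
\end{equation*}
using $[v]_b\subseteq[v]_a$. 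If $N^{1-b}<4$ one is done by $\mathbb{E}[\phi_v(a,b)^2]\leq G_{[v]_a}(v,v)\leq(1-a)\log N+C$ together with $(1-b)\log N<\log 4$. Otherwise, the strong Markov property at the hitting time $\tau\circeq\tau_{\partial[v]_b}$ (which is $\leq\tau_{\partial[v]_a}$) gives $G_{[v]_a}(v,v)-G_{[v]_b}(v,v)=\espw{v}{G_{[v]_a}(W_\tau,v)}$, and I bound the integrand pointwise over $W_\tau\in\partial[v]_b$: if $W_\tau$ is adjacent to $\Z^2\setminus V_N$ then $W_\tau\in\partial[v]_a$ too (since $\Z^2\setminus V_N\subseteq\Z^2\setminus[v]_a$), so $G_{[v]_a}(W_\tau,v)=0$; otherwise $W_\tau$ lies on an untruncated side of the square $[v]_b$, hence at distance $\geq\tfrac14 N^{1-b}$ from $v$, and the classical off-diagonal Green-function estimate on $\Z^2$ gives $G_{[v]_a}(W_\tau,v)\leq\log\!\big(\mathrm{diam}([v]_a)/|W_\tau-v|_2\big)+C\leq(b-a)\log N+C$. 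Taking the maximum over the two cases proves \eqref{eq:uniform.single.increment.bound}; the degenerate cases $a=0$ ($\phi_v(0)\equiv 0$) and $b=1$ ($\phi_v(1)=\phi_v$, $G_{[v]_1}(v,v)=0$) are covered as well.

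The only real obstacle is this uniform bound, and the delicate point is the behaviour of $v$ near $\partial V_N$. For $v$ with $\mathrm{dist}(v,\Z^2\setminus V_N)\gtrsim N^{1-b}$ the boxes $[v]_a,[v]_b$ are untruncated squares and \eqref{eq:uniform.single.increment.bound} is immediate from $G_{[v]_a}(v,v)\leq(1-a)\log N+C$ and $G_{[v]_b}(v,v)\geq(1-b)\log N-C$; but near the boundary the latter fails, and one must instead exploit that truncation by $\partial V_N$ can only decrease the variance — exactly what the identity and case split above make precise. Throughout I use only classical planar random-walk potential theory (the asymptotics $a(x)=\tfrac{2}{\pi}\log|x|_2+\kappa+O(|x|_2^{-2})$ of the potential kernel, the resulting diagonal and off-diagonal bounds on Green functions of finite subsets of $\Z^2$, and the monotonicity $G_{B'}(v,v)\leq G_B(v,v)$ for $B'\subseteq B$), all of which already appear, implicitly, in \cite{MR3541850}.
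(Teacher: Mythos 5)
Your proof is correct and shares the paper's outer structure: decompose $\psi_v(\alpha,\alpha')$ into increments of $\phi_v$ via \eqref{eq:martingale.transform.decomposition}, use the independence of disjoint increments, and bound each summand by a uniform single-increment variance estimate on $\phi$. The paper's own proof is a one-liner that outsources exactly that estimate to Lemma A.3 of \cite{MR3541850}; you instead supply it from first principles, via the domain Markov property identity $\mathbb{E}[\phi_v(\lambda)^2]=G_{V_N}(v,v)-G_{[v]_\lambda}(v,v)$, the strong Markov decomposition $G_{[v]_a}(v,v)-G_{[v]_b}(v,v)=\espw{v}{G_{[v]_a}(W_\tau,v)}$ at $\tau=\tau_{\partial[v]_b}$, and the potential-kernel asymptotics for the planar Green function. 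The case split (boundary truncation kills the integrand because $W_\tau\in\partial[v]_a$; untruncated sides give the $(b-a)\log N$ via the off-diagonal bound; small boxes $N^{1-b}<4$ and the endpoints $a=0,b=1$ are handled separately) is sound, and summing the at most $M$ terms gives the uniform constant $C_0(\boldsymbol{\sigma})$. So the content is the same — you have simply made the cited ingredient self-contained rather than a genuinely different argument — and nothing is missing.
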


    \begin{proof}
        This follows immediately from Lemma A.3 in \cite{MR3541850} and the independence of the increments.
    \end{proof}

    In Section \ref{sec:bovier.kurkova.technique.general}, estimates on the covariance of the increments are needed to bound certain overlaps and adapt the Bovier-Kurkova technique.
    The next two lemmas take care of this problem.
    To simplify the notation, define
    \begin{align}
        &\phi_v(A) \circeq \esp{}{\phi_v \nvert \F_{\partial (A \cap V_N)}}, \\
        &\phi_v(A_1,A_2) \circeq \phi_v(A_2) - \phi_v(A_1),
    \end{align}
    for any sets $A,A_1,A_2\subseteq \Z^2$. With this notation, we can also mix sets and scales with the obvious meaning. For example,
    \begin{equation}
        \phi_v(A,\lambda) \circeq \phi_v(\lambda) - \phi_v(A).
    \end{equation}
    For simplicity, we write $b_N$ instead of $b_N(v,v')$ in the remaining of this section.

    \begin{lemma}\label{lem:covariance.estimates}
        Let $\lambda_{i-1} \hspace{-0.5mm}\leq \hspace{-0.5mm}\alpha \hspace{-0.5mm}< \hspace{-0.5mm}\alpha' \hspace{-0.5mm}\leq \hspace{-0.5mm}\lambda_i$ for some $i\in \{1,...,M\}$, $\alpha \neq 0$, $\rho\in (0,\alpha/2],$ and $\varepsilon_N \circeq (\log 4) / (\log N)$. All four equations below hold for $N$ large enough (dependent on $\alpha$ and $\alpha'$\hspace{-0.5mm}, but independent from $\rho$ and $v,v'$).
        All the constants $C_i, ~1 \leq i \leq 4$, depend only on $\para$.

        \vspace{3mm}
        \noindent
        For all $v,v'\in A_{N,\rho}$ such that $1 \wedge (\alpha' + 2\varepsilon_N) \leq b_N \leq 1$,
        \begin{equation}\label{eq:covariance.estimate.0}
            \big|\mathbb{E}\big[\psi_v(\alpha,\alpha') \psi_{v'}\big] - (\alpha' - \alpha) \sigma_i^2 \log N\big| \leq C_1 \sqrt{\log N}.
        \end{equation}
        For all $v,v'\in A_{N,\rho}$ such that $\alpha' - 2\varepsilon_N \leq b_N \leq 1 \wedge (\alpha' + 2\varepsilon_N)$,
        \begin{equation}\label{eq:covariance.estimate.1}
            \big|\mathbb{E}\big[\psi_v(\alpha,\alpha') \psi_{v'}\big] - (\alpha' - \alpha) \sigma_i^2 \log N\big| \leq C_2 \sqrt{\log N}.
        \end{equation}
        For all $v,v'\in A_{N,\rho}$ such that $\alpha + 2\varepsilon_N \leq b_N \leq \alpha' - 2\varepsilon_N$,
        \begin{equation}\label{eq:covariance.estimate.2}
            \big|\mathbb{E}\big[\psi_v(\alpha,\alpha') \psi_{v'}\big] - \left(b_N - \alpha\right) \sigma_i^2 \log N\big| \leq C_3 \sqrt{\log N}.
        \end{equation}
        For all $v,v'\in V_N$ such that $b_N \leq \alpha + 2\varepsilon_N$,
        \begin{equation}\label{eq:covariance.estimate.3}
            \big|\mathbb{E}\big[\psi_v(\alpha,\alpha') \psi_{v'}\big]\big| \leq C_4 \sqrt{\log N}.
        \end{equation}
    \end{lemma}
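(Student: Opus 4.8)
The plan is to push everything down to the underlying GFF and then run the Markov property. Since $[\alpha,\alpha']\subseteq[\lambda_{i-1},\lambda_i]$, the martingale–transform decomposition \eqref{eq:martingale.transform.decomposition} collapses to a single term, giving $\psi_v(\alpha,\alpha')=\sigma_i\,\phi_v(\alpha,\alpha')$, while $\psi_{v'}=\sum_{j=1}^M\sigma_j\,\phi_{v'}(\lambda_{j-1},\lambda_j)$, so that
\begin{equation*}
\mathbb{E}\big[\psi_v(\alpha,\alpha')\,\psi_{v'}\big]=\sigma_i\sum_{j=1}^M\sigma_j\,\mathbb{E}\big[\phi_v(\alpha,\alpha')\,\phi_{v'}(\lambda_{j-1},\lambda_j)\big],
\end{equation*}
and it is enough to control cross-covariances of GFF increments. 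I would introduce the buffer scales $\underline{b}\circeq 0\vee(b_N-\varepsilon_N)$ and $\overline{b}\circeq 1\wedge(b_N+\varepsilon_N)$ and split, again via \eqref{eq:martingale.transform.decomposition}, both $\phi_v(\alpha,\alpha')$ and $\psi_{v'}$ into a \emph{low} part (scales $\le\underline{b}$), a \emph{mid} part (scales in $[\underline{b},\overline{b}]$) and a \emph{high} part (scales $\ge\overline{b}$).

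By the independence criteria \eqref{eq:GFF.Markov.independence}, every pairing whose two scale ranges lie on opposite sides of the window $(b_N-\varepsilon_N,b_N+\varepsilon_N)$ vanishes (low of $v$ with high of $v'$ and vice versa, high with high, and any two low increments whose scale intervals are $\varepsilon_N$-separated). What remains are: (a) any pairing involving a mid part, and (b) the $O(1)$ low--low pairings whose buffered scale intervals overlap. For (a), Lemma \ref{lem:IGFF.variance.uniform.bound} together with independence of the increments bounds the variance of a mid part by $\mathcal{J}_{\sigma^2}(\underline{b},\overline{b})\log N+C_0\le 2(\max_j\sigma_j^2)(\log 4)+C_0=O(1)$, and Cauchy--Schwarz against the other factor (variance $O(\log N)$ by Lemma \ref{lem:IGFF.variance.uniform.bound}) yields the announced $O(\sqrt{\log N})$. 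The same bound kills all terms in (b) except the ``diagonal'' level-$i$ term, since the off-diagonal low--low overlaps are $\varepsilon_N$-strips near $\alpha$ or $\alpha'$, hence of variance $O(1)$.

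The diagonal term equals $\sigma_i^2\,\mathbb{E}\big[\phi_v(\alpha,\alpha'\wedge\underline{b})\,\phi_{v'}(\alpha,\alpha'\wedge\underline{b})\big]$ (vacuous when $\underline{b}\le\alpha$). To evaluate it one replaces $\phi_{v'}$ by $\phi_v$: for $s\le\underline{b}$ the boxes $[v]_s$, $[v']_s$ both contain $[v]_{b_N}\cup[v']_{b_N}$ by \eqref{eq:property.figure.overlap.over.under.branching} and differ only in a boundary layer small relative to their side length, so a Green-function estimate in the spirit of the appendix of \cite{MR3541850} (comparing $\phi_v(s)$ with the field at a common representative box) gives $\mathbb{E}[(\phi_v(s)-\phi_{v'}(s))^2]=O(1)$ uniformly in $s\le\underline{b}$ and in $v,v'\in A_{N,\rho}$ — this is where $v,v'\in A_{N,\rho}$ with $\rho\le\alpha/2$ is used, keeping $[v]_\alpha,[v']_\alpha$ away from $\partial V_N$. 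With Cauchy--Schwarz and Lemma \ref{lem:IGFF.variance.estimates}, the diagonal term is $\sigma_i^2\,((\alpha'\wedge\underline{b})-\alpha)_{+}\log N+O(\sqrt{\log N})$. Reading off the four regimes, $(\alpha'\wedge\underline{b})-\alpha$ is $(\alpha'-\alpha)+O(\varepsilon_N)$ in \eqref{eq:covariance.estimate.0}--\eqref{eq:covariance.estimate.1}, $(b_N-\alpha)+O(\varepsilon_N)$ in \eqref{eq:covariance.estimate.2}, and $O(\varepsilon_N)$ in \eqref{eq:covariance.estimate.3}; since $\varepsilon_N\log N=\log 4$, the $O(\varepsilon_N)$ errors are absorbed into $O(\sqrt{\log N})$, and in \eqref{eq:covariance.estimate.3} there is no main term and no $\phi_v$--$\phi_{v'}$ comparison is needed (the low part of $\phi_v(\alpha,\alpha')$ already has scale-length $O(\varepsilon_N)$), which is why only $v,v'\in V_N$ is required there. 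The main obstacle is exactly this comparison step — showing that $\mathbb{E}[\phi\mid\mathcal{F}_{\partial[v]_s}]$ and $\mathbb{E}[\phi\mid\mathcal{F}_{\partial[v']_s}]$ stay within $O(1)$ in $L^2$ for all $s$ below the branching scale, uniformly over $A_{N,\rho}$; everything else is bookkeeping with \eqref{eq:GFF.Markov.independence} and Cauchy--Schwarz.
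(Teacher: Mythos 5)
Your proposal follows the paper's own "cutting" phase faithfully (Markov independence via Lemma \ref{lem:IGFF.psi.Markov} to kill cross-range pairings, Cauchy--Schwarz against the $O(1)$-variance $\varepsilon_N$-strips via Lemma \ref{lem:IGFF.variance.uniform.bound}, reducing to the diagonal covariance $\sigma_i^2\,\mathbb{E}[\phi_v(\alpha,\alpha'\wedge\underline{b})\,\phi_{v'}(\alpha,\alpha'\wedge\underline{b})]$), and the absorption of $O(\varepsilon_N)$ into $O(\sqrt{\log N})$ errors is exactly as in the paper. Where you genuinely diverge is the evaluation of the diagonal term. The paper does not compare $\phi_v(s)$ with $\phi_{v'}(s)$: it instead replaces $\phi_u(\lambda)$, for both $u\in\{v,v'\}$, by $\phi_u([v]_{\lambda-\varepsilon_N})$ (same enclosing box centered at $v$, using Lemma A.5 of \cite{MR3541850}), applies the identity $\esp{}{\esp{}{X\nvert\F}\esp{}{Y\nvert\F}}=\esp{}{XY}-\esp{}{(X-\esp{}{X\nvert\F})(Y-\esp{}{Y\nvert\F})}$ to express the covariance as $G_{[v]_{\alpha-\varepsilon_N}}(v,v')-G_{[v]_{\alpha'-\varepsilon_N}}(v,v')$, and then invokes Lawler's potential-kernel asymptotics directly. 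Your route, replacing $\phi_{v'}$ by $\phi_v$ and reducing the covariance to a variance that is already known from Lemma \ref{lem:IGFF.variance.estimates}, avoids writing out any Green functions explicitly; but it relocates the discrete-potential-theory input into the comparison claim $\mathbb{E}[(\phi_v(s)-\phi_{v'}(s))^2]=O(1)$.

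That comparison claim is the one step you have not justified, and it is not as innocuous as "in the spirit of the appendix" might suggest: a priori $\phi_v(s)-\phi_{v'}(s)$ is a difference of projections of two \emph{different} coordinates of the field onto two \emph{different} $\sigma$-algebras, and one must see cancellation of the leading $s\log N$ order. The relevant result in \cite{MR3541850} is Lemma A.6, which bounds $\operatorname{Var}(\phi_v(s)-\phi_{v_s}(s))$ for $v_s$ the nearest scale-$s$ representative; since $\|v-v'\|\lesssim N^{1-b_N}\le\tfrac14 N^{1-s}$ for $s\le\underline{b}$, either $v$ and $v'$ share a nearest representative (and you conclude by the triangle inequality) or they land on adjacent ones, in which case you still need to bound $\operatorname{Var}(\phi_{v_s}(s)-\phi_{v'_s}(s))$ — a short Green-function computation with a common enclosing box that makes the $s\log N$ terms cancel. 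So the claim is true and uniform over $A_{N,\rho}$ and over the finitely many scales you need, but it requires a one-paragraph argument (Lemma A.6 plus the adjacent-representative case) rather than a bare citation. With that filled in, your route is a correct and arguably cleaner alternative: all the Lawler-style potential theory is packaged inside the single comparison lemma, at the cost of making that lemma slightly stronger than what the paper's Lemma A.5 provides.
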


    \begin{proof}[Proof of Equation \eqref{eq:covariance.estimate.0}]
        Let $v,v'\in A_{N,\rho}$ be such that $1 \wedge (\alpha' + 2\varepsilon_N) \leq b_N \leq 1$.
        The case $b_N = 1$ (i.e. $v = v'$) is covered by Lemma \ref{lem:IGFF.variance.estimates}.
        Therefore, assume
        \begin{equation*}
            \alpha' + 2\varepsilon_N \leq b_N < 1.
        \end{equation*}
        From $(1)-(3)$ in Lemma \ref{lem:IGFF.psi.Markov} :
        \begin{equation}
            \begin{aligned}
                &(2) : ~~\mathbb{E}\big[\psi_v(\alpha,\alpha') \psi_{v'}(1 \wedge (b_N + \varepsilon_N),1)\big] = 0, \\
                &(3) : ~~\mathbb{E}\big[\psi_v(\alpha,\alpha') \psi_{v'}(\alpha' + \varepsilon_N,b_N - \varepsilon_N)\big] = 0, \\
                &(3) : ~~\mathbb{E}\big[\psi_v(\alpha,\alpha') \psi_{v'}(\alpha - \varepsilon_N)\big] = 0.
            \end{aligned}
        \end{equation}
        Moreover, by the Cauchy-Schwarz inequality and Lemma \ref{lem:IGFF.variance.uniform.bound},
        \begin{align}
            \left.
            \begin{array}{ll}
                \vspace{2mm}&\big|\mathbb{E}\big[\psi_v(\alpha,\alpha') \psi_{v'}(b_N - \varepsilon_N,1 \wedge (b_N + \varepsilon_N))\big]\big| \\
                \vspace{2mm}&\big|\mathbb{E}\big[\psi_v(\alpha,\alpha') \psi_{v'}(\alpha',\alpha' + \varepsilon_N)\big]\big| \\
                &\big|\mathbb{E}\big[\psi_v(\alpha,\alpha') \psi_{v'}(\alpha - \varepsilon_N,\alpha)\big]\big|
            \end{array}
            \hspace{-1mm}\right\} \leq C \sqrt{\varepsilon_N} \log N.
        \end{align}
        From the last six equations, it thus suffices to prove
        \begin{equation}\label{eq:covariance.estimate.1.to.prove}
            \big|\mathbb{E}\big[\psi_v(\alpha,\alpha') \psi_{v'}(\alpha,\alpha')\big] - (\alpha' - \alpha) \sigma_i^2 \log N\big| \leq C \sqrt{\log N}.
        \end{equation}
        But, from Definition \ref{def:IGFF} and the tower property of conditional expectations, it is easily shown (see \eqref{eq:martingale.transform.decomposition}) that when $\lambda_{i-1} \hspace{-0.5mm}\leq \alpha \hspace{-0.5mm}< \alpha' \hspace{-0.5mm}\leq \lambda_i$,
        \begin{equation}\label{eq:covariance.estimate.1.martingale.transform}
            \psi_u(\alpha,\alpha') = \sigma_i \phi_u(\alpha,\alpha'), \quad u\in V_N.
        \end{equation}
        Therefore, to show \eqref{eq:covariance.estimate.1.to.prove}, it suffices to prove
        \begin{equation}\label{eq:covariance.estimate.1.to.prove.2}
            \big|\mathbb{E}\big[\phi_v(\alpha,\alpha') \phi_{v'}(\alpha,\alpha')\big] - (\alpha' - \alpha) \log N\big| \leq C \sqrt{\log N}.
        \end{equation}
        Since $b_N \geq \alpha' + 2\varepsilon_N$ by hypothesis, we have
        \begin{equation}\label{eq:covariance.estimate.1.boxes.distance}
            [v]_{\alpha} \cup [v']_{\alpha}\subseteq [v]_{\alpha - \varepsilon_N} \quad \text{and} \quad [v]_{\alpha'} \cup [v']_{\alpha'}\subseteq [v]_{\alpha' - \varepsilon_N}.
        \end{equation}
        From \eqref{eq:covariance.estimate.1.boxes.distance} and Lemma A.5 in \cite{MR3541850}, we deduce
        \begin{equation}
            \mathbb{E}\big[\phi_u(\lambda,[v]_{\lambda - \varepsilon_N})^2\big] \leq C, \quad \text{for all } u\in \{v,v'\}, ~\lambda\in \{\alpha,\alpha'\}.
        \end{equation}
        By combining these four inequalities in \eqref{eq:covariance.estimate.1.to.prove.2} with the Cauchy-Schwarz inequality and Lemma \ref{lem:IGFF.variance.uniform.bound}, it suffices to prove
        \begin{align}\label{eq:covariance.estimate.1.to.prove.3}
            &\big|\mathbb{E}\big[\phi_v([v]_{\alpha - \varepsilon_N},[v]_{\alpha' - \varepsilon_N}) \phi_{v'}([v]_{\alpha - \varepsilon_N},[v]_{\alpha' - \varepsilon_N})\big] - (\alpha' - \alpha) \log N\big| \leq C.
        \end{align}
        For $u\in \{v,v'\}$, the Markov property \eqref{eq:GFF.Markov} yields
        \begin{equation}\label{eq:covariance.estimate.1.markov.property.yield}
            \mathbb{E}\big[\phi_u([v]_{\alpha - \varepsilon_N},1) \nvert \F_{\partial [v]_{\alpha' - \varepsilon_N}}\big] = \phi_u([v]_{\alpha - \varepsilon_N},[v]_{\alpha' - \varepsilon_N}).
        \end{equation}
        Using $(\clubsuit) : \esp{}{\esp{}{X\nvert\F}\esp{}{Y\nvert\F}} = \esp{}{XY} - \esp{}{(X - \esp{}{X\nvert \F})(Y - \esp{}{Y\nvert \F})}$ together with \eqref{eq:covariance.estimate.1.markov.property.yield}, we can compute the covariance in \eqref{eq:covariance.estimate.1.to.prove.3} :
        \begin{align}\label{eq:covariance.estimate.1.before}
            &\mathbb{E}\big[\phi_v([v]_{\alpha - \varepsilon_N},[v]_{\alpha' - \varepsilon_N}) \phi_{v'}([v]_{\alpha - \varepsilon_N},[v]_{\alpha' - \varepsilon_N})\big] \notag \\[2.5pt]
            &\stackrel{\eqref{eq:covariance.estimate.1.markov.property.yield}}{=} \mathbb{E}\Big[\mathbb{E}\big[\phi_v([v]_{\alpha - \varepsilon_N},1) \nvert \F_{\partial [v]_{\alpha' - \varepsilon_N}}\big] \mathbb{E}\big[\phi_{v'}([v]_{\alpha - \varepsilon_N},1) \nvert \F_{\partial [v]_{\alpha' - \varepsilon_N}}\big]\Big] \notag \\
            &\hspace{1.6mm}\stackrel{(\clubsuit)}{=} \hspace{1.8mm} \mathbb{E}\big[\phi_v([v]_{\alpha - \varepsilon_N},1) \phi_{v'}([v]_{\alpha - \varepsilon_N},1)\big] \notag \\[1pt]
            &\hspace{12mm}- \mathbb{E}\big[\phi_v([v]_{\alpha' - \varepsilon_N},1) \phi_{v'}([v]_{\alpha' - \varepsilon_N},1)\big].
        \end{align}
        But, it is well known that $\{\phi_u(B,1)\}_{u\in B}$ is a GFF on $B$ when $B\subseteq \Z^2$ is a finite box, see e.g. \citet{Zeitouni2014ln}.
        Simply choose $B = [v]_{\lambda - \varepsilon_N}$, $\lambda = \alpha,\alpha'$, in \eqref{eq:covariance.estimate.1.before}, then by the covariance definition in \eqref{eq:GFF.green.function},
        \begin{equation}\label{eq:tech.lemma.1.green.function.difference}
            \eqref{eq:covariance.estimate.1.before} = G_{[v]_{\alpha - \varepsilon_N}}(v,v') - G_{[v]_{\alpha' - \varepsilon_N}}(v,v')\ .
        \end{equation}
        Using standard estimates for the discrete Green function, we can now evaluate the last expression.
        For every finite box $B\subseteq \Z^2$, Proposition 1.6.3 of \citet{MR1117680} shows that (keeping in mind our normalization by $\pi/2$ in \eqref{eq:GFF.green.function}) :
        \begin{equation}\label{eq:lawler.green.function}
            G_B(x,y) = \left[\sum_{z\in \partial B} \hspace{-0.5mm}\probw{x}{W_{\tau_{\partial B}} = z} a(z - y)\right] - a(y - x), \quad x,y\in B,
        \end{equation}
        where
        \begin{equation}\label{eq:lawler.estime.noyau.potentiel.1}
            a(w) =
            \left\{\hspace{-1mm}
            \begin{array}{ll}
                \log(\|w\|_2) + \text{const.} + O(\|w\|_2^{-2}), ~&\mbox{if } w\in \Z^2\backslash\{\boldsymbol{0}\}, \\
                0, ~&\mbox{if } w = \boldsymbol{0},
            \end{array}
            \right.
        \end{equation}
        and $\mathscr{P}_x$ is the law of the simple random walk starting at $x\in \Z^2$. Using \eqref{eq:lawler.green.function}, we can rewrite the difference of Green functions in \eqref{eq:tech.lemma.1.green.function.difference} as
        \begin{equation}\label{eq:lawler.green.function.difference}
            \hspace{-3mm}\sum_{\hspace{3mm}z\in \partial [v]_{\alpha - \varepsilon_N}} \hspace{-6.5mm}\mathscr{P}_v\Big(W_{\tau_{\partial [v]_{\alpha - \varepsilon_N}}} \hspace{-2mm}= z\Big) a(z - v') \hspace{0.5mm}- \hspace{-7.5mm}\sum_{\hspace{4mm}z\in \partial [v]_{\alpha' - \varepsilon_N}} \hspace{-7mm}\mathscr{P}_v\Big(W_{\tau_{\partial [v]_{\alpha' - \varepsilon_N}}} \hspace{-2mm}= z\Big) a(z - v').
        \end{equation}
        Since $\rho \leq \alpha/2 < \alpha - \varepsilon_N < \alpha' - \varepsilon_N$ by hypothesis, the boxes $[v]_{\alpha - \varepsilon_N}$ and $[v]_{\alpha' - \varepsilon_N}$ are not cut off by $\partial V_N$ for $N$ large enough. Furthermore, $\alpha' \leq b_N$ implies that $\|v - v'\|_{\infty} \leq N^{1-\alpha'}$, so it is easily seen that $N^{1-\lambda} \leq \|z - v'\|_2 \leq 4\sqrt{2} N^{1 - \lambda}$ for all $z\in \partial [v]_{\lambda - \varepsilon_N}$ and $\lambda\in \{\alpha,\alpha'\}$.
        Then, \eqref{eq:covariance.estimate.1.to.prove.3} follows immediately by using \eqref{eq:lawler.estime.noyau.potentiel.1} in \eqref{eq:lawler.green.function.difference}.
        This proves Equation \eqref{eq:covariance.estimate.0}.
    \end{proof}

    \begin{proof}[Proof of Equation \eqref{eq:covariance.estimate.1}]
        Let $v,v'\in A_{N,\rho}$ be such that
        \begin{equation}
            \alpha' - 2\varepsilon_N \leq b_N \leq 1 \wedge (\alpha' + 2\varepsilon_N).
        \end{equation}
        Define $\widetilde{\alpha}' \circeq \alpha' - 4\varepsilon_N$.
        For $N$ large enough (independent from $v,v'$ and $\rho$), we have $\lambda_{i-1} \leq \alpha < \widetilde{\alpha}' < \alpha' \leq \lambda_i$ and $1 \wedge (\widetilde{\alpha}' + 2\varepsilon_N) \leq b_N \leq 1$. From Equation \eqref{eq:covariance.estimate.0},
        \begin{equation}
            \big|\mathbb{E}\big[\psi_v(\alpha,\widetilde{\alpha}') \psi_{v'}\big] - (\alpha' - \alpha - 4\varepsilon_N) \sigma_i^2 \log N\big| \leq C_1 \sqrt{\log N},
        \end{equation}
        and from the Cauchy-Schwarz inequality and Lemma \ref{lem:IGFF.variance.uniform.bound},
        \begin{equation}
            \big|\mathbb{E}\big[\psi_v(\widetilde{\alpha}',\alpha') \psi_{v'}\big]\big| \leq C \sqrt{\varepsilon_N} \log N.
        \end{equation}
        This proves Equation \eqref{eq:covariance.estimate.1}.
    \end{proof}

    \begin{proof}[Proof of Equation \eqref{eq:covariance.estimate.2}]
        Let $v,v'\in A_{N,\rho}$ be such that $\alpha + 2\varepsilon_N \leq b_N \leq \alpha' - 2\varepsilon_N$.
        From $(1)-(3)$ in Lemma \ref{lem:IGFF.psi.Markov} :
        \begin{equation}
            \begin{aligned}
            &(1) : ~~\mathbb{E}\big[\psi_v(b_N + \varepsilon_N,\alpha') \psi_{v'}(b_N + \varepsilon_N,1)\big] = 0, \\[0mm]
            &(2) : ~~\mathbb{E}\big[\psi_v(\alpha,b_N - \varepsilon_N) \psi_{v'}(b_N + \varepsilon_N,1)\big] = 0, \\[0mm]
            &(2) : ~~\mathbb{E}\big[\psi_v(b_N + \varepsilon_N,\alpha') \psi_{v'}(\alpha,b_N - \varepsilon_N)\big] = 0, \\[0mm]
            &(2) : ~~\mathbb{E}\big[\psi_v(b_N + \varepsilon_N,\alpha') \psi_{v'}(\alpha - \varepsilon_N)\big] = 0, \\[0mm]
            &(3) : ~~\mathbb{E}\big[\psi_v(\alpha,b_N - \varepsilon_N) \psi_{v'}(\alpha - \varepsilon_N)\big] = 0.
            \end{aligned}
        \end{equation}
        Moreover, by the Cauchy-Schwarz inequality and Lemma \ref{lem:IGFF.variance.uniform.bound},
        \begin{align}
            \left.
            \begin{array}{ll}
                \vspace{2mm}&\big|\mathbb{E}\big[\psi_v(b_N - \varepsilon_N,b_N + \varepsilon_N) \psi_{v'}(b_N + \varepsilon_N,1)\big]\big| \\[0mm]
                \vspace{2mm}&\big|\mathbb{E}\big[\psi_v(\alpha,\alpha') \psi_{v'}(b_N,b_N + \varepsilon_N)\big]\big| \\[0mm]
                \vspace{2mm}&\big|\mathbb{E}\big[\psi_v(b_N,\alpha') \psi_{v'}(b_N - \varepsilon_N,b_N)\big]\big| \\[0mm]
                \vspace{2mm}&\big|\mathbb{E}\big[\psi_v(b_N,b_N + \varepsilon_N) \psi_{v'}(\alpha,b_N - \varepsilon_N)\big]\big| \\[0mm]
                \vspace{2mm}&\big|\mathbb{E}\big[\psi_v(\alpha,\alpha') \psi_{v'}(\alpha - \varepsilon_N,\alpha)\big]\big| \\[0mm]
                &\big|\mathbb{E}\big[\psi_v(b_N - \varepsilon_N,b_N + \varepsilon_N) \psi_{v'}(\alpha - \varepsilon_N)\big]\big|
            \end{array}
            \hspace{-1mm}\right\} \leq C \sqrt{\varepsilon_N} \log N.
        \end{align}
        From the last eleven equations, it thus suffices to prove
        \begin{equation}
            \big|\mathbb{E}\big[\psi_v(\alpha,b_N) \psi_{v'}(\alpha,b_N)\big] - \left(b_N - \alpha\right) \sigma_i^2 \log N\big| \leq C \sqrt{\log N}.
        \end{equation}
        The conclusion follows from the exact same argument used after \eqref{eq:covariance.estimate.1.to.prove} in the proof of Equation \eqref{eq:covariance.estimate.0}, with $b_N$ replacing $\alpha'$ everywhere.
    \end{proof}

    \begin{proof}[Proof of Equation \eqref{eq:covariance.estimate.3}]
        Let $v,v'\in V_N$ be such that $b_N \leq \alpha + 2\varepsilon_N \leq \alpha' - 2\varepsilon_N$.
        From $(1)-(3)$ in Lemma \ref{lem:IGFF.psi.Markov} :
        \begin{equation}
            \begin{aligned}
            &(1) : ~~\mathbb{E}\big[\psi_v(\alpha + 3\varepsilon_N,\alpha') \psi_{v'}(\alpha + 3\varepsilon_N,1)\big] = 0, \\[0mm]
            &(1) : ~~\mathbb{E}\big[\psi_v(\alpha + 3\varepsilon_N,\alpha') \psi_{v'}(\alpha \wedge (b_N + \varepsilon_N),\alpha)\big] = 0, \\[0mm]
            &(2) : ~~\mathbb{E}\big[\psi_v(\alpha + 3\varepsilon_N,\alpha') \psi_{v'}(\alpha \wedge (0 \vee (b_N - \varepsilon_N)))\big] = 0.
            \end{aligned}
        \end{equation}
        Moreover, by the Cauchy-Schwarz inequality and Lemma \ref{lem:IGFF.variance.uniform.bound},
        \begin{align*}
            \hspace{-2mm}\left.
            \begin{array}{ll}
                \vspace{2mm}&\big|\mathbb{E}\big[\psi_v(\alpha + 3\varepsilon_N,\alpha') \psi_{v'}(\alpha,\alpha + 3\varepsilon_N)\big]\big| \\[0mm]
                \vspace{2mm}&\big|\mathbb{E}\big[\psi_v(\alpha + 3\varepsilon_N,\alpha') \psi_{v'}(\alpha \wedge (0 \vee (b_N - \varepsilon_N)),\alpha \wedge (b_N + \varepsilon_N))\big]\big| \\[0mm]
                &\big|\mathbb{E}\big[\psi_v(\alpha,\alpha + 3\varepsilon_N) \psi_{v'}\big]\big|
            \end{array}
            \hspace{-1mm}\right\} \leq C \sqrt{\varepsilon_N} \log N.
        \end{align*}
        The last six equations together yield Equation \eqref{eq:covariance.estimate.3}.
    \end{proof}

    We summarize the results of the previous lemma and extend the statement to include all combinations of scales $\alpha < \alpha'$ and all $\rho\in (0,1]$.

    \begin{lemma}\label{lem:covariance.estimates.2}
        Let $0 \hspace{-0.5mm}\leq \hspace{-0.5mm}\alpha \hspace{-0.5mm}< \hspace{-0.5mm}\alpha' \hspace{-0.5mm}\leq \hspace{-0.5mm}1$ and let $\rho\in (0,1]$. Then, for $N$ large enough (dependent on $\alpha$ and $\alpha'$\hspace{-0.5mm}, but independent from $\rho$ (except when $\alpha = 0$)),
        \begin{align}\label{eq:covariance.estimate.2.1}
            &\max_{v,v'\in A_{N,\rho}} \hspace{-1mm}\big|\mathbb{E}\big[\psi_v(\alpha,\alpha') \psi_{v'}\big] - \mathcal{J}_{\sigma^2}(\alpha \wedge b_N,\alpha' \wedge b_N) \log N\big| \notag \\
            &\hspace{30mm}\leq C_5\para \sqrt{\log N} + C_6(\alpha,\alpha',\boldsymbol{\sigma},\boldsymbol{\lambda})\, \rho \log N.
        \end{align}
    \end{lemma}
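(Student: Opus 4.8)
The strategy is to bootstrap from Lemma~\ref{lem:covariance.estimates} in three moves. \emph{First}, within a single block $\lambda_{i-1}\le\alpha<\alpha'\le\lambda_i$ with $\alpha\neq 0$ and $\rho\le\alpha/2$, one merges its four cases into the single expression $\mathcal{J}_{\sigma^2}(\alpha\wedge b_N,\alpha'\wedge b_N)\log N=\sigma_i^2(\alpha'\wedge b_N-\alpha\wedge b_N)\log N$: if $b_N\ge\alpha'$ the target value is $\sigma_i^2(\alpha'-\alpha)\log N$, which is \eqref{eq:covariance.estimate.0} when $b_N\ge\alpha'+2\varepsilon_N$, is \eqref{eq:covariance.estimate.1} when $\alpha'-2\varepsilon_N\le b_N\le\alpha'+2\varepsilon_N$, and differs from the value in \eqref{eq:covariance.estimate.1} by only $\sigma_i^2\cdot O(\varepsilon_N)\cdot\log N=O(1)$ on the sliver $\alpha'\le b_N<\alpha'+2\varepsilon_N$; if $\alpha\le b_N\le\alpha'$ the target value is $\sigma_i^2(b_N-\alpha)\log N$, which is \eqref{eq:covariance.estimate.2} off the two $\varepsilon_N$-slivers adjacent to $\alpha$ and $\alpha'$, and on those slivers one compares instead with \eqref{eq:covariance.estimate.3} and \eqref{eq:covariance.estimate.1} at the cost of a further $O(1)$; and if $b_N\le\alpha$ the target is $0$ while $b_N\le\alpha+2\varepsilon_N$, so \eqref{eq:covariance.estimate.3} applies. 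In every case the error is $C\sqrt{\log N}$, uniformly in $v,v'$ since the constants of Lemma~\ref{lem:covariance.estimates} are.

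\emph{Second}, for arbitrary $0<\alpha<\alpha'\le1$ with $\rho\le\alpha/2$, split $[\alpha,\alpha']$ at the $\lambda_j$'s it contains, so $\psi_v(\alpha,\alpha')$ becomes a sum of at most $M+1$ within-block increments, each with lower endpoint $\ge\alpha\ge2\rho$; the first move applies to each, while the (at most two) end pieces of width $<4\varepsilon_N$ are disposed of by Cauchy--Schwarz and Lemma~\ref{lem:IGFF.variance.uniform.bound}, which bounds both the covariance and the corresponding $\mathcal{J}_{\sigma^2}$-term by $O(\sqrt{\log N})$ and $O(1)$. Summing and using additivity, $\mathcal{J}_{\sigma^2}(a\wedge b_N,b\wedge b_N)+\mathcal{J}_{\sigma^2}(b\wedge b_N,c\wedge b_N)=\mathcal{J}_{\sigma^2}(a\wedge b_N,c\wedge b_N)$ (valid since $a\wedge b_N\le b\wedge b_N\le c\wedge b_N$), yields \eqref{eq:covariance.estimate.2.1} with $C_6=0$, the error staying $O(\sqrt{\log N})$ as only boundedly many pieces appear.

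\emph{Third}, to remove the restriction $\rho\le\alpha/2$ (and to allow $\alpha=0$), set $\widetilde\alpha\circeq(\alpha\vee3\rho)\wedge\alpha'$, so $\widetilde\alpha>0$, $\rho\le\widetilde\alpha/2$ and $\widetilde\alpha-\alpha\le3\rho$, and write $\mathbb{E}[\psi_v(\alpha,\alpha')\psi_{v'}]=\mathbb{E}[\psi_v(\widetilde\alpha,\alpha')\psi_{v'}]+\mathbb{E}[\psi_v(\alpha,\widetilde\alpha)\psi_{v'}]$. The first term is handled by the second move and equals $\mathcal{J}_{\sigma^2}(\widetilde\alpha\wedge b_N,\alpha'\wedge b_N)\log N+O(\sqrt{\log N})$, which differs from $\mathcal{J}_{\sigma^2}(\alpha\wedge b_N,\alpha'\wedge b_N)\log N$ by at most $\mathcal{J}_{\sigma^2}(\alpha,\widetilde\alpha)\log N\le3\rho(\max_i\sigma_i^2)\log N$. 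For the thin term, $[\alpha,\widetilde\alpha]$ has width $\le3\rho$; by Lemma~\ref{lem:IGFF.psi.Markov} the increment $\psi_v(\alpha,\widetilde\alpha)$ is uncorrelated with every increment of $\psi_{v'}$ supported on scales at $\varepsilon_N$-distance from $[\alpha,\widetilde\alpha]\cup\{b_N\}$, so splitting $\psi_{v'}$ over a partition of $[0,1]$ refined by $\varepsilon_N$-bands around $\alpha,\widetilde\alpha$ and $b_N$ leaves only increments of $\psi_{v'}$ on scale-intervals of total length $O(\rho+\varepsilon_N)$ (for $v=v'$ one uses instead the independence of disjoint increments of the Gaussian martingale $\psi_v(\cdot)$); Cauchy--Schwarz with Lemma~\ref{lem:IGFF.variance.uniform.bound}, which gives $\mathbb{E}[\psi_v(\alpha,\widetilde\alpha)^2]\le C\rho\log N+C_0$ and a matching bound for the surviving $\psi_{v'}$-part, then yields $|\mathbb{E}[\psi_v(\alpha,\widetilde\alpha)\psi_{v'}]|\le C\rho\log N+C\sqrt{\log N}$. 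Adding the three contributions and comparing with the target gives \eqref{eq:covariance.estimate.2.1}; the dependence of ``$N$ large'' on $\rho$ appears only when $\alpha=0$, through $\widetilde\alpha=3\rho$.

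The main obstacle is the last estimate on $\mathbb{E}[\psi_v(\alpha,\widetilde\alpha)\psi_{v'}]$: a naive Cauchy--Schwarz against the whole of $\psi_{v'}$ produces only $O(\sqrt{\rho}\,\log N)$, which is too weak for small $\rho$, so one genuinely needs the Markov independence of Lemma~\ref{lem:IGFF.psi.Markov} to trim $\psi_{v'}$ down to an $O(\rho)$-thick band of scales before applying Cauchy--Schwarz, together with careful bookkeeping of the $\varepsilon_N$-slivers around $b_N$, $\alpha$ and $\alpha'$ so that these contribute only $O(\sqrt{\log N})$. The rest—the case-by-case merging and the telescoping over blocks—is routine once the additivity of $\mathcal{J}_{\sigma^2}$ is in hand.
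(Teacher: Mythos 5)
Your proposal is correct and takes essentially the same route as the paper: decompose $\psi_v(\alpha,\alpha')$ into within-block increments via the martingale-transform formula, apply the block-level estimates of Lemma~\ref{lem:covariance.estimates} term by term, absorb the $\varepsilon_N$-sliver corrections into the $O(\sqrt{\log N})$ error, and handle the near-zero endpoint by shifting $\alpha$ up to a multiple of $\rho$ and bounding the thin piece via Markov independence plus Cauchy--Schwarz with the variance estimate of Lemma~\ref{lem:IGFF.variance.uniform.bound}. The only structural difference is that the paper treats three cases separately (the paper sets $\widetilde{\alpha} = 2\rho$ only when $\alpha = 0$ and $\rho < \alpha'/2$, and when $\alpha \neq 0$ with $\rho > \alpha/2$, or $\alpha = 0$ with $\rho \geq \alpha'/2$, it simply absorbs the trivial $O(\log N)$ bound into $C_6\,\rho\log N$ by choosing $C_6$ large depending on $\alpha$ or $\alpha'$), whereas you unify all cases with the single shift $\widetilde\alpha = (\alpha\vee 3\rho)\wedge\alpha'$. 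Your unification is clean and correct: when $\alpha\neq 0$, $\widetilde\alpha\geq\alpha$, so the ``$N$ large'' threshold for Moves~1--2 applied from $\widetilde\alpha$ is no worse than from $\alpha$, and the extra target discrepancy $\mathcal{J}_{\sigma^2}(\alpha,\widetilde\alpha)\log N = O(\rho\log N)$ goes into $C_6$; when $3\rho\geq\alpha'$ the middle piece is empty and the whole covariance is a thin term of width $\leq 3\rho$. Your warning about a naive Cauchy--Schwarz giving only $O(\sqrt\rho\,\log N)$ correctly identifies the reason the trimming of $\psi_{v'}$ by Lemma~\ref{lem:IGFF.psi.Markov} is genuinely needed --- this is exactly the content of the chained inequalities the paper uses for the bound on $\mathbb{E}[\psi_v(\widetilde\alpha)\psi_{v'}]$.
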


    \begin{proof}
        If $\alpha \neq 0$ and $\rho \leq \alpha/2$, then write the decomposition from \eqref{eq:martingale.transform.decomposition},
        \begin{equation}
            \psi_v(\alpha,\alpha') = \hspace{-11mm} \sum_{\substack{1 \leq i \leq M : \\ \alpha \leq \lambda_{i-1} < \alpha' \text{ or } \alpha < \lambda_i \leq \alpha' \\ \text{or } \lambda_{i-1} \leq \alpha < \alpha' \leq \lambda_i}} \hspace{-11.5mm} \psi_v(\alpha \vee \lambda_{i-1},\alpha' \wedge \lambda_i),
        \end{equation}
        and apply Lemma \ref{lem:covariance.estimates} to each increment ($C_6 = 0$).
        If $\alpha \neq 0$ and $\rho > \alpha/2$, or if $\alpha = 0$ and $\rho \geq \alpha'/2$, then simply choose $C_6$ big enough (depending on $\alpha$ or $\alpha'$) that \eqref{eq:covariance.estimate.2.1} is satisfied. This is always possible since $\mathcal{J}_{\sigma^2}(\cdot,\cdot)$ is bounded and since
        \begin{equation}
            \max_{v,v'\in V_N} \hspace{-1mm}\frac{\big|\mathbb{E}\big[\psi_v(\alpha,\alpha') \psi_{v'}\big]\big|}{\log N} \leq C,
        \end{equation}
        by Lemma \ref{lem:IGFF.variance.uniform.bound}. Finally, if $\alpha = 0$ and $\rho < \alpha'/2$, then define $\widetilde{\alpha} \circeq 2\rho$ and apply \eqref{eq:covariance.estimate.2.1} in the first case ($0 \neq \widetilde{\alpha} < \alpha'$ and $\rho \leq \widetilde{\alpha}/2$), we have
        \begin{equation}\label{eq:lem:covariance.estimates.2.end}
            \max_{v,v'\in A_{N,\rho}} \hspace{-1mm}\big|\mathbb{E}\big[\psi_v(\widetilde{\alpha},\alpha') \psi_{v'}\big] - \mathcal{J}_{\sigma^2}(\widetilde{\alpha} \wedge b_N,\alpha' \wedge b_N) \log N\big| \leq C_5\para \sqrt{\log N}.
        \end{equation}
        On the other hand, if we ``cut'' the increments with small covariance contributions like we did multiple times in the proof of the previous lemma (using Lemma \ref{lem:IGFF.psi.Markov}, Lemma \ref{lem:IGFF.variance.uniform.bound} and the Cauchy-Schwarz inequality), then
        \begin{align}\label{eq:lem:covariance.estimates.2.end.2}
            \max_{v,v'\in V_N} \big|\mathbb{E}\big[\psi_v(\widetilde{\alpha}) \psi_{v'}\big]\big|
            &\leq \max_{v,v'\in V_N} \big|\mathbb{E}\big[\psi_v(\widetilde{\alpha} \wedge b_N) \psi_{v'}(\widetilde{\alpha} \wedge b_N)\big]\big| + C \sqrt{\varepsilon_N} \log N \notag \\
            &\leq \widetilde{C}\, (\widetilde{\alpha} \wedge b_N) \log N + C_0 + C \sqrt{\varepsilon_N} \log N \notag \\
            &\leq \widetilde{C}\, \rho \log N + C \sqrt{\log N}.
        \end{align}
        Combining \eqref{eq:lem:covariance.estimates.2.end} and \eqref{eq:lem:covariance.estimates.2.end.2} proves \eqref{eq:covariance.estimate.2.1} in the last case.
    \end{proof}

    The following corollary gives estimates on the increments of overlaps.
    For convenience, we recall their definition from \eqref{eq:overlap.increments} :
    \begin{equation}
        q_{\alpha,\alpha'}^N(v,v') \circeq \frac{\mathbb{E}\big[\psi_v(\alpha,\alpha')\psi_{v'}\big]}{\mathcal{J}_{\sigma^2}(1) \log N + C_0}, \quad v,v'\in V_N,
    \end{equation}
    where $C_0$ is the constant introduced in Lemma \ref{lem:IGFF.variance.uniform.bound}.
    The estimates are crucial in Section \ref{sec:bovier.kurkova.technique.general} to adapt the Bovier-Kurkova technique and prove Proposition \ref{prop:BV.technique}.

    \begin{corollary}\label{cor:covariance.estimates.3}
        Let $0 \hspace{-0.5mm}\leq \hspace{-0.5mm}\alpha \hspace{-0.5mm}< \hspace{-0.5mm}\alpha' \hspace{-0.5mm}\leq \hspace{-0.5mm}1$ and let $\rho\in (0,1]$. Then, for $N$ large enough (dependent on $\alpha$ and $\alpha'$\hspace{-0.5mm}, but independent from $\rho$ (except when $\alpha = 0$)),
        \begin{align}\label{eq:covariance.estimate.3.2}
            &\max_{v,v'\in A_{N,\rho}} \hspace{-1mm}\big|q_{\alpha,\alpha'}^N(v,v') - \bar{\mathcal{J}}_{\sigma^2}(\alpha \wedge b_N,\alpha' \wedge b_N)\big| \notag \\
            &\hspace{30mm}\leq \frac{C_7\para}{\sqrt{\log N}} + C_8(\alpha,\alpha',\boldsymbol{\sigma},\boldsymbol{\lambda}) \, \rho.
        \end{align}
    \end{corollary}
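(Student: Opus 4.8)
The plan is to deduce the corollary directly from Lemma~\ref{lem:covariance.estimates.2} by dividing its estimate through by the normalizing constant $\mathcal{J}_{\sigma^2}(1)\log N + C_0$. The only point requiring care is that $q_{\alpha,\alpha'}^N$ is normalized by $\mathcal{J}_{\sigma^2}(1)\log N + C_0$ whereas $\bar{\mathcal{J}}_{\sigma^2}$ is normalized by $\mathcal{J}_{\sigma^2}(1)$ alone; the resulting mismatch will turn out to contribute only an $O\big((\log N)^{-1}\big)$ term, which gets absorbed into the $(\log N)^{-1/2}$ term of \eqref{eq:covariance.estimate.3.2}.

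Concretely, I would fix $v,v'\in A_{N,\rho}$ and abbreviate $a_N \circeq \mathbb{E}\big[\psi_v(\alpha,\alpha')\psi_{v'}\big]$, $b_N \circeq \mathcal{J}_{\sigma^2}(\alpha\wedge b_N(v,v'),\alpha'\wedge b_N(v,v'))\log N$, $D_N \circeq \mathcal{J}_{\sigma^2}(1)\log N + C_0$ and $D_N^0 \circeq \mathcal{J}_{\sigma^2}(1)\log N$, so that $q_{\alpha,\alpha'}^N(v,v') = a_N/D_N$ and $\bar{\mathcal{J}}_{\sigma^2}(\alpha\wedge b_N,\alpha'\wedge b_N) = b_N/D_N^0$. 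Then I would use the telescoping identity
\begin{equation*}
    \frac{a_N}{D_N} - \frac{b_N}{D_N^0} = \frac{a_N - b_N}{D_N} + b_N\Big(\frac{1}{D_N} - \frac{1}{D_N^0}\Big) = \frac{a_N - b_N}{D_N} - \frac{C_0\, b_N}{D_N\, D_N^0}.
\end{equation*}
For the first term, Lemma~\ref{lem:covariance.estimates.2} gives $|a_N - b_N| \le C_5\para\sqrt{\log N} + C_6(\alpha,\alpha',\boldsymbol{\sigma},\boldsymbol{\lambda})\,\rho\log N$ uniformly over $v,v'\in A_{N,\rho}$, and since $D_N \ge \mathcal{J}_{\sigma^2}(1)\log N$ this term is bounded in absolute value by $\frac{C_5}{\mathcal{J}_{\sigma^2}(1)\sqrt{\log N}} + \frac{C_6}{\mathcal{J}_{\sigma^2}(1)}\,\rho$. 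For the second term, $0 \le \mathcal{J}_{\sigma^2}(\alpha\wedge b_N,\alpha'\wedge b_N) \le \mathcal{J}_{\sigma^2}(1)$ forces $|b_N| \le D_N^0$, whence $\big|\frac{C_0 b_N}{D_N D_N^0}\big| \le \frac{C_0}{D_N} \le \frac{C_0}{\mathcal{J}_{\sigma^2}(1)\log N}$, an $O\big((\log N)^{-1}\big)$ contribution. Adding the two pieces and absorbing the $O\big((\log N)^{-1}\big)$ one into the $(\log N)^{-1/2}$ term (valid for $N$ large enough), one sets, say, $C_7\para \circeq 2C_5\para/\mathcal{J}_{\sigma^2}(1)$ and $C_8(\alpha,\alpha',\boldsymbol{\sigma},\boldsymbol{\lambda}) \circeq C_6(\alpha,\alpha',\boldsymbol{\sigma},\boldsymbol{\lambda})/\mathcal{J}_{\sigma^2}(1)$, which yields \eqref{eq:covariance.estimate.3.2}. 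The threshold on $N$ and the dependence structure of the constants — in particular independence from $\rho$ except when $\alpha = 0$, and independence from $v,v'$ — are inherited verbatim from Lemma~\ref{lem:covariance.estimates.2}.

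The main obstacle here is essentially nonexistent: the argument is pure bookkeeping, and the only thing worth flagging is the harmless interplay between the two slightly different normalizations (by $\mathcal{J}_{\sigma^2}(1)\log N + C_0$ versus $\mathcal{J}_{\sigma^2}(1)\log N$), together with the requirement — already built into Lemma~\ref{lem:covariance.estimates.2} — that the covariance estimate be uniform over $v,v'\in A_{N,\rho}$.
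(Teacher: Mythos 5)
Your proof is correct and is precisely the (unstated) derivation the paper performs: the corollary is presented as a direct consequence of Lemma~\ref{lem:covariance.estimates.2}, and your division by the normalizing constant, telescoping to isolate the $C_0$ mismatch, and absorption of the resulting $O((\log N)^{-1})$ term into the $(\log N)^{-1/2}$ budget is exactly the bookkeeping required. One small stylistic caution: you reuse the symbol $b_N$ both for the paper's branching scale $b_N(v,v')$ and for your abbreviation $\mathcal{J}_{\sigma^2}(\alpha\wedge b_N(v,v'),\alpha'\wedge b_N(v,v'))\log N$, which is self-referential as written and would be worth renaming if this were incorporated into a manuscript.
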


\newpage
\section{Technical lemmas}\label{sec:technical.lemmas}

    \begin{lemma}\label{lem:tech.lemma.1}
        The function $\mathcal{E} : [0,\gamma^{\star}] \rightarrow \R$ defined in \eqref{eq:entropy} is in $C^1([0,\gamma^{\star}])$.
    \end{lemma}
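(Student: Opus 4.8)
The plan is to check that $\mathcal{E}$ is $C^1$ by verifying smoothness on each open piece $(\gamma^{l-1},\gamma^l)$ and then matching the one-sided derivatives at the breakpoints $\gamma^l$. On the open interval $(\gamma^{l-1},\gamma^l)$, the formula \eqref{eq:entropy} expresses $\mathcal{E}(\gamma)$ as a constant minus a quadratic in $\gamma$ divided by the positive constant $\mathcal{J}_{\sigma^2}(\lambda^{l-1},1)$, so $\mathcal{E}$ is manifestly $C^\infty$ there with
\begin{equation*}
    \mathcal{E}'(\gamma) = -\frac{2\,(\gamma - \mathcal{J}_{\sigma^2/\bar{\sigma}}(\lambda^{l-1}))}{\mathcal{J}_{\sigma^2}(\lambda^{l-1},1)}, \qquad \gamma\in(\gamma^{l-1},\gamma^l).
\end{equation*}
It only remains to establish continuity of $\mathcal{E}$ and of $\mathcal{E}'$ at each $\gamma = \gamma^l$ for $1\le l\le m-1$, and right-continuity of $\mathcal{E}$ together with the correct one-sided derivative at $\gamma^0 = 0$ and $\gamma^m = \gamma^\star$.

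The key computation is to re-express the endpoints $\gamma^l$ and the quantities $\mathcal{J}_{\sigma^2/\bar{\sigma}}(\lambda^l)$, $\mathcal{J}_{\sigma^2}(\lambda^{l-1},1)$ purely in terms of the effective parameters $\bar\sigma_j$ and $\lambda^j$. First I would record, directly from \eqref{eq:critic.levels}, that
\begin{equation*}
    \gamma^l = \mathcal{J}_{\sigma^2/\bar{\sigma}}(\lambda^l) + \frac{1}{\bar\sigma_{l+1}}\,\mathcal{J}_{\sigma^2}(\lambda^l,1),
\end{equation*}
using that $\bar\sigma(s\wedge\lambda^l) = \bar\sigma(s)$ for $s\le\lambda^l$ and $\bar\sigma(s\wedge\lambda^l) = \bar\sigma_l$ hmm — more carefully, $\bar\sigma(s\wedge\lambda^l)$ equals $\bar\sigma(s)$ on $(\lambda^{l'-1},\lambda^{l'}]$ for $l'\le l$ and equals $\bar\sigma_{l+1}$ on $(\lambda^l,1]$; this needs a careful bookkeeping of which $\bar\sigma$ shows up, and I would also use $\mathcal{J}_{\sigma^2}(\lambda^{l-1},\lambda^l) = \bar\sigma_l^2\,\nabla\lambda^l$ since $\bar\sigma^2$ is the density of the concavified speed function. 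Then evaluating $\mathcal{E}$ from the left at $\gamma^l$ (using the formula on $(\gamma^{l-1},\gamma^l)$) and from the right at $\gamma^l$ (using the formula on $(\gamma^l,\gamma^{l+1})$) and substituting these expressions should yield the same value; similarly plugging $\gamma = \gamma^l$ into the two expressions for $\mathcal{E}'$ and simplifying should give a common value, which I expect to be $-2/(\beta$-free quantity$)$, concretely $-2\,\mathcal{J}_{\sigma^2}(\lambda^l,1)/(\bar\sigma_{l+1}\mathcal{J}_{\sigma^2}(\lambda^l,1)) = -2/\bar\sigma_{l+1}$ from both sides.

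The main obstacle is purely the algebraic matching at the interior breakpoints: one must carefully track the telescoping identities relating $\mathcal{J}_{\sigma^2/\bar\sigma}(\lambda^{l})-\mathcal{J}_{\sigma^2/\bar\sigma}(\lambda^{l-1})$, $\mathcal{J}_{\sigma^2}(\lambda^{l-1},1)$, $\mathcal{J}_{\sigma^2}(\lambda^{l},1)$ and $\bar\sigma_l^2\nabla\lambda^l$, and show the quadratic-in-$\gamma$ expressions glue to first order. A clean way to organize this, which I would adopt, is to note that $\mathcal{E}$ is exactly the Legendre-type expression arising from the GREM free energy (cf. the remark after Theorem~\ref{thm:IGFF.free.energy}) and that $\gamma\mapsto\mathcal{E}(\gamma)$ is, on each piece, the unique concave quadratic through prescribed data; the $C^1$ gluing is then equivalent to the statement that the concavification $\hat{\mathcal{J}}_{\sigma^2}$ is $C^0$ with matching slopes at its own vertices, which is automatic. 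I would still present the explicit one-sided derivative computation as the core of the proof, since it is elementary once the substitutions above are in place, and handle the two extreme endpoints $\gamma^0=0$ (where $\mathcal{E}(0)=1$ must match the $l=1$ formula evaluated at $0$) and $\gamma^m=\gamma^\star$ (where only left-continuity and the left derivative are needed) as easy special cases of the same computation.
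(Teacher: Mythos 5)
Your overall plan — verify $\mathcal{E}\in C^\infty$ on each open interval $(\gamma^{l-1},\gamma^l)$, then match one-sided values and derivatives at the breakpoints $\gamma^l$ using two alternative representations of $\gamma^l$ — is exactly the paper's proof; the formula you write for $\mathcal{E}'$ on the open intervals and the treatment of the two boundary points $0$ and $\gamma^\star$ as easy special cases are also as in the paper.

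The one place you need to be careful is exactly where you flag uncertainty, and your second guess is the wrong one. Since $\bar{\sigma}$ is a left-continuous step function with $\bar{\sigma}_l\circeq\bar{\sigma}(\lambda^l)$, for $s>\lambda^l$ you have $\bar{\sigma}(s\wedge\lambda^l)=\bar{\sigma}(\lambda^l)=\bar{\sigma}_l$, not $\bar{\sigma}_{l+1}$. The two representations of the critical level are therefore
\begin{equation*}
\gamma^l=\mathcal{J}_{\sigma^2/\bar{\sigma}}(\lambda^{l-1})+\frac{\mathcal{J}_{\sigma^2}(\lambda^{l-1},1)}{\bar{\sigma}_l}
=\mathcal{J}_{\sigma^2/\bar{\sigma}}(\lambda^{l})+\frac{\mathcal{J}_{\sigma^2}(\lambda^{l},1)}{\bar{\sigma}_l},
\end{equation*}
both with denominator $\bar{\sigma}_l$ (the second follows from the first via $\mathcal{J}_{\sigma^2}(\lambda^{l-1},\lambda^l)=\bar{\sigma}_l^2\nabla\lambda^l$, which you correctly invoke). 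Plugging into your $\mathcal{E}'$ formula from the left (using the first representation) and from the right (using the second), the common one-sided derivative at $\gamma^l$ is $-2/\bar{\sigma}_l$, not $-2/\bar{\sigma}_{l+1}$. With that index corrected, the computation closes and the $C^1$ gluing at the interior breakpoints is complete; your remark that this is equivalent to the concavification $\hat{\mathcal{J}}_{\sigma^2}$ having matching slopes at its vertices is a correct intuition but, as you say, not a substitute for the explicit check.
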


    \begin{proof}
        The function $\mathcal{E}$ is clearly continuously differentiable at $\gamma\in [0,\gamma^{\star}] \backslash \{\gamma^l\}_{l=0}^m$.
        Furthermore, for $0 < h < \gamma^1$,
        \begin{equation}
            \lim_{h\rightarrow 0^+} \frac{\mathcal{E}(h) - \mathcal{E}(0)}{h} = \lim_{h\rightarrow 0^+} \frac{-h}{\mathcal{J}_{\sigma^2}(1)} = 0 = \lim_{h\rightarrow 0^+} \frac{-2 h}{\mathcal{J}_{\sigma^2}(1)} = \lim_{h\rightarrow 0^+} \mathcal{E}'(h).
        \end{equation}
        Therefore, $\mathcal{E}$ is continuously differentiable at $\gamma = 0 \circeq \gamma^0$ (from the right).

        For $\gamma = \gamma^{\star}$, we can write
        \begin{equation}
            \gamma^{\star} = \gamma^m = \mathcal{J}_{\sigma^2\hspace{-0.3mm} / \bar{\sigma}}(\lambda^{m-1}) + \frac{\mathcal{J}_{\sigma^2}(\lambda^{m-1},1)}{\bar{\sigma}_m},
        \end{equation}
        where $\mathcal{J}_{\sigma^2}(\lambda^{m-1},1) = \bar{\sigma}_m^2 \nabla \lambda^m$.
        Thus, for $0 < h < \nabla \gamma^m$,
        \begin{equation}
            \lim_{h\rightarrow 0^+} \frac{\mathcal{E}(\gamma^{\star} - h) - \mathcal{E}(\gamma^{\star})}{-h} = \lim_{h\rightarrow 0^+} \frac{-1}{h} \left[\nabla \lambda^m - \frac{(\bar{\sigma}_m \nabla \lambda^m - h)^2}{\bar{\sigma}_m^2 \nabla \lambda^m}\right] = \frac{-2}{\bar{\sigma}_m},
        \end{equation}
        and
        \begin{equation}
            \lim_{h\rightarrow 0^+} \mathcal{E}'(\gamma^{\star} - h) = \lim_{h\rightarrow 0^+} \frac{-2(\bar{\sigma}_m \nabla \lambda^m - h)}{\bar{\sigma}_m^2 \nabla \lambda^m} = \frac{-2}{\bar{\sigma}_m}.
        \end{equation}
        Therefore, $\mathcal{E}$ is continuously differentiable at $\gamma = \gamma^{\star}$ (from the left).

        For the remaining points $\gamma = \gamma^l$, fix $l\in \{1,...,m-1\}$. The critical level $\gamma^l$ from \eqref{eq:critic.levels} can be expressed in two ways :
        \begin{align}
            \gamma^l
            &= \mathcal{J}_{\sigma^2\hspace{-0.3mm} / \bar{\sigma}}(\lambda^{l-1}) + \frac{\mathcal{J}_{\sigma^2}(\lambda^{l-1},1)}{\bar{\sigma}_l} \label{eq:critic.level.disc.1} \\
            &= \mathcal{J}_{\sigma^2\hspace{-0.3mm} / \bar{\sigma}}(\lambda^l) + \frac{\mathcal{J}_{\sigma^2}(\lambda^l,1)}{\bar{\sigma}_l} \label{eq:critic.level.disc.2} \ .
        \end{align}
        Also, note that
        \begin{align}
            \mathcal{E}(\gamma^l)
            &\stackrel{\eqref{eq:critic.level.disc.1}}{=} (1 - \lambda^{l-1}) - \frac{\mathcal{J}_{\sigma^2}(\lambda^{l-1},1)}{\bar{\sigma}_l^2} \label{eq:entropy.eq.1} \\
            &\stackrel{\phantom{\eqref{eq:critic.level.disc.1}}}{=} (1 - \lambda^l) - \frac{\mathcal{J}_{\sigma^2}(\lambda^l,1)}{\bar{\sigma}_l^2} \label{eq:entropy.eq.2},
        \end{align}
        where the last equality follows from $\mathcal{J}_{\sigma^2}(\lambda^{l-1},\lambda^l) = \bar{\sigma}_l^2 \nabla \lambda^l$. For $0 < h < \min_j \nabla \gamma^j$,
        \begin{align}
            &\lim_{h\rightarrow 0^+} \frac{\mathcal{E}(\eqref{eq:critic.level.disc.1} - h) - \mathcal{E}(\eqref{eq:critic.level.disc.1})}{-h}
            \stackrel{\eqref{eq:entropy.eq.1}}{=} \lim_{h\rightarrow 0^+} \frac{+h}{\mathcal{J}_{\sigma^2}(\lambda^{l-1},1)} + \frac{-2}{\bar{\sigma}_l} = \frac{-2}{\bar{\sigma}_l}, \\
            &\lim_{h\rightarrow 0^+} \frac{\mathcal{E}(\eqref{eq:critic.level.disc.2} + h) - \mathcal{E}(\eqref{eq:critic.level.disc.2})}{h}
            \stackrel{\eqref{eq:entropy.eq.2}}{=} \lim_{h\rightarrow 0^+} \frac{-h}{\mathcal{J}_{\sigma^2}(\lambda^l,1)} + \frac{-2}{\bar{\sigma}_l} = \frac{-2}{\bar{\sigma}_l}.
        \end{align}
        and
        \begin{align}
            &\lim_{h\rightarrow 0^+} \mathcal{E}'(\gamma^l - h) \stackrel{\eqref{eq:critic.level.disc.1}}{=} \lim_{h\rightarrow 0^+} \frac{-2(\frac{\mathcal{J}_{\sigma^2}(\lambda^{l-1},1)}{\bar{\sigma}_l} - h)}{\mathcal{J}_{\sigma^2}(\lambda^{l-1},1)} = \frac{-2}{\bar{\sigma}_l} \\
            &\lim_{h\rightarrow 0^+} \mathcal{E}'(\gamma^l + h) \stackrel{\eqref{eq:critic.level.disc.2}}{=} \lim_{h\rightarrow 0^+} \frac{-2(\frac{\mathcal{J}_{\sigma^2}(\lambda^l,1)}{\bar{\sigma}_l} + h)}{\mathcal{J}_{\sigma^2}(\lambda^l,1)} = \frac{-2}{\bar{\sigma}_l}.
        \end{align}
        Hence, $\mathcal{E}$ is continuously differentiable at $\gamma = \gamma^l$, for all $l\in \{1,...,m-1\}$.
        This ends the proof of the lemma.
    \end{proof}

    \begin{lemma}\label{lem:tech.lemma.2}
        Let $\beta > 0$. Define $P_{\beta}(\gamma) \circeq \beta \gamma + \mathcal{E}(\gamma)$, and recall
        \begin{equation}\label{eq:l.beta.2}
            l_{\beta} \circeq
            \left\{\hspace{-1mm}
            \begin{array}{ll}
                \min\{l\in \{1,...,m\} : \beta \leq \beta_c(\bar{\sigma}_l) \circeq 2 / \bar{\sigma}_l\}, &\mbox{if } \beta \leq 2 / \bar{\sigma}_m, \\
                m+1, &\mbox{otherwise},
            \end{array}
            \right.
        \end{equation}
        from \eqref{eq:l.beta}.
        Then,
        \begin{equation}\label{eq:expression.free.energy}
            \max_{\gamma\in [0,\gamma^{\star}]} P_{\beta}(\gamma) = \sum_{j=1}^{l_{\beta}-1} \left\{2 \frac{\beta}{(2 / \bar{\sigma}_j)}\right\} \nabla \lambda^j + \sum_{j = l_{\beta}}^m \left\{1 + \frac{\beta^2}{(2 / \bar{\sigma}_j)^2}\right\} \nabla \lambda^j \circeq f^{\psi}(\beta).
        \end{equation}
    \end{lemma}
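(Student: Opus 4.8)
The plan is to exploit the strict concavity of $P_\beta$ to locate its maximiser explicitly, and then to rewrite the resulting closed form as the announced sum over the effective scales; the key input for the second part is that the concavification $\hat{\mathcal{J}}_{\sigma^2}$ agrees with $\mathcal{J}_{\sigma^2}$ at every $\lambda^j$.

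First I would establish that $P_\beta$ is strictly concave. By Lemma \ref{lem:tech.lemma.1}, $\mathcal{E}\in C^1([0,\gamma^\star])$; moreover, by \eqref{eq:entropy}, $\mathcal{E}'$ is affine with strictly negative slope on each $[\gamma^{l-1},\gamma^l]$, so $\mathcal{E}'$ is continuous and strictly decreasing on $[0,\gamma^\star]$. The computations in the proof of Lemma \ref{lem:tech.lemma.1} also give $\mathcal{E}'(\gamma^0)=\mathcal{E}'(0)=0$ and $\mathcal{E}'(\gamma^l)=-2/\bar\sigma_l=-\beta_c(\bar\sigma_l)$ for $1\le l\le m$. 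Hence $P_\beta'(\gamma)=\beta+\mathcal{E}'(\gamma)$ is strictly decreasing, with $P_\beta'(0)=\beta>0$ and $P_\beta'(\gamma^\star)=\beta-2/\bar\sigma_m$.

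Next I would split according to the sign of $P_\beta'(\gamma^\star)$. If $\beta\ge 2/\bar\sigma_m$, equivalently $l_\beta\in\{m,m+1\}$, then $P_\beta'\ge 0$ throughout $[0,\gamma^\star]$ and the maximum is attained at $\gamma^\star=\gamma^m$; using $\mathcal{E}(\gamma^m)=0$ (which follows from \eqref{eq:entropy.eq.2} with $l=m$, since $\lambda^m=1$) gives $\max_{[0,\gamma^\star]}P_\beta=\beta\gamma^\star=\beta\,\mathcal{J}_{\sigma^2/\bar\sigma}(1)$. If $\beta<2/\bar\sigma_m$, i.e.\ $l_\beta\le m$, strict concavity yields a unique interior maximiser $\gamma_\beta$ characterised by $\mathcal{E}'(\gamma_\beta)=-\beta$; since $\mathcal{E}'$ is strictly decreasing with $\mathcal{E}'(\gamma^l)=-\beta_c(\bar\sigma_l)$, $\mathcal{E}'(\gamma^0)=0$ and $\beta_c(\bar\sigma_1)<\dots<\beta_c(\bar\sigma_m)$, the definition $l_\beta=\min\{l:\beta\le\beta_c(\bar\sigma_l)\}$ pins $\gamma_\beta$ to the interval $(\gamma^{l_\beta-1},\gamma^{l_\beta}]$. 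There $\mathcal{E}$ is the downward parabola of \eqref{eq:entropy}; solving $\mathcal{E}'(\gamma_\beta)=-\beta$ gives $\gamma_\beta=\mathcal{J}_{\sigma^2/\bar\sigma}(\lambda^{l_\beta-1})+\tfrac{\beta}{2}\,\mathcal{J}_{\sigma^2}(\lambda^{l_\beta-1},1)$, and substituting back yields $\max_{[0,\gamma^\star]}P_\beta=\beta\,\mathcal{J}_{\sigma^2/\bar\sigma}(\lambda^{l_\beta-1})+\tfrac{\beta^2}{4}\,\mathcal{J}_{\sigma^2}(\lambda^{l_\beta-1},1)+(1-\lambda^{l_\beta-1})$.

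Finally I would translate these expressions into the right-hand side of \eqref{eq:expression.free.energy}. Because $\bar\sigma\equiv\bar\sigma_j$ on $(\lambda^{j-1},\lambda^j]$ and the concave hull touches $\mathcal{J}_{\sigma^2}$ at each effective scale, $\int_{\lambda^{j-1}}^{\lambda^j}\sigma^2=\hat{\mathcal{J}}_{\sigma^2}(\lambda^j)-\hat{\mathcal{J}}_{\sigma^2}(\lambda^{j-1})=\bar\sigma_j^2\,\nabla\lambda^j$, whence $\mathcal{J}_{\sigma^2/\bar\sigma}(\lambda^l)=\sum_{j=1}^l\bar\sigma_j\,\nabla\lambda^j$, $\mathcal{J}_{\sigma^2}(\lambda^l,1)=\sum_{j=l+1}^m\bar\sigma_j^2\,\nabla\lambda^j$ and $1-\lambda^l=\sum_{j=l+1}^m\nabla\lambda^j$. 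In the case $l_\beta\le m$ this turns $\max P_\beta$ into $\beta\sum_{j=1}^{l_\beta-1}\bar\sigma_j\,\nabla\lambda^j+\sum_{j=l_\beta}^m\big(1+\tfrac{\beta^2}{4}\bar\sigma_j^2\big)\nabla\lambda^j$, which is exactly \eqref{eq:expression.free.energy}; in the case $l_\beta\in\{m,m+1\}$ one checks directly that the right-hand side of \eqref{eq:expression.free.energy} collapses to $\beta\sum_{j=1}^m\bar\sigma_j\,\nabla\lambda^j=\beta\gamma^\star$ (using $\beta\bar\sigma_m=2$ when $l_\beta=m$, so that the $j=m$ summand $1+\tfrac{\beta^2}{4}\bar\sigma_m^2=2$ equals $\beta\bar\sigma_m\,\nabla\lambda^m$). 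The only step demanding care is the interval identification $\gamma_\beta\in(\gamma^{l_\beta-1},\gamma^{l_\beta}]$; once that is in place the remainder is a mechanical computation, and the boundary value $\beta=2/\bar\sigma_m$ is handled consistently by both branches since there $\gamma_\beta=\gamma^\star$.
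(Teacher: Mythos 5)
Your proof is correct, and it takes a genuinely different (cleaner) route than the paper's. The paper's argument for Lemma~\ref{lem:tech.lemma.2} runs through three cases on $l_\beta$, in each one locates the candidate critical point $\bar\gamma$ of $P_\beta$ by solving $P_\beta'(\gamma)=0$ piecewise, and then compares $P_\beta(\bar\gamma)$, $P_\beta(\gamma^\star)$ and $P_\beta(0)$ by hand using the inequality $1+x^2\ge 2x$. You instead observe that $\mathcal{E}'$ is continuous (Lemma~\ref{lem:tech.lemma.1}), piecewise affine with strictly negative slope, hence strictly decreasing, with $\mathcal{E}'(0)=0$ and $\mathcal{E}'(\gamma^l)=-2/\bar\sigma_l$; this makes $P_\beta$ strictly concave, so the global maximiser is either the unique interior zero of $P_\beta'$ when $\beta<2/\bar\sigma_m$ or the right endpoint $\gamma^\star$ when $\beta\ge 2/\bar\sigma_m$, and no boundary comparisons are needed. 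Your location of $\gamma_\beta$ in $(\gamma^{l_\beta-1},\gamma^{l_\beta}]$ via the monotonicity of $\mathcal{E}'$ is exactly what the paper establishes with the chain of implications in Case (3). The final conversion to the announced sum, via $\mathcal{J}_{\sigma^2}(\lambda^{j-1},\lambda^j)=\bar\sigma_j^2\nabla\lambda^j$ (touching of the concave hull at the effective scales), is the same mechanical step that the paper performs implicitly in \eqref{eq:P.bar.gamma}. One small slip: the aside ``$\beta\ge 2/\bar\sigma_m$, equivalently $l_\beta\in\{m,m+1\}$'' is not an equivalence when $m\ge 2$ (one can have $2/\bar\sigma_{m-1}<\beta<2/\bar\sigma_m$, giving $l_\beta=m$ while $\beta<2/\bar\sigma_m$), but this does not affect the argument since your case split is actually on the sign of $P_\beta'(\gamma^\star)$, and the subcase $l_\beta=m$, $\beta<2/\bar\sigma_m$ is correctly handled by the interior-maximiser branch.
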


    \begin{proof}
    We consider three cases :
    \begin{equation*}
        (1) ~\, l_{\beta} = m+1; \qquad (2) ~\, l_{\beta} = 1; \qquad (3) ~\, l_{\beta} \in\{2,...,m\}.
    \end{equation*}
    Since $\bar{\sigma}_1 > \bar{\sigma}_2 > ... > \bar{\sigma}_m$, these three cases imply (respectively) :
    \begin{itemize}
        \item[\quad\quad(i)] $\beta > 2 / \bar{\sigma}_j$ for all $j\in \{1,...,m\}$;
        \item[\quad\quad(ii)] $\beta \leq 2 / \bar{\sigma}_j$ for all $j\in \{1,...,m\}$;
        \item[\quad\quad(iii)] $\beta \in (2 / \bar{\sigma}_{l_{\beta}-1},2 / \bar{\sigma}_{l_{\beta}}]$.
    \end{itemize}

    \vspace{3mm}
    \noindent \textbf{Case (1) :}
        For any $\gamma\in (\gamma^{l-1},\gamma^l] \backslash\{\gamma^{\star}\}$, we have
        \begin{equation}\label{eq:entropy.derivative}
            P_{\beta}'(\gamma) = \beta - 2 \frac{(\gamma - \mathcal{J}_{\sigma^2\hspace{-0.3mm} / \bar{\sigma}}(\lambda^{l-1}))}{\mathcal{J}_{\sigma^2}(\lambda^{l-1},1)}.
        \end{equation}
        Any solution to $P_{\beta}'(\gamma) = 0$ must satisfy
        \begin{equation}
            \gamma
            \stackrel{\phantom{(i)}}{=} \mathcal{J}_{\sigma^2\hspace{-0.3mm} / \bar{\sigma}}(\lambda^{l-1}) + \frac{\beta}{2} \mathcal{J}_{\sigma^2}(\lambda^{l-1},1)
            \stackrel{(i)}{>} \mathcal{J}_{\sigma^2\hspace{-0.3mm} / \bar{\sigma}}(\lambda^{l-1}) + \frac{\mathcal{J}_{\sigma^2}(\lambda^{l-1},1)}{\bar{\sigma}_l} \stackrel{\eqref{eq:critic.level.disc.1}}{=} \gamma^l,
        \end{equation}
        which is impossible. Therefore, the maximum $\max_{\gamma\in [0,\gamma^{\star}]} P_{\beta}(\gamma)$ must be achieved at the boundary of $[0,\gamma^{\star}]$.
        We have
        \begin{equation}\label{eq:P.0.smaller.gamma.star}
            P_{\beta}(\gamma^{\star}) \circeq \beta \gamma^{\star} + 0 = \sum_{j=1}^m \left\{2 \frac{\beta}{(2 / \bar{\sigma}_j)}\right\} \nabla \lambda^j \stackrel{(i)}{>} 2 > \beta \cdot 0 + 1 \circeq P_{\beta}(0),
        \end{equation}
        which proves \eqref{eq:expression.free.energy} when $l_{\beta} = m+1$.

    \vspace{3mm}
    \noindent \textbf{Case (2) :}
        From \eqref{eq:entropy.derivative}, any solution $\gamma\in (\gamma^{l-1},\gamma^l] \backslash\{\gamma^{\star}\}$ to $P_{\beta}'(\gamma) = 0$ must satisfy
        \begin{equation}
            \gamma = \mathcal{J}_{\sigma^2\hspace{-0.3mm} / \bar{\sigma}}(\lambda^{l-1}) + \frac{\beta}{2} \mathcal{J}_{\sigma^2}(\lambda^{l-1},1) \quad\text{and}\quad l = 1,
        \end{equation}
        because $l \geq 2$ and the restriction $(ii)$ would otherwise imply $\gamma \leq \gamma^{l-1}$, from \eqref{eq:critic.level.disc.2}.
        In other words, the maximum $\max_{\gamma\in [0,\gamma^{\star}]} P_{\beta}(\gamma)$ must be achieved at the boundary of $[0,\gamma^{\star}]$ or at $\bar{\gamma} \circeq \frac{\beta}{2} \mathcal{J}_{\sigma^2}(1) \in (0,\gamma^1]$.
        Since $\beta > 0$, we have
        \begin{equation}
            P_{\beta}(\bar{\gamma}) = \frac{\beta^2}{2} \mathcal{J}_{\sigma^2}(1) + 1 - \frac{\beta^2}{4} \mathcal{J}_{\sigma^2}(1) = 1 + \frac{\beta^2}{4} \mathcal{J}_{\sigma^2}(1) > 1 = P_{\beta}(0),
        \end{equation}
        and the identity $1 + x^2 \geq 2x$ yields
        \begin{equation}
            P_{\beta}(\bar{\gamma}) = \sum_{j=1}^m \left\{1 + \frac{\beta^2}{(2 / \bar{\sigma}_j)^2}\right\} \nabla \lambda^j \geq \sum_{j=1}^m \left\{2 \frac{\beta}{(2 / \bar{\sigma}_j)}\right\} \nabla \lambda^j = P_{\beta}(\gamma^{\star}).
        \end{equation}
        This proves \eqref{eq:expression.free.energy} when $l_{\beta} = 1$.

    \vspace{3mm}
    \noindent \textbf{Case (3) :}
        From \eqref{eq:entropy.derivative}, any solution $\gamma\in (\gamma^{l-1},\gamma^l] \backslash\{\gamma^{\star}\}$ to $P_{\beta}'(\gamma) = 0$ must satisfy
        \begin{equation}
            \gamma = \mathcal{J}_{\sigma^2\hspace{-0.3mm} / \bar{\sigma}}(\lambda^{l-1}) + \frac{\beta}{2} \mathcal{J}_{\sigma^2}(\lambda^{l-1},1) \quad\text{and}\quad l = l_{\beta}.
        \end{equation}
        We must have the restriction $l = l_{\beta}$ since $\gamma\in (\gamma^{l-1},\gamma^l]$ and $\beta\in (2 / \bar{\sigma}_{l_{\beta}-1},2 / \bar{\sigma}_{l_{\beta}}]$ from (iii) imply
        \begin{align}
            &~\left\{\hspace{-1mm}
            \begin{array}{l}
                \vspace{1mm}\mathcal{J}_{\sigma^2\hspace{-0.3mm} / \bar{\sigma}}(\lambda^{l-1}) + \frac{\mathcal{J}_{\sigma^2}(\lambda^{l-1},1)}{\bar{\sigma}_{l-1}} \stackrel{\eqref{eq:critic.level.disc.2}}{=} \gamma^{l-1} < \gamma \stackrel{(iii)}{\leq} \mathcal{J}_{\sigma^2\hspace{-0.3mm} / \bar{\sigma}}(\lambda^{l-1}) + \frac{\mathcal{J}_{\sigma^2}(\lambda^{l-1},1)}{\bar{\sigma}_{l_{\beta}}} \\
                \mathcal{J}_{\sigma^2\hspace{-0.3mm} / \bar{\sigma}}(\lambda^{l-1}) + \frac{\mathcal{J}_{\sigma^2}(\lambda^{l-1},1)}{\bar{\sigma}_{l_{\beta}-1}} \stackrel{(iii)}{<} \gamma \leq \gamma^l \stackrel{\eqref{eq:critic.level.disc.1}}{=} \mathcal{J}_{\sigma^2\hspace{-0.3mm} / \bar{\sigma}}(\lambda^{l-1}) + \frac{\mathcal{J}_{\sigma^2}(\lambda^{l-1},1)}{\bar{\sigma}_l}
            \end{array}\hspace{-1mm}
            \right\} \notag \\[4pt]
            &\Longrightarrow
            ~\left\{\hspace{-1mm}
            \begin{array}{l}
                \bar{\sigma}_{l_{\beta}} < \bar{\sigma}_{l-1} \\
                \bar{\sigma}_l < \bar{\sigma}_{l_{\beta}-1}
            \end{array}
            \hspace{-1.3mm}\right\} \notag \\[4pt]
            &\Longrightarrow
            \hspace{1.7mm}\{~l = l_{\beta}\hspace{0.5mm}\},
        \end{align}
        where the last implication holds because $\bar{\sigma}_1 > \bar{\sigma}_2 > ... > \bar{\sigma}_m$.

        When we evaluate $P_{\beta}$ at $\bar{\gamma} \circeq \mathcal{J}_{\sigma^2\hspace{-0.3mm} / \bar{\sigma}}(\lambda^{l_{\beta}-1}) + \frac{\beta}{2} \mathcal{J}_{\sigma^2}(\lambda^{l_{\beta}-1},1)$, we get
        \begin{align}\label{eq:P.bar.gamma}
            P_{\beta}(\bar{\gamma})
            &= \beta \mathcal{J}_{\sigma^2\hspace{-0.3mm} / \bar{\sigma}}(\lambda^{l_{\beta}-1}) + \frac{\beta^2}{2} \mathcal{J}_{\sigma^2}(\lambda^{l_{\beta}-1},1) \notag \\
            &\hspace{10mm}+ (1 - \lambda^{l_{\beta}-1}) - \frac{\beta^2}{4} \mathcal{J}_{\sigma^2}(\lambda^{l_{\beta}-1},1) \notag \\
            &= \beta \mathcal{J}_{\sigma^2\hspace{-0.3mm} / \bar{\sigma}}(\lambda^{l_{\beta}-1}) + \left\{(1 - \lambda^{l_{\beta}-1}) + \frac{\beta^2}{4} \mathcal{J}_{\sigma^2}(\lambda^{l_{\beta}-1},1)\right\} \notag \\
            &= \sum_{j=1}^{l_{\beta}-1} \left\{2 \frac{\beta}{(2 / \bar{\sigma}_j)}\right\} \left[\frac{\nabla \mathcal{J}_{\sigma^2\hspace{-0.3mm} / \bar{\sigma}}(\lambda^j)}{\bar{\sigma}_j \nabla \lambda^j}\right] \nabla \lambda^j \notag \\
            &\hspace{10mm}+ \sum_{j=l_{\beta}}^m \left\{1 + \frac{\beta^2}{(2 / \bar{\sigma}_j)^2} \left[\frac{\nabla \mathcal{J}_{\sigma^2}(\lambda^j)}{\bar{\sigma}_j^2 \nabla \lambda^j}\right]\right\} \nabla \lambda^j \notag \\
            &= \sum_{j=1}^{l_{\beta}-1} \left\{2 \frac{\beta}{(2 / \bar{\sigma}_j)}\right\} \nabla \lambda^j + \sum_{j=l_{\beta}}^m \left\{1 + \frac{\beta^2}{(2 / \bar{\sigma}_j)^2}\right\} \nabla \lambda^j.
        \end{align}
        The last equality holds because the pairs of brackets $[\, \cdot\, ]$ on the second and third to last line are equal to $1$.
        Since $\beta > 2 / \bar{\sigma}_{l_{\beta}-1} > 0$ by (iii), we have
        \begin{equation}
            P_{\beta}(\bar{\gamma}) \stackrel{\eqref{eq:P.bar.gamma}}{>} \sum_{j=1}^{l_{\beta}-1} \left\{2\right\} \nabla \lambda^j + \sum_{j=l_{\beta}}^m \left\{1\right\} \nabla \lambda^j > 1 = P_{\beta}(0),
        \end{equation}
        and the identity $1 + x^2 \geq 2x$ yields
        \begin{equation}
            P_{\beta}(\bar{\gamma}) \stackrel{\eqref{eq:P.bar.gamma}}{\geq} \sum_{j=1}^m \left\{2 \frac{\beta}{(2 / \bar{\sigma}_j)}\right\} \nabla \lambda^j = P_{\beta}(\gamma^{\star}).
        \end{equation}
        This proves \eqref{eq:expression.free.energy} when $l_{\beta}\in \{2,...,m\}$, and end the proof of Lemma \ref{lem:tech.lemma.2}.
    \end{proof}

    We recall the definition of the perturbed field $\psi^{u}$. Let $\lambda_{i^{\star}-1} \leq \alpha < \alpha' \leq \lambda_{i^{\star}}$ for a given $i^{\star}\in \{1,...,M\}$, and let $u > -\sigma_{i^{\star}}$. Then,
    \begin{equation}
        \psi_v^u \circeq u\, \phi_v(\alpha,\alpha') + \psi_v, \quad v\in V_N.
    \end{equation}

    \begin{lemma}\label{lem:IGFF.modified.free.energy.convexity}
        Let $\beta > 0$, $\rho\in (0,1]$ and let $\lambda_{i^{\star}-1} \leq \alpha < \alpha' \leq \lambda_{i^{\star}}$ for some $i^{\star}$\hspace{-0.5mm}.
        Then, $u\mapsto f_{N,\rho}^{\psi^u}(\beta)$ is almost-surely convex and $u\mapsto \mathbb{E}\big[f_{N,\rho}^{\psi^u}(\beta)\big]$ is convex.
    \end{lemma}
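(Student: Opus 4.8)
The plan is to use that, for every fixed realization of the Gaussian field and every fixed $N$, the partition function
\[
    Z_{N,\rho}^{\psi^u}(\beta) = \sum_{v\in A_{N,\rho}} e^{\beta(u\,\phi_v(\alpha,\alpha') + \psi_v)}
\]
is a finite sum of exponentials of affine functions of $u$, hence a strictly positive, infinitely differentiable function of $u\in\R$. Consequently $u\mapsto f_{N,\rho}^{\psi^u}(\beta) = \frac{1}{\log N^2}\log Z_{N,\rho}^{\psi^u}(\beta)$ is smooth on $\R$ for each realization, and its convexity can be read off from the sign of its second derivative.

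First I would differentiate twice in $u$. Writing $\mathcal{G}_{\beta,N,\rho,u}$ for the Gibbs measure associated with $\psi^u$ on $A_{N,\rho}$, a direct computation gives
\[
    \frac{\partial}{\partial u}\, f_{N,\rho}^{\psi^u}(\beta) = \frac{\beta}{\log N^2}\, \mathcal{G}_{\beta,N,\rho,u}\big[\phi_v(\alpha,\alpha')\big]
\]
and
\[
    \frac{\partial^2}{\partial u^2}\, f_{N,\rho}^{\psi^u}(\beta) = \frac{\beta^2}{\log N^2}\Big(\mathcal{G}_{\beta,N,\rho,u}\big[\phi_v(\alpha,\alpha')^2\big] - \mathcal{G}_{\beta,N,\rho,u}\big[\phi_v(\alpha,\alpha')\big]^2\Big).
\]
Since $\mathcal{G}_{\beta,N,\rho,u}$ is a probability measure, the right-hand side equals $\frac{\beta^2}{\log N^2}$ times the variance of $v\mapsto \phi_v(\alpha,\alpha')$ under $\mathcal{G}_{\beta,N,\rho,u}$, hence is nonnegative. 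This yields the almost-sure convexity of $u\mapsto f_{N,\rho}^{\psi^u}(\beta)$.

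For the expected free energy I would first record that $\mathbb{E}\big[|f_{N,\rho}^{\psi^u}(\beta)|\big] < \infty$ for each $u > -\sigma_{i^{\star}}$: bounding all summands of $Z_{N,\rho}^{\psi^u}(\beta)$ above and below by the maximal one gives $\beta\,\xi_N^u \leq f_{N,\rho}^{\psi^u}(\beta) \leq \beta\,\xi_N^u + 2$ for $N\geq 2$, where $\xi_N^u \circeq \max_{v\in A_{N,\rho}}\psi_v^u / \log N^2$, and $\mathbb{E}|\xi_N^u| < \infty$ because $\xi_N^u$ is a maximum of finitely many centered Gaussian variables (the same elementary estimate as in Lemma \ref{lem:IGFF.free.energy.uniform.integrability}). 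Then, for $u_1, u_2 > -\sigma_{i^{\star}}$ and $\theta\in[0,1]$, the pointwise convexity established above gives, almost surely,
\[
    f_{N,\rho}^{\psi^{\theta u_1 + (1-\theta)u_2}}(\beta) \leq \theta\, f_{N,\rho}^{\psi^{u_1}}(\beta) + (1-\theta)\, f_{N,\rho}^{\psi^{u_2}}(\beta),
\]
and taking expectations on both sides gives the convexity of $u\mapsto \mathbb{E}\big[f_{N,\rho}^{\psi^u}(\beta)\big]$.

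This lemma is elementary and I do not expect a genuine obstacle; the only points that need a word of care are the (routine) justification that differentiation may be carried out termwise inside the finite sum, that the resulting second derivative really is a variance against a probability measure, and the finiteness of $\mathbb{E}\big[|f_{N,\rho}^{\psi^u}(\beta)|\big]$, which is what makes ``the expectation of an a.s.-convex integrable function is convex'' legitimate here.
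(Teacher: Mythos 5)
Your proof is correct, and it takes a genuinely different route than the paper. The paper establishes the almost-sure convexity of $u\mapsto F(u) = f_{N,\rho}^{\psi^u}(\beta)$ without differentiating at all: it writes $Z_{N,\rho}^{\psi^u}(\beta)$ as $\int_{A_{N,\rho}} g(v)^u\,d\mu(v)$ with $g(v) = \exp(\beta\phi_v(\alpha,\alpha'))$ and $\mu$ the counting measure weighted by $\exp(\beta\psi_v)$, and then observes that the defining inequality of convexity, after exponentiation, is exactly H\"older's inequality with exponents $1/\lambda$ and $1/(1-\lambda)$. This is a slightly more elementary argument that requires no calculus. You instead compute $F''(u)$ directly and identify it as $\frac{\beta^2}{\log N^2}$ times the variance of $v\mapsto\phi_v(\alpha,\alpha')$ under the Gibbs measure $\mathcal{G}_{\beta,N,\rho,u}$, hence nonnegative. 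Both arguments are standard and complete; your version is arguably more informative since it produces an explicit formula for $F''$ (which the paper ends up using anyway, in its proof of Lemma~\ref{lem:IGFF.ghirlanda.guerra.restricted.2}, where it appears with a $\sigma_{i^\star}^{-2}$ factor because $\psi_v(\alpha,\alpha') = \sigma_{i^\star}\phi_v(\alpha,\alpha')$ in this window of scales).

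For the second assertion, the paper simply invokes ``linearity and monotonicity of expectations'' without comment. You are more careful: you verify $\mathbb{E}\bigl[|f_{N,\rho}^{\psi^u}(\beta)|\bigr] < \infty$ via the same squeeze $\beta\xi_N^u \le F(u) \le \beta\xi_N^u + 2$ used in Lemma~\ref{lem:IGFF.free.energy.uniform.integrability}, so that taking expectations of the pointwise convexity inequality is justified. That care costs nothing and removes a (very small) gap that the paper leaves implicit.
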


    \begin{proof}
        By definition, we have
        \begin{align}
            F(u) \circeq f_{N,\rho}^{\psi^u}(\beta) = \frac{1}{\log N^2} \log\left(\int_{A_{N,\rho}} \hspace{-2mm} (g(v))^u d \mu(v)\right),
        \end{align}
        where $g(v) \circeq \exp(\beta \phi_v(\alpha,\alpha'))$ and $\mu(A) \circeq \sum_{v\in A} \exp(\beta \psi_v)$ for any $A \subseteq V_N$.
        By standard properties of logarithms, we see that $u \mapsto F(u)$ is convex almost-surely since, for all $\lambda\in [0,1]$ and all $u,u' > -\sigma_{i^{\star}}$, we have
        \begin{align}
            \hspace{8mm}&\hspace{-10mm}F(\lambda u + (1 - \lambda)u') \leq \lambda F(u) + (1-\lambda) F(u') \notag \\
            &~~\Longleftrightarrow~~
            \int_{A_{N,\rho}} \hspace{-3mm}(g(v))^{\lambda u} (g(v))^{(1-\lambda) u'} d\mu (v) \notag \\
            &\hspace{10mm}\leq \left(\int_{A_{N,\rho}} \hspace{-2mm} (g(v))^u d\mu (v)\right)^{\hspace{-1.1mm}\lambda} \left(\int_{A_{N,\rho}} \hspace{-2mm}(g(v))^{u'} d\mu (v)\right)^{\hspace{-1.3mm}1-\lambda}\hspace{-4.3mm},
        \end{align}
        and the last inequality is true by Holder's inequality ($p \circeq 1/\lambda$, $q \circeq 1 / (1 - \lambda)$ and $1/p + 1/q = 1$). The fact that $u\mapsto \esp{}{F(u)}$ is also convex follows immediately from the linearity and monotonicity of expectations.
    \end{proof}

    The parameters of $\psi^u$ can be encoded simultaneously in the left-continuous step function
    \vspace{-1mm}
    \begin{equation}
        \vspace{1mm}\sigma_u(r) \circeq
        \left\{\hspace{-1mm}
        \begin{array}{ll}
            \sigma(r), ~&\mbox{for all $r\in [0,1] \backslash (\alpha,\alpha']$}, \\
            \sigma_{i^{\star}} + u, ~&\mbox{for all $r\in (\alpha,\alpha']$}.
        \end{array}
        \right.
    \end{equation}
    Since $\mathcal{J}_{\sigma_u^2}(\cdot)$ is an increasing polygonal line, there exists a unique non-increasing left-continuous step function $r \mapsto \bar{\sigma}_u(r)$ such that the {\it concavification} of $\mathcal{J}_{\sigma_u^2}$ can be expressed as the integral of $r \mapsto \bar{\sigma}_u^2(r)$ :
    \begin{equation}
        \hat{\mathcal{J}}_{\sigma_u^2}(s) = \mathcal{J}_{\bar{\sigma}_u^2}(s) = \int_0^s \bar{\sigma}_u^2(r)\, dr \  \text{ for all $s\in (0,1]$\hspace{0.3mm}. }
    \end{equation}
    As for the field $\psi$,
    \begin{itemize}
        \item[$\bullet$] $\bar{\sigma}_{u,j}, ~1 \leq j \leq m_u,$ denote the heights of the steps of $r \mapsto \bar{\sigma}_u(r)$,
        \item[$\bullet$] $m_u$ denotes the number of steps,
        \item[$\bullet$] $\lambda_u^j$ denote the scales at which $r \mapsto \bar{\sigma}_u(r)$ jumps.
    \end{itemize}
    Recall $l_{\beta}$ from \eqref{eq:l.beta.2} and define the analogue for $\psi^u$ :
    \begin{equation}
        l_{\beta,u} \circeq
        \left\{\hspace{-1mm}
        \begin{array}{ll}
            \min\{l\in \{1,...,m_u\} : \beta \leq \beta_c(\bar{\sigma}_{u,l}) \circeq 2 / \bar{\sigma}_{u,l}\}, &\mbox{if } \beta \leq 2 / \bar{\sigma}_{u,m_u}, \\
            m_u + 1, &\mbox{otherwise}.
        \end{array}
        \right.
    \end{equation}
    The following lemma studies the differentiability of the limiting free energy of $\psi^u$ with respect to the perturbation parameter $u$.

    \begin{lemma}\label{lem:IGFF.modified.free.energy.derivative}
        Let $\beta > 0$ and let $\lambda^{j^{\star}-1} \hspace{-0.5mm}\leq \lambda_{i^{\star}-1} \hspace{-0.5mm}\leq \hspace{-0.5mm}\alpha \hspace{-0.5mm} < \hspace{-0.5mm}\alpha' \hspace{-0.5mm}\leq \lambda_{i^{\star}} \hspace{-0.5mm}\leq \lambda^{j^{\star}}$ for some $i^{\star}, j^{\star}$.
        Whenever $\beta \neq 2 / \bar{\sigma}_{j^{\star}}$, there exists $\delta = \delta(\beta,\alpha,\alpha',\boldsymbol{\sigma},\boldsymbol{\lambda}) > 0$ such that $u \mapsto f^{\psi^u}(\beta)$ is differentiable on $(-\delta,\delta)$. The derivative at $u=0$ is given by
        \begin{equation}\label{eq:lem:IGFF.modified.free.energy.derivative}
            \frac{\partial}{\partial u} f^{\psi^0}(\beta) =
            \left\{\hspace{-2mm}
            \begin{array}{ll}
                \frac{\beta \sigma_{i^{\star}} (\alpha' - \alpha)}{\bar{\sigma}_{j^{\star}}}, &\mbox{if } j^{\star} \leq l_{\beta} - 1, \\
                \frac{\beta^2 \sigma_{i^{\star}} (\alpha' - \alpha)}{2}, &\mbox{if } j^{\star} \geq l_{\beta}.
            \end{array}
            \right.
        \end{equation}
        When $\beta = 2 / \bar{\sigma}_{j^{\star}}$, there exists $\delta = \delta(\beta,\alpha,\alpha',\boldsymbol{\sigma},\boldsymbol{\lambda}) > 0$ such that $u \mapsto f^{\psi^u}(\beta)$ is differentiable on $(-\delta,\delta)\backslash\{0\}$, but the derivative at $u=0$ does not exist.
    \end{lemma}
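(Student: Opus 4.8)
The plan is to exploit that $\psi^u$ is itself a scale‑inhomogeneous GFF: it is the $(\boldsymbol\sigma,\boldsymbol\lambda)$‑GFF obtained from the parameters of $\psi$ by refining the partition so that $\alpha,\alpha'$ become breakpoints and by replacing the variance parameter on $(\alpha,\alpha']$ by $(\sigma_{i^\star}+u)^2$. Hence Theorem~\ref{thm:IGFF.free.energy} and Lemma~\ref{lem:tech.lemma.2} apply verbatim to $\psi^u$: writing $\sigma_u$ for the corresponding variance function and $\bar\sigma_u,\lambda_u^j,m_u,l_{\beta,u}$ for the associated concavified data, we get $f^{\psi^u}(\beta)=\sum_{j=1}^{m_u}f^{\mathrm{REM}(\bar\sigma_{u,j})}(\beta)\,\nabla\lambda_u^j=\int_0^1 h_\beta(\bar\sigma_u^2(r))\,dr$, where $h_\beta(t)\circeq f^{\mathrm{REM}(\sqrt t)}(\beta)$ equals $\beta\sqrt t$ for $t\ge 4/\beta^2$ and $1+\beta^2 t/4$ for $t\le 4/\beta^2$ — a $C^1$, increasing, concave function of $t$ whose second derivative jumps exactly at $t=4/\beta^2$. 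Moreover $u\mapsto f^{\psi^u}(\beta)$ is convex, by Lemma~\ref{lem:IGFF.modified.free.energy.convexity} together with the fact that pointwise limits preserve convexity; so $f^{\psi^u}(\beta)$ is differentiable at a point iff its one‑sided derivatives there agree, and the whole problem reduces to a one‑sided computation of $\tfrac{d}{du}f^{\psi^u}(\beta)$ at $u=0$.

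The second step is to control the concavification $\hat{\mathcal J}_{\sigma_u^2}$ for small $|u|$. Since by hypothesis $(\alpha,\alpha']\subseteq(\lambda_{i^\star-1},\lambda_{i^\star}]\subseteq[\lambda^{j^\star-1},\lambda^{j^\star}]$, the function $\mathcal J_{\sigma_u^2}$ differs from $\mathcal J_{\sigma^2}$ only inside the single effective block $[\lambda^{j^\star-1},\lambda^{j^\star}]$, up to a global additive constant $(2\sigma_{i^\star}u+u^2)(\alpha'-\alpha)$ on $[\lambda^{j^\star},1]$. I would show that for $|u|<\delta$ the effective scales outside this block persist (each vertex of the polygonal concave hull is a strict corner, hence stable under an $O(u)$ perturbation), so the change of the concave hull is confined to $[\lambda^{j^\star-1},\lambda^{j^\star}]$; and, inspecting the finitely many relevant vertices there, that the perturbed hull acquires at most one new vertex — near $\alpha'$ when $u>0$ (the bump on $(\alpha,\alpha']$ makes a convex corner that gets cut) and near $\alpha$ when $u<0$ (the dip gets re‑concavified) — so that $\bar\sigma_u$, the new breakpoint, and $l_{\beta,u}$ are given by explicit formulas in which the $u$‑dependence enters only through weighted averages of $\bar\sigma_{j^\star}^2$ and $(\sigma_{i^\star}+u)^2$. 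In the non‑degenerate case where $\sigma^2$ is not flat across the block, one uses that $\mathcal J_{\sigma^2}$ is uniformly bounded away from its hull on compacta interior to the block, so an $O(u)$ perturbation can create new touching points only within an $O(u)$‑neighbourhood of $\{\alpha,\alpha'\}$; the flat case is a direct special instance.

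Finally I would substitute into $f^{\psi^u}(\beta)=\int_0^1 h_\beta(\bar\sigma_u^2(r))\,dr$ and differentiate. If $\beta\ne 2/\bar\sigma_{j^\star}$, then $\bar\sigma_{j^\star}^2\ne 4/\beta^2$, and for $|u|<\delta$ all perturbed concavified‑variance values on the block stay strictly on the same side of $4/\beta^2$ as $\bar\sigma_{j^\star}^2$; hence $h_\beta$ is evaluated only on its relevant smooth branch ($\beta\sqrt t$ if the block is low‑temperature, i.e. $j^\star\le l_\beta-1$; $1+\beta^2 t/4$ if high‑temperature, i.e. $j^\star\ge l_\beta$) and $l_{\beta,u}$ is locally constant, so $u\mapsto f^{\psi^u}(\beta)$ coincides with a finite sum of smooth functions of $u$ on $(-\delta,0]$ and on $[0,\delta)$. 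A telescoping computation cancels the unperturbed bulk contributions of the block and leaves only the weighted‑average term; differentiating it at $u=0$ gives $\tfrac{\beta\sigma_{i^\star}(\alpha'-\alpha)}{\bar\sigma_{j^\star}}$ in the low‑temperature case and $\tfrac{\beta^2\sigma_{i^\star}(\alpha'-\alpha)}{2}$ in the high‑temperature case, the same value from $u\to0^+$ and $u\to0^-$ — so $f^{\psi^u}(\beta)$ is differentiable at $0$ with the asserted derivative, and differentiability on all of $(-\delta,\delta)$ follows from the smoothness of the local formulas and convexity. When $\beta=2/\bar\sigma_{j^\star}$, the block $j^\star$ sits exactly at its critical temperature: for $u>0$ part of its concavified variance is pushed strictly above $4/\beta^2$ and is therefore counted with the low‑temperature branch (shifting $l_{\beta,u}$ accordingly), whereas for $u<0$ it stays $\le 4/\beta^2$ and is counted with the high‑temperature branch; running the same one‑sided computations then produces different right‑ and left‑derivatives at $0$, so the derivative there does not exist, while the argument of the previous case still yields differentiability on $(-\delta,\delta)\setminus\{0\}$.

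The main obstacle is the concave‑hull bookkeeping of the second step: establishing, uniformly in small $u$, that the perturbation stays localized and identifying precisely the new breakpoint and the value of $l_{\beta,u}$ on each side of $0$ — and then, at criticality $\beta=2/\bar\sigma_{j^\star}$, pinning down the asymmetry between $u\to0^+$ and $u\to0^-$ that is responsible for the failure of differentiability. (A structurally cleaner alternative for the smooth case is to apply a parametric‑optimization/envelope argument directly to the variational formula $f^{\psi^u}(\beta)=\max_{\gamma}\bigl(\beta\gamma+\mathcal E_u(\gamma)\bigr)$, differentiating the unique term of the objective that depends on $u$ at the maximizer $\bar\gamma$ of Lemma~\ref{lem:tech.lemma.2}; non‑uniqueness of $\bar\gamma$ at $\beta=2/\bar\sigma_{j^\star}$ is then exactly what obstructs two‑sided differentiability.)
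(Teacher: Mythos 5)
Your strategy coincides with the paper's own: in Appendix B the author also treats $\psi^u$ as a $(\boldsymbol\sigma,\boldsymbol\lambda)$-GFF with refined scales, tracks how the concave hull of the speed function deforms for small $|u|$, and then differentiates the explicit block-sum expression from Lemma~\ref{lem:tech.lemma.2}. One place where your bookkeeping drifts from the paper and would need repair if carried out rigorously: when the perturbed hull gains a vertex, it does so at a \emph{pre-existing touching point} of $\mathcal J_{\sigma^2}$ with its hull (what the paper calls $s^\star \le \alpha$ or $t^\star \ge \alpha'$), not merely ``in an $O(u)$-neighbourhood of $\{\alpha,\alpha'\}$.'' Identifying the vertex as $\alpha'$ (resp.\ $\alpha$) would give the wrong effective slope $\bar\sigma_{u,j^\star}$ even in the $u\to 0$ limit, because $\alpha'$ need not lie on the unperturbed hull; the paper's Cases (ii.1)–(ii.3) make this precise. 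Fortunately the eventual derivative is independent of the exact vertex location because the factor $(t^\star-\lambda^{j^\star-1})$ (or $(\lambda^{j^\star}-s^\star)$) cancels in the telescoping, so your end formula would still come out right once the hull is identified correctly.

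A more substantive point concerns the critical case $\beta=2/\bar\sigma_{j^\star}$: both your write-up and the paper's proof assert the one-sided derivatives differ, but this appears to be inconsistent with an observation you yourself make. You note that $h_\beta(t)\circeq f^{\mathrm{REM}(\sqrt t)}(\beta)$ is $C^1$ with a jump only in the second derivative, and indeed the two branches of the claimed formula \eqref{eq:lem:IGFF.modified.free.energy.derivative} take the common value $\tfrac{2\sigma_{i^\star}(\alpha'-\alpha)}{\bar\sigma_{j^\star}^2}$ precisely at $\beta=2/\bar\sigma_{j^\star}$. Carrying out the one-sided computations carefully gives the same answer from both sides: for $u>0^+$ (block now low-temperature) the derivative is $\beta\,\partial_u\bar\sigma_{u,j^\star}\big|_0\cdot\nabla\lambda^{j^\star}=\beta\sigma_{i^\star}(\alpha'-\alpha)/\bar\sigma_{j^\star}$, while for $u<0^-$ (block still high-temperature) it is $\tfrac{\beta^2}{4}\,\partial_u\bar\sigma_{u,j^\star}^2\big|_0\cdot\nabla\lambda^{j^\star}=\beta^2\sigma_{i^\star}(\alpha'-\alpha)/2$, and these coincide when $\beta=2/\bar\sigma_{j^\star}$. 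The paper reaches the opposite conclusion because the intermediate expansion labelled $(2^*)$ in its proof omits a $-\nabla\lambda^{j^\star}$ term (the ``$+1$'' contribution of the high-temperature REM free energy), producing a spurious nonzero constant at $u=0^+$; with that term restored, the right-limit is $0$ and the one-sided derivatives match. So if you were to flesh out your sketch, you should \emph{not} expect the asymmetric behaviour you describe at criticality: the correct conclusion is that $u\mapsto f^{\psi^u}(\beta)$ is differentiable at $u=0$ for all $\beta>0$, and one should either re-examine where the restriction $\beta\in\mathcal B$ is genuinely used downstream (e.g.\ whether the argument in the proof of Theorem~\ref{thm:two.overlap.distribution.limit} actually requires it) or locate the true obstruction, because it is not this one.
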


    \newpage
    \begin{proof}
        We separate the proof in two cases :
        \begin{itemize}
            \item[] \textbf{Case (i) : } $\mathcal{J}_{\sigma^2}(r) < \mathcal{J}_{\bar{\sigma}^2}(r)$ for all $r\in (\lambda^{j^{\star}-1},\lambda^{j^{\star}})$;
            \item[] \textbf{Case (ii) : } $\exists r\in (\lambda^{j^{\star}-1},\lambda^{j^{\star}})$ such that $\mathcal{J}_{\sigma^2}(r) = \mathcal{J}_{\bar{\sigma}^2}(r)$.
        \end{itemize}

    \noindent \textbf{Case (i) :}
        The function $u \mapsto \bar{\sigma}_u(r)$ is continuous, uniformly in $r\in [0,1]$.
        Hence, we can choose $\delta = \delta(\alpha,\alpha',\boldsymbol{\sigma},\boldsymbol{\lambda}) > 0$ small enough that for all $u\in (-\delta,\delta)$ :
        \begin{itemize}
            \item $\bar{\sigma}_{u,j} = \bar{\sigma}_j$ for all $j \neq j^{\star}$;
            \item $\lambda_u^j = \lambda^j$ for all $j\in \{1,...,m_u\}$;
            \item $m_u = m$.
        \end{itemize}
        Figure \ref{fig:free.energy.derivative.u.condition} below illustrates this point more clearly.

        Note that $l_{\beta,0} = l_{\beta}$ and also $2 / \bar{\sigma}_{l_{\beta}-1} < \beta \leq 2 / \bar{\sigma}_{l_{\beta}}$ ($\beta > 2 / \bar{\sigma}_m$ when $l_{\beta} = m+1$).
        We can choose $\delta = \delta(\beta,\alpha,\alpha',\boldsymbol{\sigma},\boldsymbol{\lambda}) > 0$ small enough that
        \begin{align}
            &\text{when } \beta \neq 2 / \bar{\sigma}_{j^{\star}} (\Longrightarrow ~1 \leq j^{\star}\hspace{-0.5mm} \leq m), ~~ \text{then } l_{\beta,u} = l_{\beta} ~\text{for all } u\in (-\delta,\delta), \\
            &\text{when } \beta = 2 / \bar{\sigma}_{j^{\star}} (\Longrightarrow ~j^{\star}\hspace{-0.5mm} = l_{\beta}), ~~ \text{then } l_{\beta,u} =
            \left\{\hspace{-1mm}
            \begin{array}{ll}
                l_{\beta}, &\mbox{if } u\in (-\delta,0], \\
                l_{\beta} + 1, &\mbox{if } u\in (0,\delta).
            \end{array}
            \right.
        \end{align}
        From \eqref{eq:expression.free.energy},
        \vspace{-2mm}
        \begin{align}\label{eq:expression.free.energy.expand}
            &\hspace{-1mm}f^{\psi^u}(\beta) - f^{\psi}(\beta)
            = \left\{\hspace{-1mm}
            \begin{array}{ll}
                \beta (\bar{\sigma}_{u,j^{\star}} - \bar{\sigma}_{j^{\star}}) \nabla \lambda^{j^{\star}}\hspace{-1mm}, &\mbox{if } j^{\star}\hspace{-0.5mm} \leq l_{\beta} - 1 \mbox{ and } l_{\beta,u} = l_{\beta}, \\[2mm]
                \beta (\bar{\sigma}_{u,j^{\star}} - \frac{\beta}{4}\bar{\sigma}_{j^{\star}}^2) \nabla \lambda^{j^{\star}}\hspace{-1mm}, &\mbox{if } j^{\star}\hspace{-0.5mm} = l_{\beta} \mbox{ and } l_{\beta,u} = l_{\beta} + 1, \\[2mm]
                \frac{\beta^2}{4} (\bar{\sigma}_{u,j^{\star}}^2 - \bar{\sigma}_{j^{\star}}^2) \nabla \lambda^{j^{\star}}\hspace{-1mm}, &\mbox{if } j^{\star}\hspace{-0.5mm} \geq l_{\beta} \mbox{ and } l_{\beta,u} = l_{\beta},
            \end{array}
            \right. \notag \\[3mm]
            &\hspace{-1mm}= \left\{\hspace{-1mm}
            \begin{array}{ll}
                \beta \left(\sqrt{\bar{\sigma}_{u,j^{\star}}^2 \nabla \lambda^{j^{\star}}} - \sqrt{\bar{\sigma}_{j^{\star}}^2 \nabla \lambda^{j^{\star}}}\right) \sqrt{\nabla \lambda^{j^{\star}}}, &\mbox{if } j^{\star}\hspace{-0.5mm} \leq l_{\beta} - 1 \mbox{ and } l_{\beta,u} = l_{\beta}, \\[2mm]
                \beta (\bar{\sigma}_{u,j^{\star}} - \bar{\sigma}_{j^{\star}}) \nabla \lambda^{j^{\star}} + \beta \bar{\sigma}_{j^{\star}} (1 - \frac{\beta}{4}\bar{\sigma}_{j^{\star}}) \nabla \lambda^{j^{\star}}\hspace{-1mm}, &\mbox{if } j^{\star}\hspace{-0.5mm} = l_{\beta} \mbox{ and } l_{\beta,u} = l_{\beta} + 1, \\[2mm]
                \frac{\beta^2}{4} (\bar{\sigma}_{u,j^{\star}}^2 - \bar{\sigma}_{j^{\star}}^2) \nabla \lambda^{j^{\star}}\hspace{-1mm}, &\mbox{if } j^{\star}\hspace{-0.5mm} \geq l_{\beta} \mbox{ and } l_{\beta,u} = l_{\beta},
            \end{array}
            \right. \notag \\[3mm]
            &\hspace{-1mm}= \left\{\hspace{-1mm}
            \begin{array}{ll}
                (1^*) :~ \beta \left\{\frac{(2 u \sigma_{i^{\star}} + u^2) (\alpha' - \alpha)}{2 \bar{\sigma}_{j^{\star}}} + O(u^2)\right\}, &\mbox{if } j^{\star}\hspace{-0.5mm} \leq l_{\beta} - 1 \mbox{ and } l_{\beta,u} = l_{\beta}, \\[2mm]
                (2^*) :~ \beta \left\{\frac{(2 u \sigma_{i^{\star}} + u^2) (\alpha' - \alpha)}{2 \bar{\sigma}_{j^{\star}}} + O(u^2)\right\} & \\[2mm]
                \hspace{23mm}+ \beta \bar{\sigma}_{j^{\star}} (1 - \frac{\beta}{4}\bar{\sigma}_{j^{\star}}) \nabla \lambda^{j^{\star}}\hspace{-1mm}, &\mbox{if } j^{\star}\hspace{-0.5mm} = l_{\beta} \mbox{ and } l_{\beta,u} = l_{\beta} + 1, \\[2mm]
                (3^*) :~ \frac{\beta^2}{4} (2 u \sigma_{i^{\star}} + u^2) (\alpha' - \alpha), &\mbox{if } j^{\star}\hspace{-0.5mm} \geq l_{\beta} \mbox{ and } l_{\beta,u} = l_{\beta}.
            \end{array}
            \right.
        \end{align}
        To get the last equality, we used
        \begin{equation}
            (\bar{\sigma}_{u,j^{\star}}^2 - \bar{\sigma}_{j^{\star}}^2) \nabla \lambda^{j^{\star}} = ((\sigma_{i^{\star}} + u)^2 - \sigma_{i^{\star}}^2) (\alpha' - \alpha).
        \end{equation}
        The function $u\mapsto f^{\psi^u}(\beta)$ is always differentiable on $(-\delta,\delta)\backslash\{0\}$.
        Furthermore,
        \begin{align}
            &\text{when } \beta > 2 / \bar{\sigma}_{j^{\star}}, ~~ \frac{\partial}{\partial u^-} f^{\psi^0}(\beta) \stackrel{(1^*)}{=} \frac{\beta \sigma_{i^{\star}} (\alpha' - \alpha)}{\bar{\sigma}_{j^{\star}}} \stackrel{(1^*)}{=} \frac{\partial}{\partial u^+} f^{\psi^0}(\beta), \\
            &\text{when } \beta = 2 / \bar{\sigma}_{j^{\star}}, ~~ \frac{\partial}{\partial u^-} f^{\psi^0}(\beta) \stackrel{(3^*)}{=} \frac{\beta^2 \sigma_{i^{\star}} (\alpha' - \alpha)}{2} \neq +\infty \stackrel{(2^*)}{=} \frac{\partial}{\partial u^+} f^{\psi^0}(\beta), \\
            &\text{when } \beta < 2 / \bar{\sigma}_{j^{\star}}, ~~ \frac{\partial}{\partial u^-} f^{\psi^0}(\beta) \stackrel{(3^*)}{=} \frac{\beta^2 \sigma_{i^{\star}} (\alpha' - \alpha)}{2} \stackrel{(3^*)}{=} \frac{\partial}{\partial u^+} f^{\psi^0}(\beta).
        \end{align}
        Thus, $u\mapsto f^{\psi^u}(\beta)$ is also differentiable at $u=0$, except when $\beta = 2 / \bar{\sigma}_{j^{\star}}$.

    \vspace{5mm}
    \begin{figure}[H]
        \centering
        \includegraphics[scale=0.78]{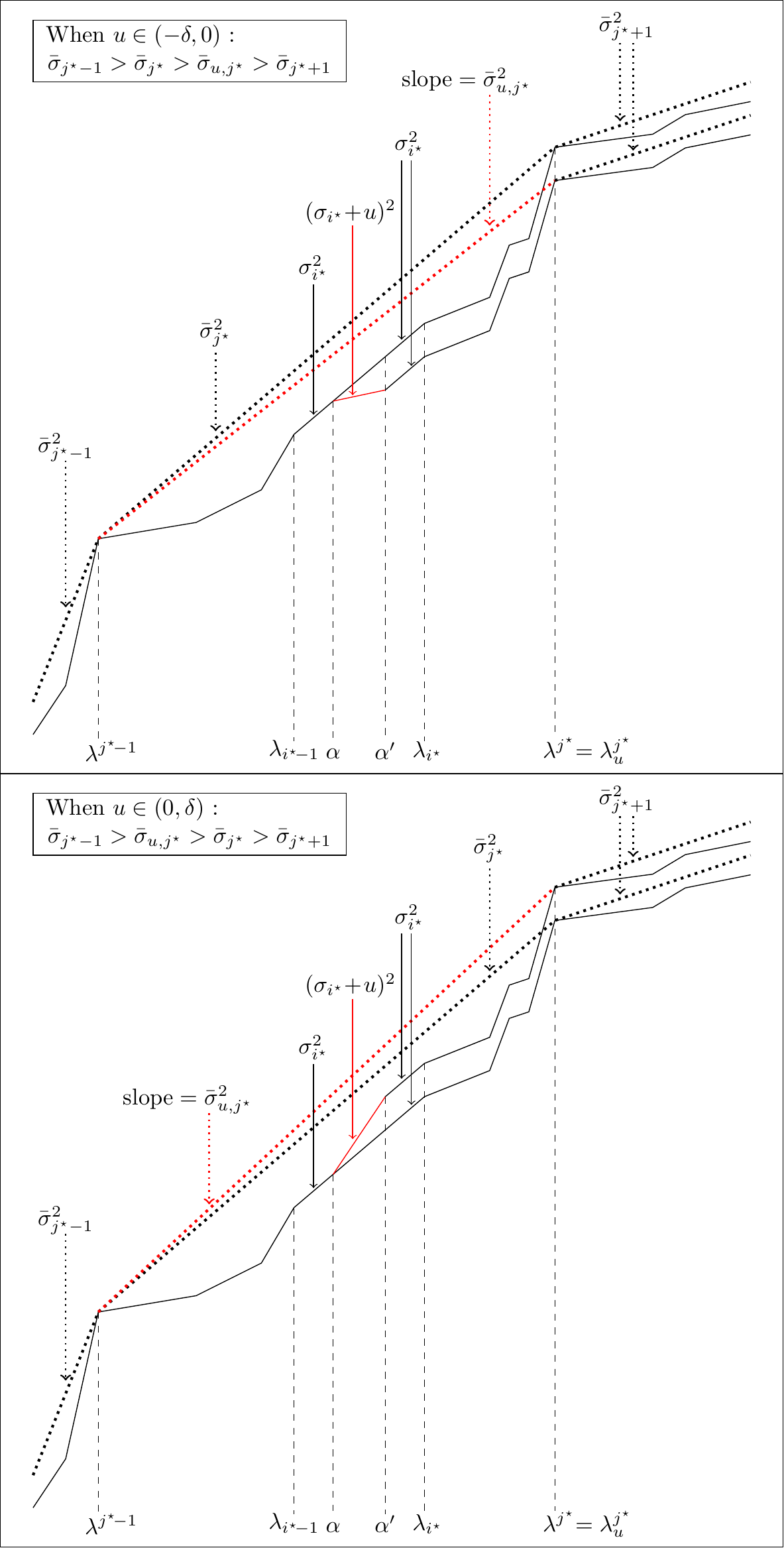}
        \captionsetup{width=0.8\textwidth}
        \caption{The dotted paths represent $\mathcal{J}_{\bar{\sigma}^2}$ and $\mathcal{J}_{\bar{\sigma}_u^2}$. The closed paths represent $\mathcal{J}_{\sigma^2}$ and $\mathcal{J}_{\sigma_u^2}$. The paths containing a {\color{red} red part} are the ones for the perturbed field $\psi^u$.}
        \label{fig:free.energy.derivative.u.condition}
    \end{figure}

    \newpage
    \noindent \textbf{Case (ii) :}
        Here are all the possible subcases of Case (ii) : \vspace{2mm}
        \begin{itemize}
            \item[\quad(ii.1)]
                \begin{itemize}
                    \item[$\bullet$] $\mathcal{J}_{\sigma^2}(r) < \mathcal{J}_{\bar{\sigma}^2}(r)$ for all $r\in [\alpha',\lambda^{j^{\star}})$;
                    \item[$\bullet$] $\exists s\in (\lambda^{j^{\star}-1},\alpha]$ such that $\mathcal{J}_{\sigma^2}(s) = \mathcal{J}_{\bar{\sigma}^2}(s)$; \vspace{2mm}
                \end{itemize}
            \item[\quad(ii.2)]
                \begin{itemize}
                    \item[$\bullet$] $\mathcal{J}_{\sigma^2}(r) < \mathcal{J}_{\bar{\sigma}^2}(r)$ for all $r\in (\lambda^{j^{\star}-1},\alpha]$;
                    \item[$\bullet$] $\exists t\in [\alpha',\lambda^{j^{\star}})$ such that $\mathcal{J}_{\sigma^2}(t) = \mathcal{J}_{\bar{\sigma}^2}(t)$; \vspace{2mm}
                \end{itemize}
            \item[\quad(ii.3)]
                \begin{itemize}
                    \item[$\bullet$] $\exists s\in (\lambda^{j^{\star}-1},\alpha]$ such that $\mathcal{J}_{\sigma^2}(s) = \mathcal{J}_{\bar{\sigma}^2}(s)$;
                    \item[$\bullet$] $\exists t\in [\alpha',\lambda^{j^{\star}})$ such that $\mathcal{J}_{\sigma^2}(t) = \mathcal{J}_{\bar{\sigma}^2}(t)$. \vspace{2mm}
                \end{itemize}
        \end{itemize}
        Denote
        \begin{align}
            &s^{\star} \circeq \max\big\{r\in [\lambda^{j^{\star}-1},\alpha] : \mathcal{J}_{\sigma^2}(r) = \mathcal{J}_{\bar{\sigma}^2}(r)\big\}, \\
            &t^{\star} \circeq \min\big\{r\in [\alpha',\lambda^{j^{\star}}] : \mathcal{J}_{\sigma^2}(r) = \mathcal{J}_{\bar{\sigma}^2}(r)\big\}.
        \end{align}

        Again, the function $u \mapsto \bar{\sigma}_u(r)$ is continuous, uniformly in $r\in [0,1]$.
        Hence, we can choose $\delta = \delta(\alpha,\alpha',\boldsymbol{\sigma},\boldsymbol{\lambda}) > 0$ small enough that for all $u\in (-\delta,\delta)$ :

        \begin{equation*}
            \centering
            \begin{array}{lll}
                \toprule
                [.2\normalbaselineskip] \multicolumn{1}{l}{\text{Case}} & \multicolumn{1}{c}{u < 0} & \multicolumn{1}{c}{u > 0} \\
                \midrule
                \vspace{-3mm}&&\\
                \multicolumn{1}{c}{\text{(ii.1)}}
                \vspace{2mm}&\tabitem \bar{\sigma}_{u,j} =
                    \left\{\hspace{-1.5mm}
                    \begin{array}{ll}
                        \bar{\sigma}_j &\hspace{-2mm}\mbox{for } j\leq j^{\star} \\
                        \bar{\sigma}_{j-1} &\hspace{-2mm}\mbox{for } j\geq j^{\star}+2
                    \end{array}
                    \right.
                &\tabitem \bar{\sigma}_{u,j} = \bar{\sigma}_j ~\text{for } j\neq j^{\star} \\
                \vspace{1mm}&\tabitem \lambda_u^j =
                    \left\{\hspace{-1.5mm}
                    \begin{array}{ll}
                        \lambda^j &\hspace{-2mm}\mbox{for } j\leq j^{\star}-1 \\
                        s^{\star} &\hspace{-2mm}\mbox{for } j = j^{\star} \\
                        \lambda^{j-1} &\hspace{-2mm}\mbox{for } j\geq j^{\star}+1
                    \end{array}
                    \right.
                &\tabitem \lambda_u^j = \lambda^j ~\text{for all } j \\
                \vspace{1mm}&\tabitem m_u = m+1
                &\tabitem m_u = m \\
                \midrule
                \vspace{-3mm}&&\\
                \multicolumn{1}{c}{\text{(ii.2)}}
                \vspace{2mm}&\tabitem \bar{\sigma}_{u,j} = \bar{\sigma}_j ~\text{for } j\neq j^{\star}
                &\tabitem \bar{\sigma}_{u,j} =
                    \left\{\hspace{-1.5mm}
                    \begin{array}{ll}
                        \bar{\sigma}_j &\hspace{-2mm}\mbox{for } j\leq j^{\star}-1 \\
                        \bar{\sigma}_{j-1} &\hspace{-2mm}\mbox{for } j\geq j^{\star}+1
                    \end{array}
                    \right. \\
                \vspace{1mm}&\tabitem \lambda_u^j = \lambda^j ~\text{for all } j
                &\tabitem \lambda_u^j =
                    \left\{\hspace{-1.5mm}
                    \begin{array}{ll}
                        \lambda^j &\hspace{-2mm}\mbox{for } j\leq j^{\star}-1 \\
                        t^{\star} &\hspace{-2mm}\mbox{for } j = j^{\star} \\
                        \lambda^{j-1} &\hspace{-2mm}\mbox{for } j\geq j^{\star}+1
                    \end{array}
                    \right. \\
                \vspace{1mm}&\tabitem m_u = m
                &\tabitem m_u = m+1 \\
                \midrule
                \vspace{-3mm}&&\\
                \multicolumn{1}{c}{\text{(ii.3)}}
                \vspace{2mm}&\tabitem \bar{\sigma}_{u,j} =
                    \left\{\hspace{-1.5mm}
                    \begin{array}{ll}
                        \bar{\sigma}_j &\hspace{-2mm}\mbox{for } j\leq j^{\star} \\
                        \bar{\sigma}_{j-1} &\hspace{-2mm}\mbox{for } j\geq j^{\star}+2
                    \end{array}
                    \right.
                &\tabitem \bar{\sigma}_{u,j} =
                    \left\{\hspace{-1.5mm}
                    \begin{array}{ll}
                        \bar{\sigma}_j &\hspace{-2mm}\mbox{for } j\leq j^{\star}-1 \\
                        \bar{\sigma}_{j-1} &\hspace{-2mm}\mbox{for } j\geq j^{\star}+1
                    \end{array}
                    \right. \\
                \vspace{1mm}&\tabitem \lambda_u^j =
                    \left\{\hspace{-1.5mm}
                    \begin{array}{ll}
                        \lambda^j &\hspace{-2mm}\mbox{for } j\leq j^{\star}-1 \\
                        s^{\star} &\hspace{-2mm}\mbox{for } j = j^{\star} \\
                        \lambda^{j-1} &\hspace{-2mm}\mbox{for } j\geq j^{\star}+1
                    \end{array}
                    \right.
                &\tabitem \lambda_u^j =
                    \left\{\hspace{-1.5mm}
                    \begin{array}{ll}
                        \lambda^j &\hspace{-2mm}\mbox{for } j\leq j^{\star}-1 \\
                        t^{\star} &\hspace{-2mm}\mbox{for } j = j^{\star} \\
                        \lambda^{j-1} &\hspace{-2mm}\mbox{for } j\geq j^{\star}+1
                    \end{array}
                    \right. \\
                \vspace{1mm}&\tabitem m_u = m+1
                &\tabitem m_u = m+1 \\
                \midrule
            \end{array}
        \end{equation*}

        \vspace{2mm}
        In other words, the parameter $\delta$ is chosen small enough that, on $(\lambda^{j^{\star}-1},\lambda^{j^{\star}}]$, the field $\psi^u$ has either one or two effective variance parameters (depending on the subcase) and they remain strictly between $\bar{\sigma}_{j^{\star}-1}$ and $\bar{\sigma}_{j^{\star}+1}$.
        If there is only one effective slope, then $\lambda_u^{j^{\star}} = \lambda^{j^{\star}}$.
        If there are two effective slopes, the segments meet at $\lambda_u^{j^{\star}}\in \{s^{\star},t^{\star}\}$.
        Figure \ref{fig:free.energy.derivative.u.table} on the next page (analogous to Figure \ref{fig:free.energy.derivative.u.condition}) illustrates this point more clearly.

        \newpage
        \begin{landscape}
        \begin{figure}[ht]
            \centering
            \hspace{-2mm}
            \includegraphics[scale=1.19]{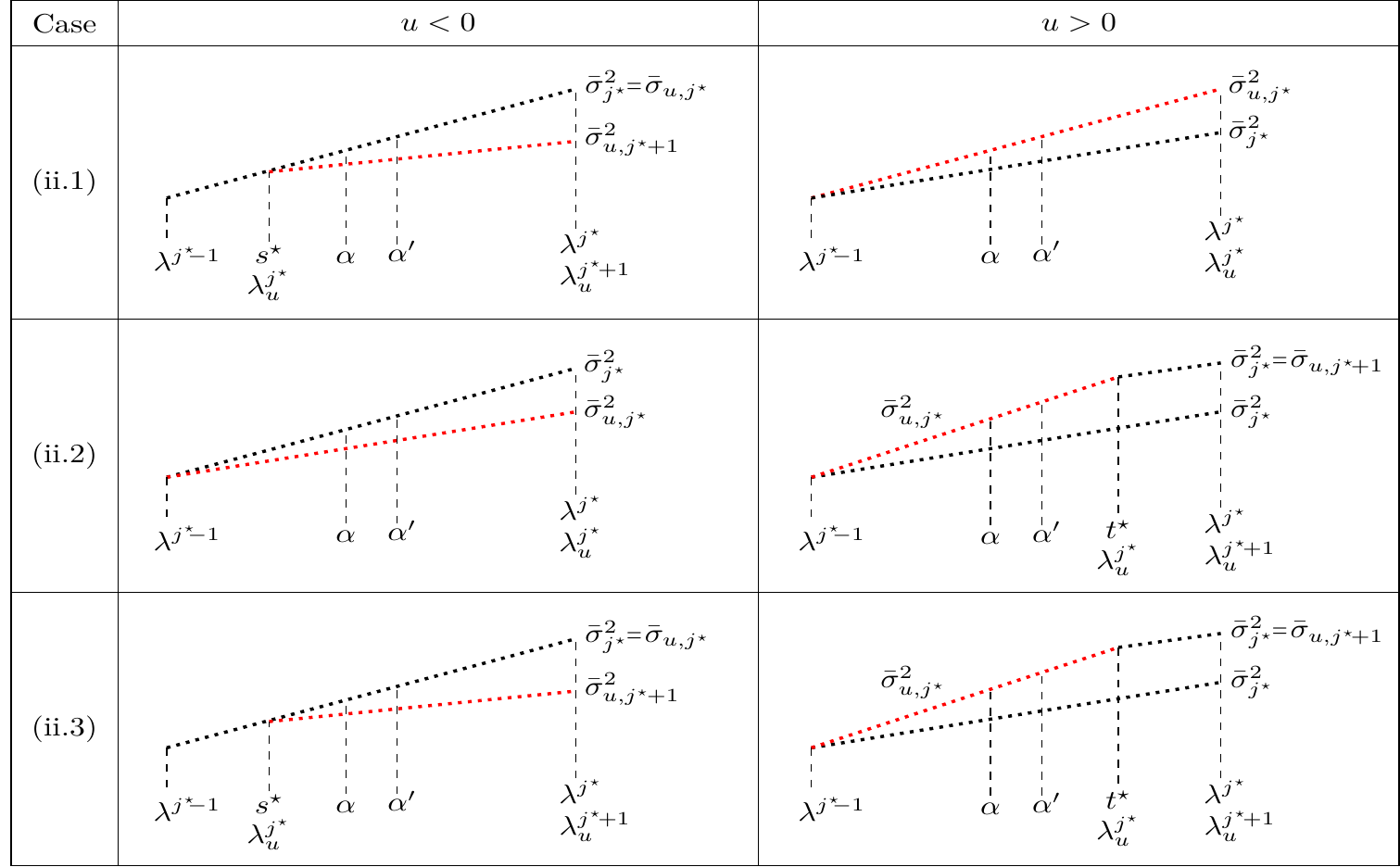}
            \captionsetup{width=1.4\textwidth}
            \caption{General form of $\mathcal{J}_{\bar{\sigma}^2}$ and $\mathcal{J}_{\bar{\sigma}_u^2}$ on $(\lambda^{j^{\star}-1},\lambda^{j^{\star}}]$. The effective slopes of $\psi$ and $\psi^u$ are the quantities $\bar{\sigma}_j^2$ and $\bar{\sigma}_{u,j}^2$, respectively. The dotted paths containing a {\color{red} red part} are the ones for the perturbed field $\psi^u$.}
            \label{fig:free.energy.derivative.u.table}
        \end{figure}
        \end{landscape}

        \newpage
        \noindent \textbf{Case (ii.1) :}
        In this case, $s^{\star}\in (\lambda^{j^{\star}-1},\alpha]$.
        We can choose $\delta = \delta(\beta,\alpha,\alpha',\boldsymbol{\sigma},\boldsymbol{\lambda}) > 0$ small enough that
        \begin{align}
            &\text{when } \beta > 2 / \bar{\sigma}_{j^{\star}} (\Longrightarrow j^{\star}\hspace{-0.5mm} \leq l_{\beta} - 1), ~~ \text{then } l_{\beta,u} =
            \left\{\hspace{-1mm}
            \begin{array}{ll}
                l_{\beta} + 1, &\mbox{if } u\in (-\delta,0), \\
                l_{\beta}, &\mbox{if } u\in [0,\delta),
            \end{array}
            \right. \\
            &\text{when } \beta = 2 / \bar{\sigma}_{j^{\star}} (\Longrightarrow j^{\star}\hspace{-0.5mm} = l_{\beta}), ~~ \text{then } l_{\beta,u} =
            \left\{\hspace{-1mm}
            \begin{array}{ll}
                l_{\beta}, &\mbox{if } u\in (-\delta,0], \\
                l_{\beta} + 1, &\mbox{if } u\in (0,\delta),
            \end{array}
            \right. \\[2mm]
            &\text{when } \beta < 2 / \bar{\sigma}_{j^{\star}} (\Longrightarrow j^{\star}\hspace{-0.5mm} \geq l_{\beta}), ~~ \text{then } l_{\beta,u} = l_{\beta} ~\text{for all } u\in (-\delta,\delta).
        \end{align}
        When $u < 0$,
        \begin{align}\label{eq:expression.free.energy.expand.2.1.u.negative}
            &f^{\psi^u}(\beta) - f^{\psi}(\beta) \notag \\
            &= \left\{\hspace{-1mm}
            \begin{array}{ll}
                \beta (\bar{\sigma}_{u,j^{\star}+1} - \bar{\sigma}_{j^{\star}}) (\lambda^{j^{\star}} - s^{\star}), &\mbox{if } j^{\star}\hspace{-0.5mm} \leq l_{\beta} - 1 \mbox{ and } l_{\beta,u} = l_{\beta} + 1, \\[2mm]
                \frac{\beta^2}{4} (\bar{\sigma}_{u,j^{\star}+1}^2 - \bar{\sigma}_{j^{\star}}^2) (\lambda^{j^{\star}} - s^{\star}), &\mbox{if } j^{\star}\hspace{-0.5mm} \geq l_{\beta} \mbox{ and } l_{\beta,u} = l_{\beta},
            \end{array}
            \right. \notag \\[3mm]
            &= \left\{\hspace{-1mm}
            \begin{array}{ll}
                \beta \left(\sqrt{\bar{\sigma}_{u,j^{\star}+1}^2 (\lambda^{j^{\star}} - s^{\star})} - \sqrt{\bar{\sigma}_{j^{\star}}^2 (\lambda^{j^{\star}} - s^{\star})}\right) \sqrt{\lambda^{j^{\star}} - s^{\star}}, & \\[2mm]
                &\hspace{-24mm}\mbox{if } j^{\star}\hspace{-0.5mm} \leq l_{\beta} - 1 \mbox{ and } l_{\beta,u} = l_{\beta} + 1, \\[2mm]
                \frac{\beta^2}{4} (\bar{\sigma}_{u,j^{\star}+1}^2 - \bar{\sigma}_{j^{\star}}^2) (\lambda^{j^{\star}} - s^{\star}), &\hspace{-24mm}\mbox{if } j^{\star}\hspace{-0.5mm} \geq l_{\beta} \mbox{ and } l_{\beta,u} = l_{\beta},
            \end{array}
            \right. \notag \\[3mm]
            &= \left\{\hspace{-1mm}
            \begin{array}{ll}
                (1^-) :~ \beta \left\{\frac{(2 u \sigma_{i^{\star}} + u^2) (\alpha' - \alpha)}{2 \bar{\sigma}_{j^{\star}}} + O(u^2)\right\}, &\mbox{if } j^{\star}\hspace{-0.5mm} \leq l_{\beta} - 1 \mbox{ and } l_{\beta,u} = l_{\beta} + 1, \\[2mm]
                (2^-) :~ \frac{\beta^2}{4} (2 u \sigma_{i^{\star}} + u^2) (\alpha' - \alpha), &\mbox{if } j^{\star}\hspace{-0.5mm} \geq l_{\beta} \mbox{ and } l_{\beta,u} = l_{\beta}.
            \end{array}
            \right.
        \end{align}
        To get the last equality, we used
        \begin{equation}
            (\bar{\sigma}_{u,j^{\star}+1}^2 - \bar{\sigma}_{j^{\star}}^2) (\lambda^{j^{\star}} - s^{\star}) = ((\sigma_{i^{\star}} + u)^2 - \sigma_{i^{\star}}^2) (\alpha' - \alpha).
        \end{equation}
        When $u > 0$, it is the same as in \eqref{eq:expression.free.energy.expand} :
        \begin{align}\label{eq:expression.free.energy.expand.2.1.u.positive}
            &f^{\psi^u}(\beta) - f^{\psi}(\beta) \notag \\
            &= \left\{\hspace{-1mm}
            \begin{array}{ll}
                (1^+) :~ \beta \left\{\frac{(2 u \sigma_{i^{\star}} + u^2) (\alpha' - \alpha)}{2 \bar{\sigma}_{j^{\star}}} + O(u^2)\right\}, &\mbox{if } j^{\star}\hspace{-0.5mm} \leq l_{\beta} - 1 \mbox{ and } l_{\beta,u} = l_{\beta}, \\[2mm]
                (2^+) :~ \beta \left\{\frac{(2 u \sigma_{i^{\star}} + u^2) (\alpha' - \alpha)}{2 \bar{\sigma}_{j^{\star}}} + O(u^2)\right\} & \\[2mm]
                \hspace{23.5mm}+ \beta \bar{\sigma}_{j^{\star}} (1 - \frac{\beta}{4}\bar{\sigma}_{j^{\star}}) \nabla \lambda^{j^{\star}}\hspace{-1mm}, &\mbox{if } j^{\star}\hspace{-0.5mm} = l_{\beta} \mbox{ and } l_{\beta,u} = l_{\beta} + 1, \\[2mm]
                (3^+) :~ \frac{\beta^2}{4} (2 u \sigma_{i^{\star}} + u^2) (\alpha' - \alpha), &\mbox{if } j^{\star}\hspace{-0.5mm} \geq l_{\beta} \mbox{ and } l_{\beta,u} = l_{\beta}.
            \end{array}
            \right.
        \end{align}
        The function $u\mapsto f^{\psi^u}(\beta)$ is always differentiable on $(-\delta,\delta)\backslash\{0\}$.
        Furthermore,
        \begin{align}
            &\text{when } \beta > 2 / \bar{\sigma}_{j^{\star}}, ~~ \frac{\partial}{\partial u^-} f^{\psi^0}(\beta) \stackrel{(1^-)}{=} \frac{\beta \sigma_{i^{\star}} (\alpha' - \alpha)}{\bar{\sigma}_{j^{\star}}} \stackrel{(1^+)}{=} \frac{\partial}{\partial u^+} f^{\psi^0}(\beta), \\
            &\text{when } \beta = 2 / \bar{\sigma}_{j^{\star}}, ~~ \frac{\partial}{\partial u^-} f^{\psi^0}(\beta) \stackrel{(2^-)}{=} \frac{\beta^2 \sigma_{i^{\star}} (\alpha' - \alpha)}{2} \neq +\infty \stackrel{(2^+)}{=} \frac{\partial}{\partial u^+} f^{\psi^0}(\beta), \\
            &\text{when } \beta < 2 / \bar{\sigma}_{j^{\star}}, ~~ \frac{\partial}{\partial u^-} f^{\psi^0}(\beta) \stackrel{(2^-)}{=} \frac{\beta^2 \sigma_{i^{\star}} (\alpha' - \alpha)}{2} \stackrel{(3^+)}{=} \frac{\partial}{\partial u^+} f^{\psi^0}(\beta).
        \end{align}
        Thus, $u\mapsto f^{\psi^u}(\beta)$ is also differentiable at $u=0$, except when $\beta = 2 / \bar{\sigma}_{j^{\star}}$.

        \vspace{5mm}
        \noindent \textbf{Case (ii.2) :}
        In this case, $t^{\star}\in [\alpha',\lambda^{j^{\star}})$.
        We can choose $\delta = \delta(\beta,\alpha,\alpha',\boldsymbol{\sigma},\boldsymbol{\lambda}) > 0$ small enough that
        \begin{align}
            &\text{when } \beta > 2 / \bar{\sigma}_{j^{\star}} (\Longrightarrow j^{\star}\hspace{-0.5mm} \leq l_{\beta} - 1), ~~ \text{then } l_{\beta,u} =
            \left\{\hspace{-1mm}
            \begin{array}{ll}
                l_{\beta}, &\mbox{if } u\in (-\delta,0], \\
                l_{\beta} + 1, &\mbox{if } u\in (0,\delta),
            \end{array}
            \right. \\
            &\text{when } \beta = 2 / \bar{\sigma}_{j^{\star}} (\Longrightarrow j^{\star}\hspace{-0.5mm} = l_{\beta}), ~~ \text{then } l_{\beta,u} =
            \left\{\hspace{-1mm}
            \begin{array}{ll}
                l_{\beta}, &\mbox{if } u\in (-\delta,0], \\
                l_{\beta} + 1, &\mbox{if } u\in (0,\delta),
            \end{array}
            \right. \\[2mm]
            &\text{when } \beta < 2 / \bar{\sigma}_{j^{\star}} (\Longrightarrow j^{\star}\hspace{-0.5mm} \geq l_{\beta}), ~~ \text{then } l_{\beta,u} = l_{\beta} ~\text{for all } u\in (-\delta,\delta).
        \end{align}
        When $u < 0$, it is the same as in \eqref{eq:expression.free.energy.expand} (without $(2^*)$) :
        \begin{align}\label{eq:expression.free.energy.expand.2.2.u.negative}
            &f^{\psi^u}(\beta) - f^{\psi}(\beta) \notag \\
            &= \left\{\hspace{-1mm}
            \begin{array}{ll}
                (1^-) :~ \beta \left\{\frac{(2 u \sigma_{i^{\star}} + u^2) (\alpha' - \alpha)}{2 \bar{\sigma}_{j^{\star}}} + O(u^2)\right\}, &\mbox{if } j^{\star}\hspace{-0.5mm} \leq l_{\beta} - 1 \mbox{ and } l_{\beta,u} = l_{\beta}, \\[2mm]
                (2^-) :~ \frac{\beta^2}{4} (2 u \sigma_{i^{\star}} + u^2) (\alpha' - \alpha), &\mbox{if } j^{\star}\hspace{-0.5mm} \geq l_{\beta} \mbox{ and } l_{\beta,u} = l_{\beta}.
            \end{array}
            \right.
        \end{align}
        When $u > 0$,
        \begin{align}\label{eq:expression.free.energy.expand.2.2.u.positive}
            &f^{\psi^u}(\beta) - f^{\psi}(\beta) \notag \\[3mm]
            &= \left\{\hspace{-1mm}
            \begin{array}{ll}
                \beta (\bar{\sigma}_{u,j^{\star}} - \bar{\sigma}_{j^{\star}}) (t^{\star} - \lambda^{j^{\star}-1}), &\mbox{if } j^{\star}\hspace{-0.5mm} \leq l_{\beta} - 1 \mbox{ and } l_{\beta,u} = l_{\beta} + 1, \\[2mm]
                \beta (\bar{\sigma}_{u,j^{\star}} - \frac{\beta}{4}\bar{\sigma}_{j^{\star}}^2) (t^{\star} - \lambda^{j^{\star}-1}), &\mbox{if } j^{\star}\hspace{-0.5mm} = l_{\beta} \mbox{ and } l_{\beta,u} = l_{\beta} + 1, \\[2mm]
                \frac{\beta^2}{4} (\bar{\sigma}_{u,j^{\star}+1}^2 - \bar{\sigma}_{j^{\star}}^2) (t^{\star} - \lambda^{j^{\star}-1}), &\mbox{if } j^{\star}\hspace{-0.5mm} \geq l_{\beta} \mbox{ and } l_{\beta,u} = l_{\beta},
            \end{array}
            \right. \notag \\[5mm]
            &= \left\{\hspace{-1mm}
            \begin{array}{ll}
                \beta \left(\sqrt{\bar{\sigma}_{u,j^{\star}}^2 (t^{\star} - \lambda^{j^{\star}-1})} - \sqrt{\bar{\sigma}_{j^{\star}}^2 (t^{\star} - \lambda^{j^{\star}-1})}\right) \sqrt{t^{\star} - \lambda^{j^{\star}-1}}, \hspace{-30mm}& \\[2mm]
                &\mbox{if } j^{\star}\hspace{-0.5mm} \leq l_{\beta} - 1 \mbox{ and } l_{\beta,u} = l_{\beta} + 1, \\[2mm]
                \beta (\bar{\sigma}_{u,j^{\star}} - \bar{\sigma}_{j^{\star}}) (t^{\star} - \lambda^{j^{\star}-1}) & \\[2mm]
                \hspace{12mm}+ \beta \bar{\sigma}_{j^{\star}}(1 - \frac{\beta}{4}\bar{\sigma}_{j^{\star}}) (t^{\star} - \lambda^{j^{\star}-1}), &\mbox{if } j^{\star}\hspace{-0.5mm} = l_{\beta} \mbox{ and } l_{\beta,u} = l_{\beta} + 1, \\[2mm]
                \frac{\beta^2}{4} (\bar{\sigma}_{u,j^{\star}+1}^2 - \bar{\sigma}_{j^{\star}}^2) (t^{\star} - \lambda^{j^{\star}-1}), &\mbox{if } j^{\star}\hspace{-0.5mm} \geq l_{\beta} \mbox{ and } l_{\beta,u} = l_{\beta},
            \end{array}
            \right. \notag \\[5mm]
            &= \left\{\hspace{-1mm}
            \begin{array}{ll}
                (1^+) :~ \beta \left\{\frac{(2 u \sigma_{i^{\star}} + u^2) (\alpha' - \alpha)}{2 \bar{\sigma}_{j^{\star}}} + O(u^2)\right\}, &\mbox{if } j^{\star}\hspace{-0.5mm} \leq l_{\beta} - 1 \mbox{ and } l_{\beta,u} = l_{\beta} + 1, \\[2mm]
                (2^+) :~ \beta \left\{\frac{(2 u \sigma_{i^{\star}} + u^2) (\alpha' - \alpha)}{2 \bar{\sigma}_{j^{\star}}} + O(u^2)\right\} & \\[2mm]
                \hspace{12.5mm}+ \beta \bar{\sigma}_{j^{\star}} (1 - \frac{\beta}{4} \bar{\sigma}_{j^{\star}}) (t^{\star} - \lambda^{j^{\star}-1}), &\mbox{if } j^{\star}\hspace{-0.5mm} = l_{\beta} \mbox{ and } l_{\beta,u} = l_{\beta} + 1, \\[2mm]
                (3^+) :~ \frac{\beta^2}{4} (2 u \sigma_{i^{\star}} + u^2) (\alpha' - \alpha), &\mbox{if } j^{\star}\hspace{-0.5mm} \geq l_{\beta} \mbox{ and } l_{\beta,u} = l_{\beta}.
            \end{array}
            \right.
        \end{align}
        To get the last equality, we used
        \begin{equation}
            (\bar{\sigma}_{u,j^{\star}}^2 - \bar{\sigma}_{j^{\star}}^2) (t^{\star} - \lambda^{j^{\star}-1}) = ((\sigma_{i^{\star}} + u)^2 - \sigma_{i^{\star}}^2) (\alpha' - \alpha).
        \end{equation}
        The function $u\mapsto f^{\psi^u}(\beta)$ is always differentiable on $(-\delta,\delta)\backslash\{0\}$.
        Furthermore,
        \begin{align}
            &\text{when } \beta > 2 / \bar{\sigma}_{j^{\star}}, ~~ \frac{\partial}{\partial u^-} f^{\psi^0}(\beta) \stackrel{(1^-)}{=} \frac{\beta \sigma_{i^{\star}} (\alpha' - \alpha)}{\bar{\sigma}_{j^{\star}}} \stackrel{(1^+)}{=} \frac{\partial}{\partial u^+} f^{\psi^0}(\beta), \\
            &\text{when } \beta = 2 / \bar{\sigma}_{j^{\star}}, ~~ \frac{\partial}{\partial u^-} f^{\psi^0}(\beta) \stackrel{(2^-)}{=} \frac{\beta^2 \sigma_{i^{\star}} (\alpha' - \alpha)}{2} \neq +\infty \stackrel{(2^+)}{=} \frac{\partial}{\partial u^+} f^{\psi^0}(\beta), \\
            &\text{when } \beta < 2 / \bar{\sigma}_{j^{\star}}, ~~ \frac{\partial}{\partial u^-} f^{\psi^0}(\beta) \stackrel{(2^-)}{=} \frac{\beta^2 \sigma_{i^{\star}} (\alpha' - \alpha)}{2} \stackrel{(3^+)}{=} \frac{\partial}{\partial u^+} f^{\psi^0}(\beta).
        \end{align}
        Thus, $u\mapsto f^{\psi^u}(\beta)$ is also differentiable at $u=0$, except when $\beta = 2 / \bar{\sigma}_{j^{\star}}$.

        \vspace{5mm}
        \noindent \textbf{Case (ii.3) :}
        In this case, $s^{\star}\in (\lambda^{j^{\star}-1},\alpha]$ and $t^{\star}\in [\alpha',\lambda^{j^{\star}})$.
        We can choose $\delta = \delta(\beta,\alpha,\alpha',\boldsymbol{\sigma},\boldsymbol{\lambda}) > 0$ small enough that
        \begin{align}
            &\text{when } \beta > 2 / \bar{\sigma}_{j^{\star}} (\Longrightarrow j^{\star}\hspace{-0.5mm} \leq l_{\beta} - 1), ~~ \text{then } l_{\beta,u} =
            \left\{\hspace{-1mm}
            \begin{array}{ll}
                l_{\beta} + 1, &\mbox{if } u\in (-\delta,0], \\
                l_{\beta} + 1, &\mbox{if } u\in (0,\delta),
            \end{array}
            \right. \\
            &\text{when } \beta = 2 / \bar{\sigma}_{j^{\star}} (\Longrightarrow j^{\star}\hspace{-0.5mm} = l_{\beta}), ~~ \text{then } l_{\beta,u} =
            \left\{\hspace{-1mm}
            \begin{array}{ll}
                l_{\beta}, &\mbox{if } u\in (-\delta,0], \\
                l_{\beta} + 1, &\mbox{if } u\in (0,\delta),
            \end{array}
            \right. \\[2mm]
            &\text{when } \beta < 2 / \bar{\sigma}_{j^{\star}} (\Longrightarrow j^{\star}\hspace{-0.5mm} \geq l_{\beta}), ~~ \text{then } l_{\beta,u} = l_{\beta} ~\text{for all } u\in (-\delta,\delta).
        \end{align}
        When $u < 0$, it is the same as in \eqref{eq:expression.free.energy.expand.2.1.u.negative} :
        \begin{align}\label{eq:expression.free.energy.expand.2.3.u.negative}
            &f^{\psi^u}(\beta) - f^{\psi}(\beta) \notag \\
            &= \left\{\hspace{-1mm}
            \begin{array}{ll}
                (1^-) :~ \beta \left\{\frac{(2 u \sigma_{i^{\star}} + u^2) (\alpha' - \alpha)}{2 \bar{\sigma}_{j^{\star}}} + O(u^2)\right\}, &\mbox{if } j^{\star}\hspace{-0.5mm} \leq l_{\beta} - 1 \mbox{ and } l_{\beta,u} = l_{\beta} + 1, \\[2mm]
                (2^-) :~ \frac{\beta^2}{4} (2 u \sigma_{i^{\star}} + u^2) (\alpha' - \alpha), &\mbox{if } j^{\star}\hspace{-0.5mm} \geq l_{\beta} \mbox{ and } l_{\beta,u} = l_{\beta}.
            \end{array}
            \right.
        \end{align}
        When $u > 0$, it is the same as in \eqref{eq:expression.free.energy.expand.2.2.u.positive} :
        \begin{align}\label{eq:expression.free.energy.expand.2.3.u.positive}
            &f^{\psi^u}(\beta) - f^{\psi}(\beta) \notag \\
            &= \left\{\hspace{-1mm}
            \begin{array}{ll}
                (1^+) :~ \beta \left\{\frac{(2 u \sigma_{i^{\star}} + u^2) (\alpha' - \alpha)}{2 \bar{\sigma}_{j^{\star}}} + O(u^2)\right\}, &\mbox{if } j^{\star}\hspace{-0.5mm} \leq l_{\beta} - 1 \mbox{ and } l_{\beta,u} = l_{\beta} + 1, \\[2mm]
                (2^+) :~ \beta \left\{\frac{(2 u \sigma_{i^{\star}} + u^2) (\alpha' - \alpha)}{2 \bar{\sigma}_{j^{\star}}} + O(u^2)\right\} & \\[2mm]
                \hspace{12.5mm}+ \beta \bar{\sigma}_{j^{\star}} (1 - \frac{\beta}{4} \bar{\sigma}_{j^{\star}}) (t^{\star} - \lambda^{j^{\star}-1}), &\mbox{if } j^{\star}\hspace{-0.5mm} = l_{\beta} \mbox{ and } l_{\beta,u} = l_{\beta} + 1, \\[2mm]
                (3^+) :~ \frac{\beta^2}{4} (2 u \sigma_{i^{\star}} + u^2) (\alpha' - \alpha), &\mbox{if } j^{\star}\hspace{-0.5mm} \geq l_{\beta} \mbox{ and } l_{\beta,u} = l_{\beta}.
            \end{array}
            \right.
        \end{align}
        The function $u\mapsto f^{\psi^u}(\beta)$ is always differentiable on $(-\delta,\delta)\backslash\{0\}$.
        Furthermore,
        \begin{align}
            &\text{when } \beta > 2 / \bar{\sigma}_{j^{\star}}, ~~ \frac{\partial}{\partial u^-} f^{\psi^0}(\beta) \stackrel{(1^-)}{=} \frac{\beta \sigma_{i^{\star}} (\alpha' - \alpha)}{\bar{\sigma}_{j^{\star}}} \stackrel{(1^+)}{=} \frac{\partial}{\partial u^+} f^{\psi^0}(\beta), \\
            &\text{when } \beta = 2 / \bar{\sigma}_{j^{\star}}, ~~ \frac{\partial}{\partial u^-} f^{\psi^0}(\beta) \stackrel{(2^-)}{=} \frac{\beta^2 \sigma_{i^{\star}} (\alpha' - \alpha)}{2} \neq +\infty \stackrel{(2^+)}{=} \frac{\partial}{\partial u^+} f^{\psi^0}(\beta), \\
            &\text{when } \beta < 2 / \bar{\sigma}_{j^{\star}}, ~~ \frac{\partial}{\partial u^-} f^{\psi^0}(\beta) \stackrel{(2^-)}{=} \frac{\beta^2 \sigma_{i^{\star}} (\alpha' - \alpha)}{2} \stackrel{(3^+)}{=} \frac{\partial}{\partial u^+} f^{\psi^0}(\beta).
        \end{align}
        Thus, $u\mapsto f^{\psi^u}(\beta)$ is also differentiable at $u=0$, except when $\beta = 2 / \bar{\sigma}_{j^{\star}}$.
        This ends the proof of Lemma \ref{lem:IGFF.modified.free.energy.derivative}.
    \end{proof}

\section*{Acknowledgements}

First, I would like to thank an anonymous referee and my advisor, Louis-Pierre Arguin, for their valuable comments that led to improvements in the presentation of this paper.
I also thank Yan Fyodorov for helpful correspondences and for pointing out the literature on the multi-scale log-REM model.
This work is supported by a NSERC Doctoral Program Alexander Graham Bell scholarship.

%
%

\bibliographystyle{alea2}
\bibliography{Ouimet_2017_bib}

\end{document}